\newtheorem{theorem}{Theorem}[section]
\newtheorem{lemma}[theorem]{Lemma}
\newtheorem{proposition}[theorem]{Proposition}
\newtheorem{corollary}[theorem]{Corollary}
\newtheorem{definition}[theorem]{Definition}
\newtheorem{remark}[theorem]{Remark}
\numberwithin{equation}{section}
\newcommand{\non}{\nonumber}
\def\a{{\alpha}}
\def\ga{\gamma}
\def\de{\delta}
\def\De{\Delta}
\def\la{\lambda}
\def\La{\Lambda}
\def\si{\sigma}
\def\Si{\Sigma}
\def\om{\omega}
\def\Om{\Omega}
\def\th{\theta}
\def\nab{\nabla}
\def\AA{{math\cal A}}
\def\BB{{\mathcal B}}
\def\CC{{\mathcal C}}
\def\MM{{\mathcal M}}
\def\LL{{\mathcal L}}
\def\HH{{\mathcal H}}
\def\WW{{\mathcal W}}
\def\VV{{\mathcal V}}
\def\OO{{\mathcal O}}
\def\RR{{\mathcal R}}
\def\QQ{{\mathcal Q}}
\def\AA{{\mathcal A}}
\def\HH{{\mathcal H}}
\def\C{{\bf C}}
\def\E{{\bf E}}
\def\J{{\bf J}}
\def\P{{\bf P}}
\def\T{{\bf T}}
\def\g{{\bf g}}
\def\m{{\bf m}}
\def\u{{\bf u}}
\def\p{{\bf p}}
\def\RRR{{\Bbb R}}
\def\f12{{\frac 1 2}}
\def\div{\mathrm{div}}
\def\curl{\mathrm{curl}}
\def\tr{\mbox{tr}}
\def\f12{\frac 1 2}
\def\bsplit{\begin{split}}
\DeclareFontFamily{U}{mathx}{\hyphenchar\font45}
\DeclareFontShape{U}{mathx}{m}{n}{
      <5> <6> <7> <8> <9> <10>
      <10.95> <12> <14.4> <17.28> <20.74> <24.88>
      mathx10
      }{}
\DeclareSymbolFont{mathx}{U}{mathx}{m}{n}
\DeclareMathAccent{\widecheck}{0}{mathx}{"71}
\newcommand{\half}{\frac{1}{2}}
\renewcommand{\d}{\mathrm{d}}
\newcommand{\dr}{\partial}
\newcommand{\ffi}{\varphi}
\newcommand{\GO}[1]{O\left( #1 \right)}
\DeclareFontFamily{U}{mathx}{\hyphenchar\font45}
\DeclareFontShape{U}{mathx}{m}{n}{
      <5> <6> <7> <8> <9> <10>
      <10.95> <12> <14.4> <17.28> <20.74> <24.88>
      mathx10
      }{}
\DeclareSymbolFont{mathx}{U}{mathx}{m}{n}
\DeclareMathAccent{\widecheck}{0}{mathx}{"71}
\newcommand{\wc}[1]{\widecheck{ #1 }}
\renewcommand{\tr}{\mathrm{tr}}
\newcommand{\pth}[1]{\left( #1 \right)}
\newcommand{\e}{\varepsilon}
\renewcommand{\l}{\left\|}
\renewcommand{\r}{\right\|}
\newcommand{\enstq}[2]{\left\{#1~\middle|~#2\right\}}
\newcommand{\Hess}{\mathrm{Hess}}
\renewcommand{\u}{\mathfrak{u}}
\newcommand{\X}{\mathfrak{X}}
\renewcommand{\d}{\mathrm{d}}
\renewcommand{\p}{\mathfrak{p}}
\renewcommand{\k}{\mathbf{k}}
\newcommand{\coker}{\mathrm{coker}}
\begin{document}

\title{Initial data for Minkowski stability with arbitrary decay}

\author{Allen Juntao Fang\footnote{Universit\"at M\"unster, M\"unster, Deutschland (\href{mailto:allen.juntao.fang@uni-muenster.de}{allen.juntao.fang@uni-muenster.de})}, 
Jérémie Szeftel\footnote{CNRS \& Laboratoire Jacques-Louis Lions, Sorbonne Université, Paris, France (\href{mailto:jeremie.szeftel@sorbonne-universite.fr}{jeremie.szeftel@sorbonne-universite.fr})}  
\,and Arthur Touati\footnote{CNRS \& Institut de Mathématiques de Bordeaux, France (\href{mailto:arthur.touati@math.u-bordeaux.fr}{arthur.touati@math.u-bordeaux.fr})}
}

\maketitle

\begin{abstract}
We construct and parametrize solutions to the constraint equations of general relativity in a neighborhood of Minkowski spacetime with arbitrary prescribed decay properties at infinity. We thus provide a large class of initial data for the results on stability of Minkowski which include a mass term in the asymptotics. Due to the symmetries of Minkowski, a naive linear perturbation fails. Our construction is based on a simplified conformal method, a reduction to transverse traceless perturbations and a nonlinear fixed point argument where we face linear obstructions coming from the cokernels of both the linearized constraint operator and the Laplace operator. To tackle these obstructions, we introduce a well-chosen truncated black hole around which to perturb. The control of the parameters of the truncated black hole is the most technical part of the proof, since its center of mass and angular momentum could be arbitrarily large.
\end{abstract}

\tableofcontents

\section{Introduction}

In this article, we construct rapidly decaying initial data for the spacelike initial value problem in general relativity.

\subsection{The initial value problem in general relativity}

At the heart of general relativity are the Einstein vacuum equations, which are given by
\begin{align}\label{EVE}
\mathrm{Ricci}(\g) & = 0,
\end{align}
where $\g$ is a Lorentzian metric defined on a 4-dimensional manifold $M$, and $\mathrm{Ricci}(\g)$ is the Ricci tensor of $\g$. We refer the reader to \cite{ChoquetBruhat2009} for a rich presentation of the mathematical study of the equation \eqref{EVE} and its equivalent with matter sources. 

The geometric equation \eqref{EVE} in fact corresponds to a Cauchy problem. The Cauchy data, i.e. initial data, are $(\Si,g,k)$, where $\Si$ is a 3-dimensional manifold, $g$ is a Riemannian metric on $\Si$ and $k$ is a symmetric 2-tensor on $\Si$. Solving \eqref{EVE} with the data $(\Si,g,k)$ then amounts to constructing a 4-dimensional Lorentzian manifold $(M,\g)$ with vanishing Ricci tensor and such that $\Si$ is a spacelike hypersurface in $M$ with first and second fundamental forms given by $(g,k)$. This last requirement simply means that $g$ is the induced metric on $\Si$ by $\g$ and that $k = -\half \LL_\T \g$, where $\T$ is the future-directed unit normal to $\Si$ for $\g$ and $\LL$ is the Lie derivative. In her seminal article \cite{ChoquetBruhat1952}, Choquet-Bruhat showed that the Cauchy problem described above is locally solvable if and only if the data satisfy the so-called constraint equations
\begin{equation}\label{C1}
\left\{
\begin{aligned}
R(g) - |k|^2_g + (\tr_g k)^2 & = 0,
\\ \div_g k - \d \tr_g k & = 0,
\end{aligned}
\right.
\end{equation}
on the manifold $\Si$, where $R(g)$ is the scalar curvature of $g$, $|k|^2_g=g^{ij}g^{ab}k_{ia}k_{jb}$, $\tr_g k = g^{ij} k_{ij}$ and $\div_g k_\ell=g^{ij}D_i k_{j\ell}$, where $D$ is the covariant derivative associated to $g$. This nonlinear system for $(g,k)$ is of elliptic nature and a wealth of literature has been produced on the subject, see the review \cite{Carlotto2021}.

The task of solving \eqref{EVE} thus factorizes nicely into an elliptic problem for the data and a hyperbolic problem for the evolution. We focus in this paper on the elliptic side, but construct initial data designed to match the requirements of an important question from the hyperbolic side, i.e. the stability of Minkowski spacetime.

\subsection{Stability of Minkowski spacetime}

The Minkowski spacetime $(\RRR^{1+3}, \m)$ with $\m$ given in Cartesian coordinates by
\begin{align*}
\m = - (\d t)^2 + (\d x^1)^2 + (\d x^2)^2 + (\d x^3)^2,
\end{align*}
is the simplest solution to \eqref{EVE} and corresponds to the framework of special relativity. As a stationary solution of \eqref{EVE}, the question of its nonlinear asymptotic stability is of great importance. More precisely, if one considers asymptotically flat initial data close to the Minkowski data $(\RRR^3,e,0)$, can one show that the resulting spacetime $(M,\g)$ is geodesically complete and converges back to Minkowski in a particular gauge? 

This question has been answered positively first in the seminal work \cite{Christodoulou1993}, and has been revisited several times since. We would like to classify these various results depending on the decay of $(g,k)-(e,0)$ at infinity. More precisely, the data are such that\footnote{We denote by $\mathbb{N}$ the set of nonnegative integers and by $\mathbb{N}^*$ the set of positive integers.} 
\begin{align}
| g - e | + r |k| \lesssim \frac{\e}{(1+r)^{q+\de}}, \qquad q\in\mathbb{N}, \quad 0<\de<1,\label{AF data}
\end{align}
with obvious extension to derivatives of $g$ and $k$. If $q\geq 1$, a mass term decaying like $r^{-1}$ needs to be included in \eqref{AF data}; we will be very precise on this issue later in the introduction but for now we simply divide into two categories a (non-exhaustive) list of results on the stability of Minkowski. First, there are the results with $q=0$:
\begin{itemize}
\item \cite{Bieri2010} deals with $\de =\half$, 
\item \cite{LeFloch2022} deals with $\half<\de<1$ for self-gravitating massive fields,
\item \cite{Ionescu2022} deals with $\de=1-\de'$ (with $\de'>0$ small) for the Einstein-Klein-Gordon,
\item \cite{Shen2023a} deals with the full range $0<\de<1$.
\end{itemize}
Then come the results with $q\geq 1$:
\begin{itemize}
\item \cite{Christodoulou1993} deals with $q=1$ and $\de=\half$,
\item \cite{Lindblad2010}, \cite{Hintz2020} and \cite{Hintz2023} deal with $q=1$ and the full range $0<\de<1$.
\end{itemize}
While the above results concern the global stability of Minkowski, i.e. the geodesic completeness of the maximal globally hyperbolic development of the initial data, we also mention three results, still in the case $q\geq 1$, proving stability of Minkowski restricted to the outside of a far-away outgoing light cone:
\begin{itemize}
\item \cite{Klainerman2003} deals with $q=1$ and $\de=\half$,
\item \cite{Klainerman2003a} deals with $q\geq 3$, $0<\de<1$ and applications to peeling estimates and the smoothness of null infinity,
\item \cite{Shen2023} deals with $q\geq 1$ and $0<\de<1$.
\end{itemize}
Since hyperbolic estimates propagate decay in the evolution, stronger spatial decay on the data implies stronger spacetime decay of the perturbations, which in turn provides better control of nonlinear terms. Therefore, for the evolution problem, the more decay in $r$ in the initial data, the easier it is to prove stability of Minkowski. 

On the other hand, for the elliptic problem, the opposite occurs. Constructing initial data is easier the lower the rate of $r$-decay specified. This is related to the idea that the construction of initial data requires the inversion of elliptic operators in weighted Sobolev spaces. These operators are invertible when $q=0$, and initial data for the articles corresponding to $q=0$ can easily be constructed (see for example \cite{ChoquetBruhat2000a}). The goal of this article is to construct and parametrize solutions of the constraint equations near Minkowski spacetime with $q\geq 1$ and $0<\de<1$ both arbitrary\footnote{The case $\de=0$ cannot be considered here since we rely on Frehdolm properties of the Laplacian which are well-known to fail on weighted Sobolev spaces with integer power decay.}. In particular, this constructs the initial data needed for the stability of Minkowski proved in \cite{Christodoulou1993,Klainerman2003,Klainerman2003a,Lindblad2010,Hintz2020,Hintz2023,Shen2023} and it is nontrivial as an elliptic problem.

\subsection{Different approaches for the constraint equations}

We now focus on the constraint equations \eqref{C1}. The constraint equations form an underdetermined system of four equations for twelve unknowns, and thus the difficulty is not to find \textit{some} solutions but rather to parametrize the space of \textit{all} solutions. The most successful strategy is the conformal method, first introduced in \cite{Lichnerowicz1944}. In its most basic form, it considers solutions of the constraint equations of the form
\begin{align*}
(g,k) = \pth{ \ffi^4 \bar{g}, \ffi^{-2} \pth{ \si + K_{\bar{g}}W} + \frac{\tau}{3} \ffi^4 \bar{g} },
\end{align*}
where $\bar{g}$ is a Riemannian metric, $\si$ is a traceless and divergence-free 2-tensor and $\tau$ is the mean curvature, and the unknowns are $\ffi$ and the vector field $W$ (with $K_{\bar{g}}W = \LL_W\bar{g} - \frac{2}{3} (\div_{\bar{g}}W)\bar{g} $). The system \eqref{C1} is then recast as a semilinear elliptic system for $(\ffi,W)$. This strategy has proved to be very efficient in constructing solutions on compact manifolds with constant mean curvature (CMC) or not (see for instance \cite{Isenberg1995} and \cite{Maxwell2009}) or on asymptotically flat manifolds (see for instance \cite{ChoquetBruhat2000a} and \cite{Dilts2014}). We also mention the recent drift formulation from \cite{Maxwell2021} aimed at exploring far from CMC regions of the space of solutions.

As mentionned at the end of the previous section, from the point of view of stability of Minkowski or more generally of perturbative regimes in the case $q\geq 1$, the conformal method and its refinements do not produce appropriate initial data, since it always relies on $\De_{\bar{g}}$ being an isomorphism on fields decaying like $r^{-\de}$ for $0<\de<1$ (thus corresponding to $q=0$ in \eqref{AF data}), a fact demonstrated for asymptotically flat metric $\bar{g}$ in the seminal \cite{Bartnik1986}. If one is interested in stronger and stronger decay, one must take into account the larger and larger cokernel of $\De_{\bar{g}}$.

Focusing now on the asymptotically flat case, another successful approach has been the various gluing techniques. Starting with the seminal paper \cite{Corvino2000} followed by \cite{Chrusciel2003} and \cite{Corvino2006}, the idea is to glue two solutions of \eqref{C1}, and thus create an interesting third one, equal to an arbitrary solution when restricted to a compact set and to a member of a family of reference solutions outside of a larger compact set. The gluing \textit{per se} happens in a far away annulus, and the family of reference solutions is often taken as the family of Kerr initial data sets. Because of the nontrivial cokernel of the linearized constraint operator around the Minkowski data (denoted $D\Phi[e,0]$ later in this article), this gluing strategy faces linear obstructions with the following consequence: one cannot choose the Kerr initial data set to which one glues. Recently, \cite{Czimek2022a} and \cite{Mao2023} have been able to produce obstruction-free gluing results, i.e. they show how one can choose the Kerr initial data set to which one glues.

From the point of view of perturbative regimes, solutions produced by gluing techniques are too restrictive since they coincide with an exact solution outside of a compact set. Thanks to the finite speed of propagation, the corresponding stability problem is then localized inside an outgoing cone (see for example \cite{Lindblad2005}). Note that in \cite{Chrusciel2003} stationnary spacetimes outside of a compact set are constructed via gluing, they are more general than exact black holes solutions but aren't generic perturbations.

In order to construct nontrivial initial data for the stability of Minkowski problem, i.e. data close to $(e,0)$, arbitrarily decaying and not restricted to a compact set, we employ a simplified version of the conformal method and also face linear obstructions which now come from the intersection $\coker(\De)\cap\coker(D\Phi[e,0])$. The fact that this intersection is not trivial forces us to include and control truncated black holes of small masses in the asymptotics, very much as in gluing constructions. While the truncated black hole solutions that we consider have small mass, we will show that they will have potentially large center of mass and angular momentum. Controlling these potentially large parameters for the truncated black hole solutions is a primary complication in the analysis.

\subsection{First version of the main result}

In this section we present a first version of our main result, see Theorem \ref{maintheorem} for the precise statement. First, we reformulate the constraint equations \eqref{C1} by introducing the reduced second fundamental form
\begin{align*}
\pi & \vcentcolon= k - (\tr_g k)g.
\end{align*}
Note that this relation is invertible, i.e. $k=\pi - \half (\tr_g\pi)g$. The constraint equations for the unknown $(g,\pi)$ read
\begin{equation}\label{C2}
\left\{
\begin{aligned}
R(g)  + \half (\tr_g \pi)^2 - |\pi|^2_g & = 0,
\\ \div_g \pi & = 0.
\end{aligned}
\right.
\end{equation}
For further use, we introduce the notations
\begin{align*}
\HH(g,\pi) & \vcentcolon= R(g)  + \half (\tr_g \pi)^2 - |\pi|^2_g,
\\ \MM(g,\pi) & \vcentcolon= \div_g \pi,
\\ \Phi(g,\pi) & \vcentcolon=(\HH(g,\pi),\MM(g,\pi)),
\end{align*}
so that \eqref{C2} rewrites $\Phi(g,\pi)=0$. For $X$ a vector field on $\RRR^3$ and $g$ a Riemannian metric we define 
\begin{align*}
L_g X \vcentcolon= \LL_X g - (\div_g X)g.
\end{align*}
The symmetric 2-tensors defined on $\RRR^3$ which are traceless and divergence free with respect to the Euclidean metric $e$ play a major role in this article, and such a tensor is said to be TT (for traceless and transverse). When needed, we will also denote by $TT$ the set of TT tensors.

\begin{theorem}[Main result, version 1]\label{rough theorem}
Let $q\in\mathbb{N}^*$, $0<\de<1$ and $\e>0$. Let $(\wc{g},\wc{\pi})$ be TT tensors with
\begin{align}\label{rough decay}
\sup_{k=0,1}\pth{ r^k|\nab^k\wc{g}| + r^{k+1} |\nab^k\wc{\pi}|} \leq \frac{\e}{(1+r)^{q+\de}},
\end{align}
and set $\eta^2 \vcentcolon = \frac{1}{4} \l \nab \wc{g} \r_{L^2}^2 + \l \wc{\pi}\r_{L^2}^2$. If $\eta>0$ and if $\e$ is small enough, then there exists a solution of \eqref{C2} on $\RRR^3$ which for large $r$ is of the form
\begin{equation}\label{rough expansion}
\begin{aligned}
g & = g_{\vec{p}} + \wc{g} + 4\wc{u} e + \GO{\e r^{-q-\de-1}},
\\ \pi & = \pi_{\vec{p}} + \wc{\pi} + L_e \wc{X} + \GO{\e r^{-q-\de-2}}.
\end{aligned}
\end{equation}
Here, $(g_{\vec{p}},\pi_{\vec{p}})$ is a Kerr initial data set of mass $m$, center of mass $y$ and angular momentum $a$ such that
\begin{align}\label{rough p}
\eta^2 \lesssim m \lesssim \eta^2, \qquad |y| + |a| \lesssim \pth{\frac{\e}{\eta}}^{\frac{2}{2q+2\de-1}},
\end{align}
and the scalar function $\wc{u}$ and the vector field $\wc{X}$ satisfy
\begin{align*}
|\wc{u}| + \left| \wc{X}\right| & \lesssim \frac{\eta\e}{(1+r)^{q+\de}}.
\end{align*}
\end{theorem}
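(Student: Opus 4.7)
The plan is to set up the constraint system \eqref{C2} as a nonlinear fixed-point problem whose unknowns are the two Poisson-type corrections $(\wc u, \wc X)$ together with the seven-dimensional parameter $\vec p = (m, y, a)$ of the truncated Kerr background. Substituting the ansatz $g = g_{\vec p} + \wc g + 4 \wc u \, e + \text{(higher order)}$ and $\pi = \pi_{\vec p} + \wc \pi + L_e \wc X + \text{(higher order)}$ into $\Phi(g, \pi) = 0$, and using that at Minkowski data one has $D\Phi[e,0](4 \wc u\, e, L_e \wc X) = (-8\Delta \wc u, \Delta \wc X)$, the problem reduces to a system of two Poisson equations
\[
-8 \Delta \wc u = \HH_{\mathrm{src}}, \qquad \Delta \wc X = \MM_{\mathrm{src}},
\]
with right-hand sides of pointwise size $\e^2 (1+r)^{-2q-2\de-2}$, made up of the quadratic-in-$(\wc g, \wc \pi)$ error in $\Phi$, the truncated Kerr contribution, and nonlinear corrections in $(\wc u, \wc X)$.

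The key obstacle is that $\Delta$ fails to be surjective on the weighted Sobolev spaces in which $(\wc u, \wc X)$ must live in order to achieve the announced decay $r^{-q-\de}$: its cokernel is spanned by harmonic polynomials of degree up to roughly $q-1$. Compounding this, the linearized constraint operator $D\Phi[e,0]$ itself carries a cokernel built out of the ADM-type integrals (mass, linear momentum, center of mass, angular momentum), and the relevant obstruction space for our system is precisely $\coker(\Delta) \cap \coker(D\Phi[e, 0])$. The crucial observation is that the generators of this intersection match one-to-one with the parameters $(m, y, a)$, so $\vec p$ can play the role of Lagrange multipliers: one solves the Poisson equations modulo the cokernel, and then tunes $\vec p$ so that the projection of $(\HH_{\mathrm{src}}, \MM_{\mathrm{src}})$ onto the obstruction directions vanishes.

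The concrete steps I would carry out are: first, build the family of truncated Kerr initial data $(g_{\vec p}, \pi_{\vec p})$ by gluing to the Euclidean metric inside a compact set, and expand $\Phi(g_{\vec p}+\cdots, \pi_{\vec p}+\cdots)$ in $\vec p$ to sufficient order; second, develop a Fredholm theory for $\Delta$ on the relevant weighted Sobolev spaces, \emph{à la} Bartnik, identifying the obstruction pairings with ADM-type surface integrals; third, run a Banach fixed-point argument jointly in $(\wc u, \wc X, \vec p)$, where $(\wc u, \wc X)$ come from a right inverse to the Poisson equations and $\vec p$ is chosen so that the seven scalar obstruction equations are satisfied. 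The pairing of the source with the constant cokernel mode yields the ADM mass, which from the quadratic dependence on $(\wc g, \wc \pi)$ gives $\eta^2 \lesssim m \lesssim \eta^2$; pairing with linear and angular cokernel modes determines $y$ and $a$.

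The principal technical obstacle, as advertised in the abstract, will be executing this fixed-point argument \emph{uniformly in the large parameters} $(y, a)$. Indeed, matching a source of quadratic size $\e^2$ against polynomial cokernel modes of growth up to $r^{q-1}$ produces an algebraic system whose solution has size $|y| + |a| \sim (\e/\eta)^{2/(2q + 2\de - 1)}$; since $\eta$ may be far smaller than $\e$, this quantity is not small and in fact blows up as $\eta \to 0$. Consequently, every estimate in the argument, from the weighted bounds on the truncated Kerr data and the nondegeneracy of the matching system, through the constants in the Fredholm theory, down to the contraction estimates of the fixed point, must be proved with explicit and sharp control of the dependence on $|y|$ and $|a|$. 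Carrying out this parameter-uniform analysis, rather than the abstract fixed-point setup itself, is what I expect to be the genuine heart of the proof.
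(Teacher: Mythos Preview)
Your overall architecture is right, but the obstruction count is off in a way that hides two essential pieces of the argument. The intersection $\coker(\Delta)\cap\coker(D\Phi[e,0])$ is not seven-dimensional but ten: by Moncrief it equals all of $\coker(D\Phi[e,0])$, spanned by the KIDs of the time translation, the three space translations, the three rotations, and the three boosts. Your parameters $(m,y,a)$ account for only seven of these; the paper introduces a fourth black-hole parameter, a Lorentz boost velocity $v\in B_{\RRR^3}(0,1)$, to absorb the space-translation (linear-momentum) modes. This is not a cosmetic addition: the mass equation becomes $16\pi\gamma m=\eta^2+\ldots$ with $\gamma=(1-|v|^2)^{-1/2}$, so the lower bound $m\gtrsim\eta^2$ that you need in order to divide by $m$ and solve for $(y,a)$ requires an \emph{a priori} bound $|v|\le 1-\alpha$. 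The paper shows this reduces to controlling an explicit functional $J(\wc g,\wc\pi)<1$ which, however, is not uniformly bounded away from $1$ on TT tensors; a cone restriction on the seed must therefore be imposed. Without the boost parameter and this cone analysis, the fixed point for $\vec p$ cannot close.

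Second, once $q\ge 2$ the full cokernel of $\Delta$ (harmonic polynomials of degree up to $q-1$, and their vectorial analogues) is strictly larger than $\coker(D\Phi[e,0])$, and all of it obstructs the Poisson step, not just the intersection. The Kerr family cannot reach the extra modes because they lie outside $\ker D\Phi[e,0]^*$; the paper handles them by introducing additional compactly supported correctors $(\breve g,\breve\pi)$ chosen from finite-dimensional spaces $\mathcal A_q\times\mathcal B_q$ constructed so that the Gram matrices $\langle(\breve g,\breve\pi),D\Phi[e,0]^*(W)\rangle$ are invertible on the complementary modes. Your proposal, by declaring the intersection to be ``the relevant obstruction space'', leaves these higher-degree cokernel elements unaccounted for, and the scheme as written would fail to produce $(\wc u,\wc X)\in H^2_{-q-\delta}$ for any $q\ge 2$.
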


Some comments are in order. 
\begin{itemize}
\item In Theorem \ref{rough theorem} we actually construct a map from the "seed" decaying TT tensors $(\wc{g},\wc{\pi})$ to the space of solutions to \eqref{C2}, i.e. both the Kerr initial data set $(g_{\vec{p}},\pi_{\vec{p}})$ and the correctors $\pth{\wc{u},\wc{X}}$ are constructed out of $(\wc{g},\wc{\pi})$. In Proposition \ref{prop uniqueness}, we will show that this map is injective so that formally we construct a graph of solutions over the set of decaying TT tensors. This cannot be seen on the expansion \eqref{rough expansion} and requires the exact statement of Theorem \ref{maintheorem}. The assumption that our seed tensors $(\wc{g},\wc{\pi})$ are TT might seem restrictive but we will actually show that thanks to the diffeomorphism invariance it is generic in a neighborhood of Minkowski, see Section \ref{section param TT}.
\item As announced at the end of the previous section, the center of mass $y$ and the angular momentum $a$ of the constructed Kerr initial data set $(g_{\vec{p}},\pi_{\vec{p}})$ are large. Indeed, the decay assumption \eqref{rough decay} implies that $\eta\lesssim \e$, so that $\pth{\frac{\e}{\eta}}^{\frac{2}{2q+2\de-1}}$ appearing in \eqref{rough p} is potentially large. The reason for this will be explained in the next section where we will in particular explain how $(g_{\vec{p}},\pi_{\vec{p}})$ is constructed. We can already say that the largeness of $y$ and $a$ is the main technical difficulty in our proof, where we will often crucially need to extract the small constant $\eta$ instead of the small (but larger) constant $\e$. Note that the largeness of $\frac{|a|}{m}$ does not contradict the Penrose inequality since there are no apparent horizons in the resulting manifold constructed by Theorem \ref{rough theorem} (we will introduce cutoffs such that we don't see the black hole's horizon).
\item The solutions constructed in Theorem \ref{rough theorem} can be glued to interior solutions of the constraint equations using the obstruction-free gluing results of \cite{Czimek2022a} and \cite{Mao2023}, thus providing a large number of solutions that asymptote to trivial initial data at arbitrary rate. 
\item While in this paper we focus on the case $q\in\mathbb{N}^*$, the case $q=0$ is well-known and significantly simpler thanks to the good inversion properties of the Laplacian in this range. See for instance \cite{Huang2010}, where the author constructs perturbations around any asymptotically flat solution of the constraint equations for $q=0$ and $\de\in\pth{\frac{1}{2},1}$.
\end{itemize}

Of independent interest is the following corollary (see Corollary \ref{coro TS} for a precise statement) where solutions to $R(g)=0$ are constructed in a neighborhood of Minkowski.

\begin{corollary}[Time-symmetric case, version 1]\label{rough coro TS}
Let $q\in\mathbb{N}^*$, $0<\de<1$ and $\e>0$. Let $\wc{g}$ be a TT tensor with
\begin{align*}
\sup_{k=0,1}r^k|\nab^k\wc{g}|  \leq \frac{\e}{(1+r)^{q+\de}},
\end{align*}
and set $\eta^2 \vcentcolon =  \l \nab \wc{g} \r_{L^2}^2 $. If $\eta>0$ and if $\e$ is small enough, then there exists a solution of $R(g)=0$ on $\RRR^3$ which for large $r$ is of the form
\begin{equation*}
\begin{aligned}
g & = g_{\vec{p}} + \wc{g} + 4\wc{u} e + \GO{\e r^{-q-\de-1}}.
\end{aligned}
\end{equation*}
Here, $g_{\vec{p}}$ is a Schwarzschild initial data set of mass $m$ and center of mass $y$ such that
\begin{align}\label{rough p}
\eta^2 \lesssim m \lesssim \eta^2, \qquad |y|  \lesssim \pth{\frac{\e}{\eta}}^{\frac{2}{2q+2\de-1}},
\end{align}
and the scalar function $\wc{u}$ satisfies
\begin{align*}
|\wc{u}|  & \lesssim \frac{\eta\e}{(1+r)^{q+\de}}.
\end{align*}
\end{corollary}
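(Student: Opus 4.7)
The plan is to deduce Corollary \ref{rough coro TS} from Theorem \ref{rough theorem} by specializing to a time-symmetric seed. First I would observe that when $\pi \equiv 0$ the momentum constraint $\MM(g,0) = \div_g 0 = 0$ is trivially satisfied, while the Hamiltonian constraint reduces to $\HH(g,0) = R(g) = 0$. Thus any solution of \eqref{C2} with vanishing momentum tensor automatically produces a scalar-flat metric, and conversely such a metric pairs with $\pi = 0$ to give a time-symmetric solution of the full constraint system. It therefore suffices to produce a solution of \eqref{C2} with $\pi \equiv 0$ whose metric $g$ has the claimed asymptotics.

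To this end, I would apply Theorem \ref{rough theorem} to the seed data $(\wc{g}, \wc{\pi}) = (\wc{g}, 0)$. The tensor $0$ is trivially TT, the decay assumption \eqref{rough decay} is inherited directly from $\wc{g}$, and the definition of $\eta$ used in the corollary differs from that of Theorem \ref{rough theorem} only by a factor of $4$, which affects only constants. Theorem \ref{rough theorem} then supplies a solution $(g,\pi)$ of \eqref{C2} with the asymptotic expansion \eqref{rough expansion}, Kerr parameters $\vec{p} = (m,y,a)$, and correctors $(\wc{u}, \wc{X})$, together with the bounds \eqref{rough p} on $m$, $y$.

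The remaining task is to force $\pi \equiv 0$, $\wc{X} \equiv 0$, and $a = 0$, so that the Kerr data set $(g_{\vec{p}}, \pi_{\vec{p}})$ collapses to a Schwarzschild initial data set. The conceptual way is to exploit the fact that the whole construction is equivariant under the involution $\iota \colon (g,\pi) \mapsto (g,-\pi)$, which is a symmetry of the constraint operator $\Phi$ since $\HH$ is even and $\MM$ odd in $\pi$. On the Kerr family $\iota$ corresponds to reversing the angular momentum, $a \mapsto -a$, while preserving $m$ and $y$; on the correctors it sends $\wc{u} \mapsto \wc{u}$ and $\wc{X} \mapsto -\wc{X}$. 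Since the seed $(\wc{g}, 0)$ is invariant under $\iota$, the injectivity of the seed-to-solution map announced in the first remark following Theorem \ref{rough theorem} forces the output of the construction to be $\iota$-invariant, whence $\pi = -\pi$, $a = -a$, and $\wc{X} = -\wc{X}$, giving the desired vanishing.

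Because Theorem \ref{rough theorem} already packages all of the hard analysis, there is essentially no difficult step here: everything reduces to the symmetry argument and a transcription of the bounds on $m$, $y$, and $\wc{u}$. The one point to verify carefully is the promised injectivity of the parametrization. If one prefers to bypass it, a fully equivalent alternative is to rerun the underlying fixed point scheme of Theorem \ref{rough theorem} directly in the closed subspace $\{\pi = 0\}$; this subspace is preserved by the iteration because $\MM(g,0) \equiv 0$ and because every source term that could generate a nonzero $\pi$, $\wc{X}$ or $a$ in the scheme is proportional to $\wc{\pi}$ and hence vanishes identically when the seed is $(\wc{g}, 0)$.
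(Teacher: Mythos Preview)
Your proposal is correct, but the route differs from the paper's. The paper does not deduce the corollary from Theorem \ref{rough theorem}; it instead adapts the fixed-point construction directly to the time-symmetric setting (your ``alternative'' at the end): one solves only the scalar equation $8\De\wc{u}=\HH(\bar g(\p),0)+\RR_\HH$, the parameter reduces to $\vec p=(m,y,0,0)$ from the outset, and $\wc X$, $\breve\pi$, $a$, $v$ never enter. This also makes the cone restriction $V_\a$ disappear automatically rather than having to be checked.

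Your primary route via the involution $\iota:(g,\pi)\mapsto(g,-\pi)$ is conceptually clean but requires two verifications the paper sidesteps: (i) full $\iota$-equivariance of the fixed-point scheme, including the time-reversal identity $(g_{(m,y,a,v)},\pi_{(m,y,a,v)})=(g_{(m,y,-a,-v)},-\pi_{(m,y,-a,-v)})$ for the Kerr family and invariance of $\chi_{\vec p}$ and $\mathcal B_q$ under the induced sign flips; and (ii) that the cone hypothesis of the precise Theorem \ref{maintheorem} is automatic for $\wc\pi=0$ since $J(\wc g,0)=0$. One minor imprecision: what forces the output to be $\iota$-invariant is not the \emph{injectivity} of the seed-to-solution map (Proposition \ref{prop uniqueness}) but the \emph{uniqueness of the Banach fixed point} together with equivariance; injectivity runs in the wrong direction for this argument. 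With that correction your symmetry argument goes through, though the paper's direct adaptation is shorter.
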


\subsection{Sketch of proof}

In this section, we intend to present the main features of our proof. We first consider so-called "seed" initial data in the form of $(\wc{g},\wc{\pi})\in S^2T^*\Si\times S^2T^*\Si$ a pair of symmetric 2-tensors over the initial slice $\Si$, and search for solutions of \eqref{C2} of the form 
\begin{align}\label{rough ansatz}
(g,\pi) = \pth{ u^4 (e + \wc{g}), \wc{\pi} + L_e X}.
\end{align}
This is a simplification of the ansatz proposed in \cite{Corvino2006}, where $\pi$ would be given by $\pi = u^2 (\wc{\pi} + L_{e+\wc{g}}X)$. The constraint equations \eqref{C2} then reads
\begin{align}\label{rough eq u X}
\De \pth{ u - 1, -X} = \Phi(e+\wc{g},\wc{\pi}) + \text{lower order terms},
\end{align}
where $\De$ is the standard Euclidean Laplacian operator. In this discussion we completely neglect the lower order terms. If $\wc{g}$ and $\wc{\pi}$ satisfy \eqref{rough decay} then thanks to standard properties of $\De$ in weighted Sobolev spaces we can invert \eqref{rough eq u X} and obtain 
\begin{align}\label{rough expansion}
\pth{ u - 1, -X} & = \sum_{j=1}^q \sum_{\ell=-(j-1)}^{j-1} \left\langle \Phi(e+\wc{g},\wc{\pi}) , \WW_{j,\ell} \right\rangle \VV_{j,\ell} + \pth{\tilde{u},-\tilde{X}},
\end{align}
where $\WW_{j,\ell}$ denotes roughly a basis of homogeneous polynomials\footnote{These are in fact harmonic polynomials and vectorial harmonic polynomials, see Definition \ref{def tool laplace}.} of degree $j-1$ and $\VV_{j,\ell}$ decays like $r^{-j}$, $\tilde{u}$ and $\tilde{X}$ are small remainders decaying like $r^{-q-\de}$ and $\langle f , h \rangle = \int_{\RRR^3} fh$. We actually have 
\begin{align}\label{coker}
\coker\pth{ \De } = \mathrm{Span}\pth{ \WW_{j,\ell}, 1\leq j \leq q, -(j-1)\leq \ell \leq j-1 },
\end{align}
if $\De$ denotes the operator\footnote{Here, $H^2_{-q-\de}$ and $L^2_{-q-\de-2}$ denote weighted Sobolev spaces, see Definition \ref{def weighted sobolev spaces}.}  $\De : H^2_{-q-\de}\longrightarrow L^2_{-q-\de-2}$ (acting on couples formed by a scalar function and a vector field expressed in Cartesian coordinates). See Section \ref{section elliptic} for complete details on the inversion of an equation like \eqref{rough eq u X}. In order to prove our theorem, we need to cancel the scalar products appearing in \eqref{rough expansion}. If we expand the constraint operator $\Phi$ we obtain
\begin{align}\label{scalar product}
\left\langle \Phi(e+\wc{g},\wc{\pi}) , \WW_{j,\ell} \right\rangle & = \left\langle (\wc{g},\wc{\pi}) , D\Phi[e,0]^*\pth{\WW_{j,\ell}} \right\rangle + \text{nonlinear terms}.
\end{align}
where $D\Phi[e,0]$ is the first variation of the constraint operator at $(e,0)$ and $D\Phi[e,0]^*$ denotes the $L^2$-formal adjoint operator. A naive way to generate solutions to the constraint equations that decay like $r^{-q-\de}$ near infinity would then be to find compact perturbations $(\breve{g},\breve{\pi})$ such that
\begin{align*}
\left\langle (\breve{g},\breve{\pi}), D\Phi[e,0]^*(\mathcal{W}_{j,\ell}) \right\rangle = - \left\langle (\wc{g},\wc{\pi}), D\Phi[e,0]^*(\mathcal{W}_{j,\ell}) \right\rangle
\end{align*}
to linearly cancel out the scalar product on the RHS of \eqref{scalar product} and linearly absorb \[\left\langle (\wc{g},\wc{\pi}) , D\Phi[e,0]^*\pth{\WW_{j,\ell}} \right\rangle.\] This however requires $ D\Phi[e,0]^*\pth{\WW_{j,\ell}}$ to be non-zero. Thus, according to \eqref{coker}, we need to understand the intersection
\begin{align}\label{intersection}
\coker\pth{ \De } \cap \coker\pth{D\Phi[e,0] }.
\end{align}
The famous result of \cite{Moncrief1975} tells us that elements of $\coker\pth{D\Phi[g,\pi] }$ (usually called KIDS in the literature, for Killing Initial Data Sets) are identified with the projections on the initial hypersurface under consideration of the Killing fields of the spacetime emanating from the initial data set $(g,\pi)$. The Killing fields of Minkowski are the four translations $\dr_\mu$, the three rotations $x_i\dr_j - x_j\dr_i$ and the three boosts $t\dr_i+x_i\dr_t$. It is a straightforward exercise to check that the associated Killing initial data sets of the Killing fields of Minkowski in fact all belong to $\coker\pth{ \De }$ so that \[\coker\pth{ \De } \cap \coker\pth{D\Phi[e,0] }= \coker\pth{D\Phi[e,0] }\] forms a 10-dimensional space of linear obstructions.

\begin{remark}
In this regard, the Minkowski spacetime is the hardest case to treat. First, it is maximally symmetric, i.e. the space of Killing vector fields has the largest dimension possible (10 for a 3+1 spacetime) and, thanks to \cite{Moncrief1975}, so does $\coker\pth{D\Phi[g,\pi] }$. Second, the intersection \eqref{intersection} is maximal since $\coker\pth{D\Phi[e,0] }\subset\coker\pth{\De }$. 
\end{remark}

We modify the ansatz \eqref{rough ansatz} in order to deal with the scalar products in \eqref{rough expansion}: we introduce compactly supported perturbations $\breve{g}$ and $\breve{\pi}$, and a truncated black hole initial data set $(\chi_{\vec{p}}(g_{\vec{p}}-e),\chi_{\vec{p}}\pi_{\vec{p}})$ and consider the now complete ansatz
\begin{align}\label{rough ansatz 2}
(g,\pi) = \pth{ u^4 (e + \chi_{\vec{p}}(g_{\vec{p}}-e) + \wc{g} + \breve{g}), \chi_{\vec{p}}\pi_{\vec{p}} +  \wc{\pi} + \breve{\pi} + L_e X}.
\end{align}
The family of $(\chi_{\vec{p}}(g_{\vec{p}}-e),\chi_{\vec{p}}\pi_{\vec{p}})$ are constructed in Section \ref{section kerr family} following \cite{Chrusciel2003}. They are parametrized by the black hole parameter $\vec{p}=(m,y,a,v)\in\RRR_+\times \RRR^3\times \RRR^3\times B_{\RRR^3}(0,1)$ where $m$ is the mass of the black hole, $y$ is the center of mass of the black hole, $a$ is the angular momentum of the black hole and $v$ corresponds to a Lorentz boost. The cutoff $\chi_{\vec{p}}$ is equal to 0 if $r\leq \la$ and to 1 if $r\geq 2\la$ where $\la=\max(2|y|,2|a|,3)$. They precisely form a 10-dimensional family of asymptotics. The goal will now be to choose the correct black hole parameter $\vec{p}$ and compact perturbation $(\breve{g},\breve{\pi})$ so that $u$ and $X$ in \eqref{rough ansatz 2} decay like $r^{-q-\de}$.

\subsubsection{The system for the black hole parameter}

The 10-dimensional family of $(\chi_{\vec{p}}(g_{\vec{p}}-e),\chi_{\vec{p}}\pi_{\vec{p}})$ are added to cancel the scalar products $\left\langle \Phi(e+\wc{g},\wc{\pi}) , \WW_{j,\ell} \right\rangle$ in the case where $\WW_{j,\ell}\in\coker\pth{D\Phi[e,0] }$. Recall that this is precisely the case where the leading term in \eqref{scalar product} are the quadratic terms. Thus, the black hole needs to solve
\begin{align}\label{scalar p}
\left\langle D\Phi[e,0](\chi_{\vec{p}}(g_{\vec{p}}-e),\chi_{\vec{p}}\pi_{\vec{p}}), \WW_{j,\ell} \right\rangle & = - \left\langle \half D^2\Phi[e,0]((\wc{g},\wc{\pi}),(\wc{g},\wc{\pi})) , \WW_{j,\ell} \right\rangle + \mathrm{l.o.t} ,
\end{align}
where $D^2\Phi[e,0]$ is the second variation of the constraint operator. As we will show using the well-known asymptotics charges (see for example \cite{Chrusciel2003}), the expressions on the LHS of \eqref{scalar p} actually precisely recover the black hole parameters (see Proposition \ref{prop kerr}). 

\paragraph{Lower bound on the mass.} If $\WW_{j,\ell}$ in \eqref{scalar p} corresponds to the time translation, we get the equation for the mass
\begin{align}\label{rough eq m}
16\pi \ga m & = \frac{1}{4} \l \nab\wc{g}\r_{L^2}^2 + \l \wc{\pi} \r_{L^2}^2 - \half \l \div\wc{g} \r_{L^2}^2 + \frac{1}{4} \l \nab\tr \wc{g} \r_{L^2}^2 - \half \l \tr\wc{\pi} \r_{L^2}^2 +  \mathrm{l.o.t} ,
\end{align}
where $\ga=\frac{1}{\sqrt{1-|v|^2}}$ is the Lorentz factor. Note that $m$ is not only the mass of the black hole $(g_{\vec{p}},\pi_{\vec{p}})$ but also the mass of our solution \eqref{rough ansatz 2}. The positive mass theorem, proved in a neighborhood of Minkowski in \cite{ChoquetBruhat1976} (where only the time-symmetric case is studied) and then in full generality in \cite{Schoen1979} and \cite{Witten1981}, implies that if $m=0$ then $(g,\pi)$ is isometric to $(e,0)$. Therefore, to ensure a good parametrization of a neighborhood of $(e,0)$, we need coercivity of the quadratic form appearing in \eqref{rough eq m}. This is achieved by assuming that $(\wc{g},\wc{\pi})$ are TT tensors (in \cite{ChoquetBruhat1976} harmonic coordinates are used, see also \cite{ChoquetBruhat1979}), and we show in Section \ref{section param TT} that this is generic in a neighborhood of Minkowski up to a choice of diffeomorphism, i.e. up to a choice of a different initial hypersurface. Therefore, \eqref{rough eq m} becomes
\begin{align}\label{rough eq m 2}
16\pi \ga m & = \frac{1}{4} \l \nab\wc{g}\r_{L^2}^2 + \l \wc{\pi} \r_{L^2}^2 + \mathrm{l.o.t}.
\end{align}
In particular the mass cannot be zero if $(\wc{g},\wc{\pi})$ are not trivial.

\paragraph{Coupling with the boost.} Looking at \eqref{rough eq m 2}, we see that a lower bound on the mass is equivalent to an upper bound on $\ga$. From \eqref{scalar p} in the case where $\WW_{j,\ell}$ corresponds to space translations, we can get an equation for $v$ which, plugged into \eqref{rough eq m 2}, finally implies
\begin{align}\label{rough eq m 3}
16\pi m & = \sqrt{1 - J(\wc{g},\wc{\pi})^2} \pth{ \frac{1}{4} \l \nab\wc{g}\r_{L^2}^2 + \l \wc{\pi} \r_{L^2}^2 } + \mathrm{l.o.t},
\end{align}
where the functional $J$ is defined by
\begin{align*}
J(\wc{g},\wc{\pi}) & = \frac{\sqrt{ \sum_{k=1,2,3}\pth{\int_{\RRR^3}\wc{\pi}^{ij}\dr_k \wc{g}_{ij} }^2}}{\frac{1}{4} \l \nabla \wc{g} \r_{L^2}^2 + \l \wc{\pi} \r_{L^2}^2 }.
\end{align*}
In Section \ref{section restriction to a cone} we study the functional $J$ and show that it is not uniformly far from 1 on the set of TT tensors. Therefore our construction might degenerate if $(\wc{g},\wc{\pi})$ are such that $J(\wc{g},\wc{\pi}) $ goes close to 1, and we need to perform our construction over the set of TT tensors $(\wc{g},\wc{\pi})$ such that $J(\wc{g},\wc{\pi})\leq 1 - \a$ for some $\a$ arbitrary in $(0,1]$ (note that this is a cone in the space of TT tensors). Since $\a$ is a universal constant, \eqref{rough eq m 3} finally implies the lower bound $m \gtrsim \eta^2$ (where $\eta$ is defined in Theorem \ref{rough theorem}).

\paragraph{Large center of mass and angular momentum.} From \eqref{scalar p} when $\WW_{j,\ell}$ corresponds to the three rotations and the three boosts and from the lower bound on the mass, we then obtain the following bound for the center of mass and angular momentum
\begin{align}\label{rough y a}
|y| + |a| \lesssim \eta^{-2} \int_{\RRR^3} \pth{ |\nab\wc{g}|^2 + |\wc{\pi}|^2} r + \mathrm{l.o.t}.
\end{align}
Because of the $r$ weight in \eqref{rough y a} we need to use the decay assumption \eqref{rough decay}, which means that we cannot fully compensate the $\eta^{-2}$ coming from the mass. The best we can do is to interpolate sharply between weighted and non-weighted norms and get
\begin{align}\label{rough y a 2}
|y| + |a| \lesssim \pth{\frac{\e}{\eta}}^{\frac{2}{2q+2\de-1}}.
\end{align}
The largeness of $y$ and $a$, coming from the two different small constants $\eta$ and $\e$, is at the origin of most of the difficulties we face. Getting $\eta$ instead of $\e$ in nonlinear estimates will often be crucial, for instance for the contraction estimates in the fixed point arguments. In particular, the largeness of $y$ and $a$ implies that the cutoffs $\chi_{\vec{p}}$ are localized in far regions and we need to be careful when estimating weighted norms (the power in \eqref{rough y a 2} will be crucial).

\subsubsection{The compact perturbations} 

The tensors $(\breve{g},\breve{\pi})$ are added to cancel the scalar products $\left\langle \Phi(e+\wc{g},\wc{\pi}) , \WW_{j,\ell} \right\rangle$ in the case where $\WW_{j,\ell}\notin\coker\pth{D\Phi[e,0] }$. Therefore, at leading order they need to solve a finite dimensional and linear system of the form
\begin{equation}\label{rough eq breve}
\left\langle (\breve{g},\breve{\pi}) , D\Phi[e,0]^*(\WW_{j,\ell}) \right\rangle = \QQ_{j,\ell},
\end{equation}
for some numbers $\QQ_{j,\ell}$ depending on $(\wc{g},\wc{\pi})$ and for every $\WW_{j,\ell}\notin\coker\pth{D\Phi[e,0] }$. We solve for $(\breve{g},\breve{\pi})\in \CC_q$ with $\CC_q$ a finite dimensional space of compactly supported 2-tensors. Since the family 
\begin{align*}
\pth{D\Phi[e,0]^*(\WW_{j,\ell})}_{\WW_{j,\ell}\in\coker\pth{ \De } \setminus \coker\pth{D\Phi[e,0] }}
\end{align*}
is linearly independent, a definition of $\CC_q$ ensuring the invertibility of \eqref{rough eq breve} is not difficult. However, as first explained in the remarks following Theorem \ref{rough theorem}, we want a uniqueness statement i.e. that the map $(\wc{g},\wc{\pi})\longmapsto (g,\pi)$ defined by \eqref{rough ansatz 2} is injective. Therefore, we need  
\begin{align}\label{complementary}
\CC_q \cap \pth{ \pth{ TT + \mathrm{Span}(e)}\times \pth{ TT + \mathrm{ran}(L_e)} } = \{ 0 \}
\end{align}
where $TT$ denotes the space of TT tensors and $\mathrm{ran}(L_e)=\left\{ L_e X \middle| X \in T\Si\right\}$. The construction of $\CC_q$ satisfying \eqref{complementary} is performed in Section \ref{section constraint preli}.

\subsection{Outline of the article}

We conclude this introduction by giving an outline of the rest of the article.
\begin{itemize}
\item In Section \ref{section préliminaires}, we introduce the different tools and results necessary to our construction. This includes standard notations, elliptic theory on $\RRR^3$, properties of the constraint operators and the family of Kerr initial data sets.
\item In Section \ref{section main results}, we introduce our simplified conformal formulation of the constraint equations, reduce the space of seed tensors and then state our results.
\item In Section \ref{section main proof} we prove our main result, Theorem \ref{maintheorem}.
\end{itemize}
The article is concluded by two appendices: Appendix \ref{appendix section 2} contains all the proofs of Section \ref{section préliminaires} and Appendix \ref{appendix proof section 3} contains all the proofs of Section \ref{section main results}.

\subsection{Acknowledgments}

The first author acknowledges support from NSF award DMS-2303241 and support through Germany’s Excellence Strategy EXC 2044 390685587, Mathematics Münster: Dynamics–Geometry–Structure. The second author is supported by the ERC grant ERC-2023 AdG 101141855 BlaHSt.

\section{Preliminaries}\label{section préliminaires}

\subsection{Notations}

We will work on the manifold $\RRR^3$ with Cartesian coordinates $(x^1,x^2,x^3)$ and $r=|x|$ with
\begin{align*}
|x|=\sqrt{(x^1)^2+(x^2)^2+(x^3)^2}.
\end{align*} 
The standard Euclidean metric will be denoted by $e$, i.e.
\begin{align*}
e =  (\d x^1)^2 + (\d x^2)^2 + (\d x^3)^2.
\end{align*}
Latin indexes will always refer to Cartesian coordinates and will be raised with respect to $e$. Moreover, the Einstein summation convention will be used for couples of repeted indexes with one up and one down.

If $f$ is a scalar function on $\RRR^3$ we define its gradient by $\nab f = (\dr_1 f,\dr_2 f,\dr_3 f)$ and its Hessian matrix by $(\Hess f)_{ij} = \dr_i \dr_j f$. If $X$ is a vector field on $\RRR^3$ its components in Cartesian coordinates are denoted $X_i$ and if $T$ is a symmetric 2-tensor on $\RRR^3$ its components in Cartesian coordinates are denoted $T_{ij}$. Moreover we use the notations
\begin{align*}
|\nab X|^2 &  = \dr^i X^j \dr_i X_j, & \tr T &  = \de^{ij} T_{ij},
\\ \div X &  = \dr^i X_i, & | \nab T |^2 &  = \dr^kT^{ij} \dr_kT_{ij},
\\ \nab\otimes X_{ij} &  = \dr_i X_j + \dr_j X_i, & \div T_i &  = \dr^j T_{ji}.
\end{align*}
We recall the following standard definition of a TT tensor.
\begin{definition}
A symmetric 2-tensor $S$ is called TT when $\tr S=0$ and $\div S=0$.
\end{definition}

We employ the following standard notations for inequalities: $A\lesssim B$ means that $A\leq C B$ for $C$ a universal constant; $C(c_1,\dots,c_n)$ will denote any polynomial expression (without constant coefficient) of the real numbers $c_i>0$; $A\lesssim_{c_1,\dots,c_n} B$ means that $A\leq C(c_1,\dots,c_n) B$. 

Finally, if $E$ is a normed vector space, then $B_E(x,r)$ denotes the open ball of center $x\in E$ and radius $r\geq 0$. 

\subsection{Weighted elliptic theory on $\RRR^3$}\label{section elliptic}

We define the usual weighted Sobolev and Hölder spaces on $\RRR^3$. Note that if not otherwise specified, every function spaces (such as the usual Lebesgue or Sobolev spaces) will be defined on $\RRR^3$.

\begin{definition}\label{def weighted sobolev spaces}
Let $k\in\mathbb{N}$ and $\de\in\RRR$. 
\begin{itemize}
\item[(i)] We define the weighted Sobolev spaces $H^k_\de$ as the completion of the space of smooth and compactly supported functions for the norm
\begin{align*}
\l u \r_{H^k_\de} = \sum_{|\a|\leq k} \l (1+r)^{-\de-\frac{3}{2} +|\a|}\nabla^\a u \r_{L^2}.
\end{align*}
We use the standard notation $L^2_\de=H^0_\de$.
\item[(ii)] We define the space $C^k_\de$ as the completion of the space of smooth and compactly supported functions for the norm
\begin{align*}
\l u \r_{C^k_\de} = \sum_{|\a|\leq k} \l (1+r)^{-\de+|\a|}\nabla^\a u \r_{L^\infty}.
\end{align*}
\end{itemize}
\end{definition}

These definition are easily extended to tensors of all kinds by summing over all components in Cartesian coordinates. The spaces $H^k_\de$ were first introduced in \cite{Cantor1975} and their properties studied in \cite{ChoquetBruhat1981} (see also the first appendix of \cite{ChoquetBruhat2009}). Note that we use Bartnik's convention from \cite{Bartnik1986} in order to ease the reading of the decay rates, as shown by next lemma's first part.

\begin{lemma} \label{lem plongement}
We have the following properties:
\begin{itemize}
\item[(i)]If $k>m+\frac{3}{2}$, then we have the continuous embedding $H^k_\de\subset C^m_\de$.
\item[(ii)]  If $k\leq \min(k_1,k_2)$, $k<k_1+k_2 - \frac{3}{2}$ and $\de>\de_1+\de_2$ we have the continuous embedding
\begin{align*}
H^{k_1}_{\de_1}\times H^{k_2}_{\de_2} \subset H^{k}_{\de}.
\end{align*}
\end{itemize}
\end{lemma}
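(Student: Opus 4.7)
The plan is to reduce both statements to their unweighted counterparts on a fixed reference annulus via a dyadic decomposition of $\mathbb{R}^3$ together with a rescaling argument. First I would cover $\mathbb{R}^3$ by the central ball $B_0 := \{r \leq 2\}$ and the overlapping annuli $A_j := \{2^{j-1} \leq r \leq 2^{j+1}\}$ for $j \geq 1$. On $B_0$ the weight $(1+r)$ is bounded above and below by positive constants, so both embeddings reduce immediately to the standard unweighted Sobolev embeddings on the smooth bounded domain $B_0$. For the exterior piece, given $u$ restricted to $A_j$ I would introduce the rescaled function $\tilde u(y) := u(2^j y)$ living on the fixed reference annulus $\tilde A := \{1/2 \leq |y| \leq 2\}$. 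A straightforward change of variables, combined with $(1+r) \sim 2^j$ on $A_j$, yields the scaling identities
\begin{equation*}
\|u\|_{H^k_\delta(A_j)} \sim 2^{-j\delta}\|\tilde u\|_{H^k(\tilde A)}, \qquad \|u\|_{C^k_\delta(A_j)} \sim 2^{-j\delta}\|\tilde u\|_{C^k(\tilde A)},
\end{equation*}
where on the left the weighted norms of Definition \ref{def weighted sobolev spaces} are restricted to the subdomain $A_j$. The crucial observation is that the $2^{-j\delta}$ factor depends only on $\delta$ and is the \emph{same} on both sides, so it cancels in any embedding inequality relating them at fixed $\delta$.

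With these identities at hand, statement (i) follows at once: on each $A_j$ the classical Sobolev embedding $H^k(\tilde A) \hookrightarrow C^m(\tilde A)$ on the bounded smooth domain $\tilde A$ (valid for $k > m + 3/2$) gives, after rescaling, $\|u\|_{C^m_\delta(A_j)} \lesssim \|u\|_{H^k_\delta(A_j)} \leq \|u\|_{H^k_\delta}$ uniformly in $j$. Taking the supremum over $j$ and combining with the estimate on $B_0$ concludes.

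For statement (ii), the classical Sobolev multiplication inequality on $\tilde A$ asserts $\|\tilde u \tilde v\|_{H^k(\tilde A)} \lesssim \|\tilde u\|_{H^{k_1}(\tilde A)}\|\tilde v\|_{H^{k_2}(\tilde A)}$ precisely under the hypotheses $k \leq \min(k_1,k_2)$ and $k < k_1 + k_2 - 3/2$. Rescaling back to $A_j$ and using the scaling identities with the corresponding weights $\delta,\delta_1,\delta_2$ yields
\begin{equation*}
\|uv\|_{H^k_\delta(A_j)} \lesssim 2^{j(\delta_1 + \delta_2 - \delta)}\|u\|_{H^{k_1}_{\delta_1}(A_j)}\|v\|_{H^{k_2}_{\delta_2}(A_j)}.
\end{equation*}
Squaring and summing over $j \geq 1$, the hypothesis $\delta > \delta_1 + \delta_2$ makes the prefactor $2^{2j(\delta_1+\delta_2-\delta)}$ uniformly bounded, so the elementary inequality $\sum_j a_j b_j \leq (\sum_j a_j)(\sum_j b_j)$ for nonnegative sequences gives $\|uv\|_{H^k_\delta} \lesssim \|u\|_{H^{k_1}_{\delta_1}}\|v\|_{H^{k_2}_{\delta_2}}$. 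I do not expect any genuine obstacle here; the only care needed is the consistent bookkeeping of the weight exponents and the recognition that $\delta > \delta_1 + \delta_2$ is exactly what makes the dyadic summation well-behaved.
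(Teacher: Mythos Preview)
Your proof is correct and follows the standard dyadic-rescaling strategy for weighted Sobolev inequalities. The paper does not give its own proof at all: it simply cites Choquet-Bruhat's monograph. Your argument is essentially what one finds in such references, so there is no substantive difference to compare.
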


\begin{proof}
See \cite{ChoquetBruhat2009}.
\end{proof}

The following definition contains all the various functions and vector fields required to invert the Laplace operator $\De\vcentcolon=\dr^i \dr_i$ on $\RRR^3$.

\begin{definition}\label{def tool laplace}
We introduce the following notations:
\begin{itemize}
\item[(i)] We consider the real spherical harmonics $Y_{j,\ell}$ for $j\geq 0$ and $-j\leq \ell \leq j$ with the following normalization
\begin{align}\label{orthogonality}
\int_{\mathbb{S}^2}Y_{j,\ell}Y_{j',\ell'} & = 4\pi \de_{jj'}\de_{\ell\ell'}.
\end{align}
\item[(ii)] For $j\geq 1$ and $-(j-1)\leq \ell \leq j-1$, we define two families of functions
\begin{align*}
w_{j,\ell} & = r^{j-1} Y_{j-1,\ell},
\\ v_{j,\ell} & = \frac{(-1)^{\ell+1}  Y_{j-1,-\ell} }{4\pi(2j-1)r^{j}}.
\end{align*}
\item[(iii)] For $j\geq1$, $-(j-1)\leq \ell \leq j-1$ and $k=1,2,3$ we define the vector fields
\begin{align*}
W_{j,\ell,k} & = w_{j,\ell}\dr_k,
\\ V_{j,\ell,k} & = v_{j,\ell}\dr_k.
\end{align*}
For $a,b=1,2,3$, we also define
\begin{align*}
\Om_{ab}^\pm & = x^a \dr_b \pm x^b \dr_a.
\end{align*}
\item[(iv)] We also fix a smooth cutoff function $\chi:\RRR_+\longrightarrow[0,1]$ such that $\chi(r)=0$ for $r\leq 1$ and $\chi(r)=1$ for $r\geq 2$.
\end{itemize}
\end{definition}

The following theorem from \cite{McOwen1979} is our main tool to invert the flat Laplacian in weighted Sobolev spaces on $\RRR^3$.

\begin{theorem}\label{theo macowen}
Let $p\in\mathbb{N}^*$ and $-p-1 < \rho < -p$. Then $\De:H^2_{\rho}\longrightarrow L^2_{\rho-2}$ is an injection with closed range equal to
\begin{align*}
L^2_{\rho-2}\cap\pth{ \bigcup_{i=0}^{p-1}\mathfrak{H}^i}^{\perp_{L^2}},
\end{align*}
where $\mathfrak{H}^i$ is the set of harmonic polynomials defined on $\RRR^3$ which are homogeneous of degree $i$. Moreover we have $\l u \r_{H^2_\rho}\lesssim \l \De u \r_{L^2_{\rho-2}}$.
\end{theorem}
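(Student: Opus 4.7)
The plan is to combine the Fredholm theory for the flat Laplacian on weighted Sobolev spaces with an explicit multipole analysis of the Newtonian potential. The hypothesis $\rho \in (-p-1,-p)$ is precisely the non-exceptionality condition that ensures the indicial roots of $\De$ on $\RRR^3$ lie strictly outside the weight window, so that $\De$ is Fredholm with transparent kernel and cokernel. For injectivity, any $u \in H^2_\rho \cap \ker\De$ is harmonic on $\RRR^3$; because $\rho < -1 < 0$, Lemma \ref{lem plongement}(i) yields $u \in C^0_\rho$ so that $u(x) \to 0$ at infinity, and Liouville's theorem forces $u \equiv 0$.

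For the range, the orthogonality conditions are necessary by integration by parts: given $u \in H^2_\rho$ and $P \in \mathfrak{H}^i$ with $0 \leq i \leq p-1$, Green's formula on $B(0,R)$ reduces $\int_{B(0,R)}(\De u)P$ to a boundary term controlled by $R^2 (|P||\partial_r u| + |u||\partial_r P|) = O(R^{\rho+i+1})$, which vanishes as $R \to \infty$ because $\rho + i + 1 \leq \rho + p < 0$. For sufficiency, given $f \in L^2_{\rho-2}$ orthogonal to every element of $\bigcup_{i=0}^{p-1}\mathfrak{H}^i$, approximate it by compactly supported smooth $f_n$ still satisfying the same orthogonality constraints; the constraints form a closed finite-codimensional subspace of $L^2_{\rho-2}$, so a standard truncation followed by a finite-rank correction supported in a fixed ball achieves this. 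The Newtonian potential
\begin{align*}
u_n(x) = -\frac{1}{4\pi} \int_{\RRR^3} \frac{f_n(y)}{|x-y|}\, \d y
\end{align*}
solves $\De u_n = f_n$, and the multipole expansion of the kernel combined with the vanishing of the first $p$ moments kills the top $p$ terms in the far-field expansion, yielding $u_n(x) = O(|x|^{-p-1})$ together with matching derivative bounds; thus $u_n \in H^2_\rho$.

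The core analytic statement is the a priori estimate $\l u \r_{H^2_\rho} \lesssim \l \De u \r_{L^2_{\rho-2}}$, which simultaneously gives closed range and, via passage to the limit in the previous step and density, surjectivity onto the orthogonal complement. The standard route is a scaling argument on dyadic annuli $A_k = \{2^k \leq |x| \leq 2^{k+1}\}$: interior elliptic regularity on a slightly enlarged annulus $\widetilde{A}_k$ gives $\l u \r_{H^2(A_k)} \lesssim \l \De u \r_{L^2(\widetilde{A}_k)} + \l u \r_{L^2(\widetilde{A}_k)}$, and reweighting by $2^{k(-\rho - 3/2)}$ and summing produces the desired weighted estimate with an error $\l u \r_{L^2_\rho}$; this error is absorbed by a contradiction-and-compactness argument built on the injectivity of the first step, using the compactness of $H^2_\rho \hookrightarrow L^2_{\rho'}$ on bounded sets for $\rho' > \rho$. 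The main obstacle, and the reason the hypothesis $\rho \notin \mathbb{Z}$ is essential, is precisely this a priori estimate: at integer weights $\De$ acquires an explicit kernel generated by the harmonic profiles $w_{j,\ell}$ and $v_{j,\ell}$ of Definition \ref{def tool laplace}, so the Fredholm constant blows up as $\rho$ approaches $-p$ or $-p-1$ from within the window, and controlling the rate away from these endpoints is what makes the whole scheme quantitative.
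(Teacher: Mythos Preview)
The paper does not supply its own proof of this theorem: it simply cites McOwen's 1979 paper \cite{McOwen1979} and moves on. So there is no ``paper's proof'' to compare against beyond that reference.

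Your sketch is a reasonable outline of how the result is actually established in the weighted Sobolev literature (McOwen, Bartnik, Choquet-Bruhat--Christodoulou). The injectivity via Liouville, the necessity of the orthogonality conditions via Green's formula, the dyadic-annulus scaling argument for the a priori estimate, and the compactness-contradiction step to absorb the lower-order error are all standard and correct in spirit. One small gap: in your boundary-term estimate you invoke pointwise decay $|\partial_r u| = O(r^{\rho-1})$, but $u \in H^2_\rho$ only gives $\nabla u \in H^1_{\rho-1}$, and $H^1 \not\hookrightarrow C^0$ in dimension three. This is usually handled either by a direct density argument (the identity $\langle \De u, P\rangle = 0$ holds for $u \in C^\infty_c$ and extends by continuity), or by integrating the Green-type identity over a shell $R \leq r \leq 2R$ and using the $L^2$-weighted norms rather than pointwise bounds. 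With that fix the argument goes through.
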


\begin{remark}\label{remark iso}
Also from \cite{McOwen1979} we have: if $-1<\rho<0$, then $\De:H^2_{\rho}\longrightarrow L^2_{\rho-2}$ is an isomorphism and moreover $\l u \r_{H^2_\rho}\lesssim \l \De u \r_{L^2_{\rho-2}}$. 
\end{remark}

We deduce the following proposition, its proof being postponed to Appendix \ref{appendix prop laplace}.

\begin{proposition}\label{prop laplace}
Let $q\in\mathbb{N}^*$ and $0 < \de < 1$. Then we have the following:
\begin{itemize}
\item[(i)] If $f$ is a scalar function satisfying $f\in L^2_{-q-\de-2}$, then there exists a unique $\tilde{u}\in H^2_{-q-\de}$ such that $u$ defined by
\begin{align*}
u=  \chi(r) \sum_{j=1}^q \sum_{\ell=-(j-1)}^{j-1} \langle f , w_{j,\ell} \rangle  v_{j,\ell}  + \tilde{u} 
\end{align*}
is a solution to $\De u =f$. Moreover, we have $\l \tilde{u} \r_{H^2_{-q-\de}}\lesssim \l f \r_{L^2_{-q-\de-2}}$. 
\item[(ii)] If $Y$ is a vector field satisfying $Y\in L^2_{-q-\de-2}$, then there exists a unique vector field $\tilde{X}\in H^2_{-q-\de}$ such that $X$ defined by
\begin{align*}
X & =  \chi(r)  \sum_{k=1,2,3} \langle Y , W_{1,0,k} \rangle  V_{1,0,k} 
\\&\quad + \frac{\sqrt{3}}{2} \chi(r)  \Big( \langle Y ,  \Om^-_{12} \rangle \pth{- V_{2,-1,1} + V_{2,1,2}} +  \langle Y , \Om^-_{13} \rangle\pth{ - V_{2,0,1} + V_{2,1,3}} 
\\&\quad\hspace{2cm} + \langle Y , \Om^-_{23} \rangle \pth{ - V_{2,0,2} + V_{2,-1,3}  } +  \langle Y , \Om^+_{23}  \rangle \pth{ V_{2,0,2} +  V_{2,-1,3} }
\\&\quad\hspace{2cm} + \langle Y , \Om^+_{12}  \rangle \pth{ V_{2,-1,1}  +  V_{2,1,2}} + \langle Y , \Om^+_{13}  \rangle \pth{ V_{2,0,1}  +  V_{2,1,3} } 
\\&\quad\hspace{2cm} + \langle Y , \Om^+_{11} \rangle V_{2,1,1} + \langle Y , \Om^+_{22} \rangle V_{2,-1,2}  + \langle Y , \Om^+_{33} \rangle V_{2,0,3} \Big)
\\&\quad + \chi(r) \sum_{j= 3}^q \sum_{\ell=-(j-1)}^{j-1}\sum_{k=1,2,3} \langle Y , W_{j,\ell,k} \rangle  V_{j,\ell,k}  + \tilde{X} 
\end{align*}
is a solution to $\De X =Y$. Moreover, we have $\l \tilde{X} \r_{H^2_{-q-\de}}\lesssim \l Y \r_{L^2_{-q-\de-2}}$. 
\end{itemize}
\end{proposition}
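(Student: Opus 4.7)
The strategy is to apply Theorem \ref{theo macowen} after subtracting from the sought solution an explicit ``particular'' piece that absorbs the finite-dimensional cokernel obstructions. Since $-q-\de \in (-q-1,-q)$, Theorem \ref{theo macowen} applies with $p = q$, and the cokernel of $\De : H^2_{-q-\de}\to L^2_{-q-\de-2}$ is the span of harmonic polynomials of degree at most $q-1$, a basis of which is $\{w_{j,\ell}\}_{1\leq j\leq q,\,-(j-1)\leq \ell\leq j-1}$. Each $v_{j,\ell}$ is a multiple of the decaying harmonic $r^{-j}Y_{j-1,-\ell}$, hence harmonic on $\RRR^3\setminus\{0\}$; the normalization $(-1)^{\ell+1}/(4\pi(2j-1))$ is chosen so that $v_{j,\ell}$ is dual to $w_{j,\ell}$ under a boundary pairing coming from Green's formula.

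For part (i), I would set $S \vcentcolon= \chi(r)\sum_{j,\ell}\langle f, w_{j,\ell}\rangle v_{j,\ell}$. Since $\chi$ is supported in $\{r\geq 1\}$ and $\De v_{j,\ell}=0$ there, $\De S$ is smooth and compactly supported in $\{1\leq r\leq 2\}$, hence trivially in $L^2_{-q-\de-2}$. The key identity is that for any $R\geq 2$, Green's formula and harmonicity of $w_{j',\ell'}$ give
\begin{align*}
\int_{\RRR^3}\De(\chi v_{j,\ell})\,w_{j',\ell'}\,\d x = \int_{\partial B_R}\pth{\partial_r v_{j,\ell}\,w_{j',\ell'} - v_{j,\ell}\,\partial_r w_{j',\ell'}}\d S = \de_{jj'}\de_{\ell\ell'},
\end{align*}
the last equality following from the orthogonality \eqref{orthogonality} of the spherical harmonics and from cancellation of the radial powers $r^{j'-j-2}$ (forcing $j'=j$). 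Consequently $\langle f-\De S,\,w_{j',\ell'}\rangle = 0$ for every admissible $(j',\ell')$, so Theorem \ref{theo macowen} produces a unique $\tilde u \in H^2_{-q-\de}$ solving $\De\tilde u = f - \De S$ with $\l \tilde u \r_{H^2_{-q-\de}}\lesssim \l f - \De S\r_{L^2_{-q-\de-2}}$. To close the weighted estimate it remains to control $|\langle f, w_{j,\ell}\rangle| \lesssim \l f\r_{L^2_{-q-\de-2}}$, which follows from the weighted Cauchy-Schwarz inequality since $w_{j,\ell}$ grows like $r^{j-1}$ with $j \le q$, so that $w_{j,\ell}\in L^2_{q+\de-1}$, which is exactly the dual of $L^2_{-q-\de-2}$.

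For part (ii), the Laplacian in Cartesian coordinates acts componentwise on vector fields, so applying part (i) to each component $Y_k$ of $Y$ produces
\begin{align*}
X = \chi(r)\sum_{j=1}^q \sum_{\ell=-(j-1)}^{j-1}\sum_{k=1,2,3}\langle Y, W_{j,\ell,k}\rangle V_{j,\ell,k} + \tilde X
\end{align*}
with the desired estimate and uniqueness of $\tilde X$. The remaining task is purely algebraic: the $j=2$ block involves the nine functionals $\langle Y, W_{2,\ell,k}\rangle$ with $\ell\in\{-1,0,1\}$ and $k\in\{1,2,3\}$, probing the nine-dimensional space of linear vector fields $x^a\dr_b$. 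This space admits the alternative basis given by the three antisymmetric rotations $\Om^-_{ab}$ ($a<b$) and the six symmetric fields $\Om^+_{ab}$ ($a\leq b$); expressing $Y_{1,\cdot}\,x^k$ in terms of products $x^ax^b$ and regrouping yields the displayed combination of $\Om^\pm$ terms in the statement.

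Uniqueness of $\tilde u$ and $\tilde X$ is inherited from the injectivity of $\De$ on $H^2_{-q-\de}$ in Theorem \ref{theo macowen}. I expect the most technical step to be the verification of the boundary duality $\int\De(\chi v_{j,\ell})w_{j',\ell'}\,\d x = \de_{jj'}\de_{\ell\ell'}$, which requires careful tracking of the sign $(-1)^{\ell+1}$, the factor $1/(4\pi(2j-1))$, and the convention used for the real spherical harmonics $Y_{j,-\ell}$ appearing in $v_{j,\ell}$; the change of basis for the $j=2$ block in part (ii) is elementary but bookkeeping-heavy.
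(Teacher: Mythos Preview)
Your approach is correct and in fact somewhat more streamlined than the paper's. Both proofs reduce to Theorem~\ref{theo macowen} after removing the cokernel obstruction, but they construct the explicit ``particular'' piece differently. You take $S=\chi\sum\langle f,w_{j,\ell}\rangle v_{j,\ell}$ directly and verify the key duality $\langle\De(\chi v_{j,\ell}),w_{j',\ell'}\rangle=\de_{jj'}\de_{\ell\ell'}$ by Green's formula on $\partial B_R$, exploiting that $\De(\chi v_{j,\ell})$ is compactly supported in the annulus $\{1\le r\le 2\}$. The paper instead introduces auxiliary radial bumps $G_j$ with $\int G_j(r)r^{j+1}\,\d r=\tfrac{1}{4\pi}$, subtracts $\sum\langle f,w_{j,\ell}\rangle G_j(r)Y_{j-1,\ell}$ from $f$ to land in the range, solves for the remainder $v$ via Theorem~\ref{theo macowen}, and then constructs the Newton potentials $u_{j,\ell}=\tfrac{1}{4\pi}(G_jY_{j-1,\ell})*|\cdot|^{-1}$; the multipole expansion \eqref{expansion potential} of $|x-y|^{-1}$ is used to show $u_{j,\ell}=-v_{j,\ell}$ for $r>1$, which recovers the claimed form after regrouping.

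Your route avoids the auxiliary bumps and the multipole expansion entirely, replacing them with a single boundary computation (the Wronskian pairing of the growing and decaying harmonics). The paper's route is more constructive and makes the origin of the normalization constant $\tfrac{(-1)^{\ell+1}}{4\pi(2j-1)}$ transparent via the multipole expansion. For part (ii), both proofs proceed identically: apply part (i) componentwise and then rewrite the $j=2$ block in the $\Om^\pm_{ab}$ basis. Your caveat about carefully tracking the spherical-harmonic sign conventions when verifying the duality is well placed; this is indeed where the normalization of $v_{j,\ell}$ is pinned down.
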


\begin{remark}
In the previous proposition, we performed a change of basis for the space of vector fields with components given by harmonic polynomials homogeneous of degree 1. Instead of the standard basis $(x^i \dr_j)_{1\leq i,j\leq 3}$, we prefered the basis
\begin{align*}
\pth{ \Om^+_{11} , \Om^+_{22} , \Om^+_{33} , \Om^+_{12} , \Om^+_{13} , \Om^+_{23} , \Om^-_{12} , \Om^-_{13}, \Om^-_{23} }.
\end{align*}
This is motivated by the properties of the linearized momentum constraint operator, see Proposition \ref{prop KIDS}.
\end{remark}

\begin{remark}
We can improve Proposition \ref{prop laplace}: if the scalar function $f$ or the vector field $Y$ are in $H^k_{-q-\de-2}$ for $k\in\mathbb{N}$, then $\tilde{u}$ or $\tilde{X}$ are actually in $H^{k+2}_{-q-\de}$ with the obvious bounds. This also applies for the isomorphism case mentioned in Remark \ref{remark iso}. This will only be used (without mention) in Appendix \ref{section proof lem reduction}  and \ref{section proof reduction}.
\end{remark}

\subsection{The constraint operator}\label{section constraint preli}

If $\bar{g}$ is a Riemannian metric and $\bar{\pi}$ a symmetric two-tensor, we denote by $D\HH[\bar{g},\bar{\pi}]$ and $D^2\HH[\bar{g},\bar{\pi}]$ (resp. $D\MM[\bar{g},\bar{\pi}]$ and $D^2\MM[\bar{g},\bar{\pi}]$) the first and second variation of the operator $\HH$ (resp. $\MM$) at $(\bar{g},\bar{\pi})$, and recall that $\Phi(\bar{g},\bar{\pi})=\pth{ \HH(\bar{g},\bar{\pi}),\MM(\bar{g},\bar{\pi})}$. Note that in this article, and in particular in Lemma \ref{lem Phi} below, we do not distinguish between coefficients of $\bar{g}$ and coefficients of its inverse $\bar{g}^{-1}$ and thus simply write $\bar{g}$ in the schematic notation for error terms. 

\begin{lemma}\label{lem Phi}
We have
\begin{align}
\Phi(\bar{g} + h ,\bar{\pi} + \pi) & = \Phi(\bar{g},\bar{\pi}) + D \Phi [\bar{g},\bar{\pi}](h,\pi) + \half  D^2\Phi [\bar{g},\bar{\pi}]((h,\pi),(h,\pi)) + \mathrm{Err}\Phi[\bar{g},\bar{\pi}]( h,\pi),
\end{align}
where schematically
\begin{align}
D\HH[\bar{g},\bar{\pi}](h,\pi) & = \bar{g}\nabla^2 h + h \nabla^2 \bar{g}  + \bar{g}^2 \nabla \bar{g} \nabla h + \bar{g} h (\nab \bar{g})^2  + \bar{g}^2\bar{\pi}\pi + \bar{g}\bar{\pi}^2h \label{DH bis},
\\ D\MM[\bar{g},\bar{\pi}](h,\pi) & =  \bar{g}\nab\pi + h\nab\bar{\pi} + \bar{g}^2\nab\bar{g}\pi + \bar{g}^2\bar{\pi}\nab h + \bar{g} \bar{\pi} h \nab\bar{g} \label{DM bis},
\\ D^2\HH [\bar{g},\bar{\pi}]((h,\pi),(h,\pi)) & = h\nab^2 h +\bar{g} h \nab\bar{g} \nab h + (\bar{g}\nab h)^2  + (h\nab\bar{g})^2  \label{D2H bis}
\\&\quad +  \bar{g}\bar{\pi}h\pi + (\bar{g}\pi)^2 + (\bar{\pi}h)^2,
\\ D^2\MM [\bar{g},\bar{\pi}]((h,\pi),(h,\pi)) & = \bar{g}^2 h\nab\pi + \bar{g} h \pi \nab\bar{g} + \bar{g}\bar{\pi} h \nab h  + h^2 \bar{\pi} \nab\bar{g} \label{D2M bis},
\end{align}
and
\begin{align}
\mathrm{Err}\HH[\bar{g},\bar{\pi}]( h,\pi) & = \bar{g} h(\nab h)^2 + h^2 \nab\bar{g}\nab h + (h\nab h)^2 + \bar{\pi}h^2\pi + (h\pi)^2\label{ErrH},
\\ \mathrm{Err}\MM[\bar{g},\bar{\pi}]( h,\pi) & =  \bar{g} h \pi\nab h + h^2\pi \nab\bar{g} + h^2\bar{\pi}\nab h + h^2 \pi\nab h\label{ErrM}.
\end{align}
\end{lemma}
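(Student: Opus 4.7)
The plan is to prove this lemma by a routine schematic Taylor expansion in $(h,\pi)$ around $(\bar g,\bar \pi)$ of each component of $\Phi$ separately, tracking only the structure of each resulting monomial (its degree in $h$ and $\pi$, its number of derivatives, and its companion factors of $\bar g$ and $\bar\pi$). Two standard ingredients drive the computation: the geometric series expansion $(\bar g+h)^{-1}=\bar g^{-1}-\bar g^{-1}h\bar g^{-1}+\ldots$, valid at the level of formal power series, and the variation formula $\Gamma(\bar g+h)=\Gamma(\bar g)+(\bar g+h)^{-1}\nab h$ in schematic form. With these, each of $\HH$ and $\MM$ becomes a formal polynomial in $h$, $\pi$, $\bar g$, $\bar \pi$ and their derivatives, and the statement amounts to collecting that polynomial by total degree in $(h,\pi)$.

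For the momentum constraint, I would start from the coordinate expression $\MM(g,\pi)_\ell=g^{ij}\dr_i\pi_{j\ell}-g^{ij}\Gamma(g)^k_{ij}\pi_{k\ell}-g^{ij}\Gamma(g)^k_{i\ell}\pi_{jk}$, substitute $g=\bar g+h$ and $\pi=\bar\pi+\pi$, and expand. The degree-one-in-$(h,\pi)$ contributions fall into the five families listed in \eqref{DM bis}: $\bar g\nab\pi$ from $\bar g^{-1}\dr\pi$, $h\nab\bar\pi$ from varying the inverse metric in $g^{-1}\dr\bar\pi$, $\bar g^2\nab\bar g\,\pi$ and $\bar g\bar\pi h\nab\bar g$ from the background Christoffel, and $\bar g^2\bar\pi\nab h$ from the linearization of the Christoffel itself. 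The degree-two part reproduces \eqref{D2M bis}, and all cubic-and-higher terms fit into the structure \eqref{ErrM}.

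For the Hamiltonian constraint, the algebraic pieces $\tfrac12(\tr_g\pi)^2-|\pi|^2_g$ are polynomials in $(\bar g+h)^{-1}$ and $(\bar\pi+\pi)$, so their Taylor expansion follows by direct inspection and produces the terms involving $\bar\pi$ or $\pi$ in \eqref{DH bis}, \eqref{D2H bis} and \eqref{ErrH}. The scalar curvature $R(\bar g+h)$ is slightly more delicate: using $R(g)=g^{ij}R_{ij}(g)$ and the schematic identity $R_{ij}=\dr\Gamma+\Gamma^2$, one sees that $R$ is schematically of the form $g^{-1}\nab^2 g+(g^{-1})^2(\nab g)^2$. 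Differentiating once and twice in $h$ at $\bar g$ yields the four derivative-of-metric terms in \eqref{DH bis} and the four in \eqref{D2H bis}, while all cubic contributions collapse into \eqref{ErrH}.

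The principal difficulty is purely bookkeeping. Many different combinations of factors of $\bar g$, $h$, $\bar\pi$ and $\pi$ arise, in particular because norms and traces of $\pi$ involve two copies of the inverse metric, so each variation produces new mixed terms, and one must sort each carefully by degree to confirm it belongs in $D\Phi$, $D^2\Phi$ or $\mathrm{Err}\Phi$. Once this sorting is done, no analytic estimate is needed since the lemma is an algebraic identity, and the proof simply matches the expansion of $\Phi(\bar g+h,\bar\pi+\pi)$ with the displayed schematic formulas term by term.
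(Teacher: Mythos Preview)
Your proposal is correct and matches the paper's approach: the paper's proof is even terser, simply recording the schematic identities $\HH(\bar g,\bar\pi)=\bar g\nab^2\bar g+(\bar g\nab\bar g)^2+(\bar g\bar\pi)^2$ and $\MM(\bar g,\bar\pi)=\bar g(\nab\bar\pi+\bar g\bar\pi\nab\bar g)$ (with the convention, stated just before the lemma, that $\bar g$ stands for both the metric and its inverse) and declaring the rest a straightforward substitution $\bar g\to\bar g+h$, $\bar\pi\to\bar\pi+\pi$ followed by collection by degree in $(h,\pi)$. Your more explicit route through the coordinate expression of $\MM$ and the formula $R=g^{-1}\nab^2 g+(g^{-1}\nab g)^2$ is exactly what underlies those schematic identities, so the two arguments are the same.
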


\begin{proof}
Straightforward computations from the schematic expressions
\begin{align*}
\HH(\bar{g},\bar{\pi}) & =  \bar{g}\nab^2\bar{g} + (\bar{g}\nab\bar{g})^2 +  (\bar{g}\bar{\pi})^2,
\\ \MM(\bar{g},\bar{\pi}) & =  \bar{g}\pth{\nab\bar{\pi} + \bar{g} \bar{\pi}\nab\bar{g} }.
\end{align*}
See also \cite{Fischer1980} for the exact expressions of $D \Phi [\bar{g},\bar{\pi}]$ and $D^2 \Phi [\bar{g},\bar{\pi}]$.
\end{proof}

Of particular importance is the case of the Minkowski initial data set $(\bar{g},\bar{\pi})=(e,0)$, for which we give the exact expressions of both the first and second variations of the constraint operator.

\begin{lemma}
We have\footnote{Note that $\pi$ doesn't appear in $D \HH[e,0](h,\pi)$ and $h$ doesn't appear in $D \MM[e,0](h,\pi)$ so we simply denote them by $D \HH[e,0](h)$ and $D \MM[e,0](\pi)$ throughout the paper.}
\begin{align}
D \HH[e,0](h) & =-\De\tr h + \div\div h, \label{DH}
\\ \half D^2 \HH[e,0]((h,\pi),(h,\pi)) & =  -  2   h^{ij}\dr_i\div h_{j}   +  h^{ij} \De h_{ij} + h^{ij}  \dr_i \dr_j \tr h - |\div h|^2 \label{D2H}
\\&\quad + \div h\cdot \nabla \tr h   - \frac{1}{4} | \nabla \tr h|^2  + \frac{3}{4}  | \nabla h|^2 \nonumber
\\&\quad   - \frac{1}{2}    \dr^b h^{jk}  \dr_{j} h_{kb}  + \half (\tr \pi)^2 - |\pi|^2,\nonumber
\end{align}
and
\begin{align}
D \MM[e,0](\pi) & = \div \pi, \label{DM}
\\ \half D^2 \MM[e,0]((h,\pi),(h,\pi))_i & =  - h^{k\ell}\dr_k \pi_{\ell i} - \half  \pi^{k\ell}  \dr_i h_{k\ell}  -  \pi_{i}^k   \pth{ \div h_k - \half \dr_k \tr h}.\label{D2M}
\end{align}
\end{lemma}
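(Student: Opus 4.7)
The statement is a direct, if tedious, computation of the first and second variations of $\HH$ and $\MM$ at the Minkowski data $(e,0)$. My overall plan is to set $g = e + h$ and expand each ingredient to second order in $(h,\pi)$, using the expansions $g^{ij} = e^{ij} - h^{ij} + h^{ik} h_k{}^j + O(h^3)$ and
\[
\Gamma^k_{ij}(e+sh) = \frac{s}{2}\pth{\dr_i h_j{}^k + \dr_j h_i{}^k - \dr^k h_{ij}} - \frac{s^2}{2} h^{k\ell}\pth{\dr_i h_{j\ell} + \dr_j h_{i\ell} - \dr_\ell h_{ij}} + O(s^3).
\]

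For the first variations, I would invoke the classical Lichnerowicz formula $DR[g](h) = -\De_g(\tr_g h) + \div_g \div_g h - \langle h, \Ric(g)\rangle_g$ and evaluate at $g = e$, where $\Ric(e)=0$; the term $\half(\tr_g\pi)^2 - |\pi|_g^2$ is quadratic in $\pi$ and so contributes nothing at first order when $\pi = 0$. For $D\MM[e,0](\pi)$, writing $\div_g \pi_\ell = g^{ij}\pth{\dr_i \pi_{j\ell} - \Gamma^m_{ij}\pi_{m\ell} - \Gamma^m_{i\ell}\pi_{jm}}$, the linearization at $(e,0)$ reduces to $e^{ij}\dr_i \pi_{j\ell} = \div \pi_\ell$ since all contributions of $h$ multiply $\pi = 0$.

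For the second variations, the part of $\half D^2\HH[e,0]$ coming from $\half(\tr_g\pi)^2 - |\pi|_g^2$ is painless: all mixed $h$–$\pi$ terms are cubic in the perturbation, so only the evaluation at $(e,\pi)$ survives, producing $\half(\tr\pi)^2 - |\pi|^2$. For $\half D^2 \MM[e,0]$, I would collect three second-order contributions: the linear perturbation of $g^{-1}$ acting on $\dr \pi$ yields $-h^{k\ell}\dr_k \pi_{\ell i}$; the trace piece $-e^{jk}\Gamma(h)^m_{jk}\pi_{mi}$ yields $-\pi_i^k\pth{\div h_k - \half \dr_k \tr h}$; and the remaining Christoffel contribution $-e^{jk}\Gamma(h)^m_{ji}\pi_{km}$, after relabeling dummy indices and using the symmetry of $h$ and $\pi$, collapses to $-\half \pi^{k\ell}\dr_i h_{k\ell}$ because two of its three pieces cancel against one another.

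The main obstacle is the second variation of the scalar curvature, which is obtained by substituting the quadratic Christoffel expansion into $R_{ij} = \dr_k\Gamma^k_{ij} - \dr_j \Gamma^k_{ik} + \Gamma^k_{k\ell}\Gamma^\ell_{ij} - \Gamma^k_{j\ell}\Gamma^\ell_{ik}$ and then contracting with $g^{ij} = e^{ij} - h^{ij} + O(h^2)$, remembering the cross term where the linear piece of $g^{-1}$ hits the linear piece of $R_{ij}$. The bookkeeping is nontrivial since many of the resulting terms must be grouped pointwise (no integration by parts is allowed, because the statement is a pointwise identity) to match the displayed combination $-2h^{ij}\dr_i \div h_j + h^{ij}\De h_{ij} + h^{ij}\dr_i\dr_j \tr h - |\div h|^2 + \div h \cdot \nabla \tr h - \frac{1}{4}|\nabla \tr h|^2 + \frac{3}{4}|\nabla h|^2 - \half \dr^b h^{jk}\dr_j h_{kb}$; a few a priori distinct quadratic forms in $h$ become equal only after the symmetry of $h$ is invoked. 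One can cross-check the result against the second-variation formulae in \cite{Fischer1980}.
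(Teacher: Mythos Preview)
Your proposal is correct and follows the same approach as the paper, which simply cites \cite{Fischer1980} for these standard formulae; in fact you sketch more of the actual computation than the paper does.
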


\begin{proof}
See \cite{Fischer1980}.
\end{proof}

When solving the constraint equations around a given solution $(\bar{g},\bar{\pi})$, the linear obstructions come from elements of the cokernel of the linearized operators $D\HH[\bar{g},\bar{\pi}]$ and $D\MM[\bar{g},\bar{\pi}]$.

\begin{definition}
Let $f$ be a scalar function and $X$ a vector field. We say that $(f,X)$ is a KIDS for $(\bar{g},\bar{\pi})$ if $D\HH[\bar{g},\bar{\pi}]^*(f,X)=0$ and $D\MM[\bar{g},\bar{\pi}]^*(f,X)=0$.
\end{definition}
In the case of the Euclidean metric, we have
\begin{align*}
D\HH[e,0]^*(f) & = -(\De f)e + \Hess f,
\\ D\MM[e,0]^*(X) & = -\half \nab\otimes X,
\end{align*}
for $f$ a scalar function and $X$ a vector field. The following proposition, whose proof is postponed to Appendix \ref{appendix proof KIDS}, identifies in particular the KIDS of $(e,0)$.

\begin{proposition}\label{prop KIDS}
The operator $D\HH[e,0]^*$ satisfies the following properties.
\begin{itemize}
\item[(i)] The kernel of $D\HH[e,0]^*$ is generated by $w_{j,\ell}$ for $j=1,2$ and $-(j-1)\leq \ell \leq j-1$.
\item[(ii)] For $q\geq 3$, there exists a family of symmetric 2-tensors $(h_{j,\ell})_{3\leq j \leq q, -(j-1)\leq \ell \leq j-1}$ such that:
\begin{itemize}
\item the tensors $h_{j,\ell}$ are traceless, smooth and compactly supported in $\{1\leq r \leq 10\}$,
\item the matrix
\begin{align*}
\pth{ \left\langle h_{j,\ell}, D\HH[e,0]^*(w_{j',\ell'}) \right\rangle }_{3\leq j,j'\leq q, -(j-1)\leq \ell \leq j-1,-(j'-1)\leq \ell' \leq j'-1}
\end{align*}
is invertible,
\item the family of 1-form $\pth{\div h_{j,\ell}}_{3\leq j \leq q, -(j-1)\leq \ell \leq j-1}$ is linearly independent.
\end{itemize} 
\end{itemize}
The operator $D\MM[e,0]^*$ satisfies the following properties.
\begin{itemize}
\item[(i)] The kernel of $D\MM[e,0]^*$ is generated by $\left\{ W_{1,0,1},W_{1,0,2},W_{1,0,3}, \Om^-_{12}, \Om^-_{13}, \Om^-_{23}  \right\}$.
\item[(ii)] For $q\geq 2$, there exists a family of symmetric 2-tensors $(\varpi_Z)_{Z\in \mathcal{Z}_q}$ where 
\begin{align*}
\mathcal{Z}_2 \vcentcolon = \left\{ \Om^+_{11} , \Om^+_{22} , \Om^+_{33} , \Om^+_{12} , \Om^+_{13} , \Om^+_{23} \right\},
\end{align*}
and if $q\geq 3$
\begin{align*}
\mathcal{Z}_q \vcentcolon = \mathcal{Z}_2 \cup \left\{ W_{j,\ell,k}  \,\middle|\, 3\leq j\leq q, -(j-1)\leq \ell \leq j-1, k=1,2,3 \right\},
\end{align*}
such that:
\begin{itemize}
\item the tensors $\varpi_Z$ are smooth and compactly supported in $\{1\leq r \leq 10\}$,
\item the matrix
\begin{align*}
\pth{ \left\langle \varpi_Z, D\MM[e,0]^*(Z') \right\rangle }_{(Z,Z')\in\mathcal{Z}_q^2}
\end{align*}
is invertible,
\item the family of functions $\pth{(\De\tr + \div\div)\varpi_Z}_{Z\in \mathcal{Z}_q}$ is linearly independent.
\end{itemize}
\end{itemize}
\end{proposition}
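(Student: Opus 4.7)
The plan is to treat the four claims separately. The two kernel computations follow directly from the explicit formulas for the adjoints. Taking the trace of $D\HH[e,0]^*(f) = -(\De f)e + \Hess f = 0$ yields $-2\De f = 0$ and hence $\Hess f = 0$, so that $f$ is affine; up to normalization the four affine functions on $\RRR^3$ are exactly $w_{1,0}$ and the three $w_{2,\ell}$. For $D\MM[e,0]^*(X) = -\half \nab\otimes X$, the equation $\nab\otimes X = 0$ is the Killing equation for the Euclidean metric on $\RRR^3$, whose six-dimensional solution space is spanned by the translations $W_{1,0,k}$ and the rotations $\Om^-_{ab}$.

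For the construction of the $h_{j,\ell}$, I would integrate by parts twice to obtain, for traceless compactly supported $h$ and any smooth $w$,
\[
\langle h, D\HH[e,0]^*(w) \rangle = \int_{\RRR^3} (\div\div h)\, w,
\]
and then look for $h_{j,\ell}$ traceless and supported in $\{1 \leq r \leq 10\}$ with $\div\div h_{j,\ell}$ proportional to $\chi(r) r^{j-1} Y_{j-1,\ell}$. The orthogonality relation \eqref{orthogonality} immediately makes the matrix $\bigl(\langle h_{j,\ell}, D\HH[e,0]^*(w_{j',\ell'})\rangle\bigr)$ diagonal with nonzero entries. The existence of such a compactly supported traceless $h_{j,\ell}$ rests on the observation that the cokernel of $\div\div$ restricted to traceless tensors consists of scalar functions $\phi$ satisfying $\Hess \phi - \frac{1}{3}(\De\phi) e = 0$, i.e., affine functions and constant multiples of $r^2$; the target $\chi(r) r^{j-1} Y_{j-1,\ell}$ is orthogonal to this five-dimensional space for every $j \geq 3$ by \eqref{orthogonality}. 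A compactly supported $h_{j,\ell}$ is then produced by an explicit spherical-harmonic ansatz, reducing to a radial ODE that can be solved on an enlarged annulus. Linear independence of the $\div h_{j,\ell}$ is then automatic: any vanishing linear combination would yield, after applying $\div$, the same combination of $\div\div h_{j,\ell}$, contradicting the diagonal form of the pairing.

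The family $(\varpi_Z)_{Z\in\mathcal{Z}_q}$ is constructed by a parallel integration by parts,
\[
\langle \varpi, D\MM[e,0]^*(Z) \rangle = \int_{\RRR^3} (\div \varpi)\cdot Z,
\]
in which I design $\div \varpi_Z$ as a compactly supported vector field whose angular profile is dual, in the spherical-harmonic decomposition, to that of $Z$: a degree-one angular profile for $Z = \Om^+_{ab}$ and a profile involving $Y_{j-1,\ell}$ for $Z = W_{j,\ell,k}$ with $j\geq 3$. The pairing matrix is then block-diagonal and invertible. The supplementary linear independence of $\bigl((\De\tr + \div\div)\varpi_Z\bigr)_{Z\in\mathcal{Z}_q}$ is enforced by replacing each $\varpi_Z$ by $\varpi_Z + \alpha_Z e$, where $\alpha_Z$ is a compactly supported scalar chosen orthogonal to every $\div Z'$ for $Z'\in\mathcal{Z}_q$; this leaves the pairing matrix unchanged while adding the free term $4\De\alpha_Z$ to $(\De\tr + \div\div)\varpi_Z$, providing enough freedom to ensure independence.

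The most delicate step, which I expect to be the main technical obstacle, is the construction of compactly supported tensors whose divergence or divergence-divergence has a prescribed profile. The restrictions $j \geq 3$ in the $\HH$ statement and $q \geq 2$ in the $\MM$ statement are exactly what ensures that the harmonic-polynomial targets are orthogonal to the finite-dimensional obstructions to compactly supported inversion, so that the construction goes through.
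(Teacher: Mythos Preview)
Your kernel computations are correct and in fact more elementary than the paper's, which invokes Moncrief's identification of KIDS with spacetime Killing fields rather than solving $\Hess f = 0$ and the Euclidean Killing equation directly.

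For the constructions, however, you take a substantially harder route than the paper and leave real gaps. The paper does not try to prescribe $\div\div h_{j,\ell}$ or $\div\varpi_Z$; it simply sets
\[
h_{j,\ell} \vcentcolon= \mu(r)\, D\HH[e,0]^*(w_{j,\ell}), \qquad \varpi_{Z} \vcentcolon= \mu_Z(r)\, D\MM[e,0]^*(Z),
\]
with $\mu$ a smooth nonnegative cutoff supported in $\{1\leq r\leq 10\}$. Tracelessness of $h_{j,\ell}$ is automatic since $\De w_{j,\ell}=0$. Invertibility of the pairing matrix is then immediate by a Gram-type argument: a dependence among columns would force $D\HH[e,0]^*\bigl(\sum\la_{j,\ell}w_{j,\ell}\bigr)=0$ on an open set, hence everywhere (polynomials), contradicting the kernel description. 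Linear independence of $\div h_{j,\ell}$ is read off from the radial component $(\div h_{j,\ell})_r = (j-1)(j-2)\mu'(r)r^{j-3}Y_{j-1,\ell}$. For the $\varpi_Z$, the extra requirement on $(\De\tr+\div\div)\varpi_Z$ is handled not by adding a trace correction as you propose, but by choosing \emph{different} cutoffs $\mu_Z^{(n)}$ for different $Z$, with pairwise disjoint transition annuli; a limiting argument shows the pairing matrix stays invertible, and the disjoint supports force each term of a dependence relation to vanish separately, reducing to an easy ODE contradiction.

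By contrast, your plan requires building compactly supported traceless tensors with prescribed $\div\div$, which you justify only by ``an explicit spherical-harmonic ansatz, reducing to a radial ODE''; this is where the work actually lies, and you have not shown the resulting tensor can be made to vanish on both boundary spheres of $\{1\leq r\leq 10\}$. Your independence argument for $(\De\tr+\div\div)\varpi_Z$ via the correction $\alpha_Z e$ is also incomplete: saying that $4\De\alpha_Z$ provides ``enough freedom'' is not a proof, and you would need to exhibit $\alpha_Z$ simultaneously orthogonal to all $\div Z'$ and producing an independent family of Laplacians. None of this is impossible, but it is considerably more delicate than the paper's two-line construction, and as written the proposal does not close these steps.
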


\begin{definition}\label{def Aq Bq}
For $q\geq 3$, we define the space
\begin{align*}
\AA_q & \vcentcolon= \mathrm{Span}\Big( h_{j,\ell} , 3\leq j \leq q, -(j-1)\leq \ell \leq j-1\Big).
\end{align*}
For $q\geq 2$, we define the space
\begin{align*}
 \BB_q & \vcentcolon= \mathrm{Span}\pth{ \varpi_Z, Z\in\mathcal{Z}_q}.
\end{align*}
Note that $\dim \AA_q=q^2-4$ and $\dim \BB_q=3q^2-6$.
\end{definition}

Since elements of $\AA_q$ and $\BB_q$ are smooth and supported in $\{1\leq r \leq 10\}$, we will simply use the $W^{2,\infty}$ and $W^{1,\infty}$ topology on these spaces. This corresponds to the maximal degree of differentiation in the constraint equations for the metric and the reduced second fundamental form.

\begin{remark}
If one defines $\CC_q=\AA_q\times \BB_q$, we note that $\CC_q$ does indeed satisfy \eqref{complementary}. This will be crucially used in the proof of Proposition \ref{prop uniqueness}.
\end{remark}

\subsection{Kerr initial data sets}\label{section kerr family}

In this section, we construct and study the family of Kerr initial data sets, following Appendix F of \cite{Chrusciel2003} (see also \cite{Aretakis2023}). 

\subsubsection{Definition}\label{section Kerr}

Let $\vec{p}=(m,y,a,v)\in \RRR_+\times \RRR^3\times \RRR^3\times B_{\RRR^3}(0,1)$ and, following \cite{Chrusciel2011} and \cite{Mao2023}, consider the Kerr metric $\g_{m,|a|}$ of mass $m$ and angular momentum $|a|$ and rotating around the $x^3$-axis in Kerr-Schild coordinates $(t,x^1,x^2,x^3)$ given by
\begin{align*}
\g_{m,|a|} & = \m + \frac{2m\tilde{r}^3}{\tilde{r}^4+|a|^2(x^3)^2}\k\otimes \k,
\end{align*}
where $\m$ is the Minkowski metric, the 1-form $\k$ is given in coordinates by
\begin{align*}
\pth{ 1 , \frac{\tilde{r}x^1+|a|x^2}{\tilde{r}^2+|a|^2},\frac{\tilde{r}x^2-|a|x^1}{\tilde{r}^2+|a|^2}, \frac{x^3}{\tilde{r}} },
\end{align*}
and the function $\tilde{r}$ is implicitly defined by 
\begin{align*}
\frac{(x^1)^2 + (x^2)^2}{\tilde{r}^2+|a|^2} + \frac{(x^3)^2}{\tilde{r}^2} = 1.
\end{align*}

\begin{remark}
The Kerr-Schild coordinates are better to read off the distance between Kerr and Minkowski than the more usual Boyer-Lindquist coordinates. Indeed, in Boyer-Lindquist coordinates, the limit of $\g_{m,|a|}$ when $m$ tends to 0 is
\begin{align*}
- \d t^2 + \frac{r^2+a^2\cos^2\th}{r^2+a^2}\d r^2 + (r^2 +a^2\cos^2\th)\d\th^2 + (r^2+a^2)\sin^2\th\d\phi^2.
\end{align*}
This is in fact the Minkowski metric in the so-called "oblate spheroidal" coordinates, see \cite{Visser2009} for more details.
\end{remark}

We define a new coordinates system by setting 
\begin{align}\label{change of coord}
(x')^\mu & = \,^{(v)}\La^\mu_{\;\;\nu} \pth{ R^\nu_{\;\;\si} x^\si + y^\si }.
\end{align}
where $R$ is the rotation matrix such that $R\dr_3=\frac{a}{|a|}$ (if $a=0$ then $R$ is the identity matrix), and $\,^{(v)}\La$ is a Lorentz boost of relative velocity $|v|$ in the direction $\frac{v}{|v|}$ (see (8.2) in \cite{Aretakis2023}). 

\begin{definition}
We define $(g_{\vec{p}}, \pi_{\vec{p}})$ as the induced metric and reduced second fundamental form of the hypersurface $\{(x')^0=0\}$.
\end{definition}

For later use, we define the Lorentz factor $\ga\vcentcolon = \frac{1}{\sqrt{1-|v|^2}}$.

\subsubsection{Estimates for truncated black holes}

\begin{definition}\label{def chi p}
For $\vec{p}=(m,y,a,v)\in\RRR_+\times \RRR^3\times \RRR^3 \times B_{\RRR^3}(0,1)$, we define 
\begin{align*}
\chi_{\vec{p}}(r)=\chi\pth{\frac{r}{\la}},
\end{align*}
with
\begin{align*}
\la\vcentcolon=\max(2|y|,2|a|,3),
\end{align*}
where $\chi$ is defined in Definition \ref{def tool laplace}.
\end{definition}

The following proposition gathers important properties of the truncated initial data sets 
\begin{align*}
\pth{\chi_{\vec{p}}(g_{\vec{p}} - e),\chi_{\vec{p}} \pi_{\vec{p}}}.
\end{align*}
Its proof is postponed to Appendix \ref{appendix prop kerr}.

\begin{proposition}\label{prop kerr}
Let $\vec{p},\vec{p}\,'\in \RRR_+\times \RRR^3\times \RRR^3\times B_{\RRR^3}(0,1)$ with $m,m'\leq 1$. Then:
\begin{itemize}
\item[(i)] The following estimates hold
\begin{align}\label{estim kerr}
r^n \left| \nab^n\pth{ \chi_{\vec{p}}(g_{\vec{p}} - e) } \right| + r^{n+1}\left| \nab^n \pth{ \chi_{\vec{p}} \pi_{\vec{p}}} \right| & \lesssim_{\ga,n} \frac{m}{r},
\end{align}
and
\begin{align}
&r^n \left| \nab^n\pth{ \chi_{\vec{p}}(g_{\vec{p}} - e) - \chi_{\vec{p}\,'}(g_{\vec{p}\,'} - e) } \right| + r^{n+1}\left| \nab^n \pth{ \chi_{\vec{p}} \pi_{\vec{p}} - \chi_{\vec{p}\,'} \pi_{\vec{p}\,'}} \right| \label{diff kerr}
\\&\hspace{2cm} \lesssim_{\max(\ga,\ga'),n} \frac{|m-m'|}{r} + \max(m,m')\pth{\frac{|y-y'|+|a-a'|}{r^2}  + \frac{ | v-v'|}{r}}\nonumber.
\end{align}
\item[(ii)] There exists functions $\RR_{\HH,1}(\vec{p})$, $\RR_{\MM,W_{1,0,k}}(\vec{p})$, $\RR_{\HH,x^k}(\vec{p})$ and $\RR_{\MM,\Om^-_{ab}}(\vec{p})$ satisfying
\begin{align}
\left| \RR_{\HH,1}(\vec{p}) \right| + \left| \RR_{\MM,W_{1,0,k}}(\vec{p}) \right| + \left| \RR_{\HH,x^k}(\vec{p})\right| + \left| \RR_{\MM,\Om^-_{ab}}(\vec{p}) \right| & \lesssim_\ga m^2 \label{estim R bh bis},
\end{align}
and
\begin{align}
&\left| \RR_{\HH,1}(\vec{p}) - \RR_{\HH,1}(\vec{p}\,') \right| + \left| \RR_{\MM,W_{1,0,k}}(\vec{p}) - \RR_{\MM,W_{1,0,k}}(\vec{p}\,') \right| \non
\\& + \left| \RR_{\HH,x^k}(\vec{p}) - \RR_{\HH,x^k}(\vec{p}\,') \right| + \left| \RR_{\MM,\Om^-_{ab}}(\vec{p}) - \RR_{\MM,\Om^-_{ab}}(\vec{p}\,') \right|  \label{diff R bh bis}
\\&\hspace{1cm}\lesssim_{\max(\ga,\ga')} \max(m,m')\pth{ |m-m'| + \max(m,m')\pth{ |y-y'| + |a-a'| + |v-v'| }},\non
\end{align}
and such that the following integral identities hold
\begin{align}
\left\langle \HH\pth{ e + \chi_{\vec{p}}(g_{\vec{p}} - e),\chi_{\vec{p}} \pi_{\vec{p}}  }, 1 \right\rangle & = 16\pi \ga m + \RR_{\HH,1}(\vec{p}) \label{int kerr H 1}
\\  \left\langle \MM\pth{ e + \chi_{\vec{p}}(g_{\vec{p}} - e),\chi_{\vec{p}} \pi_{\vec{p}}  }, W_{1,0,k} \right\rangle & = - 8\pi \ga m v^k + \RR_{\MM,W_{1,0,k}}(\vec{p}) \label{int kerr M 1}
\\ \left\langle \HH\pth{ e + \chi_{\vec{p}}(g_{\vec{p}} - e),\chi_{\vec{p}} \pi_{\vec{p}}  }, x^k \right\rangle & = my^k + \RR_{\HH,x^k}(\vec{p}) \label{int kerr H x}
\\ \left\langle \MM\pth{ e + \chi_{\vec{p}}(g_{\vec{p}} - e),\chi_{\vec{p}} \pi_{\vec{p}}  }, \Om^-_{ij} \right\rangle & = 8\pi m a^k + \RR_{\MM,\Om^-_{ij}}(\vec{p}) \label{int kerr M x}
\end{align}
where in \eqref{int kerr M x} $k$ is such that $\in_{kij}=1$. 
\item[(iii)] We have
\begin{align}\label{constraint kerr}
\left| \Phi\pth{e + \chi_{\vec{p}}(g_{\vec{p}} - e), \chi_{\vec{p}}\pi_{\vec{p}} } \right| & \lesssim_\ga \frac{m}{r^3} \mathbbm{1}_{\{\la\leq r \leq 2\la\} },  
\end{align}
and
\begin{align}\label{diff constraint kerr}
&\left| \Phi\pth{e + \chi_{\vec{p}}(g_{\vec{p}} - e), \chi_{\vec{p}}\pi_{\vec{p}} } -  \Phi\pth{e + \chi_{\vec{p}\,'}(g_{\vec{p}\,'} - e), \chi_{\vec{p}\,'}\pi_{\vec{p}\,'} } \right| 
\\&\quad \lesssim_{\max(\ga,\ga')} \pth{  \frac{|m-m'|}{r^3} + \max(m,m')\pth{ \frac{|y-y'|+|a-a'|}{r^4} + \frac{|v-v'|}{r^3}}  } \nonumber
\\&\hspace{3cm} \times \mathbbm{1}_{\{\min(\la,\la')\leq r \leq 2 \max(\la,\la')\}}.\nonumber
\end{align}
\end{itemize}
\end{proposition}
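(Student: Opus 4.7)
\textbf{Proof plan for Proposition \ref{prop kerr}.}

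My overall strategy is to split the three claims and prove each one largely independently. For (i), I would start from the explicit Kerr-Schild form of $\g_{m,|a|}$ in the original coordinates, where $\g_{m,|a|} - \m = \frac{2m\tilde r^3}{\tilde r^4 + |a|^2(x^3)^2}\, \kappa\otimes\kappa$, and extract by direct differentiation the schematic bound $|\nab^n(g_{\vec{p}} - e)| + r|\nab^n\pi_{\vec{p}}| \lesssim_{\ga,n} m/r^{n+1}$ after implementing the translation/rotation/boost \eqref{change of coord} and restricting to the slice $\{(x')^0 = 0\}$. The $\ga$-dependence is explicit from the Lorentz boost. Multiplication by $\chi_{\vec{p}}$ adds no loss since $r|\chi'_{\vec{p}}| + r^2|\chi''_{\vec p}| \lesssim 1$. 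The difference estimate \eqref{diff kerr} follows from smoothness of $\vec{p}\mapsto (g_{\vec{p}},\pi_{\vec{p}})$ in all four parameters, together with a parameter-by-parameter fundamental theorem of calculus: a shift of $y$ or $a$ gains one extra power of $r^{-1}$ as compared to $g_{\vec p}-e$ itself, while shifts in $m$ and $v$ only scale like the original quantity.

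For (ii), I would write $h = \chi_{\vec{p}}(g_{\vec{p}} - e)$, $\pi = \chi_{\vec{p}}\pi_{\vec{p}}$, and expand
\begin{align*}
\HH(e+h,\pi) = D\HH[e,0](h) + Q_\HH(h,\pi), \qquad \MM(e+h,\pi) = D\MM[e,0](\pi) + Q_\MM(h,\pi),
\end{align*}
using Lemma \ref{lem Phi}. By \eqref{DH} and \eqref{DM} the linear pieces are exact divergences, so their integrals against the weights $1$, $W_{1,0,k}$, $x^k$, $\Om^-_{ab}$ reduce to boundary integrals at $r\to\infty$ via the divergence theorem (for $x^k$ and $\Om^-_{ab}$, after the usual integration by parts that moves derivatives onto the weight). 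Since $\chi_{\vec{p}}\equiv 1$ for $r\geq 2\la$, these boundary integrals are computed on the exact boosted Kerr asymptotics and reproduce the standard ADM quantities $16\pi\ga m$, $-8\pi\ga m v^k$, $m y^k$, $8\pi m a^k$ with the stated normalizations. The remainders $\RR$ are then defined as the integrals of $Q_\HH, Q_\MM$; using the pointwise bounds from (i) — $|Q_\HH| + r|Q_\MM| \lesssim_\ga m^2/r^4$ in the outer region $r\geq 2\la$ and $\lesssim_\ga m^2/\la^4$ on the annulus $\{\la \leq r \leq 2\la\}$ (here derivatives of $\chi_{\vec p}$ produce $1/\la$ each but the integrand is at most quadratic and the annulus has volume $\sim\la^3$) — both contributions integrate to $\lesssim_\ga m^2$, giving \eqref{estim R bh bis}. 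The difference estimate \eqref{diff R bh bis} is obtained by the same scheme applied to $Q_\HH(h,\pi) - Q_\HH(h',\pi')$ (bilinearly) together with \eqref{diff kerr}, splitting the region of integration at $\min(\la,\la')$ and $2\max(\la,\la')$.

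For (iii), the key observation is that $\Phi(g_{\vec{p}},\pi_{\vec{p}}) = 0$ since Kerr solves the constraints, and $\Phi(e,0) = 0$. Therefore $\Phi(e + \chi_{\vec{p}}(g_{\vec{p}}-e), \chi_{\vec{p}}\pi_{\vec{p}})$ vanishes identically on $\{r\leq\la\}\cup\{r\geq 2\la\}$, leaving support in the transition annulus $\{\la\leq r\leq 2\la\}$ where $r\sim\la$. On that annulus the product-rule bound $|\nab^n(\chi_{\vec{p}} f)| \lesssim \sum_{j\leq n} \la^{-(n-j)} |\nab^j f|$ combined with the Kerr decay $|\nab^j(g_{\vec{p}}-e)| \lesssim_\ga m/r^{j+1}$ and $|\nab^j \pi_{\vec{p}}| \lesssim_\ga m/r^{j+2}$ makes every schematic term appearing in the expansions \eqref{DH}--\eqref{D2M} and the Err terms \eqref{ErrH}, \eqref{ErrM} bounded by $m/r^3$. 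Estimate \eqref{diff constraint kerr} then follows from the same computation applied to the difference, using \eqref{diff kerr} and the elementary fact that $\chi_{\vec{p}} - \chi_{\vec{p}\,'}$ is supported in $\{\min(\la,\la')\leq r\leq 2\max(\la,\la')\}$ and satisfies $|\nab^n(\chi_{\vec{p}}-\chi_{\vec{p}\,'})| \lesssim (|y-y'|+|a-a'|)/\la^{n+1}$.

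The main technical obstacle is keeping the implicit constants free of $\la$ throughout, since $\la = \max(2|y|,2|a|,3)$ may be as large as the parameters $|y|,|a|$ themselves appearing elsewhere in the argument. This is precisely why the sharp $m/r^3$ rate in \eqref{constraint kerr} is crucial: any slower decay (say $m/r^{3-\epsilon}$) would make the $m^2$-type remainders in part (ii) pick up unbounded powers of $\la$ and invalidate \eqref{estim R bh bis}. Balancing the derivative counts on the cutoff and the Kerr tail, so that the transition-annulus contributions always integrate to $m^2$ rather than $m^2\la^\alpha$ for some positive $\alpha$, is the delicate bookkeeping that must be done carefully in each of the three parts.
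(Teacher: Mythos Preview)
Your approach to parts (i) and (iii) matches the paper's and is essentially correct. For part (ii), however, there is a genuine gap in your treatment of the degree-one weights $x^k$ and $\Om^-_{ab}$.

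You claim that the remainders $\RR$ are defined as the integrals of the nonlinear part $Q_\HH$ (resp.\ $Q_\MM$) against the weight, and that the pointwise bound $|Q_\HH| \lesssim_\ga m^2/r^4$ in the outer region $\{r\geq 2\la\}$ makes these integrals $\lesssim_\ga m^2$. This works for the constant weights $1$ and $W_{1,0,k}$, but fails for $x^k$ and $\Om^-_{ab}$: against a weight growing like $r$, the integrand is $\sim m^2/r^3$ in three dimensions, which is \emph{logarithmically divergent} at infinity. So your definition of $\RR_{\HH,x^k}$ and $\RR_{\MM,\Om^-_{ab}}$ does not even yield a finite number, let alone one bounded by $m^2$.

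The paper resolves this by expanding one order further. For \eqref{int kerr H x}, one writes
\[
\HH(e+h,\pi) = D\HH[e,0](h) + \tfrac12 D^2\HH[e,0]((h,\pi),(h,\pi)) + \mathrm{Err}\HH[e,0](h,\pi).
\]
The cubic error $\mathrm{Err}\HH$ decays like $m^3/r^5$ and is integrable against $r$. The quadratic piece $Q^k(\vec{p}) := \tfrac12\langle D^2\HH[e,0]((h,\pi),(h,\pi)),x^k\rangle$ is a priori divergent, but the paper argues by contradiction: writing the spherical integral as $\widehat{Q}^k_r(\vec{p}) = c^k(\vec{p})/r + O(m^2 r^{-2})$ using the explicit Kerr asymptotics, if $c^k(\vec{p})\neq 0$ then $Q^k(\vec{p}) = \pm\infty$, which is impossible since the other three terms in the identity (the full left-hand side, the linear ADM piece $my^k$, and the cubic error) are all finite. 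Hence $c^k(\vec{p}) = 0$, and the remaining $O(m^2 r^{-2})$ integrates to $\lesssim_\ga m^2$. The same mechanism handles $\Om^-_{ab}$. You need to incorporate this cancellation argument (or supply an alternative, e.g.\ a direct computation showing the leading spherical moment vanishes) to close part (ii).
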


\begin{remark}
Thanks to the cutoff functions $\chi_{\vec{p}}$ vanishing if $r\leq 3$, the $r^{-1}$ weights in the estimates of Proposition \ref{prop kerr} are equivalent to $(1+r)^{-1}$ weights.
\end{remark}

\section{Main results}\label{section main results}

In this section, we present the main result of this article, i.e. Theorem \ref{maintheorem}. Before that, we present first our conformal ansatz and then discuss the class of seed tensors we will consider in Theorem \ref{maintheorem}.

\subsection{A simplified conformal ansatz}\label{section ansatz}

Near Minkowski spacetime, we will consider the following conformal ansatz for $(g,\pi)$:
\begin{align}
(g,\pi) & = \pth{u^4 \bar{g}, \bar{\pi} + L_e X},\label{SCA}
\end{align}
for $(\bar{g},\bar{\pi})$ the parameter of the conformal formulation, $u$ a scalar function and $X$ a vector field. For the simplified ansatz \eqref{SCA}, the constraint equations rewrite
\begin{align*}
8\De_{\bar{g}} u  & =  u R(\bar{g})    + \half u^{-3}(\tr_{\bar{g}}\pth{\bar{\pi} + L_e X})^2 - u^{-3} | \bar{\pi} + L_e X |^2_{\bar{g}},
\\ \div_{\bar{g}}(\bar{\pi} + L_e X)_i & =  2u^{-1} \pth{  \dr_{i} u \tr_{\bar{g}}(\bar{\pi} + L_e X) - \bar{g}^{jk} \dr_k u  (\bar{\pi} + L_e X)_{ij}}.
\end{align*}
If $\bar{g}$ is assumed to be close to the Euclidean metric and $\bar{\pi}$ close to 0, we hope to show that $\wc{u}=u-1$ and $\wc{X}=X$ are small and we rewrite these equations as
\begin{align*}
8\De \wc{u}  & = \HH(\bar{g},\bar{\pi}) + \RR_\HH\pth{\bar{g},\bar{\pi},\wc{u},\wc{X}} ,
\\ \De \wc{X}_i & =  -\MM(\bar{g},\bar{\pi})_i  + \RR_\MM\pth{\bar{g},\bar{\pi},\wc{u},\wc{X}}_i ,
\end{align*}
where we defined the following remainders 
\begin{align*}
\RR_\HH\pth{\bar{g},\bar{\pi},\wc{u},\wc{X}} & = (1+\wc{u})^{-3} \pth{  \half \pth{\tr_{\bar{g}} L_e \wc{X}}^2 + \tr_{\bar{g}}\bar{\pi}\tr_{\bar{g}}L_eX - \left|  L_e \wc{X} \right|^2_{\bar{g}} - 2\pth{\bar{\pi}\cdot L_e\wc{X}}_{\bar{g}}  }
\\&\quad  + \pth{(1+\wc{u})^{-3} - 1}\pth{ \half \pth{\tr_{\bar{g}}\bar{\pi}}^2  - \left| \bar{\pi}  \right|^2_{\bar{g}} } + \wc{u} R(\bar{g}) - 8\pth{ \De_{\bar{g}} -  \De}\wc{u},
\\ \RR_\MM\pth{\bar{g},\bar{\pi},\wc{u},\wc{X}}_i & = 2(1+\wc{u})^{-1} \pth{  \dr_{i} \wc{u} \tr_{\bar{g}}\pth{\bar{\pi} + L_e \wc{X}} - \bar{g}^{jk} \dr_k \wc{u}  \pth{\bar{\pi} + L_e \wc{X}}_{ij}}
\\&\quad -\pth{ \div_{\bar{g}}\pth{L_e\wc{X}}_i - \div_e\pth{L_e\wc{X}}_i}.
\end{align*}

\subsection{Parametrization by TT tensors}\label{section param TT}

Here we show how, without loss of generality, we can reduce the space where the parameters $(\bar{g},\bar{\pi})$ of the simplified ansatz \eqref{SCA} live. In the next lemma (whose proof is postponed to Appendix \ref{section proof lem reduction}), we first decompose every symmetric 2-tensor with enough decay into the sum of a TT tensor, a multiple of $e$ and a Lie derivative, and give precise decay bounds.

\begin{lemma}\label{lem decomposition}
Let $k\geq 2$, $q=1,2$ and $0<\de<1$, and let a symmetric 2-tensor $h$ in $H^k_{-q-\de}$. Then there exists a unique scalar function $ \u[h]$ and a unique vector field $\X[h]$ such that
\begin{align}\label{decomposition}
h_{TT} \vcentcolon = h - \u[h]e - \nab\otimes \X[h] 
\end{align}
is a TT tensor. Moreover we have the bounds 
\begin{align}\label{estim decomp}
\l \u[h]\r_{H^k_{-q-\de}}+\l \X[h]\r_{H^{k+1}_{-q-\de+1}}+\l h_{TT} \r_{H^k_{-q-\de}}\lesssim \l h\r_{H^k_{-q-\de}}.
\end{align}
\end{lemma}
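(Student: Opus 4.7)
The strategy is to recast the TT condition as an elliptic system for $(\u[h],\X[h])$ and then invert it using the weighted theory of Section \ref{section elliptic}. Writing $h_{TT} = h - \u e - \nab \otimes \X$, the two conditions $\tr h_{TT}=0$ and $\div h_{TT}=0$ read
\begin{align*}
3\u + 2\div \X & = \tr h, \\
\De \X + \nab \u + \nab \div \X & = \div h.
\end{align*}
Using the first equation to eliminate $\u$, the problem reduces to solving a single vector-valued elliptic equation
\begin{equation*}
L\X \vcentcolon= \De \X + \tfrac{1}{3}\nab \div \X = F, \qquad F \vcentcolon= \div h - \tfrac{1}{3}\nab \tr h \in H^{k-1}_{-q-\de-1},
\end{equation*}
after which $\u = \tfrac{1}{3}\tr h - \tfrac{2}{3}\div \X \in H^k_{-q-\de}$ is recovered algebraically.

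To invert $L$, I would first take the divergence of the equation to obtain the scalar Poisson equation $\tfrac{4}{3}\De\div\X = \div F$, solve it for $\div \X \in H^k_{-q-\de}$ using Theorem \ref{theo macowen}, and then solve the vector Poisson equation $\De \X = F - \tfrac{1}{3}\nab \div \X$ for $\X \in H^{k+1}_{-q-\de+1}$, again via Theorem \ref{theo macowen}. The bound \eqref{estim decomp} then follows by chaining these weighted elliptic estimates together with the triangle inequality.

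The main obstacle is the $L^2$-orthogonality that the right-hand side must satisfy when Theorem \ref{theo macowen} is applied in its non-isomorphism regime. For $q = 2$, both the weight $-1-\de$ for $\X$ and the weight $-2-\de$ for $\div \X$ lie in this regime, so one needs orthogonality of the right-hand side against harmonic polynomials of degree $0$, respectively $\le 1$. The key observation is that $F$ and $\div F$ are of pure divergence form. Using the pointwise decay $|h|\lesssim (1+r)^{-q-\de}$ coming from the embedding $H^k_{-q-\de}\hookrightarrow C^0_{-q-\de}$ of Lemma \ref{lem plongement}, a cutoff $\chi_R$ supported in $\{R\le r\le 2R\}$ and integration by parts yield
\[ \left| \int F_i \chi_R \, \d x\right| \lesssim R^{2-q-\de}, \qquad \left|\int \div F \cdot \chi_R \, \d x\right| \lesssim R^{1-q-\de}, \]
while the identity $\int \div F\cdot x^k \chi_R\,\d x = -\int F_k\chi_R\,\d x + \GO{R^{1-q-\de}}$ reduces the degree-one orthogonality to the degree-zero one. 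All boundary contributions vanish as $R\to\infty$ in the regime $q\in\{1,2\}$; in particular, for $q=1$ the weight $-\de$ for $\X$ lies in the isomorphism range of Remark \ref{remark iso}, so only the orthogonality for $\div\X$ is actually required, and the bound above still covers it.

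Uniqueness is then straightforward: if $(\u^0,\X^0)$ lies in the homogeneous kernel, $L\X^0=0$ forces $\De\div\X^0=0$ in $H^k_{-q-\de}$, and the injectivity part of Theorem \ref{theo macowen} (or Remark \ref{remark iso}) gives $\div\X^0=0$. Hence $\De\X^0=0$ in $H^{k+1}_{-q-\de+1}$, which forces $\X^0=0$ and finally $\u^0=0$.
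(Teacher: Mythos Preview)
Your proof is correct and follows essentially the same route as the paper: both reduce the problem to one scalar Poisson equation at weight $-q-\de$ and one vector Poisson equation at weight $-q-\de+1$, with the required orthogonality conditions verified via the divergence structure of the right-hand sides. The only cosmetic difference is that the paper extracts the auxiliary scalar $\De^{-1}\div\div h$ first and defines $\u[h]$ directly, whereas you eliminate $\u$ algebraically and use $\div\X$ as the auxiliary scalar---these are equivalent since $\div\X[h] = \tfrac{3}{4}\De^{-1}\div\div h - \tfrac{1}{4}\tr h$, and the resulting vector equation for $\X$ is literally the same in both arguments.
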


\begin{remark}
The decomposition \eqref{decomposition} is not orthogonal. It can be deduced from York's classical orthogonal decomposition in \cite{York1973}, this one being orthogonal.
\end{remark}

Building on Lemma \ref{lem decomposition}, the next proposition shows how one can benefit from the diffeomorphism invariance to simplify the parameters of the conformal ansatz. Its proof is postponed to Appendix \ref{section proof reduction}.

\begin{proposition}\label{prop reduction}
Let $0<\de<1$ and $\g$ a Lorentzian metric on $\RR=\{(t,x)\in \RRR\times \RRR^3, \; |t| < 1\}$ such that
\begin{align}\label{spacetime assumption}
\sup_{|t|<1}\pth{ \l \g - \m \r_{H^3_{-1-\de}(\Si_t)} + \l \dr_t \g  \r_{H^2_{-2-\de}(\Si_t)} } \leq \e,
\end{align}
where $\Si_t=\{t\}\times \RRR^3$. There exists $\e_0=\e_0(\de)>0$ such that if $0<\e\leq \e_0$, then there exists a diffeomorphism $\psi$ defined on $\left\{(t,x)\in \RRR\times \RRR^3, \; |t| < \half \right\} $, a scalar function $v$ and a vector field $Y$ defined on $\Si_0$ satisfying
\begin{align*}
\l v-1 \r_{H^3_{-1-\de}(\Si_0)} + \sup_{|t|<\half}\l \psi - \mathrm{Id}\r_{H^4_{- \de}(\Si_t)}  + \l Y \r_{H^3_{-1-\de}(\Si_0)} \lesssim \e ,
\end{align*} 
and such that the first and reduced second fundamental form of $\Si_0$ in the spacetime 
\begin{align*}
\pth{\left\{(t,x)\in \RRR\times \RRR^3, \; |t| < \half\right\},\psi^*\g}
\end{align*}
are given by
\begin{align*}
(\psi^*\g)_{| \Si_0} & = v^4 (e + \wc{g}), 
\\ \pi^{(\psi^*\g,\Si_0)} & =  L_{e}Y + \wc{\pi},
\end{align*}
with $\wc{g}$, $\wc{\pi}$ TT tensors.
\end{proposition}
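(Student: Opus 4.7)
The strategy is to construct $\psi$ by a nonlinear fixed-point argument, splitting it into a spatial reparametrization of $\Si_0$ and a time-shift selecting a nearby spacelike hypersurface. At first order around Minkowski these two effects decouple: since $\m$ is static, a pure time-shift leaves the induced metric unchanged at linear order; and since the background second fundamental form vanishes, a pure spatial diffeomorphism does not affect $\pi^{(\g,\Si_0)}$ at linear order. Thus the main step reduces to choosing, at leading order and separately, a spatial displacement $W$ and a time-shift $\xi^0$ on $\Si_0$.

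For the metric, set $h := g|_{\Si_0} - e$ and apply Lemma~\ref{lem decomposition} to write $h = \u[h] e + \nab\otimes\X[h] + h_{TT}$. Requiring $(\psi^*\g)|_{\Si_0} = v^4(e+\wc{g})$ with $\wc{g}$ TT forces the choice $v^4 = \tr\bigl((\psi^*\g)|_{\Si_0}\bigr)/3$, after which the divergence-freeness of $\wc{g}$ becomes an equation on $\psi$. The key linearized observation is that under a spatial displacement $W$ the tensor $h$ transforms into $h + \nab\otimes W$ at leading order; hence choosing $W = -\X[h]$ kills the $\nab\otimes\X[h]$ piece of the decomposition and leaves $h' = \u[h] e + h_{TT}$, which is manifestly of the desired form. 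No further elliptic inversion is required beyond the one already performed inside Lemma~\ref{lem decomposition}.

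For the reduced second fundamental form, apply Lemma~\ref{lem decomposition} to obtain $\pi^{(\g,\Si_0)} = \u[\pi] e + \nab\otimes\X[\pi] + \pi_{TT}$. Since $L_e Y = \nab\otimes Y - (\div Y) e$, the uniqueness part of the decomposition shows that $\pi^{(\g,\Si_0)} = L_e Y + \wc{\pi}$ with $\wc{\pi}$ TT is possible iff $\u[\pi] + \div\X[\pi] = 0$. A time-shift by $\xi^0$ modifies $\pi$ at linear order by $\De\xi^0\cdot e - \Hess\,\xi^0$, whose decomposition is $\De\xi^0\cdot e + \nab\otimes(-\tfrac{1}{2}\nab\xi^0)$, so this shift changes $\u[\pi] + \div\X[\pi]$ by exactly $\tfrac{1}{2}\De\xi^0$. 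Hence at leading order one solves the Laplace equation
\begin{align*}
\De\xi^0 = -2\bigl(\u[\pi] + \div\X[\pi]\bigr),
\end{align*}
which is uniquely solvable in the relevant weighted Sobolev space by the isomorphism statement in Remark~\ref{remark iso}. The vector field $Y$ is then read off as $\X[\pi']$ for $\pi'$ the transformed reduced second fundamental form.

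The full spacetime diffeomorphism $\psi$ is obtained by extending the spatial pair $(W,\xi^0)$ from $\Si_0$ to a vector field on $\{|t|<1\}$ via a smooth cutoff in $t$, and integrating its flow, which produces $\psi$ on the smaller slab $\{|t|<\half\}$. The fully nonlinear problem is closed by a standard contraction mapping argument on the product weighted Sobolev spaces dictated by the linear step; the residual couplings between the spatial and temporal gauges — coming from $\xi^0\dr_t\g$ in $\de g_{ij}$ and $W^k\dr_k\pi$ in $\de\pi_{ij}$, both of order $\e^2$ by assumption \eqref{spacetime assumption} — are absorbed by smallness of $\e$. The main obstacle is the careful bookkeeping of the regularity/decay exchange in the elliptic inversion: each inversion of $\De$ gains two derivatives but loses one power of decay, yielding $\psi-\mathrm{Id}\in H^4_{-\de}(\Si_t)$ from data in $H^3_{-1-\de}(\Si_t)$, while leaving $v-1$ and $Y$ in $H^3_{-1-\de}(\Si_0)$ as claimed.
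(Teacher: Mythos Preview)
Your proposal is correct and follows essentially the same approach as the paper: split the gauge freedom into spatial components (your $W$, the paper's $\wc{\psi}_i$) chosen to absorb the $\nab\otimes\X$ part of the decomposition of the induced metric, and a time component (your $\xi^0$, the paper's $\wc{\psi}_0$) obtained by solving a scalar Laplace equation with source $\u[\pi]+\div\X[\pi]$, then close by contraction. The only notable difference is that the paper builds $\psi$ directly as $\mathrm{Id}+\wc{\psi}$ with $\wc{\psi}$ extended trivially in $t$ (imposing $\dr_t\wc{\psi}=0$), which is simpler than your proposed cutoff-plus-flow construction and avoids any extra bookkeeping; also your sign in the equation for $\xi^0$ is opposite to the paper's, but this is just the orientation convention for the time-shift.
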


This proposition has the following consequence: when solving the constraint equations near Minkowski spacetime, we can assume without loss of generality that the perturbations $(\wc{g},\wc{\pi})$ of the induced metric and of the reduced second fundamental form are TT tensors. Indeed, consider a perturbation of Minkowski with enough decay and regularity (as in \eqref{spacetime assumption}). From the point of view of stability problems (for instance), the choice of the initial slice is irrelevant and Proposition \ref{prop reduction} tells us that the initial data on $\RRR^3$ can be assumed to be of the form
\begin{align*}
\bar{g} & = v^4 (e + \wc{g}),
\\ \bar{\pi} & = L_e Y + \wc{\pi},
\end{align*}
with $(\wc{g},\wc{\pi})$ TT tensors. If we now want to solve the constraint equations on $\RRR^3$, we perturb $(\bar{g},\bar{\pi})$ and look for a solution $(g,\pi)$ under the form \eqref{SCA}, i.e. look for $u$ and $X$ such that
\begin{align*}
g & = (uv)^4 (e +\wc{g}),
\\ \pi & = L_e(X+Y) + \wc{\pi},
\end{align*}
solve the constraint equations. Now instead of solving for $(u,X)$, we can solve for $(uv,X+Y)$. This shows that if one's goal is to construct every decaying solution of the constraint equations in a neighborhood of Minkowski spacetime, then there is no loss of generality in assuming that $\bar{g}-e$ and $\bar{\pi}$ are TT tensors.

\begin{remark}
Note that this parametrization by TT tensors is consistent with the number of degrees of freedom of the constraint equations. Indeed, 4 degrees of freedom come from the four equations, 4 come from the two TT tensors $\wc{g}$ and $\wc{\pi}$, and 4 come from the diffeomorphism invariance, i.e. the choice of the initial slice. This adds up to 12 degrees of freedom, matching the 12 degrees of freedom of two symmetric 2-tensors.
\end{remark}

\begin{remark}
In order to obtain decaying solutions to the constraint equations, we will parametrize our set of solutions by decaying TT tensors, i.e. TT tensors in $H^2_{-q-\de}$ for $q\in \mathbb{N}^*$ and $0<\de<1$. Note they can be produced by the operator $P:H^5_{-q-\de+3}\longrightarrow H^2_{-q-\de}$ acting on symmetric 2-tensors defined by
\begin{align*}
P = \pth{ \De - \nab\otimes \div } \curl,
\end{align*}
where the $\curl$ of a symmetric 2-tensor is defined in Section 4.4 of \cite{Christodoulou1993}. Indeed one can show that if $h\in H^5_{-q-\de+3}$ then $P(h)$ is TT.
\end{remark}

\subsection{Restriction to a cone}\label{section restriction to a cone}

As explained in the introduction, the assumption that $(\wc{g},\wc{\pi})$ are TT will imply a coercive estimate on the mass $m$ of the black hole, i.e. for non-trivial TT tensors $(\wc{g},\wc{\pi})$ we will have $m>0$. However, the coupling between the equation for the mass $m$ and the equation for the boost velocity $v$ will imply that
\begin{align}\label{eq m modele}
16\pi m & = \sqrt{1-J(\wc{g},\wc{\pi})^2}\pth{ \frac{1}{4} \l \nabla \wc{g} \r_{L^2}^2 + \l \wc{\pi} \r_{L^2}^2 } + \text{lower order terms},
\end{align}
where the functional $J$ is given in the following definition.

\begin{definition}\label{def J}
For $(h,\varpi)$ two symmetric 2-tensors such that 
\begin{align*}
0< \frac{1}{4} \l \nabla h \r_{L^2}^2 + \l \varpi \r_{L^2}^2 < \infty,
\end{align*}
we define the following functional
\begin{align*}
J(h,\varpi) & = \frac{\sqrt{ \sum_{k=1,2,3}\pth{\int_{\RRR^3}\varpi^{ij}\dr_k h_{ij} }^2}}{\frac{1}{4} \l \nabla h \r_{L^2}^2 + \l \varpi \r_{L^2}^2 }.
\end{align*}
\end{definition}

The following proposition gathers important properties of the functional $J$. Its proof is postponed to Appendix \ref{section proof prop F}.

\begin{proposition}\label{prop F}
The functional $J$ satisfies the following properties.
\begin{itemize}
\item[(i)] If $(h,\varpi)\in \dot{H}^1\times L^2\setminus \{(0,0)\}$, then $J(h,\varpi)<1$, where the $\dot{H}^1$ norm is defined by $\l h\r_{\dot{H}^1}\vcentcolon =\l \nab h \r_{L^2}$.
\item[(ii)] There exists a sequence $(\wc{g}_n,\wc{\pi}_n)_{n\in\mathbb{N}}$ of TT tensors in $\dot{H}^1\times L^2\setminus \{(0,0)\}$ such that 
\begin{align*}
\lim_{n\to +\infty}J(\wc{g}_n,\wc{\pi}_n)=1.
\end{align*}
\end{itemize}
\end{proposition}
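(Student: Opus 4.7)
The plan is to prove (i) via a chain of Cauchy--Schwarz and polarization inequalities followed by a rigidity argument in the equality case, and to prove (ii) by a Fourier-localized TT construction concentrated near a single wavevector in the $e_1$ direction.

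For part (i), write $I_k := \int_{\RRR^3}\varpi^{ij}\dr_k h_{ij}$. Cauchy--Schwarz applied component-wise and then summed gives $\sqrt{\sum_k I_k^2}\leq \|\varpi\|_{L^2}\|\nabla h\|_{L^2}$, and the polarization inequality $ab\leq a^2+\tfrac{1}{4}b^2$ with $a=\|\varpi\|_{L^2}$, $b=\|\nabla h\|_{L^2}$ yields $J(h,\varpi)\leq 1$. If equality holds, polarization forces $\|\nabla h\|_{L^2}=2\|\varpi\|_{L^2}>0$, while component-wise Cauchy--Schwarz equality produces constants $\alpha_k\in\RRR$ (not all zero) with $\dr_k h = \alpha_k\varpi$ in $L^2$. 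Then for any $\beta$ orthogonal to $\alpha := (\alpha_1,\alpha_2,\alpha_3)$ one has $\beta\cdot\nabla h = (\beta\cdot\alpha)\varpi = 0$, so $h$ depends only on the single coordinate $\alpha\cdot x$. A function of one Cartesian direction cannot lie in $\dot H^1(\RRR^3)$ unless its gradient vanishes (by Fubini, the norm factors a $2$-dimensional Lebesgue integral), contradicting $\|\nabla h\|_{L^2}>0$.

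For part (ii), I construct $(\wc g_n,\wc\pi_n)$ in Fourier space. Fix a nonzero even bump $\chi\in C_c^\infty(\RRR^3)$ supported in the unit ball, $\epsilon_0 = \diag(0,1,-1)$ (which is TT transverse to $e_1$), and let $\Pi(\hat\xi)$ denote the transverse-traceless projector on symmetric 2-tensors, built from $P_{ij}(\hat\xi)=\delta_{ij}-\hat\xi_i\hat\xi_j$ via $\Pi_{ij,kl}=\tfrac{1}{2}(P_{ik}P_{jl}+P_{il}P_{jk})-\tfrac{1}{2}P_{ij}P_{kl}$. Set
\[
\hat{\wc g}_n(\xi) = \bigl[\chi(\xi-ne_1)+\chi(\xi+ne_1)\bigr]\Pi(\xi/|\xi|)\epsilon_0,\qquad \wc\pi_n = \tfrac{1}{2}\dr_1\wc g_n.
\]
By pointwise application of $\Pi$, $\wc g_n$ is real-valued and TT, and $\wc\pi_n$ is TT as a derivative of a TT tensor. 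For $n$ large the support of $\hat{\wc g}_n$ lies in $B(\pm ne_1,1)$, on which $\xi_1/|\xi|=1+O(n^{-2})$, $|\xi_k|/|\xi_1|=O(n^{-1})$ for $k=2,3$, and $\Pi(\xi/|\xi|)\epsilon_0\to\epsilon_0\neq 0$ uniformly. Plancherel gives $I_k^{(n)}=\tfrac{1}{2(2\pi)^3}\int\xi_1\xi_k|\hat{\wc g}_n|^2\,d\xi$, from which $|I_k^{(n)}|/I_1^{(n)}=O(n^{-1})$ for $k=2,3$ and $\|\nabla\wc g_n\|^2 = \|\dr_1\wc g_n\|^2(1+O(n^{-2}))$. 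Since $I_1^{(n)}=\tfrac{1}{2}\|\dr_1\wc g_n\|^2$ and $\|\wc\pi_n\|^2=\tfrac{1}{4}\|\dr_1\wc g_n\|^2$, substitution into the definition of $J$ yields $J(\wc g_n,\wc\pi_n)\to 1$.

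The main obstacle is the tension in (ii) between near-saturation and the TT constraint: by (i) no nonzero TT pair actually saturates, so any approximating sequence must degenerate, and naive physical-space cutoffs of plane-wave polarizations break the divergence-free condition. The Fourier construction circumvents this because the TT polarization projector is smooth away from $\xi=0$ and asymptotically constant on bounded-radius neighborhoods around $\pm ne_1$, so the TT constraint is preserved exactly while the non-$e_1$ derivative contributions decouple at rate $n^{-1}$.
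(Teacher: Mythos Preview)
Your proof of (i) is essentially identical to the paper's: Cauchy--Schwarz componentwise, then the arithmetic--geometric inequality, then the rigidity argument that equality forces $h$ to depend on a single Cartesian direction, which is incompatible with membership in $\dot H^1(\RRR^3)$.

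For (ii) your argument is correct but takes a genuinely different route from the paper. The paper works in physical space: it starts from an anisotropically stretched traceless tensor $g_n=\phi''(x^3)\chi(\rho/n)\bigl((\d x^1)^2-(\d x^2)^2\bigr)$, which is not divergence-free, and then builds a corrector $h_n$ by integrating ODEs in $x^3$ and solving a two-dimensional divergence system (invoking a result of Huneau) to restore the TT condition; it then sets $\wc\pi_n=\tfrac12\dr_3(g_n+h_n)$ and checks that the non-$\dr_3$ derivatives are lower order. Your Fourier construction sidesteps the corrector entirely: by applying the transverse-traceless projector $\Pi(\hat\xi)$ pointwise to a bump localized near $\pm n e_1$, you obtain an exactly TT tensor from the outset, and the frequency localization makes the derivative asymptotics $\|\dr_k\wc g_n\|/\|\dr_1\wc g_n\|=O(n^{-1})$ for $k=2,3$ immediate via Plancherel. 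Your approach is shorter and self-contained (no external 2D elliptic result needed); the paper's approach, by contrast, produces tensors that are compactly supported in $x^3$ and have an explicit physical-space structure, which is occasionally useful but not required here.
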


The first part of Proposition \ref{prop F} is very much reminiscent of the fact that the ADM momentum vector cannot be null unless it vanishes, see \cite{Ashtekar1982} and \cite{Beig1996}. As in these references, the equality case here is discarded for decay related reasons, see Appendix \ref{section proof prop F}. 

The second part of Proposition \ref{prop F} shows that, even when restricted to the space of TT tensors, the functional $J$ is not uniformly far from 1. This implies that the lower bound on the mass obtained from \eqref{eq m modele} cannot be uniform over the space of TT tensors and might degenerate if $J(\wc{g},\wc{\pi})$ gets close to 1. 

Therefore, we have no choice but to perform our construction over a set of TT tensors where $J(\wc{g},\wc{\pi})$ is assumed uniformly far from 1. More precisely, we fix $0<\a \leq 1$ and define the following set
\begin{align*}
V_\a & \vcentcolon= \enstq{(\wc{g},\wc{\pi})\in \dot{H}^1\times L^2\setminus \{(0,0)\}}{ J(\wc{g},\wc{\pi})\leq 1-\a}.
\end{align*}
Our construction will require the smallness constant $\e$ measuring the size of $\wc{g}$ and $\wc{\pi}$ to be small compared to $\a$. This is always possible since $V_\a$ has the structure of a cone and therefore by rescaling any elements of $V_\a$, one can reach any smallness with $\a$ still being fixed.

\subsection{Statement of the main results}\label{section main theo}

We are ready to state the precise version of our main result. We recall that the cutoff $\chi_{\vec{p}}$ has been introduced in Definition \ref{def chi p}.

\begin{theorem}[Main result, version 2]\label{maintheorem}
Let $q\in \mathbb{N}^*$, $0<\de<1$ and $0<\e\ll \a \leq 1$. Let $(\wc{g},\wc{\pi})$ be two TT tensors such that
\begin{align*}
(\wc{g},\wc{\pi}) \in V_\a \cap B_{H^4_{-q-\de}\times H^3_{-q-\de-1}}(0,\e),
\end{align*} 
and
\begin{align*}
\eta^2 \vcentcolon = \frac{1}{4} \l \nabla \wc{g} \r_{L^2}^2 + \l \wc{\pi} \r_{L^2}^2 >0.
\end{align*}
Then, there exists $\e_0=\e_0(q,\de,\a)>0$ such that for every $0<\e\leq \e_0$, the following holds.
\begin{itemize}
\item[(i)] If $q=1$, then there exists a solution $(g,\pi)$ of the constraint equations on $\RRR^3$ of the form
\begin{align*}
g & = u^4 \pth{ e + \chi_{\vec{p}} (g_{\vec{p}} - e )  + \wc{g}   },
\\ \pi & =  \chi_{\vec{p}} \pi_{\vec{p}}  + \wc{\pi} + L_{e }X ,
\end{align*}
where
where the black hole parameter $\vec{p}=(m,0,0,v)\in\RRR_+\times \RRR^3\times \RRR^3 \times B_{\RRR^3}(0,1)$ satisfies
\begin{align*}
\eta^2 \lesssim m \lesssim \eta^2, \qquad  |v| \leq 1-\frac{\a}{2} ,
\end{align*}
and $u$ and $X$ satisfy
\begin{align*}
\l u-1 \r_{H^2_{-1-\de}} + \l X \r_{H^2_{-1-\de}} \lesssim \eta\e .
\end{align*}
\item[(ii)] If $q=2$, then there exists a solution $(g,\pi)$ of the constraint equations on $\RRR^3$ of the form
\begin{align*}
g & = u^4 \pth{ e + \chi_{\vec{p}} (g_{\vec{p}} - e )  + \wc{g}   },
\\ \pi & =  \chi_{\vec{p}} \pi_{\vec{p}}  + \wc{\pi} + \breve{\pi} + L_{e }X ,
\end{align*}
where the black hole parameter $\vec{p}=(m,y,a,v)\in\RRR_+\times \RRR^3\times \RRR^3 \times B_{\RRR^3}(0,1)$ satisfies
\begin{align*}
\eta^2 \lesssim m \lesssim \eta^2, \qquad |a| + |y| \lesssim \pth{ \frac{\e}{\eta}}^{\frac{1}{\frac{3}{2}+\de}}, \qquad |v| \leq 1-\frac{\a}{2} ,
\end{align*}
and $u$, $X$ and the corrector $\breve{\pi}\in \BB_2$ satisfy
\begin{align*}
\l u-1 \r_{H^2_{-2-\de}} + \l X \r_{H^2_{-2-\de}} + \l \breve{\pi} \r_{W^{1,\infty}} \lesssim \eta\e .
\end{align*}
\item[(iii)] If $q\geq 3$, then there exists a solution $(g,\pi)$ of the constraint equations on $\RRR^3$ of the form
\begin{align*}
g & = u^4 \pth{ e + \chi_{\vec{p}} (g_{\vec{p}} - e )  + \wc{g} + \breve{g}  },
\\ \pi & =  \chi_{\vec{p}} \pi_{\vec{p}}  + \wc{\pi} + \breve{\pi} + L_{e }X ,
\end{align*}
where the black hole parameter $\vec{p}=(m,y,a,v)\in\RRR_+\times \RRR^3\times \RRR^3 \times B_{\RRR^3}(0,1)$ satisfies
\begin{align*}
\eta^2 \lesssim m \lesssim \eta^2, \qquad |a| + |y| \lesssim \pth{ \frac{\e}{\eta}}^{\frac{1}{q+\de-\half}}, \qquad |v| \leq 1-\frac{\a}{2} ,
\end{align*}
and $u$, $X$ and the correctors $(\breve{g},\breve{\pi})\in \AA_q\times \BB_q$ satisfy 
\begin{align*}
\l u-1 \r_{H^2_{-q-\de}} + \l X \r_{H^2_{-q-\de}} +  \l \breve{g} \r_{W^{2,\infty}} + \l \breve{\pi} \r_{W^{1,\infty}} \lesssim \eta\e .
\end{align*}
\end{itemize}
\end{theorem}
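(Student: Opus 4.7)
The plan is to run a Banach fixed point argument on the tuple $(\wc u,\wc X,\vec p,\breve g,\breve\pi)$, with $\breve g=0$ for $q\leq 2$ and $\breve\pi=0$ for $q=1$. Substituting the ansatz into \eqref{C2} and using the simplified conformal formulation of Section \ref{section ansatz} rewrites the constraint equations as
\begin{align*}
8\De\wc u &= \HH(\bar g,\bar\pi) + \RR_\HH(\bar g,\bar\pi,\wc u,\wc X), \\
\De\wc X_i &= -\MM(\bar g,\bar\pi)_i + \RR_\MM(\bar g,\bar\pi,\wc u,\wc X)_i,
\end{align*}
with $\bar g = e + \chi_{\vec p}(g_{\vec p}-e) + \wc g + \breve g$ and $\bar\pi = \chi_{\vec p}\pi_{\vec p} + \wc\pi + \breve\pi$. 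Freezing the right-hand sides as source data, I would apply Proposition \ref{prop laplace} to invert the Laplacian, obtaining a decomposition into polynomially growing cokernel modes $v_{j,\ell},V_{j,\ell,k}$ with prefactors given by $L^2$ pairings of the sources against $w_{j,\ell}$, $W_{j,\ell,k}$ and the six combinations involving $\Om^\pm_{ab}$, plus a decaying remainder controlled by the source norm in $L^2_{-q-\de-2}$. To reach the prescribed decay, every single cokernel prefactor must vanish.

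\textbf{Killing the obstructions.} The 10 pairings against $\coker D\Phi[e,0]\cap\coker\De$, i.e.\ the Minkowski KIDs $1,x^k,W_{1,0,k},\Om^-_{ab}$, cannot be cancelled by compactly supported correctors since $D\Phi[e,0]^*$ annihilates these modes, so they must be absorbed by the Kerr term. Using the integral identities \eqref{int kerr H 1}--\eqref{int kerr M x} together with Lemma \ref{lem Phi}, these 10 conditions form a $10\times 10$ nonlinear system for $\vec p = (m,y,a,v)$. The equation paired with $w_{1,0}=1$, combined with the TT structure of $(\wc g,\wc\pi)$ which kills the divergence-type quadratic terms in \eqref{D2H}, and coupled to the three boost equations from $W_{1,0,k}$, reduces at leading order to
\begin{align*}
16\pi m = \sqrt{1-J(\wc g,\wc\pi)^2}\pth{\tfrac14\|\nabla\wc g\|_{L^2}^2+\|\wc\pi\|_{L^2}^2} + \text{l.o.t.},
\end{align*}
which, together with $(\wc g,\wc\pi)\in V_\a$, yields $\eta^2\lesssim m\lesssim \eta^2$ and $|v|\leq 1-\a/2$ by a sub-contraction for $(m,v)$. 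The six remaining equations from $x^k$ and $\Om^-_{ab}$ determine $y$ and $a$; their $r$-weighted right-hand sides force use of the stronger hypothesis $(\wc g,\wc\pi)\in H^4_{-q-\de}\times H^3_{-q-\de-1}$, and a sharp interpolation between $L^2_{-q-\de}$ and $L^2$ produces the bound $|y|+|a|\lesssim(\e/\eta)^{1/(q+\de-1/2)}$. The remaining non-KID obstructions, i.e.\ pairings against $w_{j,\ell}$ for $3\leq j\leq q$ and against $Z\in\mathcal Z_q$, are absorbed by $(\breve g,\breve\pi)\in\AA_q\times\BB_q$: Proposition \ref{prop KIDS}(ii) guarantees the finite-dimensional pairing matrices are invertible, producing $\|\breve g\|_{W^{2,\infty}}+\|\breve\pi\|_{W^{1,\infty}}\lesssim\eta\e$.

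\textbf{Fixed point.} Assembling the above into a single map $\Psi:(\wc u,\wc X,\vec p,\breve g,\breve\pi)\mapsto(\wc u',\wc X',\vec p\,',\breve g',\breve\pi')$, I would close the argument on the closed subset of the appropriate product Banach space where $\|\wc u\|_{H^2_{-q-\de}}+\|\wc X\|_{H^2_{-q-\de}}+\|\breve g\|_{W^{2,\infty}}+\|\breve\pi\|_{W^{1,\infty}}\lesssim\eta\e$, $m\in[c_1\eta^2,c_2\eta^2]$, $|y|+|a|\lesssim(\e/\eta)^{1/(q+\de-1/2)}$ and $|v|\leq 1-\a/2$. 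Self-mapping follows from \eqref{estim kerr}, \eqref{estim R bh bis}, \eqref{constraint kerr} of Proposition \ref{prop kerr}, Proposition \ref{prop laplace}, the weighted embeddings of Lemma \ref{lem plongement}, and the quadratic structure of the error terms $\mathrm{Err}\HH,\mathrm{Err}\MM$ in \eqref{ErrH}, \eqref{ErrM}. Contraction rests on the corresponding difference estimates \eqref{diff kerr}, \eqref{diff R bh bis}, \eqref{diff constraint kerr}. The resulting unique fixed point produces the desired $(u,X,\vec p,\breve g,\breve\pi)$.

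\textbf{Main obstacle.} The central difficulty is the interplay between the two small scales $\e$ and $\eta\lesssim\e$: $\eta$ may be arbitrarily smaller than $\e$, so the Kerr parameters $y,a$ are potentially large, and the cutoff $\chi_{\vec p}$ localizes the Kerr contribution in the far annulus $\{\la\leq r\leq 2\la\}$ with $\la\sim\max(|y|,|a|,3)$ possibly enormous. Every nonlinear estimate must systematically extract an $\eta$ rather than an $\e$, otherwise the contraction fails; and weighted integrals on the far annulus must be controlled without loss to justify the exact exponent $1/(q+\de-1/2)$. Finally, the uniform bound on the Lorentz factor $\ga$ secured by the $V_\a$ restriction is essential, as without it the coupling between the mass and boost equations would degenerate when $J(\wc g,\wc\pi)\to 1$. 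Carrying these two scales uniformly through every pairing in the $10\times 10$ Kerr system and through the Sobolev inversions is where the bulk of the technical work lies.
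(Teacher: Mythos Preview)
Your proposal is correct and matches the paper's strategy in all essential respects: the conformal ansatz, the cokernel analysis via Proposition~\ref{prop laplace}, the use of the integral identities \eqref{int kerr H 1}--\eqref{int kerr M x} to pin down $\vec p$, the invertible pairing matrices of Proposition~\ref{prop KIDS} for $(\breve g,\breve\pi)$, the mass coercivity from TT plus the cone $V_\a$, and the sharp H\"older interpolation giving the exponent $1/(q+\de-\tfrac12)$. The one organizational difference is that the paper runs a \emph{nested} fixed point rather than your single joint map: for each fixed $(\wc u,\wc X)$ it first solves an inner contraction (the ``parameter map'' $\Psi^{(p)}$) producing $\p(\wc u,\wc X)$ so that the orthogonality conditions \eqref{ortho 1}--\eqref{ortho 4} hold \emph{exactly}, and only then inverts $\De$ to define the outer map $\Psi^{(s)}$ on $(\wc u,\wc X)$. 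Your single map $\Psi$ is under-specified on precisely this point---if $(\wc u',\wc X')$ are obtained by inverting $\De$ on a source built from the \emph{old} parameters, the cokernel coefficients do not vanish and the inversion does not land in $H^2_{-q-\de}$; if instead you first compute $\vec p\,',\breve g',\breve\pi'$ and use those in the source, you are effectively running the nested scheme inside one iteration step. The paper's two-level structure also pays off in the difference estimates: the inner contraction yields a priori Lipschitz bounds on $\p(\wc u,\wc X)$ in $(\wc u,\wc X)$ (Lemma~\ref{lem diff param}), which are then fed cleanly into the outer contraction, separating the delicate $\eta$-versus-$\e$ bookkeeping into two smaller pieces.
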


We can adapt our construction to the time-symmetric case, which corresponds to solutions of the constraint equations with $\pi=0$. In this case, the constraint equations reduce to $R(g)=0$ and there is no linear obstructions coming from the momentum constraint operator. This allows us to drop the cone assumption and to consider only $g_{\vec{p}}$ with $\vec{p}=(m,y,0,0)$ (which implies $\pi_{\vec{p}}=0$), i.e. all the translated Schwarzschild data sets.

\begin{corollary}[Time-symmetric case, version 2]\label{coro TS}
Let $q\in \mathbb{N}^*$, $0<\de<1$. Let $\wc{g}$ be a TT tensor such that $\l \wc{g} \r_{H^4_{-q-\de}} \leq \e$ and $\eta \vcentcolon =  \l \nabla \wc{g} \r_{L^2} >0$. There exists $\e_0=\e_0(q,\de)>0$ such that for every $0<\e\leq \e_0$ the following holds.
\begin{itemize}
\item[(i)] If $q=1$, then there exists a solution $g$ of $R(g)=0$ on $\RRR^3$ of the form 
\begin{align*}
g & = u^4 \pth{ e + \chi_{\vec{p}} (g_{\vec{p}} - e )  + \wc{g}  },
\end{align*}
where the black hole parameter $\vec{p}=(m,0,0,0)\in\RRR_+\times \RRR^3\times \RRR^3 \times B_{\RRR^3}(0,1)$ satisfies
\begin{align*}
\eta^2 \lesssim m \lesssim \eta^2,  
\end{align*}
and $u$ satisfies $\l u-1 \r_{H^2_{-1-\de}}  \lesssim \eta\e $.
\item[(ii)] If $q=2$, then there exists a solution $g$ of $R(g)=0$ on $\RRR^3$ of the form 
\begin{align*}
g & = u^4 \pth{ e + \chi_{\vec{p}} (g_{\vec{p}} - e )  + \wc{g}  },
\end{align*}
where the black hole parameter $\vec{p}=(m,y,0,0)\in\RRR_+\times \RRR^3\times \RRR^3 \times B_{\RRR^3}(0,1)$ satisfies
\begin{align*}
\eta^2 \lesssim m \lesssim \eta^2,   \qquad |y| \lesssim \pth{ \frac{\e}{\eta}}^{\frac{1}{\frac{3}{2}+\de}},
\end{align*}
and $u$ satisfies $\l u-1 \r_{H^2_{-2-\de}}  \lesssim \eta\e $.
\item[(iii)] If $q\geq 3$, then there exists a solution $g$ of $R(g)=0$ on $\RRR^3$ of the form 
\begin{align*}
g & = u^4 \pth{ e + \chi_{\vec{p}} (g_{\vec{p}} - e )  + \wc{g} + \breve{g}  },
\end{align*}
where the black hole parameter $\vec{p}=(m,y,0,0)\in\RRR_+\times \RRR^3\times \RRR^3 \times B_{\RRR^3}(0,1)$ satisfies
\begin{align*}
\eta^2 \lesssim m \lesssim \eta^2, \qquad |y| \lesssim \pth{ \frac{\e}{\eta}}^{\frac{1}{q+\de-\half}}, 
\end{align*}
and $u$ and the corrector $\breve{g} \in \AA_q $ satisfy $\l u-1 \r_{H^2_{-q-\de}} + \l \breve{g} \r_{W^{2,\infty}} \lesssim \eta\e $.
\end{itemize}
\end{corollary}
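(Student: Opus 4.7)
The plan is to specialize the fixed-point argument underlying Theorem \ref{maintheorem} to the time-symmetric setting, in which we seek a solution with $\pi = 0$. Setting $\wc{\pi} = 0$ trivializes the cone condition since $J(\wc{g}, 0) = 0$, so the parameter $\a$ disappears and the hypothesis reduces to $\l \wc{g} \r_{H^4_{-q-\de}} \le \e$ with $\eta = \half \l \nab \wc{g} \r_{L^2} > 0$. We look for a black hole parameter of the form $\vec{p} = (m, y, 0, 0)$, which makes $g_{\vec{p}}$ a translated Schwarzschild metric and forces $\pi_{\vec{p}} = 0$; we also set $X = 0$ and $\breve{\pi} = 0$. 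The momentum constraint $\div_g \pi = 0$ is then trivially satisfied and the entire problem reduces to solving the single scalar Hamiltonian constraint $R(g) = 0$.

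Writing $g = u^4 \bar{g}$ with $\bar{g} = e + \chi_{\vec{p}}(g_{\vec{p}}-e) + \wc{g} + \breve{g}$ and $\wc{u} = u - 1$, the Hamiltonian constraint becomes the semilinear elliptic equation
\begin{align*}
8 \De \wc{u} = \HH(\bar{g}, 0) + \RR_\HH(\bar{g}, 0, \wc{u}, 0).
\end{align*}
Proposition \ref{prop laplace}(i) produces a solution $\wc{u} \in H^2_{-q-\de}$ provided the $L^2$-pairings $\langle \HH(\bar{g}, 0) + \RR_\HH, w_{j,\ell}\rangle$ vanish for $1 \le j \le q$, $|\ell| \le j-1$. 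The free parameters $m$, $y$ and $\breve{g} \in \AA_q$ are used to enforce these finitely many conditions. For $j = 1$ (i.e.\ $w_{1,0}$ constant), identity \eqref{int kerr H 1} combined with the TT property of $\wc{g}$ (which annihilates the $\l \div \wc{g} \r_{L^2}^2$ and $\l \nab \tr \wc{g} \r_{L^2}^2$ contributions of \eqref{D2H}) reduces the condition to $16\pi m = \eta^2 + \mathrm{l.o.t.}$, yielding $m \sim \eta^2$. For $j = 2$ (i.e.\ $w_{2,\ell}$ proportional to $x^k$), identity \eqref{int kerr H x} gives an equation of the form $m y^k = -\int (\text{quadratic in } \wc{g}) \cdot x^k + \mathrm{l.o.t.}$, which after interpolating between the $L^2$-norm of $\nab \wc{g}$ and its pointwise decay produces $|y| \lesssim (\e/\eta)^{1/(q+\de-\half)}$. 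For $j \ge 3$, the remaining pairings are absorbed by $\breve{g} \in \AA_q$ via the invertibility asserted in Proposition \ref{prop KIDS}(ii).

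These conditions for $\wc{u}$, $(m, y)$ and $\breve{g}$ are coupled through the lower-order and nonlinear terms and are solved simultaneously by a contraction map in the set
\begin{align*}
\enstq{(\wc{u}, m, y, \breve{g})}{\l \wc{u} \r_{H^2_{-q-\de}} + \l \breve{g} \r_{W^{2,\infty}} \lesssim \eta \e,\ m \sim \eta^2,\ |y| \lesssim (\e/\eta)^{\frac{1}{q+\de-\frac{1}{2}}}},
\end{align*}
relying on the estimates of Proposition \ref{prop kerr} for the truncated Schwarzschild data and those of Lemma \ref{lem plongement} for the nonlinear errors. The main technical obstacle, inherited from the proof of Theorem \ref{maintheorem}, is that $|y|$ can be much larger than $1$ when $\e/\eta$ is small: the cutoff scale $\la = \max(2|y|, 3)$ is then large and weighted norms of $\chi_{\vec{p}}(g_{\vec{p}} - e)$ degenerate accordingly. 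Overcoming this requires that the nonlinear estimates extract the small constant $\eta$ rather than $\e$, which is achieved by the TT-coercivity of the mass equation together with the sharp exponent in the interpolation bound for $|y|$. The simplification over Theorem \ref{maintheorem} is that no conditions need to be cancelled against $\coker(D\MM[e,0])$, so the angular momentum $a$ and boost velocity $v$ remain zero throughout the iteration.
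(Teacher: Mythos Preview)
Your proposal is correct and follows essentially the same approach as the paper. The paper does not give a separate proof of Corollary~\ref{coro TS} but simply notes (at the beginning of Section~\ref{section main proof} and just before the corollary's statement) that the time-symmetric case is obtained from the proof of Theorem~\ref{maintheorem} by setting $\wc{\pi}=0$, $a=0$, $v=0$, $X=0$, $\breve{\pi}=0$, which trivializes the momentum constraint and the cone assumption (since $J(\wc{g},0)=0$) and reduces everything to the scalar Lichnerowicz equation with only the Hamiltonian orthogonality conditions to enforce; this is exactly what you outline. One minor organizational difference: the paper runs two \emph{nested} fixed points (first $\p(\wc{u})$ for the parameters, then $\wc{u}$), whereas you package everything into a single contraction on $(\wc{u},m,y,\breve{g})$; both schemes are valid and lead to the same estimates.
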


\begin{remark}
Note that in the cases $q=1$ (both in Theorem \ref{maintheorem} and Corollary \ref{coro TS}), where there is no need for a nontrivial center of mass, the cutoff appearing in the ansatz for $g$ or $\pi$ is $\chi$ defined in Definition \ref{def tool laplace}, i.e. we do not need $\chi_{\vec{p}}$ for $q=1$.
\end{remark}

We also give a uniqueness statement in the case of the full constraint equations with $q\geq 3$ (simpler statements hold in the cases $q=1,2$). Its proof is postponed to Appendix \ref{appendix uniqueness}.

\begin{proposition}[Uniqueness]\label{prop uniqueness}
Let $q\geq 3$, $0<\de<1$ and let $(g,\pi)$ be a solution of the constraint equations on $\RRR^3$. Assume that there exists two sets
\begin{align}\label{two sets}
\pth{\vec{p}_i, \wc{g}_i,\wc{\pi}_i,\breve{g}_i,\breve{\pi}_i,u_i, X_i}
\end{align}
for $i=1,2$ with $\vec{p}_i\in \RRR_+\times \RRR^3\times \RRR^3 \times B_{\RRR^3}(0,1)$, $\wc{g}_i$ and $\wc{\pi}_i$ TT tensors in $H^2_{-q-\de}$, $(\breve{g}_i,\breve{\pi}_i)\in\mathcal{A}_q\times \mathcal{B}_q$ and $\pth{u_i-1,X_i}\in (H^2_{-q-\de})^2$ such that $(g,\pi)$ are given by
\begin{align}
g & = u_i^4 \pth{ e + \chi_{\vec{p}_i} (g_{\vec{p}_i} - e )  + \wc{g}_i + \breve{g}_i  },\label{uniq1}
\\ \pi & =  \chi_{\vec{p}_i} \pi_{\vec{p}_i}  + \wc{\pi}_i + \breve{\pi}_i + L_{e }X_i ,\label{uniq2}
\end{align}
Then the two sets \eqref{two sets} are equal.
\end{proposition}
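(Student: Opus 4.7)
The plan is to peel off the seven components $(\vec p_i,\wc g_i,\wc\pi_i,\breve g_i,\breve\pi_i,u_i,X_i)$ one block at a time, starting from the asymptotic information and working down to the TT and compactly-supported pieces. \emph{Step 1 (black hole parameter).} I first show $\vec p_1=\vec p_2$ by matching the ten ADM-type asymptotic invariants of $(g,\pi)$ (mass, linear momentum, center of mass and angular momentum). These are intrinsic to $(g,\pi)$, expressible as boundary integrals at infinity. Since $u_i-1, X_i\in H^2_{-q-\de}$ with $q\geq3$, the conformal factor and the Lie-derivative part of $\pi$ decay faster than $r^{-2}$ and contribute nothing at infinity; the TT tensors $\wc g_i, \wc\pi_i$ are trace- and divergence-free so all asymptotic charges built from them vanish identically; and the $\breve g_i, \breve\pi_i$ are compactly supported. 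Only the truncated Kerr pieces contribute at leading order, and identities \eqref{int kerr H 1}--\eqref{int kerr M x} of Proposition~\ref{prop kerr}(ii) identify the ten charges with $(16\pi\gamma_im_i,\,-8\pi\gamma_im_iv_i^k,\,m_iy_i^k,\,8\pi m_ia_i^k)$ up to remainders of order $m_i^2$ controlled by \eqref{estim R bh bis}. Equating these numbers between $i=1,2$ and using the smallness of $m_i$ together with the Lipschitz bound \eqref{diff R bh bis} in an implicit-function argument yields $\vec p_1=\vec p_2=:\vec p$.

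\emph{Steps 2 and 3 (conformal factor and metric corrections).} With $\vec p_1=\vec p_2$, \eqref{uniq1} becomes $u_i^4\bar g_i=g$ where $\bar g_i:=e+\chi_{\vec p}(g_{\vec p}-e)+\wc g_i+\breve g_i$. Taking the Euclidean trace $\delta^{ij}(\cdot)_{ij}$ and using that $\wc g_i$ is TT and that $\breve g_i\in\AA_q$ is traceless by Proposition~\ref{prop KIDS}(ii) gives $u_1^4\,T=u_2^4\,T$ with $T:=3+\tr_e\chi_{\vec p}(g_{\vec p}-e)$ pointwise positive for $m$ small, hence $u_1=u_2$. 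Then $\bar g_1=\bar g_2$ reduces to $\wc g_1-\wc g_2=\breve g_2-\breve g_1$; the LHS is TT and the RHS lies in $\AA_q$, so taking the $\BB_q$-factor to be zero in the complementarity \eqref{complementary} gives $\AA_q\cap(TT+\mathrm{Span}(e))=\{0\}$, forcing both differences to vanish.

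\emph{Step 4 (momentum corrections).} Subtracting the two versions of \eqref{uniq2} yields $(\wc\pi_1-\wc\pi_2)+(\breve\pi_1-\breve\pi_2)=L_e(X_2-X_1)$, so $\breve\pi_1-\breve\pi_2\in\BB_q\cap(TT+\mathrm{ran}(L_e))=\{0\}$ by the analogous consequence of \eqref{complementary}, and hence $\breve\pi_1=\breve\pi_2$. It remains to solve $\wc\pi_1-\wc\pi_2=L_e(X_2-X_1)$. Using the embedding $H^2_{-q-\de}\subset H^2_{-2-\de'}$ valid for $q\geq3$ and small $\de'\in(0,1)$, I apply the uniqueness of Lemma~\ref{lem decomposition}: since $L_eY=\nabla\otimes Y-(\div Y)e$ has decomposition $(-\div Y,\,Y,\,0)$ while the TT tensor $\wc\pi_1-\wc\pi_2$ has decomposition $(0,0,\wc\pi_1-\wc\pi_2)$, uniqueness forces $X_2-X_1=0$ and $\wc\pi_1=\wc\pi_2$. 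The main obstacle is Step 1: the cutoffs $\chi_{\vec p_i}$ have supports reaching out to $\max(2|y_i|,2|a_i|,3)$ with $|y_i|, |a_i|$ potentially as large as $\pth{\frac{\e}{\eta}}^{\frac{1}{q+\de-\half}}$, so the extraction of the ten charges from the truncated Kerr pieces must be performed carefully using the Lipschitz control of Proposition~\ref{prop kerr} together with the coercive mass lower bound $m_i\gtrsim\eta^2$.
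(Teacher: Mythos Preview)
Your Steps 2--4 are essentially correct and close to the paper's argument: the paper verifies \eqref{complementary} by hand (taking $\div$ to kill the TT part and using the linear independence of $\div h_{j,\ell}$ for $\AA_q$; applying $\De\tr+\div\div$ to kill both $L_eX$ and the TT part and using the linear independence of $(\De\tr+\div\div)\varpi_Z$ for $\BB_q$), whereas you invoke \eqref{complementary} directly. In Step 4 the appeal to Lemma~\ref{lem decomposition} is unnecessary---just take the Euclidean divergence of $\wc\pi_1-\wc\pi_2=L_e(X_2-X_1)$ to get $\De(X_2-X_1)=0$ and conclude by decay.

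Step 1, however, has a genuine gap. The identities \eqref{int kerr H 1}--\eqref{int kerr M x} that you cite are \emph{not} formulas for the ADM charges of the truncated Kerr data: they are volume integrals of the full nonlinear constraint operator against the KIDS, and it is precisely the nonlinear part that produces the $\RR$-remainders of order $m_i^2$. The ADM charges are the boundary integrals at $r\to\infty$ and are computed exactly, with no remainder, by \eqref{charges BH bis}: $(\E,\P,\J,\C)(g_{\vec p_i},\pi_{\vec p_i})=(\gamma_i m_i,-\gamma_i m_i v_i,m_ia_i,m_iy_i)$. Since the truncated and non-truncated Kerr data agree outside a compact set, and all the other pieces in \eqref{uniq1}--\eqref{uniq2} decay strictly faster than $r^{-2}$ (here $q\geq3$ is what matters, not the TT property for the momentum charges), the ADM charges of $(g,\pi)$ equal those of $(g_{\vec p_i},\pi_{\vec p_i})$ \emph{exactly} for each $i$. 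Equating the two sets of charges and inverting the algebraic map $(m,y,a,v)\mapsto(\gamma m,-\gamma m v,ma,my)$ gives $\vec p_1=\vec p_2$ directly. No smallness of $m_i$ and no implicit-function argument are needed---which is essential, since Proposition~\ref{prop uniqueness} contains no smallness hypothesis, so your Step 1 as written does not close. The ``main obstacle'' you flag (the location of the cutoffs $\chi_{\vec p_i}$) is also a red herring: the ADM charges are read off at $r\to\infty$, far beyond the support of any cutoff.
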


The discussions of Sections \ref{section param TT} and \ref{section restriction to a cone}, together with the existence statement of Theorem \ref{maintheorem} and the uniqueness statement of Proposition \ref{prop uniqueness} show that we have constructed decaying solutions of the constraint equations in a neighborhood of Minkowski spacetime on $\RRR^3$ as a graph over the set of decaying TT tensors.

\section{Solving the constraint equations}\label{section main proof}

This section is devoted to the proof of Theorem \ref{maintheorem} in the case $q\geq 3$. The proofs of the cases $q=1,2$ are simpler and we will only point out the simplifications in Remarks \ref{remark q12 1}, \ref{remark q12 2} and \ref{remark q12 3}. A similar remark holds for the proof of Corollary \ref{coro TS}, i.e. for the time-symmetric constraint equations, but we won't mention the simplifications further.

\subsection{Conformal formulation}

We define the base metric and base reduced second fundamental form for the conformal formulation of the constraint equations.

\begin{definition}\label{def para} We introduce the following notations:
\begin{itemize}
\item[(i)] The space of parameters is defined as 
\begin{align*}
P=\RRR_+\times \RRR^3\times \RRR^3 \times B_{\RRR^3}(0,1)\times \mathcal{A}_q\times \mathcal{B}_q,
\end{align*}
where $\AA_q$ and $\BB_q$ are defined in Definition \ref{def Aq Bq}.
\item[(ii)] For a parameter $\p=(\vec{p},\breve{g},\breve{\pi})\in P$ we define the base metric $\bar{g}(\p)$ by
\begin{align*}
\bar{g}(\p) & = e + \chi_{\vec{p}}(g_{\vec{p}}-e) + \wc{g} + \breve{g},
\end{align*}
and the base reduced second fundamental form $\bar{\pi}(\p)$ by
\begin{align*}
\bar{\pi}(\p) & = \chi_{\vec{p}}\pi_{\vec{p}} + \wc{\pi} + \breve{\pi},
\end{align*}
where $g_{\vec{p}}$, $\pi_{\vec{p}}$ and $\chi_{\vec{p}}$ are defined in Section \ref{section Kerr}.
\item[(iii)] We define the following distance between two parameters $\p$ and $\p'$:
\begin{align}\label{def distance}
d(\p,\p') & \vcentcolon = |m-m'| + \eta^2\pth{ |y-y'| + |a-a'|  + |v-v'| }  
\\&\quad + \l \breve{g} - \breve{g}'\r_{W^{2,\infty}}  + \l \breve{\pi} - \breve{\pi}'\r_{W^{1,\infty}}. \non
\end{align}
It is easy to see that $(P,d)$ defines a complete metric space.
\end{itemize}
\end{definition}

\begin{remark}
Note that \eqref{diff kerr} implies the rough estimate
\begin{align}\label{diff kerr 2}
&r^n \left| \nab^n\pth{ \chi_{\vec{p}}(g_{\vec{p}} - e) - \chi_{\vec{p}\,'}(g_{\vec{p}\,'} - e) } \right| + r^{n+1}\left| \nab^n \pth{ \chi_{\vec{p}} \pi_{\vec{p}} - \chi_{\vec{p}\,'} \pi_{\vec{p}\,'}} \right| 
\\&\hspace{5cm} \lesssim_{\max(\ga,\ga')} \pth{ 1  + \eta^{-2}\max(m,m')}\frac{d(\p,\p')}{r}\nonumber.
\end{align}
The estimate \eqref{diff kerr 2} does not benefit from the fact that in \eqref{diff kerr}, $|y-y'|+|a-a'|$ is divided by $r^2$ and not $r$. This better weight will be crucial in the proofs of Lemmas \ref{lem param contraction} and \ref{lem u X contraction}. 
\end{remark}

The base metric and reduced second fundamental form $(\bar{g}(\p),\bar{\pi}(\p))$ serve as parameters of the conformal method, in the sense that we look for solutions of the constraint equations under the form
\begin{equation}\label{conformal ansatz}
\begin{aligned}
g & = (1+\wc{u})^4 \bar{g}(\p),
\\ \pi & = \bar{\pi}(\p) + L_{e}\wc{X},
\end{aligned}
\end{equation}
for $\wc{u}$ a scalar function and $\wc{X}$ a vector field. As explained in Section \ref{section ansatz}, the constraint equations for $(g,\pi)$ are equivalent to the following system of equations for $\wc{u}$ and $\wc{X}$:
\begin{equation}\label{eq u X}
\left\{
\begin{aligned}
8\De \wc{u}  & = \HH(\bar{g}(\p),\bar{\pi}(\p)) + \RR_\HH\pth{\p,\wc{u},\wc{X}} ,
\\ \De \wc{X}_i & =  -\MM(\bar{g}(\p),\bar{\pi}(\p))_i  + \RR_\MM\pth{\p,\wc{u},\wc{X}}_i ,
\end{aligned}
\right.
\end{equation}
with remainders 
\begin{align}
&\RR_\HH\pth{\p,\wc{u},\wc{X}} \label{remainders 1}
\\& = (1+\wc{u})^{-3} \pth{  \half \pth{\tr_{\bar{g}(\p)} L_e \wc{X}}^2 + \tr_{\bar{g}(\p)}\bar{\pi}(\p)\tr_{\bar{g}(\p)}L_eX - \left|  L_e \wc{X} \right|^2_{\bar{g}(\p)} - 2\pth{\bar{\pi}(\p)\cdot L_e\wc{X}}_{\bar{g}(\p)}  } \nonumber
\\&\quad  + \pth{(1+\wc{u})^{-3} - 1}\pth{ \half \pth{\tr_{\bar{g}(\p)}\bar{\pi}(\p)}^2  - \left| \bar{\pi}(\p)  \right|^2_{\bar{g}(\p)} } + \wc{u} R(\bar{g}(\p)) - 8\pth{ \De_{\bar{g}(\p)} -  \De}\wc{u}\nonumber,
\end{align}
and
\begin{align}
\RR_\MM\pth{\p,\wc{u},\wc{X}}_i & = 2(1+\wc{u})^{-1} \pth{  \dr_{i} \wc{u} \tr_{\bar{g}(\p)}\pth{\bar{\pi}(\p) + L_e \wc{X}} - \bar{g}(\p)^{jk} \dr_k \wc{u}  \pth{\bar{\pi}(\p) + L_e \wc{X}}_{ij}} \label{remainders 2}
\\&\quad -\pth{ \div_{\bar{g}(\p)}\pth{L_e\wc{X}}_i - \div_e\pth{L_e\wc{X}}_i}.\nonumber
\end{align}
The strategy to prove Theorem \ref{maintheorem} is then to solve \eqref{eq u X} with a fixed point argument while also choosing the parameter $\p$ so that the RHS of the equations are orthogonal to the first $q^2$ and $3q^2$ spherical harmonics respectively appearing in the first and second equation when inverting the Laplacian operator (see Proposition \ref{prop laplace}). For the sake of clarity, we define
\begin{align*}
A\pth{ \p , \wc{u},\wc{X} } & \vcentcolon = \HH(\bar{g}(\p),\bar{\pi}(\p)) + \RR_\HH\pth{\p,\wc{u},\wc{X}},
\\ B\pth{ \p , \wc{u},\wc{X} }_i & \vcentcolon =  -\MM(\bar{g}(\p),\bar{\pi}(\p))_i  + \RR_\MM\pth{\p,\wc{u},\wc{X}}_i .
\end{align*}
The choice of the parameter $\p$ is part of the fixed point argument since the RHS of \eqref{eq u X} depend on $\wc{u}$ and $\wc{X}$ themselves. The parameter will thus depend on $\pth{\wc{u},\wc{X}}$ and we denote it $\p\pth{\wc{u},\wc{X}}$. To ensure $r^{-q-\de}$ asymptotic $r$-decay, we will choose $\p$ so that
\begin{itemize}
\item the Hamiltonian orthogonality conditions hold
\begin{align}\label{ortho 1}
\left\langle A\pth{ \p\pth{\wc{u},\wc{X}} , \wc{u},\wc{X} }, w_{j,\ell} \right\rangle = 0,
\end{align}
for $1\leq j \leq q$ and $-(j-1)\leq \ell \leq j-1$,
\item the momentum orthogonality conditions hold
\begin{align}
\left\langle B\pth{ \p\pth{\wc{u},\wc{X}} , \wc{u},\wc{X} }, W_{1,0,k} \right\rangle & = 0,\label{ortho 2}
\\ \left\langle B\pth{ \p\pth{\wc{u},\wc{X}} , \wc{u},\wc{X} }, \Om^{\pm}_{ab} \right\rangle & = 0,\label{ortho 3}
\\ \left\langle B\pth{ \p\pth{\wc{u},\wc{X}} , \wc{u},\wc{X} }, W_{j,\ell,k} \right\rangle & = 0,\label{ortho 4}
\end{align}
for $3\leq j \leq q$, $-(j-1)\leq \ell \leq j-1$, $k=1,2,3$ and $a,b=1,2,3$.
\end{itemize}

\begin{remark}\label{remark q12 1}
If $q=1$, we only need to ensure the orthogonality conditions \eqref{ortho 1} for $j=1$ and \eqref{ortho 2}. If $q=2$, we only need to ensure the orthogonality conditions \eqref{ortho 1} for $j=1,2$, \eqref{ortho 2} and \eqref{ortho 3}.
\end{remark}

We give a brief outline of the proof of Theorem \ref{maintheorem}.
\begin{itemize}
\item In Section \ref{section expansion}, we expand $A\pth{ \p , \wc{u},\wc{X} }$ and $B\pth{ \p , \wc{u},\wc{X} }$ and identify main terms and error terms.
\item In Section \ref{section projection}, we use these expansions to compute the projections appearing in the orthogonality conditions above.
\item In Section \ref{section def param}, we construct a map $\pth{\wc{u},\wc{X}}\mapsto \p\pth{\wc{u},\wc{X}}\in P$ such that the orthogonality conditions hold.
\item In Section \ref{section conclusion}, we solve the constraint equations with the parameter $\p\pth{\wc{u},\wc{X}}$ previously constructed.
\end{itemize}

\subsection{Expansions of $A\pth{ \p , \wc{u},\wc{X} }$ and $B\pth{ \p , \wc{u},\wc{X} }$}\label{section expansion}

In this section, we give two propositions expanding first the main terms in $A\pth{ \p , \wc{u},\wc{X} }$ and $B\pth{ \p , \wc{u},\wc{X} }$, i.e. $\HH(\bar{g}(\p),\bar{\pi}(\p))$ and $\MM(\bar{g}(\p),\bar{\pi}(\p))$, and then the remainders $\RR_\HH\pth{\p,\wc{u},\wc{X}}$ and $\RR_\MM\pth{\p,\wc{u},\wc{X}}$.

\begin{proposition}\label{prop main terms}
If $\p\in P$ is such that 
\begin{align}
m + \l \breve{g} \r_{W^{2,\infty}} + \l \breve{\pi} \r_{W^{1,\infty}} \leq \half,\label{assumption preli}
\end{align}
then we have
\begin{align}
\HH(\bar{g}(\p),\bar{\pi}(\p)) & = \HH\pth{e + \chi_{\vec{p}}(g_{\vec{p}}-e) , \chi_{\vec{p}}\pi_{\vec{p}} }  + D\HH[ e ,0]\pth{\breve{g}}  \label{exp H}
\\&\quad + \half D^2\HH[ e ,0]\pth{\pth{ \wc{g}  , \wc{\pi}  } , \pth{ \wc{g} , \wc{\pi}   }}  +   \breve{g}^{ij} \De \wc{g}_{ij}  + \mathrm{Err}_1(\p), \nonumber 
\\ \MM(\bar{g}(\p),\bar{\pi}(\p))_i & = \MM\pth{e + \chi_{\vec{p}}(g_{\vec{p}}-e),\chi_{\vec{p}}\pi_{\vec{p}} }_i  +D\MM[e ,0]( \breve{\pi})_i \label{exp M}
\\&\quad +  \half D^2\MM[e , 0]((\wc{g},\wc{\pi}),(\wc{g},\wc{\pi}))_i - \breve{g}^{k\ell}\dr_k \wc{\pi}_{\ell i} + \mathrm{Err}_2(\p)_i \nonumber,
\end{align}
where the error terms satisfy
\begin{align}\label{estim error}
|\mathrm{Err}_i(\p)| & \lesssim_\ga    \frac{\e m}{(1+r)^{q+\de+3}} 
\\&\quad + \frac{\e}{(1+r)^{q+\de}}\pth{ |\nab\wc{g}|^2 + |\wc{\pi}|^2}  + \pth{\l \breve{g}\r_{W^{2,\infty}}^2 + \l \breve{\pi} \r_{W^{1,\infty}}^2}\mathbbm{1}_{\{1\leq r \leq 10\}} \nonumber
\\&\quad + \pth{ \l \breve{g} \r_{W^{2,\infty}} + \l \breve{\pi} \r_{W^{1,\infty}} } \pth{ |\wc{g}| + |\nab\wc{g}| + |\wc{\pi}| + m }  \mathbbm{1}_{\{1\leq r \leq 10\}}  .\nonumber
\end{align}
Moreover, there exists numbers $T_1^{abcd}$ and $S_2^{abc}$ for $a,b,c,d=1,2,3$ satisfying 
\begin{align}
\left| T_1^{abcd} \right| + \left| S_2^{abc} \right| \lesssim_{\max(\ga,\ga')}\pth{ 1  + \eta^{-2}\max(m,m')}d(\p,\p')\label{estim T S}
\end{align}
and such that
\begin{align}\label{diff error}
& \left| \mathrm{Err}_i(\p) - \mathrm{Err}_i(\p') - T_1^{abcd} (1+r)^{-1}\dr_a\dr_b\wc{g}_{cd} - S_2^{abc} (1+r)^{-1} \dr_a \wc{\pi}_{bc} \right|
\\&\quad \lesssim_{\max(\ga,\ga')} \pth{  (1+r)^{-2}\pth{|\nab\wc{g}| + |\wc{\pi}| }+ (1+r)^{-3}|\wc{g}| } \pth{ 1  + \eta^{-2}\max(m,m')}d(\p,\p')\nonumber
\\&\quad\quad + \max\pth{ \l \breve{g}\r_{W^{2,\infty}},\l \breve{g}'\r_{W^{2,\infty}},\l \breve{\pi}\r_{W^{1,\infty}},\l \breve{\pi}'\r_{W^{1,\infty}}} \pth{ 1  + \eta^{-2}\max(m,m')}d(\p,\p')  \mathbbm{1}_{\{1\leq r \leq 10\}}\nonumber
\\&\quad\quad + \pth{ \max(m,m')+  |\wc{g}| + |\nab\wc{g}| + |\wc{\pi}| }d(\p,\p')\mathbbm{1}_{\{1\leq r \leq 10\}}\nonumber,
\end{align}
for $\p,\p'\in P$ both satisfying \eqref{assumption preli} and $i=1,2$.
\end{proposition}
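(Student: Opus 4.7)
My approach is to apply the Taylor expansion of Lemma \ref{lem Phi} at the Kerr base
\[
(\bar g_K, \bar \pi_K) := \bigl(e + \chi_{\vec p}(g_{\vec p} - e),\; \chi_{\vec p}\pi_{\vec p}\bigr)
\]
with perturbation $(h,\pi) = (\wc g + \breve g,\,\wc\pi + \breve\pi)$, and then to shift both the first and second variations so that they are evaluated at $(e,0)$ rather than $(\bar g_K, \bar\pi_K)$. By Proposition \ref{prop kerr} the Kerr corrections $\bar g_K - e$ and $\bar\pi_K$ are pointwise of size $m/r$ and $m/r^2$ respectively, and the weighted Sobolev hypothesis on $(\wc g, \wc\pi)$ combined with the embedding of Lemma \ref{lem plongement} yields $|\nab^k\wc g| + r|\nab^k\wc\pi| \lesssim \e(1+r)^{-q-\de-k}$; the shift therefore contributes precisely the $\e m (1+r)^{-q-\de-3}$ term of \eqref{estim error}.

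The key algebraic simplification is that $D\HH[e,0](h) = -\De\tr h + \div\div h$ and $D\MM[e,0](\pi) = \div\pi$, so the TT hypothesis kills $D\HH[e,0](\wc g)$ and $D\MM[e,0](\wc\pi)$ and leaves only the $\breve g, \breve\pi$ linear contributions $D\HH[e,0](\breve g)$ and $D\MM[e,0](\breve \pi)$ of \eqref{exp H}, \eqref{exp M}. Polarizing $\tfrac{1}{2} D^2\HH[e, 0]$ from \eqref{D2H} at $(\wc g+\breve g,\wc\pi+\breve\pi)$ and again using $\tr\wc g = \div\wc g = \tr\wc\pi = 0$ to annihilate all the trace- and divergence-based mixed monomials, the mixed cross term reduces to $\breve g^{ij}\De\wc g_{ij}$ together with derivative pieces of schematic form $\nab\wc g\cdot\nab\breve g$ and $\wc\pi\cdot\breve\pi$, all supported where $\breve g, \breve\pi$ are. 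I keep $\breve g^{ij}\De\wc g_{ij}$ as an explicit main term (to enable the projection computations against the $w_{j,\ell}$ in the next section) and relegate the rest to the error. The analogous computation for $\MM$ using \eqref{D2M} produces the explicit $-\breve g^{k\ell}\dr_k\wc\pi_{\ell i}$ in \eqref{exp M}. The compact support of $\breve g, \breve\pi$ in $\{1 \leq r \leq 10\}$ then produces the two compactly supported contributions of \eqref{estim error}, while the cubic remainder $\mathrm{Err}\HH[\bar g_K, \bar\pi_K]$ from \eqref{ErrH} evaluated at the seed factors out one copy of $\wc g$, whose pointwise decay $\e(1+r)^{-q-\de}$ gives the $\e(1+r)^{-q-\de}(|\nab\wc g|^2 + |\wc\pi|^2)$ summand.

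The main obstacle is the difference estimate \eqref{diff error}, since the Kerr data are only Lipschitz in $\p$ with the potentially large factor $1 + \eta^{-2}\max(m,m')$ coming in. Taking the difference at two parameters $\p, \p'$, each error piece from the previous paragraph must be differenced. The crucial asymmetry is visible in \eqref{diff kerr}: the Kerr-data difference enjoys the improved weight $r^{-2}$ in the $y, a$ directions but only $r^{-1}$ in the $m, v$ directions. When such a Kerr difference multiplies a term linear in $\nab^2\wc g$ or $\nab\wc\pi$ (arising from the $\bar g_K\nab^2 h$ and $\bar g_K\nab\pi$ pieces of the shifted first variation), the $y, a$ contributions produce the improved $(1+r)^{-2}|\nab\wc g|$ and $(1+r)^{-3}|\wc g|$ tails of \eqref{diff error} after redistributing one derivative, whereas the $m, v$ contributions only give $(1+r)^{-1}$-weighted terms that must be peeled off as the $T_1^{abcd}(1+r)^{-1}\dr_a\dr_b\wc g_{cd}$ and $S_2^{abc}(1+r)^{-1}\dr_a\wc\pi_{bc}$ expressions. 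I will define $T_1^{abcd}$ and $S_2^{abc}$ as precisely the constant leading coefficients of the $m$- and $v$-derivatives of the Kerr metric and momentum at infinity, read off from the asymptotic form given by Proposition \ref{prop kerr}; the smallness bound \eqref{estim T S} then follows directly from the definition \eqref{def distance} of $d(\p,\p')$, which controls $|m - m'|$ and $\max(m, m')|v-v'|$ and absorbs $\max(m,m')(|y-y'|+|a-a'|)$ into the factor $\eta^{-2}\max(m,m')d(\p,\p')$ as in \eqref{diff kerr 2}. All residual contributions then either gain the improved $r^{-2}$ weight from the $y, a$ directions or are compactly supported and controlled by $\l \breve g - \breve g' \r_{W^{2,\infty}} + \l \breve\pi - \breve\pi' \r_{W^{1,\infty}} \leq d(\p,\p')$, producing the remaining lines of \eqref{diff error}.
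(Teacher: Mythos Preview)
Your approach is essentially the paper's: Taylor-expand $\Phi$ at the Kerr base $(\bar g_K,\bar\pi_K)$ with perturbation $(\wc g+\breve g,\wc\pi+\breve\pi)$, shift the first and second variations to $(e,0)$, use the TT hypothesis to kill $D\HH[e,0](\wc g)$ and $D\MM[e,0](\wc\pi)$, extract the mixed pieces $\breve g^{ij}\De\wc g_{ij}$ and $-\breve g^{k\ell}\dr_k\wc\pi_{\ell i}$, and collect the rest as error.

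There is one bookkeeping confusion in your treatment of \eqref{diff error}. You propose to split the Kerr difference into its $(m,v)$ part (weight $r^{-1}$) and its $(y,a)$ part (weight $r^{-2}$), peel off only the former as $T_1^{abcd}(1+r)^{-1}\dr_a\dr_b\wc g_{cd}$, and absorb the latter into the $(1+r)^{-2}|\nab\wc g|$ tail ``after redistributing one derivative''. But one cannot redistribute a derivative in a \emph{pointwise} bound: the $(y,a)$ portion of $[\chi_{\vec p}(g_{\vec p}-e)-\chi_{\vec p'}(g_{\vec p'}-e)]\nab^2\wc g$ has size $(1+r)^{-2}|\nab^2\wc g|$, which is not dominated by $(1+r)^{-2}|\nab\wc g|$. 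The paper instead peels off the \emph{entire} Kerr-difference $\times\,\nab^2\wc g$ (and $\times\,\nab\wc\pi$) contribution into $T_1$ and $S_2$, bounding their size via the rough estimate \eqref{diff kerr 2}; this is exactly \eqref{estim T S}. The $(1+r)^{-2}(|\nab\wc g|+|\wc\pi|)$ and $(1+r)^{-3}|\wc g|$ terms on the right of \eqref{diff error} do not come from the $\nab^2\wc g$ piece at all, but from differencing the \emph{other} Kerr--seed cross terms in $\mathrm{Err}_i$, namely those of schematic form $\nab(\chi_{\vec p}(g_{\vec p}-e))\cdot\nab\wc g$, $\chi_{\vec p}\pi_{\vec p}\cdot\wc\pi$, and $\wc g\cdot\nab^2(\chi_{\vec p}(g_{\vec p}-e))$, where the Kerr factor already carries the extra power of $r^{-1}$. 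Once you reroute the argument this way the rest of your outline goes through unchanged.
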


\begin{proof}
Note that \eqref{estim kerr} and the assumption $m\leq 1$ imply that
\begin{align}\label{estim kerr 2}
\left| e + \chi_{\vec{p}}(g_{\vec{p}}-e) \right| & \lesssim_\ga 1.
\end{align}
We start with the Hamiltonian constraint operator. We recall Definition \ref{def para} and expand:
\begin{align*}
\HH(\bar{g}(\p),\bar{\pi}(\p)) & = \HH(e+\chi_{\vec{p}}(g_{\vec{p}}-e) + \wc{g} + \breve{g},\chi_{\vec{p}}\pi_{\vec{p}} + \wc{\pi} + \breve{\pi})
\\&= \HH(e+\chi_{\vec{p}}(g_{\vec{p}}-e) ,\chi_{\vec{p}}\pi_{\vec{p}} ) + D\HH[e+\chi_{\vec{p}}(g_{\vec{p}}-e),\chi_{\vec{p}}\pi_{\vec{p}} ](\wc{g} + \breve{g},\wc{\pi}+\breve{\pi})
\\&\quad + \half D^2\HH[e+\chi_{\vec{p}}(g_{\vec{p}}-e),\chi_{\vec{p}}\pi_{\vec{p}} ]((\wc{g} + \breve{g}, \wc{\pi} + \breve{\pi}),(\wc{g} + \breve{g}, \wc{\pi} + \breve{\pi}))
\\&\quad + \mathrm{Err}\HH[e+\chi_{\vec{p}}(g_{\vec{p}}-e),\chi_{\vec{p}}\pi_{\vec{p}} ](\wc{g} + \breve{g}, \wc{\pi} + \breve{\pi})
\\& = \HH(e+\chi_{\vec{p}}(g_{\vec{p}}-e) ,\chi_{\vec{p}}\pi_{\vec{p}} ) + A_1(\p) + A_2(\p) + A_3(\p),
\end{align*}
where $D\HH$, $D^2\HH$ and $\mathrm{Err}\HH$ are as defined in Lemma \ref{lem Phi}, and
\begin{align*}
A_1(\p) & \vcentcolon= D\HH[e+\chi_{\vec{p}}(g_{\vec{p}}-e),\chi_{\vec{p}}\pi_{\vec{p}} ](\wc{g} + \breve{g},\wc{\pi}+\breve{\pi}),
\\ A_2(\p) & \vcentcolon= \half D^2\HH[e+\chi_{\vec{p}}(g_{\vec{p}}-e),\chi_{\vec{p}}\pi_{\vec{p}} ]((\wc{g} + \breve{g}, \wc{\pi} + \breve{\pi}),(\wc{g} + \breve{g}, \wc{\pi} + \breve{\pi})),
\\ A_3(\p) & \vcentcolon= \mathrm{Err}\HH[e+\chi_{\vec{p}}(g_{\vec{p}}-e),\chi_{\vec{p}}\pi_{\vec{p}} ](\wc{g} + \breve{g}, \wc{\pi} + \breve{\pi}).
\end{align*}
Thanks to \eqref{DH bis} we have schematically
\begin{align}
A_1(\p) - D\HH[ e ,0]\pth{\breve{g}}  & =  \chi_{\vec{p}}(g_{\vec{p}}-e)\nabla^2 (\wc{g} + \breve{g}) +  (\wc{g} + \breve{g}) \nabla^2(\chi_{\vec{p}}(g_{\vec{p}}-e)) \label{error A1}
\\&\quad + (e+\chi_{\vec{p}}(g_{\vec{p}}-e))^2 \pth{ \nabla (\chi_{\vec{p}}(g_{\vec{p}}-e)) \nabla (\wc{g} + \breve{g}) + \chi_{\vec{p}}\pi_{\vec{p}}(\wc{\pi}+\breve{\pi}) }\nonumber
\\&\quad + (e+\chi_{\vec{p}}(g_{\vec{p}}-e)) (\wc{g} + \breve{g}) \pth{ (\nab (\chi_{\vec{p}}(g_{\vec{p}}-e)))^2 + (\chi_{\vec{p}}\pi_{\vec{p}})^2 }\nonumber
\end{align}
where we have used that $D\HH[ e ,0]\pth{ \wc{g}}=0$ since $\wc{g}$ is TT (recall \eqref{DH}). Thanks to \eqref{D2H bis} we have schematically
\begin{align*}
& A_2(\p) - \half D^2\HH[ e ,0]\pth{\pth{ \wc{g} + \breve{g} , \wc{\pi} + \breve{\pi}   } , \pth{ \wc{g} + \breve{g} , \wc{\pi} + \breve{\pi}   }  }
\\&\hspace{2cm} =  (e+ \chi_{\vec{p}}(g_{\vec{p}}-e))(\wc{g}+\breve{g}) \pth{ \nab (\chi_{\vec{p}}(g_{\vec{p}}-e))  \nab(\wc{g}+\breve{g})   + \chi_{\vec{p}}\pi_{\vec{p}}   ( \wc{\pi} + \breve{\pi} ) }
\\&\hspace{2cm}\quad +  \chi_{\vec{p}}(g_{\vec{p}}-e) \pth{ (\nab(\wc{g}+\breve{g}))^2 +  (\wc{\pi} + \breve{\pi})^2 } 
\\&\hspace{2cm}\quad + \pth{ (\chi_{\vec{p}}\pi_{\vec{p}})^2  +  (\nab(\chi_{\vec{p}}(g_{\vec{p}}-e)))^2 } (\wc{g}+\breve{g})^2.
\end{align*}
Moreover, using \eqref{D2H} and the fact that $\wc{g}$ is TT we obtain
\begin{align*}
\half D^2\HH[ e ,0]\pth{\pth{ \wc{g} + \breve{g} , \wc{\pi} + \breve{\pi}   } , \pth{ \wc{g} + \breve{g} , \wc{\pi} + \breve{\pi}   }  } & -  \half D^2\HH[ e ,0]\pth{\pth{ \wc{g}  , \wc{\pi}  } , \pth{ \wc{g} , \wc{\pi}   }}  -   \breve{g}^{ij} \De \wc{g}_{ij}
\\& =  \wc{g} \nab^2\breve{g}  +  \breve{g} \nab^2\breve{g}   +  \nab\wc{g} \nab\breve{g}   + ( \nab\breve{g})^2+   \wc{\pi}  \breve{\pi} + (\breve{\pi})^2 ,
\end{align*}
so that we obtain
\begin{align}
A_2(\p) -  &\half D^2\HH[ e ,0]\pth{\pth{ \wc{g}  , \wc{\pi}  } , \pth{ \wc{g} , \wc{\pi}   }}  -   \breve{g}^{ij} \De \wc{g}_{ij} \nonumber
\\& = (e+ \chi_{\vec{p}}(g_{\vec{p}}-e))(\wc{g}+\breve{g}) \pth{ \nab (\chi_{\vec{p}}(g_{\vec{p}}-e))  \nab(\wc{g}+\breve{g})   + \chi_{\vec{p}}\pi_{\vec{p}}   ( \wc{\pi} + \breve{\pi} ) }\label{error A2}
\\&\quad +  \chi_{\vec{p}}(g_{\vec{p}}-e) \pth{ (\nab(\wc{g}+\breve{g}))^2 +  (\wc{\pi} + \breve{\pi})^2 } + \pth{ (\chi_{\vec{p}}\pi_{\vec{p}})^2  +  (\nab(\chi_{\vec{p}}(g_{\vec{p}}-e)))^2 } (\wc{g}+\breve{g})^2\nonumber
\\&\quad +  \wc{g} \nab^2\breve{g}  +  \breve{g} \nab^2\breve{g}   +  \nab\wc{g} \nab\breve{g}   + ( \nab\breve{g})^2+   \wc{\pi}  \breve{\pi} + (\breve{\pi})^2\nonumber.
\end{align}
Finally, \eqref{ErrH} gives
\begin{align}
A_3(\p) & = (e+ \chi_{\vec{p}}(g_{\vec{p}}-e)) (\wc{g}+\breve{g})(\nab (\wc{g}+\breve{g}))^2   \label{error A3}
\\&\quad  +  (\wc{g}+\breve{g})^2 \pth{ \chi_{\vec{p}}\pi_{\vec{p}}(\wc{\pi} + \breve{\pi})  + \nab(\chi_{\vec{p}}(g_{\vec{p}}-e))\nab (\wc{g}+\breve{g}) } \nonumber
\\&\quad + (\wc{g}+\breve{g})^2 \pth{ (\nab (\wc{g}+\breve{g}))^2 + (\wc{\pi} + \breve{\pi})^2 }.\nonumber
\end{align}
Defining
\begin{align*}
\mathrm{Err}_1 = \sum_{i=1}^3 A_i(\p),
\end{align*}
and putting \eqref{error A1}, \eqref{error A2} and \eqref{error A3} together we obtain \eqref{exp H} with the following schematic expression for the error term:
\begin{align*}
\mathrm{Err}_1(\p) & =   (\wc{g} + \breve{g}) \nabla^2(\chi_{\vec{p}}(g_{\vec{p}}-e)) 
\\&\quad  + (e+\chi_{\vec{p}}(g_{\vec{p}}-e))^2 \pth{ \nabla (\chi_{\vec{p}}(g_{\vec{p}}-e)) \nabla (\wc{g} + \breve{g}) + \chi_{\vec{p}}\pi_{\vec{p}}(\wc{\pi}+\breve{\pi}) }
\\&\quad  + (e+\chi_{\vec{p}}(g_{\vec{p}}-e)) (\wc{g} + \breve{g}) \pth{\pth{ \nab(\chi_{\vec{p}}(g_{\vec{p}}-e)+\wc{g}+\breve{g}) }^2 + (\chi_{\vec{p}}\pi_{\vec{p}})^2   + \chi_{\vec{p}}\pi_{\vec{p}}   ( \wc{\pi} + \breve{\pi} ) }
\\&\quad +  (\wc{g}+\breve{g})^2 \pth{( \chi_{\vec{p}}\pi_{\vec{p}}+\wc{\pi} + \breve{\pi})^2  + \pth{ \nab(\chi_{\vec{p}}(g_{\vec{p}}-e)+\wc{g}+\breve{g}) }^2 }
\\&\quad +  \chi_{\vec{p}}(g_{\vec{p}}-e) \pth{\nabla^2 (\wc{g} + \breve{g})+ (\nab(\wc{g}+\breve{g}))^2 +  (\wc{\pi} + \breve{\pi})^2 }
\\&\quad +  \wc{g} \nab^2\breve{g}  +  \breve{g} \nab^2\breve{g}   +  \nab\wc{g} \nab\breve{g}   + ( \nab\breve{g})^2+   \wc{\pi}  \breve{\pi} + (\breve{\pi})^2.
\end{align*}
Using \eqref{estim kerr}, \eqref{assumption preli}, \eqref{estim kerr 2}, $\l\wc{g}\r_{C^2_{-q-\de}}+\l\wc{\pi}\r_{C^1_{-q-\de-1}}\lesssim \e$ (follows from Lemma \ref{lem plongement}) and the fact that $\breve{g}$ and $\breve{\pi}$ are supported in $\{1\leq r \leq 10\}$, we first obtain the bound
\begin{align}\label{estim Err 1}
\left| \mathrm{Err}_1(\p) \right| & \lesssim \frac{\e m}{(1+r)^{q+\de+3}} + \frac{\e}{(1+r)^{q+\de}}\pth{ |\nab\wc{g}|^2 + |\wc{\pi}|^2}  + \pth{\l \breve{g}\r_{W^{2,\infty}}^2 + \l \breve{\pi} \r_{W^{1,\infty}}^2}\mathbbm{1}_{\{1\leq r \leq 10\}} 
\\&\quad + \pth{ \l \breve{g} \r_{W^{2,\infty}} + \l \breve{\pi} \r_{W^{1,\infty}} } \pth{ |\wc{g}| + |\nab\wc{g}| + |\wc{\pi}| + m }  \mathbbm{1}_{\{1\leq r \leq 10\}} \nonumber.
\end{align}
Using in addition the distance $d(\p,\p')$ defined in \eqref{def distance} and \eqref{diff kerr 2} we also obtain
\begin{align}
&\left| \mathrm{Err}_1(\p) - \mathrm{Err}_1(\p')  -  T_1^{abcd} (1+r)^{-1}\dr_a\dr_b\wc{g}_{cd}  \right| \label{diff Err 1}
\\&\hspace{2cm} \lesssim_{\max(\ga,\ga')} \pth{   (1+r)^{-2}\pth{|\nab\wc{g}| + |\wc{\pi}|} + (1+r)^{-3}|\wc{g}| } \pth{ 1  + \eta^{-2}\max(m,m')}d(\p,\p')\nonumber
\\&\hspace{2cm}\quad + \max\pth{ \l \breve{g}\r_{W^{2,\infty}},\l \breve{g}'\r_{W^{2,\infty}}} \pth{ 1  + \eta^{-2}\max(m,m')}d(\p,\p')  \mathbbm{1}_{\{1\leq r \leq 10\}}\nonumber
\\&\hspace{2cm}\quad + \pth{ \max(m,m')+  |\wc{g}| + |\nab\wc{g}| + |\wc{\pi}| }d(\p,\p')\mathbbm{1}_{\{1\leq r \leq 10\}},\nonumber
\end{align}
where the numbers $T_1^{abcd}$ satisfy $\left| T_1^{abcd} \right| \lesssim \pth{ 1  + \eta^{-2}\max(m,m')}d(\p,\p')$. The terms $T_1^{abcd} (1+r)^{-1}\dr_a\dr_b\wc{g}_{cd}$ come from the terms $\chi_{\vec{p}}(g_{\vec{p}}-e)\nab^2\wc{g}$ in $\mathrm{Err}_1(\p)$, see Remark \ref{remark IPP} for an explanation on why we need to isolate them.

We now look at the momentum constraint. We recall Definition \ref{def para} and expand with the notations of Lemma \ref{lem Phi}:
\begin{align*}
\MM(\bar{g}(\p),\bar{\pi}(\p)) & = \MM(e+\chi_{\vec{p}}(g_{\vec{p}}-e) + \wc{g} + \breve{g},\chi_{\vec{p}}\pi_{\vec{p}} + \wc{\pi} + \breve{\pi})
\\&= \MM(e+\chi_{\vec{p}}(g_{\vec{p}}-e) ,\chi_{\vec{p}}\pi_{\vec{p}} ) + D\MM[e+\chi_{\vec{p}}(g_{\vec{p}}-e),\chi_{\vec{p}}\pi_{\vec{p}} ](\wc{g} + \breve{g},\wc{\pi}+\breve{\pi})
\\&\quad + \half D^2\MM[e+\chi_{\vec{p}}(g_{\vec{p}}-e),\chi_{\vec{p}}\pi_{\vec{p}} ]((\wc{g} + \breve{g}, \wc{\pi} + \breve{\pi}),(\wc{g} + \breve{g}, \wc{\pi} + \breve{\pi}))
\\&\quad + \mathrm{Err}\MM[e+\chi_{\vec{p}}(g_{\vec{p}}-e),\chi_{\vec{p}}\pi_{\vec{p}} ](\wc{g} + \breve{g}, \wc{\pi} + \breve{\pi})
\\& = \vcentcolon \MM(e+\chi_{\vec{p}}(g_{\vec{p}}-e) ,\chi_{\vec{p}}\pi_{\vec{p}} ) + B_1(\p) + B_2(\p) + B_3(\p).
\end{align*}
Thanks to \eqref{DM bis} we have schematically
\begin{align}
B_1(\p) - D\MM[e,0](\breve{\pi}) & =  \chi_{\vec{p}}(g_{\vec{p}}-e)  \nab(\wc{\pi}+\breve{\pi}) + (\wc{g}+\breve{g})  \nab (\chi_{\vec{p}}\pi_{\vec{p}}) \label{error B1}
\\&\quad + (e+\chi_{\vec{p}}(g_{\vec{p}}-e))^2 \pth{  \nab(\chi_{\vec{p}}(g_{\vec{p}}-e)) (\wc{\pi}+\breve{\pi}) +  \nab(\wc{g}+\breve{g})\chi_{\vec{p}}\pi_{\vec{p}} }\nonumber
\\&\quad + (e+\chi_{\vec{p}}(g_{\vec{p}}-e))\nab(\chi_{\vec{p}}(g_{\vec{p}}-e))  \chi_{\vec{p}}\pi_{\vec{p}}  (\wc{g}+\breve{g})\nonumber
\end{align}
where we have used \eqref{DM} and the fact that $\wc{\pi}$ is TT to get $D\MM[e,0](\wc{\pi})=0$. Thanks to \eqref{D2M bis} we have schematically
\begin{align*}
&B_2 (\p) - \half D^2\MM[e,0 ]((\wc{g} + \breve{g}, \wc{\pi} + \breve{\pi}),(\wc{g} + \breve{g}, \wc{\pi} + \breve{\pi})) 
\\&\hspace{2cm}= \chi_{\vec{p}}(g_{\vec{p}}-e) (\wc{g} + \breve{g}) \nab(\wc{\pi} + \breve{\pi})
\\&\hspace{2cm}\quad +  (e+\chi_{\vec{p}}(g_{\vec{p}}-e))(\wc{g}+\breve{g}) \pth{ (\wc{\pi}+\breve{\pi}) \nab(\chi_{\vec{p}}(g_{\vec{p}}-e)) +  \chi_{\vec{p}}\pi_{\vec{p}} \nab(\wc{g}+\breve{g}) }
\\&\hspace{2cm}\quad + \nab(\chi_{\vec{p}}(g_{\vec{p}}-e))  \chi_{\vec{p}}\pi_{\vec{p}}   (\wc{g}+\breve{g})^2.
\end{align*}
Moreover, using \eqref{D2M} we have
\begin{align*}
&\half D^2\MM[e , 0]((\wc{g} + \breve{g},\wc{\pi} + \breve{\pi}),(\wc{g} + \breve{g},\wc{\pi} + \breve{\pi})_i -  \half D^2\MM[e , 0]((\wc{g},\wc{\pi}),(\wc{g},\wc{\pi}))_i + \breve{g}^{k\ell}\dr_k \wc{\pi}_{\ell i} 
\\&\hspace{8cm} = \wc{g}  \nab\breve{\pi} + \wc{\pi} \nab\breve{g} + \breve{\pi} \nab\wc{g} + \breve{g} \nab\breve{\pi} + \breve{\pi} \nab\breve{g},
\end{align*}
so that we obtain
\begin{align}
& B_2 (\p)_i -   \half D^2\MM[e , 0]((\wc{g},\wc{\pi}),(\wc{g},\wc{\pi}))_i + \breve{g}^{k\ell}\dr_k \wc{\pi}_{\ell i} \label{error B2}
\\&\hspace{2cm} = \chi_{\vec{p}}(g_{\vec{p}}-e) (\wc{g} + \breve{g}) \nab(\wc{\pi} + \breve{\pi})\nonumber
\\&\hspace{2cm}\quad +  (e+\chi_{\vec{p}}(g_{\vec{p}}-e))(\wc{g}+\breve{g}) \pth{ (\wc{\pi}+\breve{\pi}) \nab(\chi_{\vec{p}}(g_{\vec{p}}-e)) +  \chi_{\vec{p}}\pi_{\vec{p}} \nab(\wc{g}+\breve{g}) }\nonumber
\\&\hspace{2cm}\quad + \nab(\chi_{\vec{p}}(g_{\vec{p}}-e))  \chi_{\vec{p}}\pi_{\vec{p}}   (\wc{g}+\breve{g})^2 + \wc{g}  \nab\breve{\pi} + \wc{\pi} \nab\breve{g} + \breve{\pi} \nab\wc{g} + \breve{g} \nab\breve{\pi} + \breve{\pi} \nab\breve{g}.\nonumber
\end{align}
Finally, \eqref{ErrM} implies
\begin{align}
B_3(\p) & =  (e+\chi_{\vec{p}}(g_{\vec{p}}-e)) (\wc{g}+\breve{g}) (\wc{\pi}+\breve{\pi})\nab(\wc{g}+\breve{g})  \label{error B3}
\\&\quad + (\wc{g}+\breve{g})^2(\wc{\pi}+\breve{\pi}) \nab(\chi_{\vec{p}}(g_{\vec{p}}-e)+\wc{g}+\breve{g}) \nonumber
\\&\quad + (\wc{g}+\breve{g})^2 \chi_{\vec{p}}\pi_{\vec{p}} \nab(\wc{g}+\breve{g}).\nonumber
\end{align}
Putting \eqref{error B1}, \eqref{error B2} and \eqref{error B3} together we obtain \eqref{exp M} with the following expression for the error term
\begin{align*}
\mathrm{Err}_2(\p) & = \chi_{\vec{p}}(g_{\vec{p}}-e)  \nab \wc{\pi}  + (\chi_{\vec{p}}(g_{\vec{p}}-e) + \breve{g})  \nab \breve{\pi} + (\wc{g}+\breve{g} +\nab\breve{g})  \nab (\chi_{\vec{p}}\pi_{\vec{p}} + \breve{\pi}) 
\\&\quad + (e+\chi_{\vec{p}}(g_{\vec{p}}-e))^2 \pth{  \nab(\chi_{\vec{p}}(g_{\vec{p}}-e)) (\wc{\pi}+\breve{\pi}) +  \nab(\wc{g}+\breve{g})\chi_{\vec{p}}\pi_{\vec{p}} }
\\&\quad + \chi_{\vec{p}}(g_{\vec{p}}-e) (\wc{g} + \breve{g}) \nab(\wc{\pi} + \breve{\pi}) + \wc{\pi} \nab\breve{g} + \breve{\pi} \nab\wc{g}  
\\&\quad +  (e+\chi_{\vec{p}}(g_{\vec{p}}-e))(\wc{g}+\breve{g}) \nab(\chi_{\vec{p}}(g_{\vec{p}}-e)+\wc{g}+\breve{g}) ( \chi_{\vec{p}}\pi_{\vec{p}}+\wc{\pi}+\breve{\pi}) 
\\&\quad + (\wc{g}+\breve{g})^2( \chi_{\vec{p}}\pi_{\vec{p}}   +\wc{\pi}+\breve{\pi}) \nab(\chi_{\vec{p}}(g_{\vec{p}}-e)+\wc{g}+\breve{g}) .
\end{align*}
As above this implies 
\begin{align*}
\left| \mathrm{Err}_2(\p) \right| & \lesssim  \frac{\e m}{(1+r)^{q+\de+3}} + \frac{\e}{(1+r)^{q+\de}}\pth{ |\nab\wc{g}|^2 + |\wc{\pi}|^2}  + \pth{\l \breve{g}\r_{W^{2,\infty}}^2 + \l \breve{\pi} \r_{W^{1,\infty}}^2}\mathbbm{1}_{\{1\leq r \leq 10\}} 
\\&\quad + \pth{ \l \breve{g} \r_{W^{2,\infty}} + \l \breve{\pi} \r_{W^{1,\infty}} } \pth{ |\wc{g}| + |\nab\wc{g}| + |\wc{\pi}| + m }  \mathbbm{1}_{\{1\leq r \leq 10\}} \nonumber,
\end{align*}
which implies \eqref{estim error} together with \eqref{estim Err 1}. We also obtain
\begin{align}
&\left| \mathrm{Err}_2(\p) - \mathrm{Err}_2(\p') - S_2^{abc} (1+r)^{-1} \dr_a \wc{\pi}_{bc} \right|\label{diff Err 2}
\\&\quad \lesssim_{\max(\ga,\ga')}  \pth{    (1+r)^{-2}\pth{|\nab\wc{g}| + |\wc{\pi}|} + (1+r)^{-3}|\wc{g}| } \pth{ 1  + \eta^{-2}\max(m,m')}d(\p,\p')\nonumber
\\&\quad\quad + \max\pth{ \l \breve{g}\r_{W^{2,\infty}},\l \breve{g}'\r_{W^{2,\infty}},\l \breve{\pi}\r_{W^{1,\infty}},\l \breve{\pi}'\r_{W^{1,\infty}}} \pth{ 1  + \eta^{-2}\max(m,m')}d(\p,\p')  \mathbbm{1}_{\{1\leq r \leq 10\}}\nonumber
\\&\quad\quad + \pth{ \max(m,m')+  |\wc{g}| + |\nab\wc{g}| + |\wc{\pi}| }d(\p,\p')\mathbbm{1}_{\{1\leq r \leq 10\}},\nonumber
\end{align}
where the numbers $S_2^{abc}$ satisfy $\left| S_2^{abc} \right| \lesssim \pth{ 1  + \eta^{-2}\max(m,m')}d(\p,\p')$. The terms $S_2^{abc} (1+r)^{-1} \dr_a \wc{\pi}_{bc}$ comes from the terms $\chi_{\vec{p}}(g_{\vec{p}}-e)\nab\wc{\pi}$ in $\mathrm{Err}_2(\p)$, see Remark \ref{remark IPP} for an explanation on why we need to isolate them. Together with \eqref{diff Err 1}, \eqref{diff Err 2} implies \eqref{diff error} and concludes the proof of the proposition.
\end{proof}

\begin{remark}\label{remark IPP}
Some of the terms involving $\nab^2\wc{g}$ and $\nab\wc{\pi}$ have been isolated in \eqref{exp H}, \eqref{exp M} and \eqref{diff error} so that, after integration by parts, they can be estimated with the smallness constant $\eta$ instead of $\e$ (a similar remark applies for the terms of the form $\nab^2\wc{g}$ in \eqref{estim remainders} and \eqref{diff remainders} in the next proposition). This will be used in the proofs of Lemmas \ref{lem estim diff F} and \ref{lem diff param}.
\end{remark}

\begin{proposition}\label{prop remainders}
Let $\pth{\wc{u},\wc{X}}\in H^2_{-q-\de}\times H^2_{-q-\de}$ and $\p\in P$ satisfying
\begin{align}\label{assumption preli 2}
m +\l \breve{g} \r_{W^{2,\infty}} + \l \breve{\pi} \r_{W^{1,\infty}} + \l \wc{u}\r_{L^\infty} \leq \half.
\end{align}
For $\OO=\HH,\MM$, there exist numbers $U_\OO^{abcd}$ and $V_\OO^{abcd}$ for $a,b,c,d=1,2,3$ satisfying 
\begin{align*}
\left| U_\OO^{abcd} \right|  &  \lesssim_\ga 1,
\\  \left| V_\OO^{abcd} \right| &  \lesssim_{\max(\ga,\ga')} 1, 
\end{align*}
and such that
\begin{align}
&\left| \RR_\OO\pth{\p,\wc{u},\wc{X}} - U_\OO^{abcd}\wc{u}\dr_a\dr_b\wc{g}_{cd} \right|  \nonumber
\\&\quad \lesssim_\ga  \left|\nab \wc{X}\right|^2 + \left|\nab\wc{u}\nab\wc{X}\right|  +  \pth{ \frac{m}{1+r} + |\wc{g}| }\left|\nab^2\pth{\wc{u} + \wc{X}}\right| \label{estim remainders}
\\&\quad\quad + \pth{ \frac{m}{(1+r)^2} + |\nab\wc{g}|+ |\wc{\pi}| } \left|\nab\pth{ \wc{X} + \wc{u} }\right| \nonumber
\\&\quad\quad +  \pth{ \frac{m}{(1+r)^3}  + \pth{\frac{m}{(1+r)^2} + |\nab\wc{g}|+|\wc{\pi}|}^2}|\wc{u}| \nonumber
\\&\quad\quad + \pth{ \l \breve{g} \r_{W^{2,\infty}} + \l \breve{\pi}\r_{W^{1,\infty}} } \pth{  \left|\nab^2\wc{X}\right| + \left| \nab \wc{X}\right|^2 +  \left|\nab \wc{X}\right| + |\nab^{\leq 2}\wc{u}| }  \mathbbm{1}_{\{1\leq r \leq 10\}},\nonumber
\end{align}
and 
\begin{align}
&\left|\RR_\OO\pth{\p,\wc{u},\wc{X}} -  \RR_\OO\pth{\p',\wc{u}',\wc{X}'} - V_\OO^{abcd} (\wc{u}-\wc{u}')\dr_a\dr_b\wc{g}_{cd}\right| \label{diff remainders}
\\&\quad \lesssim_{\max(\ga,\ga')}  \left| \pth{\nab\wc{X}}^2 - \pth{\nab\wc{X}'}^2 \right| + \left| \nab\wc{u}\nab\wc{X} - \nab\wc{u}'\nab\wc{X}'  \right| \nonumber
\\&\quad\quad + \pth{ 1  + \eta^{-2}\max(m,m')}  \left|\nab^2\pth{\wc{u} + \wc{X}+\wc{u}' + \wc{X}'}\right| \frac{d(\p,\p')}{1+r}      \nonumber
\\&\quad\quad + \pth{ 1  + \eta^{-2}\max(m,m')}   \left|\nab\pth{\wc{u} + \wc{X}+\wc{u}' + \wc{X}'}\right| \frac{d(\p,\p')}{(1+r)^2}      \nonumber
\\&\quad\quad + \pth{ 1  + \eta^{-2}\max(m,m')}|\wc{u}+\wc{u}'| \frac{d(\p,\p')}{(1+r)^3}\nonumber
\\&\quad\quad +  \pth{ \frac{m+m'}{1+r} + |\wc{g}| } \left| \nab^2\pth{\wc{u}-\wc{u}' + \wc{X}-\wc{X}'}\right| \nonumber
\\&\quad\quad +  \pth{ \frac{m+m'}{(1+r)^2} + |\nab\wc{g}| + |\wc{\pi}| }\left|\nab\pth{\wc{u}-\wc{u}' + \wc{X}-\wc{X}'}\right|\nonumber
\\&\quad\quad + \pth{ \frac{m+m'}{(1+r)^3}  + \pth{ \frac{m+m'}{(1+r)^2} + |\nab\wc{g}| + |\wc{\pi}| }^2}| \wc{u}-\wc{u}'|\nonumber
\\&\quad\quad + \max\pth{ \l \breve{g} \r_{W^{2,\infty}}, \l \breve{\pi}\r_{W^{1,\infty}},\l \breve{g}' \r_{W^{2,\infty}}, \l \breve{\pi}'\r_{W^{1,\infty}}}\nonumber
\\&\hspace{2cm}\quad \times \left| \nab^{\leq 2}\pth{ \wc{u}-\wc{u}' + \wc{X}- \wc{X'}} +  \pth{\nab\wc{X}}^2 - \pth{\nab\wc{X}'}^2 \right| \mathbbm{1}_{\{1\leq r \leq 10\}}\nonumber
\\&\quad\quad +   \left| \nab^{\leq 2}\pth{ \wc{u}+\wc{u}' + \wc{X}+ \wc{X'}} +  \pth{\nab\wc{X}}^2 + \pth{\nab\wc{X}'}^2 \right|d(\p,\p')\mathbbm{1}_{\{1\leq r \leq 10\}}\nonumber,
\end{align}
where $\pth{\wc{u},\wc{X}},\pth{\wc{u}',\wc{X}'}\in \pth{ H^2_{-q-\de}}^4$ and $\p,\p'\in P$ satisfy \eqref{assumption preli 2}.
\end{proposition}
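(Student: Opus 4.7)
The plan is to prove Proposition \ref{prop remainders} by a careful term-by-term expansion of the expressions \eqref{remainders 1}--\eqref{remainders 2} using the decomposition $\bar g(\p) = e + \chi_{\vec p}(g_{\vec p}-e) + \wc g + \breve g$ and $\bar\pi(\p) = \chi_{\vec p}\pi_{\vec p} + \wc\pi + \breve\pi$. The assumption \eqref{assumption preli 2} guarantees $|\wc u|\leq \half$, so the Taylor expansions $(1+\wc u)^{-k} = 1 + O(\wc u)$ and $\bar g(\p)^{ij} = e^{ij} + O(\bar g(\p)-e)$ hold uniformly. The pointwise Kerr bounds $|\nabla^n(\chi_{\vec p}(g_{\vec p}-e))|\lesssim_\ga m/(1+r)^{n+1}$ and $|\nabla^n(\chi_{\vec p}\pi_{\vec p})|\lesssim_\ga m/(1+r)^{n+2}$ from Proposition \ref{prop kerr}, together with the compact support of $\breve g$ and $\breve\pi$ in $\{1\leq r\leq 10\}$ and the TT conditions on $\wc g$ and $\wc\pi$, are the only structural inputs needed.

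For $\RR_\HH$, we break \eqref{remainders 1} into four blocks. The quadratic-in-$L_e\wc X$ and cross $\bar\pi(\p)L_e\wc X$ terms on the first line contribute $|\nabla\wc X|^2$, $|\nabla\wc u\,\nabla\wc X|$, and $(m/(1+r)^2+|\wc\pi|+\|\breve\pi\|_{W^{1,\infty}}\mathbbm{1}_{\{1\leq r\leq 10\}})|\nabla\wc X|$ by triangle inequality on $\bar\pi(\p)$. The factor $(1+\wc u)^{-3}-1 = O(\wc u)$ applied to $|\bar\pi(\p)|^2$ yields the $|\wc u|$-weighted quadratic line. Expanding $-8(\De_{\bar g(\p)}-\De)\wc u = -8(\bar g(\p)^{ij}-e^{ij})\partial_i\partial_j\wc u + \text{Christoffels}\cdot \nabla\wc u$ produces the $(m/(1+r)+|\wc g|)|\nabla^2\wc u|$ and $(m/(1+r)^2+|\nabla\wc g|+|\wc\pi|)|\nabla\wc u|$ contributions. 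Finally, for the critical term $\wc u R(\bar g(\p))$, we write $R(\bar g(\p)) = DR[e](\bar g(\p)-e) + \half D^2R[e](\bar g(\p)-e,\bar g(\p)-e) + O((\bar g(\p)-e)^3)$ and use $DR[e](\wc g) = -\De\tr\wc g+\div\div\wc g = 0$ (the TT condition). The cross-quadratic term built from the Kerr perturbation $\chi_{\vec p}(g_{\vec p}-e)$ (or $\breve g$) contracted with $\nabla^2\wc g$ is the only piece that cannot be pointwise bounded without giving up the smallness gain discussed in Remark \ref{remark IPP}, and it is exactly what we assign to the isolated expression $U_\HH^{abcd}\wc u\partial_a\partial_b\wc g_{cd}$. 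The analysis of $\RR_\MM$ is strictly analogous: the TT condition on $\wc\pi$ kills the linear piece in the $\div_{\bar g(\p)}-\div_e$ expansion, and since no analogous $\wc u\nabla^2\wc g$ cross term arises one may take $U_\MM^{abcd}=0$.

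The difference estimate \eqref{diff remainders} proceeds by the same term-by-term expansion combined with the Lipschitz identities $(1+\wc u)^{-k}-(1+\wc u')^{-k} = O(\wc u-\wc u')$ and the Kerr-difference bound \eqref{diff kerr 2}. Each product is telescoped as $AB-A'B' = (A-A')B + A'(B-B')$, producing three families of terms: differences $(\wc u,\wc X)-(\wc u',\wc X')$ paired with primed base quantities; primed fields paired with $\bar g(\p)-\bar g(\p')$ and $\bar\pi(\p)-\bar\pi(\p')$ differences of sizes $d(\p,\p')/(1+r)$ and $d(\p,\p')/(1+r)^2$ respectively; and the isolated $V_\OO^{abcd}(\wc u-\wc u')\partial_a\partial_b\wc g_{cd}$ piece coming from the Kerr--$\wc g$ cross term in $\wc u R(\bar g(\p))$. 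Each family fits into one of the lines on the right-hand side of \eqref{diff remainders}.

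The main obstacle is purely the bookkeeping: one must simultaneously control every cross product without losing the $\e$-smallness structure or the factor $\eta^{-2}\max(m,m')$ appearing inside $d(\p,\p')$ via \eqref{diff kerr 2}, keep the correct $(1+r)^{-k}$ weight on each term, and ensure that the critical $\wc u\,\nabla^2\wc g$ (respectively $(\wc u-\wc u')\nabla^2\wc g$) contribution is separated cleanly enough that the integration-by-parts argument of Remark \ref{remark IPP} can later exploit the coercive constant $\eta^2 = \tfrac14\|\nabla\wc g\|_{L^2}^2+\|\wc\pi\|_{L^2}^2$ rather than the weaker pointwise $\e$.
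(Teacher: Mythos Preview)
Your proposal is correct and follows essentially the same route as the paper: write $\RR_\HH$ and $\RR_\MM$ schematically, decompose $\bar g(\p)-e$ and $\bar\pi(\p)$ into their Kerr, $\wc g/\wc\pi$, and $\breve g/\breve\pi$ pieces, apply the pointwise Kerr bounds of Proposition~\ref{prop kerr} and the support properties of the correctors, and isolate the $\wc u\,\nabla^2\wc g$ contribution arising from $\wc u\,R(\bar g(\p))$ as $U_\HH^{abcd}\wc u\,\partial_a\partial_b\wc g_{cd}$; the difference estimate is then obtained by telescoping and invoking \eqref{diff kerr 2}.

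Two small remarks on presentation. First, the paper does not actually invoke the TT condition to kill the linear-in-$\wc g$ piece of $R(\bar g(\p))$: it simply assigns \emph{all} terms of the schematic form $\wc u\,\bar g(\p)\nabla^2\wc g$ to $U_\HH^{abcd}\wc u\,\partial_a\partial_b\wc g_{cd}$, with $|U_\HH^{abcd}|\lesssim_\ga 1$ coming directly from $|\bar g(\p)|\lesssim_\ga 1$. Your use of TT here is a harmless refinement, but your identification of $U_\HH^{abcd}$ as coming only from the cross-quadratic Kerr/$\breve g$ piece is slightly narrower than what the statement allows. Second, your remark that ``the TT condition on $\wc\pi$ kills the linear piece in the $\div_{\bar g(\p)}-\div_e$ expansion'' is not quite accurate: that expansion acts on $L_e\wc X$, not on $\wc\pi$, and no TT cancellation is needed in $\RR_\MM$. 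Neither point affects the validity of your argument.
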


\begin{proof}
We start with $\RR_\HH\pth{\p,\wc{u},\wc{X}}$, whose expression is given in \eqref{remainders 1}. Schematically we obtain
\begin{align*}
\RR_\HH\pth{\p,\wc{u},\wc{X}} & = (1+\wc{u})^{-3} \bar{g}(\p)^2 \pth{\pth{  \nab\wc{X} }^2 + \bar{\pi}(\p)\nab\wc{X}} +  \pth{(1+\wc{u})^{-3} - 1} \pth{\bar{g}(\p)\bar{\pi}(\p)}^2   
\\&\quad + \wc{u}R(\bar{g}(\p)) + (\bar{g}(\p)-e)\nab^2\wc{u} + \bar{g}(\p)^2\nab\bar{g}(\p)\nab\wc{u}.
\end{align*}
From \eqref{estim kerr}, \eqref{assumption preli 2}, \eqref{estim kerr 2} and $\left| \pth{(1+\wc{u})^{-3} - 1} \right|  \lesssim |\wc{u}|$ we obtain
\begin{align*}
&\left| \RR_\HH\pth{\p,\wc{u},\wc{X}} - U_\HH^{abcd}\wc{u}\dr_a\dr_b \wc{g}_{cd} \right| 
\\& \lesssim_\ga  \left|\nab \wc{X}\right|^2 + \pth{ \frac{m}{(1+r)^2} + |\wc{\pi}| }\left| \nab\pth{ \wc{X} + \wc{u} }\right| + |\wc{u}|\pth{ \frac{m}{(1+r)^2} + |\nab\wc{g}| + |\wc{\pi}|}^2  
\\&\quad + |\wc{u}| \frac{m}{(1+r)^3}  + \pth{ \frac{m}{1+r} + |\wc{g}| }|\nab^2\wc{u}| + |\nab\wc{g}||\nab\wc{u}|
\\&\quad + \pth{ \l \breve{g} \r_{W^{2,\infty}} + \l \breve{\pi}\r_{W^{1,\infty}} } \pth{ \left| \nab \wc{X}\right|^2 +  |\nab \wc{X}| + |\nab^{\leq 2}\wc{u}| }  \mathbbm{1}_{\{1\leq r \leq 10\}},
\end{align*}
where the terms of the form $\wc{u} \nab^2 \wc{g}$ that we isolated come from $\wc{u}R(\bar{g}(\p)) $ in $\RR_\HH\pth{\p,\wc{u},\wc{X}}$.
Moreover, using \eqref{diff kerr 2} we also obtain
\begin{align*}
&\left|\RR_\HH\pth{\p,\wc{u},\wc{X}} -  \RR_\HH\pth{\p',\wc{u}',\wc{X}'} - V_\HH^{abcd} (\wc{u}-\wc{u}')\dr_a\dr_b\wc{g}_{cd}\right| 
\\&\quad \lesssim_{\max(\ga,\ga')}  \left| \pth{\nab\wc{X}}^2 - \pth{\nab\wc{X}'}^2 \right| 
\\&\quad\quad + \pth{ 1  + \eta^{-2}\max(m,m')}\left|\nab^2\pth{\wc{u} + \wc{X}+\wc{u}' + \wc{X}'}\right| \frac{d(\p,\p')}{1+r}
\\&\quad\quad +  \pth{ \frac{m+m'}{1+r} + |\wc{g}| } \left| \nab^2(\wc{u}-\wc{u}')\right| 
\\&\quad\quad +  \pth{ 1  + \eta^{-2}\max(m,m')}\left|\nab\pth{\wc{u} + \wc{X}+\wc{u}' + \wc{X}'}\right| \frac{d(\p,\p')}{(1+r)^2}
\\&\quad\quad +  \pth{ \frac{m+m'}{(1+r)^2} + |\nab\wc{g}| + |\wc{\pi}| }\left|\nab\pth{\wc{u}-\wc{u}' + \wc{X}-\wc{X}'}\right|  
\\&\quad\quad + \pth{ 1  + \eta^{-2}\max(m,m')}|\wc{u}+\wc{u}'| \frac{d(\p,\p')}{(1+r)^3}
\\&\quad\quad + \pth{ \frac{m+m'}{(1+r)^3}  + \pth{ \frac{m+m'}{(1+r)^2} + |\nab\wc{g}| + |\wc{\pi}| }^2}| \wc{u}-\wc{u}'|
\\&\quad\quad + \max\pth{ \l \breve{g} \r_{W^{2,\infty}}, \l \breve{\pi}\r_{W^{1,\infty}},\l \breve{g}' \r_{W^{2,\infty}}, \l \breve{\pi}'\r_{W^{1,\infty}}}
\\&\hspace{2cm}\quad \times \left| \nab^{\leq 2}( \wc{u}-\wc{u}') +\nab\pth{ \wc{X}- \wc{X'}} +  \pth{\nab\wc{X}}^2 - \pth{\nab\wc{X}'}^2 \right| \mathbbm{1}_{\{1\leq r \leq 10\}}
\\&\quad\quad +   \left| \nab^{\leq 2}( \wc{u}+\wc{u}') +\nab\pth{ \wc{X}+ \wc{X'}}  +  \pth{\nab\wc{X}}^2 + \pth{\nab\wc{X}'}^2 \right|d(\p,\p')\mathbbm{1}_{\{1\leq r \leq 10\}}.
\end{align*}
We deal with $\RR_\MM\pth{\p,\wc{u},\wc{X}}$ in a similar way, relying on \eqref{remainders 2} which implies the following schematic expression
\begin{align*}
\RR_\MM\pth{\p,\wc{u},\wc{X}} & = (1+\wc{u})^{-1} \bar{g}(\p) \nab\wc{u} \pth{ \bar{\pi}(\p) + \nab\wc{X} } + (\bar{g}(\p)-e)\nab^2\wc{X} + \bar{g}(\p)^2\nab\bar{g}(\p)\nab\wc{X}.
\end{align*}
This concludes the proof of the proposition.
\end{proof}

\subsection{Projections of $A\pth{ \p , \wc{u},\wc{X} }$ and $B\pth{ \p , \wc{u},\wc{X} }$}\label{section projection}

In this section, we give two lemmas computing the projections appearing on the RHS of the orthogonality conditions \eqref{ortho 1}-\eqref{ortho 4}. Note that we still assume that the parameter $\p\in P$ satisfies \eqref{assumption preli 2} so that the results of Propositions \ref{prop main terms} and \ref{prop remainders} hold.

\begin{lemma}\label{lem ortho calcul A}
For $-1\leq \ell \leq 1$ we have
\begin{align}
\left\langle A\pth{ \p , \wc{u},\wc{X} }, w_{1,0} \right\rangle & = 16\pi \ga m  - \eta^2 + \RR_{\HH,1}(\vec{p}) \label{eq A w10}  
\\&\quad + \left\langle  \breve{g}^{ij} \De \wc{g}_{ij}  + \mathrm{Err}_1(\p) +\RR_\HH\pth{\p,\wc{u},\wc{X}}  , w_{1,0}\right\rangle ,\nonumber
\\ \left\langle A\pth{ \p , \wc{u},\wc{X} }, w_{2,\ell} \right\rangle & = \sqrt{3}my^{\ell'} + Q^A_{2,\ell}(\wc{g},\wc{\pi}) + \sqrt{3}\RR_{\HH,x^{\ell'}}(\vec{p})  \label{eq A w2}
\\&\quad + \left\langle \breve{g}^{ij} \De \wc{g}_{ij}  + \mathrm{Err}_1(\p) +\RR_\HH\pth{\p,\wc{u},\wc{X}}  , w_{2,\ell}\right\rangle \nonumber,
\end{align}
where $\ell'$ is such that\footnote{In fact, we have $w_{2,-1}=\sqrt{3}x^1$, $w_{2,0}=\sqrt{3}x^3$ and $w_{2,1}=\sqrt{3}x^2$.} $w_{2,\ell}= \sqrt{3}x^{\ell'}$, and 
\begin{align*}
|Q^A_{2,\ell}(\wc{g},\wc{\pi})|\lesssim \eta^{2(1-\th)}\e^{2\th},
\end{align*}
with
\begin{align*}
\th\vcentcolon=\frac{1}{2q+2\de-1}.
\end{align*}
For $3\leq j\leq q$ and $-(j-1)\leq \ell \leq j-1$ we have
\begin{align}
\left\langle A\pth{ \p , \wc{u},\wc{X} }, w_{j,\ell} \right\rangle & = \left\langle \breve{g}  , D\HH[e,0]^*( w_{j,\ell}) \right\rangle  +  \left\langle \HH\pth{e + \chi_{\vec{p}}(g_{\vec{p}}-e) , \chi_{\vec{p}}\pi_{\vec{p}} } , w_{j,\ell} \right\rangle  \label{eq A wj}
\\&\quad + Q^A_{j,\ell}(\wc{g},\wc{\pi}) + \left\langle  \breve{g}^{ij} \De \wc{g}_{ij}  + \mathrm{Err}_1(\p) +\RR_\HH\pth{\p,\wc{u},\wc{X}}  , w_{j,\ell}\right\rangle \nonumber,
\end{align}
where 
\begin{align*}
|Q^A_{j,\ell}(\wc{g},\wc{\pi})|\lesssim \eta\e.
\end{align*}
\end{lemma}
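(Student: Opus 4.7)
The plan is to combine the second-order expansion of $\HH(\bar g(\p),\bar\pi(\p))$ from Proposition~\ref{prop main terms} with the asymptotic-charge identities in Proposition~\ref{prop kerr}(ii), then to evaluate the pairing of the quadratic part against $w_{j,\ell}$ by integration by parts and the TT hypothesis. Plugging \eqref{exp H} into $A(\p,\wc u,\wc X) = \HH(\bar g(\p),\bar\pi(\p)) + \RR_\HH(\p,\wc u,\wc X)$ and pairing with $w_{j,\ell}$ produces
\[
\langle A(\p,\wc u,\wc X), w_{j,\ell}\rangle = \mathrm{I}_{j,\ell} + \mathrm{II}_{j,\ell} + \mathrm{III}_{j,\ell} + \langle \breve g^{ij}\Delta\wc g_{ij} + \mathrm{Err}_1(\p) + \RR_\HH(\p,\wc u,\wc X), w_{j,\ell}\rangle,
\]
where $\mathrm{I}_{j,\ell} = \langle\HH(e+\chi_{\vec p}(g_{\vec p}-e),\chi_{\vec p}\pi_{\vec p}),w_{j,\ell}\rangle$, $\mathrm{II}_{j,\ell} = \langle D\HH[e,0](\breve g), w_{j,\ell}\rangle$, and $\mathrm{III}_{j,\ell} = \langle \tfrac12 D^2\HH[e,0]((\wc g,\wc\pi),(\wc g,\wc\pi)), w_{j,\ell}\rangle$. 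The error bucket is carried through unchanged.

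The Kerr piece $\mathrm{I}_{j,\ell}$ is read off from Proposition~\ref{prop kerr}(ii): \eqref{int kerr H 1} gives $\mathrm{I}_{1,0} = 16\pi\gamma m + \RR_{\HH,1}(\vec p)$, and \eqref{int kerr H x} applied with $w_{2,\ell}=\sqrt 3\, x^{\ell'}$ gives $\mathrm{I}_{2,\ell} = \sqrt 3\, m\,y^{\ell'} + \sqrt 3\,\RR_{\HH,x^{\ell'}}(\vec p)$; for $j\geq 3$ it is kept as written on the RHS of \eqref{eq A wj}. The linear piece $\mathrm{II}_{j,\ell}$ I would rewrite by duality as $\langle \breve g, D\HH[e,0]^*(w_{j,\ell})\rangle$ and use Proposition~\ref{prop KIDS}(i): the kernel of $D\HH[e,0]^*$ is spanned exactly by the $w_{j,\ell}$ for $j\in\{1,2\}$, so $\mathrm{II}_{j,\ell}=0$ in those cases, while for $j\geq 3$ it survives as the first summand of \eqref{eq A wj}.

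The quadratic piece $\mathrm{III}_{j,\ell}$ simplifies dramatically since $\wc g$ and $\wc\pi$ are TT. Substituting $\tr_e\wc g = \div_e\wc g = 0$ and $\tr_e\wc\pi = \div_e\wc\pi = 0$ into \eqref{D2H} collapses the integrand to $\wc g^{ij}\Delta\wc g_{ij} + \tfrac34|\nabla\wc g|^2 - \tfrac12\partial^b\wc g^{jk}\partial_j\wc g_{kb} - |\wc\pi|^2$. For $w_{1,0}=1$, one integration by parts yields $\int \wc g^{ij}\Delta\wc g_{ij} = -\|\nabla\wc g\|_{L^2}^2$, and a further integration by parts on $\int \partial^b\wc g^{jk}\partial_j\wc g_{kb}$ reduces it to $-\int \wc g^{jk}\partial_j(\div\wc g)_k\,dx = 0$. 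The total is $-\tfrac14\|\nabla\wc g\|_{L^2}^2 - \|\wc\pi\|_{L^2}^2 = -\eta^2$, exactly the term appearing in \eqref{eq A w10}. For $w_{j,\ell}$ with $j\geq 2$, the same integrations by parts leave behind boundary-derivative contributions coming from $\nabla w_{j,\ell}$, which I would bundle into $Q^A_{j,\ell}(\wc g,\wc\pi)$; in absolute value it is controlled schematically by $\int(|\nabla\wc g|^2 + |\wc\pi|^2)(1+r)^{j-1}\,dx$.

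The main obstacle is the bound on $Q^A_{j,\ell}$. For $j\geq 3$, Cauchy--Schwarz with $\|\nabla\wc g\|_{L^2}+\|\wc\pi\|_{L^2}\lesssim\eta$ and $\|(1+r)^{j-1}\nabla\wc g\|_{L^2}+\|(1+r)^{j-1}\wc\pi\|_{L^2}\lesssim\epsilon$ works, since $j-1\leq q-1\leq q+\delta-\tfrac12$ matches the weight built into $\|\wc g\|_{H^4_{-q-\delta}}+\|\wc\pi\|_{H^3_{-q-\delta-1}}\leq\epsilon$, yielding $|Q^A_{j,\ell}|\lesssim\eta\epsilon$. The hard case is $j=2$: the naive Cauchy--Schwarz only gives $\eta\epsilon$, which is too weak to produce the downstream bound on the center of mass $y$ in Theorem~\ref{maintheorem}. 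I would instead interpolate by splitting the integral at a radius $R$: on $\{r\leq R\}$ use the global $L^2$ bound $\eta^2$, and on $\{r>R\}$ use the Sobolev embeddings $H^4_{-q-\delta}\subset C^1_{-q-\delta}$ and $H^3_{-q-\delta-1}\subset C^0_{-q-\delta-1}$ from Lemma~\ref{lem plongement} to get the pointwise decay $|\nabla\wc g|+|\wc\pi|\lesssim\epsilon(1+r)^{-(q+\delta+1)}$. The two contributions sum to $R\eta^2 + \epsilon^2 R^{-(2q+2\delta-2)}$, optimized at $R=(\epsilon/\eta)^{2\theta}$ with $\theta = 1/(2q+2\delta-1)$, producing exactly $|Q^A_{2,\ell}|\lesssim\eta^{2(1-\theta)}\epsilon^{2\theta}$.
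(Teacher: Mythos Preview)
Your overall architecture matches the paper's proof: expand via \eqref{exp H}, use Proposition~\ref{prop kerr}(ii) for the Kerr term, kill the linear term for $j\in\{1,2\}$ by duality and Proposition~\ref{prop KIDS}(i), and compute the quadratic term using the TT hypothesis. Your evaluation of $\mathrm{III}_{1,0}=-\eta^2$ is the same as the paper's.

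There is one small omission. After integrating by parts against $w_{j,\ell}$ for $j\geq 2$, the term $\int w_{j,\ell}\,\wc g^{ij}\Delta\wc g_{ij}$ produces not only $\int r^{j-1}|\nabla\wc g|^2$ but also a cross term of size $\int r^{j-2}|\wc g||\nabla\wc g|$ from $\nabla w_{j,\ell}$ hitting $\wc g$; your schematic control $\int r^{j-1}(|\nabla\wc g|^2+|\wc\pi|^2)$ does not cover it. For $j=2$ the paper observes this extra term is $\int \wc g^{ij}\partial_{\ell'}\wc g_{ij}=\tfrac12\int\partial_{\ell'}|\wc g|^2=0$, and for $j\geq 3$ it bounds it via Hardy's inequality $\|r^{-1}\wc g\|_{L^2}\lesssim\|\nabla\wc g\|_{L^2}\lesssim\eta$ together with the weighted norm of $\nabla\wc g$, again producing $\eta\e$. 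This is easy to patch.

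The one genuine methodological difference is the $j=2$ interpolation. The paper estimates $\int r|\nabla\wc g|^2$ by H\"older with exponents $\tfrac{1}{1-\theta}$ and $\tfrac{1}{\theta}$, directly yielding $\|\nabla\wc g\|_{L^2}^{2(1-\theta)}\|(1+r)^{1/(2\theta)}\nabla\wc g\|_{L^2}^{2\theta}\lesssim\eta^{2(1-\theta)}\e^{2\theta}$ since $1/(2\theta)=q+\delta-\tfrac12$ is exactly the weight in $H^1_{-q-\delta}$. Your split at radius $R=(\e/\eta)^{2\theta}$ using the $L^2$ bound inside and the $C^0_{-q-\delta-1}$ embedding outside is a legitimate alternative giving the same optimum; it is more elementary (no fractional H\"older) but uses one more derivative of regularity through the Sobolev embedding. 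Likewise, for $j\geq 3$ your direct Cauchy--Schwarz $\int r^{j-1}|\nabla\wc g|^2\leq\|\nabla\wc g\|_{L^2}\|(1+r)^{j-1}\nabla\wc g\|_{L^2}\lesssim\eta\e$ is actually simpler than the paper's H\"older interpolation, which first produces $\eta^{2(1-(j-1)\theta)}\e^{2(j-1)\theta}$ and then crudely bounds this by $\eta\e$.
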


\begin{proof}
Thanks to \eqref{exp H} we have
\begin{align}
\left\langle A\pth{ \p , \wc{u},\wc{X} }, w_{1,0} \right\rangle & = \left\langle \HH\pth{e + \chi_{\vec{p}}(g_{\vec{p}}-e) , \chi_{\vec{p}}\pi_{\vec{p}} } , w_{1,0} \right\rangle + \left\langle D\HH[ e ,0]\pth{\breve{g}}, w_{1,0} \right\rangle \label{A w10}
\\&\quad + \half \left\langle D^2\HH[ e ,0]\pth{\pth{ \wc{g}  , \wc{\pi}  } , \pth{ \wc{g} , \wc{\pi}   }}, w_{1,0} \right\rangle \nonumber
\\&\quad + \left\langle \breve{g}^{ij} \De \wc{g}_{ij} + \mathrm{Err}_1(\p) +\RR_\HH\pth{\p,\wc{u},\wc{X}} , w_{1,0} \right\rangle\nonumber.
\end{align}
For the first term in \eqref{A w10}, we use \eqref{int kerr H 1}. The second term vanishes since $w_{1,0}$ belongs to the kernel of $D\HH[e,0]^*$ (see Proposition \ref{prop KIDS}). For the third term we use \eqref{D2H}, the fact that $\wc{g}$ and $\wc{\pi}$ are TT and integrate by parts to obtain
\begin{align*}
\half \left\langle D^2\HH[ e ,0]\pth{\pth{ \wc{g}  , \wc{\pi}  } , \pth{ \wc{g} , \wc{\pi}   }}, w_{1,0} \right\rangle & =  \int_{\RRR^3}\pth{   \wc{g}^{ij} \De \wc{g}_{ij}     + \frac{3}{4}  | \nabla \wc{g}|^2  - \frac{1}{2}    \dr^b \wc{g}^{jk}  \dr_{j} \wc{g}_{kb} - |\wc{\pi}|^2}
\\& = - \frac{1}{4}\l \nab\wc{g}\r_{L^2}^2 - \l \wc{\pi}\r_{L^2}^2
\\& = -\eta^2,
\end{align*}
where we recall that $w_{1,0}=1$. This concludes the proof of \eqref{eq A w10}.

We now turn to the proof of \eqref{eq A w2}. We again use \eqref{exp H} to obtain
\begin{align}
\left\langle A\pth{ \p , \wc{u},\wc{X} }, w_{2,\ell} \right\rangle & = \left\langle \HH\pth{e + \chi_{\vec{p}}(g_{\vec{p}}-e) , \chi_{\vec{p}}\pi_{\vec{p}} } , w_{2,\ell} \right\rangle + \left\langle D\HH[ e ,0]\pth{\breve{g}}, w_{2,\ell} \right\rangle \label{A w2}
\\&\quad + \half \left\langle D^2\HH[ e ,0]\pth{\pth{ \wc{g}  , \wc{\pi}  } , \pth{ \wc{g} , \wc{\pi}   }}, w_{2,\ell}\right\rangle \nonumber
\\&\quad + \left\langle \breve{g}^{ij} \De \wc{g}_{ij} + \mathrm{Err}_1(\p) +\RR_\HH\pth{\p,\wc{u},\wc{X}} , w_{2,\ell} \right\rangle\nonumber.
\end{align}
For the first term in \eqref{A w2}, we use \eqref{int kerr H x}. The second term vanishes since $w_{2,\ell}$ belongs to the kernel of $D\HH[e,0]^*$ (see Proposition \ref{prop KIDS}). For the third term, we rename it $Q^A_{2,\ell}(\wc{g},\wc{\pi})$ and use \eqref{D2H} and the fact that $\wc{g}$ is TT:
\begin{align}\label{3rd}
\left| Q^A_{2,\ell}(\wc{g},\wc{\pi}) \right| & \lesssim  \int_{\RRR^3}r   | \nab \wc{g} |^2 +  \left| \int_{\RRR^3} x^{\ell'}  \wc{g}^{ij} \De \wc{g}_{ij} \right| + \int_{\RRR^3}r  |\wc{\pi} |^2,
\end{align}
where we also used $|w_{2,\ell}|\lesssim r$. We estimate each integral on the RHS of \eqref{3rd}.
\begin{itemize}
\item For the first integral, we apply Hölder's inequality with well-chosen exponents
\begin{align*}
\int_{\RRR^3}r   | \nab \wc{g} |^2 & \lesssim \l (\nab\wc{g})^{2-2\th} \r_{L^\frac{1}{1-\th}} \l (1+r) (\nab\wc{g})^{2\th} \r_{L^{\frac{1}{\th}}}
\\& \lesssim \l \nab\wc{g} \r_{L^2}^{2(1-\th)} \l (1+r)^{\frac{1}{2\th}}\nab\wc{g} \r_{L^2}^{2\th}
\\& \lesssim \eta^{2(1-\th)} \e^{2\th},
\end{align*}
where we recall that $\th=\frac{1}{2q+2\de-1}$.
\item For the second integral, we integrate by parts
\begin{align*}
\int_{\RRR^3} x^{\ell'}  \wc{g}^{ij} \De \wc{g}_{ij} & = - \int_{\RRR^3} \dr^k (x^{\ell'} \wc{g}^{ij} ) \dr_k \wc{g}_{ij}
\\& = - \int_{\RRR^3} x^{\ell'} |\nab\wc{g}|^2 - \int_{\RRR^3} \wc{g}^{ij}  \dr_{\ell'} \wc{g}_{ij}.
\end{align*}
By integration by parts again, we see that $ \int_{\RRR^3} \wc{g}^{ij}  \dr_{\ell'} \wc{g}_{ij}= \half \int_{\RRR^3} \dr_{\ell'} |\wc{g}|^2=0$. Therefore we have
\begin{align*}
\left| \int_{\RRR^3} x^{\ell'}  \wc{g}^{ij} \De \wc{g}_{ij} \right| & \lesssim \int_{\RRR^3}r   | \nab \wc{g} |^2
\\&\lesssim \eta^{2(1-\th)} \e^{2\th},
\end{align*}
where we used the estimate of the first integral.
\item The third integral on the RHS of \eqref{3rd} is estimated in the same manner as the first using the fact that $\wc{\pi}$ satisfies the same $L^2$ bounds as $\nab\wc{g}$ up to a constant factor.
\end{itemize}
We have thus proved that
\begin{align*}
\left|Q^A_{2,\ell}(\wc{g},\wc{\pi})\right| & \lesssim \eta^{2(1-\th)} \e^{2\th}.
\end{align*}
This concludes the proof of \eqref{eq A w2}.

Finally, we prove \eqref{eq A wj}. This proceeds in a similar manner to before. From \eqref{exp H} we obtain
\begin{align}
\left\langle A\pth{ \p , \wc{u},\wc{X} }, w_{j,\ell} \right\rangle & = \left\langle \HH\pth{e + \chi_{\vec{p}}(g_{\vec{p}}-e) , \chi_{\vec{p}}\pi_{\vec{p}} }, w_{j,\ell} \right\rangle + \left\langle D\HH[ e ,0]\pth{\breve{g}}  , w_{j,\ell} \right\rangle   \label{A wj}
\\&\quad + \half \left\langle D^2\HH[ e ,0]\pth{\pth{ \wc{g}  , \wc{\pi}  } , \pth{ \wc{g} , \wc{\pi}   }}, w_{j,\ell} \right\rangle  \nonumber
\\&\quad + \left\langle \breve{g}^{ij} \De \wc{g}_{ij} + \mathrm{Err}_1(\p) +\RR_\HH\pth{\p,\wc{u},\wc{X}} , w_{j,\ell} \right\rangle\nonumber.
\end{align}
We leave the first term as it is. The second term can be rewritten
\begin{align*}
\left\langle D\HH[ e ,0]\pth{\breve{g}}  , w_{j,\ell} \right\rangle = \left\langle \breve{g}  , D\HH[e,0]^*( w_{j,\ell}) \right\rangle.
\end{align*}
For the third term in \eqref{A wj}, we rename it $Q^A_{j,\ell}(\wc{g},\wc{\pi})$ and use \eqref{D2H}, $|w_{j,\ell}|\lesssim r^{j-1}$ and integrate by parts the terms of the form $\wc{g}\nab^2\wc{g}$:
\begin{align}\label{third term}
\left| Q^A_{j,\ell}(\wc{g},\wc{\pi}) \right| & \lesssim \int_{\RRR^3}r^{j-1}\pth{ |\nab\wc{g}|^2 + |\wc{\pi}|^2} + \int_{\RRR^3} r^{j-2} | \wc{g}| |\nab\wc{g}|.
\end{align}
We estimate each integral on the RHS of \eqref{third term}.
\begin{itemize}
\item For the first integral, we apply Hölder's inequality with well-chosen exponents
\begin{align}
\int_{\RRR^3}r^{j-1}   | \nab \wc{g} |^2 & \lesssim \l (\nab\wc{g})^{2-2(j-1)\th} \r_{L^\frac{1}{1-(j-1)\th}} \l (1+r)^{j-1} (\nab\wc{g})^{2(j-1)\th} \r_{L^{\frac{1}{(j-1)\th}}} \label{interpolation utile}
\\& \lesssim \l \nab\wc{g} \r_{L^2}^{2(1-(j-1)\th)} \l (1+r)^{\frac{1}{2\th}}\nab\wc{g} \r_{L^2}^{2(j-1)\th}\nonumber
\\& \lesssim \eta^{2(1-(j-1)\th)} \e^{2(j-1)\th}\nonumber
\\& \lesssim \eta \e,\nonumber
\end{align}
where we used $\eta\lesssim\e$ and $2(j-1)\th<1$ since $j\leq q$ and $\de>0$. Since $\wc{\pi}$ and $\nab\wc{g}$ satisfy the same estimates we also obtain
\begin{align*}
\int_{\RRR^3}r^{j-1} |\wc{\pi}|^2 \lesssim \eta\e.
\end{align*}
\item For the second integral in \eqref{third term} we first use Cauchy-Schwarz inequality and Hardy's inequality $\l r^{-1} \wc{g} \r_{L^2}\lesssim \l \nab\wc{g}\r_{L^2}\lesssim \eta $ to get 
\begin{align*}
\int_{\RRR^3} r^{j-2} | \wc{g}| |\nab\wc{g}| & \lesssim \eta \pth{ \int_{\RRR^3} (1+r)^{2(j-1)} |\nab \wc{g}|^2 }^\half.
\end{align*}
We estimate the integral $\int_{\RRR^3} (1+r)^{2(j-1)} |\nab \wc{g}|^2$ as above, with a $(1+r)^{2(j-1)}$ weight instead of a $(1+r)^{j-1}$ weight. Using $2(j-1)\th<1$, we obtain
\begin{align*}
\int_{\RRR^3} (1+r)^{2(j-1)} |\nab \wc{g}|^2 & \lesssim \eta^{2(1-2(j-1)\th)} \e^{4(j-1)\th}
\\& \lesssim \e^2
\end{align*}
and thus
\begin{align*}
\int_{\RRR^3} r^{j-2} | \wc{g}| |\nab\wc{g}| & \lesssim \eta\e.
\end{align*}
\end{itemize}
We have proved that 
\begin{align*}
\left|  Q^A_{j,\ell}(\wc{g},\wc{\pi}) \right| & \lesssim \eta \e.
\end{align*}
This concludes the proof of \eqref{eq A wj}.
\end{proof}

\begin{lemma}\label{lem ortho calcul B}
For $k,a,b=1,2,3$ we have
\begin{align}
\left\langle B\pth{ \p , \wc{u},\wc{X} }, W_{1,0,k} \right\rangle & = 8\pi\ga m v^k + \half \int_{\RRR^3} \wc{\pi}^{ij}  \dr_k \wc{g}_{ij}  - \RR_{\MM,W_{1,0,k}}(\vec{p})  \label{eq B W1}
\\&\quad + \left\langle  \breve{g}^{k\ell}\dr_k \wc{\pi}_{\ell i} - \mathrm{Err}_2(\p)_i + \RR_\MM\pth{\p,\wc{u},\wc{X}}_i, W_{1,0,k}^i \right\rangle, \nonumber
\\ \left\langle B\pth{ \p , \wc{u},\wc{X} }, \Om^{-}_{ab} \right\rangle & = - 8\pi m a^k   + Q^B_{ab}(\wc{g},\wc{\pi}) - \RR_{\MM,\Om^{-}_{ab}}(\vec{p})  \label{eq B W2}
\\&\quad\quad + \left\langle  \breve{g}^{k\ell}\dr_k \wc{\pi}_{\ell i} - \mathrm{Err}_2(\p)_i + \RR_\MM\pth{\p,\wc{u},\wc{X}}_i, (\Om^{-}_{ab})^i \right\rangle,\nonumber
\end{align}
where $k$ is such that $\in_{kab}=1$, and 
\begin{align*}
|Q^B_{ab}(\wc{g},\wc{\pi})|\lesssim \eta^{2(1-\th)}\e^{2\th}.
\end{align*}
For $Z\in\mathcal{Z}_q$ we have
\begin{align}
\left\langle B\pth{ \p , \wc{u},\wc{X} }, Z \right\rangle & = - \left\langle  \breve{\pi}, D\MM[e ,0]^*(Z) \right\rangle - \left\langle \MM\pth{e + \chi_{\vec{p}}(g_{\vec{p}}-e),\chi_{\vec{p}}\pi_{\vec{p}} }, Z \right\rangle  \label{eq B Z}
\\&\quad + Q^B_Z(\wc{g},\wc{\pi}) + \left\langle  \breve{g}^{k\ell}\dr_k \wc{\pi}_{\ell i} - \mathrm{Err}_2(\p)_i + \RR_\MM\pth{\p,\wc{u},\wc{X}}_i, Z^i \right\rangle  \nonumber,
\end{align}
where 
\begin{align*}
|Q^B_{Z}(\wc{g},\wc{\pi})|\lesssim \eta\e.
\end{align*}
\end{lemma}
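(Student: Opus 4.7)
The plan is to mirror the proof of Lemma \ref{lem ortho calcul A}, relying on the expansion \eqref{exp M} of the momentum constraint, the characterization of $\ker D\MM[e,0]^*$ from Proposition \ref{prop KIDS}, and the asymptotic charge identities \eqref{int kerr M 1} and \eqref{int kerr M x}. Writing $B(\p,\wc{u},\wc{X})_i = -\MM(\bar{g}(\p),\bar{\pi}(\p))_i + \RR_\MM(\p,\wc{u},\wc{X})_i$ and substituting \eqref{exp M}, the pairing with any test vector field $Z$ decomposes into four main contributions: a truncated-Kerr momentum projection, a linear term in $\breve{\pi}$, a quadratic seed term coming from $D^2\MM[e,0]$, and the error contributions which are collected in the final inner product of each identity.

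For \eqref{eq B W1}, the Kerr projection is given exactly by \eqref{int kerr M 1}. Since $W_{1,0,k}\in\ker D\MM[e,0]^*$ by Proposition \ref{prop KIDS}, integrating by parts kills the $\breve{\pi}$-linear contribution. For the quadratic term I would insert \eqref{D2M} and use that $\wc{g}$ is TT (so $\div\wc{g}=0$ and $\tr\wc{g}=0$) to reduce the integrand to $-\wc{g}^{k\ell}\dr_k\wc{\pi}_{\ell i}-\half\wc{\pi}^{k\ell}\dr_i\wc{g}_{k\ell}$; pairing with $W_{1,0,k}^i=\de^i_k$ and integrating the first summand by parts against $\div\wc{g}=0$ leaves exactly $\half\int\wc{\pi}^{ij}\dr_k\wc{g}_{ij}$, which is the surviving term in the statement.

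For \eqref{eq B W2} the same structure applies: $\Om^-_{ab}\in\ker D\MM[e,0]^*$ so the $\breve{\pi}$-linear contribution again vanishes; the Kerr projection is now given by \eqref{int kerr M x}; and the quadratic piece from $D^2\MM[e,0]$ becomes $Q^B_{ab}(\wc{g},\wc{\pi})$. This integral now carries an extra factor of $r$ coming from $|\Om^-_{ab}|\lesssim r$, and must be estimated by $\eta^{2(1-\th)}\e^{2\th}$. This is precisely the interpolation performed around \eqref{3rd}: after integrating by parts any $\wc{g}\nab\wc{\pi}$ terms so that only $|\nab\wc{g}|^2$ and $|\wc{\pi}|^2$ type integrands remain, I would apply H\"older with exponents $1/(1-\th)$ and $1/\th$ to split $\int r|\nab\wc{g}|^2$ as $\|\nab\wc{g}\|_{L^2}^{2(1-\th)}\|(1+r)^{1/(2\th)}\nab\wc{g}\|_{L^2}^{2\th}$, estimating the first factor by $\eta$ and the second by $\e$ through the decay hypothesis $\wc{g}\in H^4_{-q-\de}$; the $\wc{\pi}$ integral is analogous.

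For \eqref{eq B Z}, we have $Z\notin\ker D\MM[e,0]^*$, so integration by parts now produces the surviving term $-\langle\breve{\pi},D\MM[e,0]^*(Z)\rangle$ in the statement, and the truncated-Kerr pairing is simply retained as $-\langle\MM(e+\chi_{\vec{p}}(g_{\vec{p}}-e),\chi_{\vec{p}}\pi_{\vec{p}}),Z\rangle$ since $Z$ is not one of the KIDs appearing in \eqref{int kerr M 1} or \eqref{int kerr M x}. For the quadratic term $Q^B_Z(\wc{g},\wc{\pi})$, the weight is $r^{j-1}$ with $j\leq q$, and the bound $|Q^B_Z|\lesssim\eta\e$ follows from the weighted H\"older-plus-interpolation argument of \eqref{interpolation utile}, using $2(j-1)\th<1$ (guaranteed by $j\leq q$ and $\de>0$); for $Z=\Om^+_{ab}$ (the case $j=2$) one first obtains $\eta^{2(1-\th)}\e^{2\th}$ and absorbs this into $\eta\e$ via $\eta\leq\e$. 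The main obstacle will be to systematically exploit the TT conditions on $\wc{g}$ to cancel the borderline $D^2\MM[e,0]$ contributions, and to keep the sharp $\eta$ factor (rather than only $\e$) in the interpolation, since this sharpness is what will drive the contraction estimates later in the paper.
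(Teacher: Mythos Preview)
Your proposal is correct and follows the paper's approach essentially line by line: expand via \eqref{exp M}, use Proposition~\ref{prop KIDS} to kill or retain the $\breve\pi$-linear term, invoke \eqref{int kerr M 1}--\eqref{int kerr M x} for the Kerr charges, and estimate the $D^2\MM$ contribution by the same interpolation as in Lemma~\ref{lem ortho calcul A}. One small oversight: for $Q^B_{ab}$ you claim that after integrating by parts ``only $|\nab\wc g|^2$ and $|\wc\pi|^2$ type integrands remain,'' but integrating the $\wc g^{k\ell}\dr_k\wc\pi_{\ell i}$ term against $\Om^-_{ab}$ produces, besides the vanishing $\div\wc g$ piece, a term $\wc g\cdot\wc\pi\cdot\nab\Om^-_{ab}$ of size $\int|\wc g||\wc\pi|$ (since $\nab\Om^-_{ab}$ is $O(1)$); the paper handles this extra cross term via Hardy's inequality $\|r^{-1}\wc g\|_{L^2}\lesssim\|\nab\wc g\|_{L^2}\lesssim\eta$ followed by the same weighted interpolation on $\wc\pi$, yielding $\eta^{2(1-\th)}\e^{2\th}$ as required.
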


\begin{proof}
We start with the proof of \eqref{eq B W1}. Thanks to \eqref{exp M} we have
\begin{align}
\left\langle B\pth{ \p , \wc{u},\wc{X} }, W_{1,0,k} \right\rangle & = - \left\langle \MM\pth{e + \chi_{\vec{p}}(g_{\vec{p}}-e),\chi_{\vec{p}}\pi_{\vec{p}} }, W_{1,0,k} \right\rangle \label{B W1}
\\&\quad  - \left\langle D\MM[e ,0]( \breve{\pi}) , W_{1,0,k} \right\rangle -\half  \left\langle D^2\MM[e , 0]((\wc{g},\wc{\pi}),(\wc{g},\wc{\pi})), W_{1,0,k} \right\rangle\nonumber
\\&\quad + \left\langle \breve{g}^{k\ell}\dr_k \wc{\pi}_{\ell i} - \mathrm{Err}_2(\p)_i + \RR_\MM\pth{\p,\wc{u},\wc{X}}_i, W_{1,0,k}^i \right\rangle. \nonumber
\end{align}
For the first term on the RHS of \eqref{B W1} we use \eqref{int kerr M 1}. The second term vanishes since $W_{1,0,k}$ belongs to the kernel of $D\MM[e,0]^*$ (see Proposition \ref{prop KIDS}). For the third term, we use \eqref{D2M}, integrate by parts and use the fact that $\wc{g}$ is TT:
\begin{align*}
-\half  \left\langle D^2\MM[e , 0]((\wc{g},\wc{\pi}),(\wc{g},\wc{\pi})), W_{1,0,k} \right\rangle & = \half \int_{\RRR^3} \wc{\pi}^{ij}  \dr_k \wc{g}_{ij}.  
\end{align*}
This concludes the proof of \eqref{eq B W1}.

We now turn to the proof of \eqref{eq B W2}, we again use \eqref{exp M} to obtain
\begin{align}
\left\langle B\pth{ \p , \wc{u},\wc{X} }, \Om^{-}_{ab} \right\rangle & = - \left\langle \MM\pth{e + \chi_{\vec{p}}(g_{\vec{p}}-e),\chi_{\vec{p}}\pi_{\vec{p}} }, \Om^{-}_{ab} \right\rangle - \left\langle D\MM[e ,0]( \breve{\pi}), \Om^{-}_{ab} \right\rangle \label{B W2}
\\&\quad -\half \left\langle D^2\MM[e , 0]((\wc{g},\wc{\pi}),(\wc{g},\wc{\pi})), \Om^{-}_{ab} \right\rangle \nonumber
\\&\quad + \left\langle \breve{g}^{k\ell}\dr_k \wc{\pi}_{\ell i} - \mathrm{Err}_2(\p)_i + \RR_\MM\pth{\p,\wc{u},\wc{X}}_i, (\Om^{-}_{ab})^i \right\rangle. \nonumber
\end{align}
For the first term in \eqref{B W2}, we use \eqref{int kerr M x}. The second term vanishes since $\Om^{-}_{ab}$ belongs to the kernel of $D\MM[e,0]^*$ (see Proposition \ref{prop KIDS}). For the third term, we rename it $Q^B_{ab}(\wc{g},\wc{\pi})$, use \eqref{D2M}, integrate by parts and use the fact that $\wc{g}$ is TT:
\begin{align}\label{3rd b}
\left| Q^B_{ab}(\wc{g},\wc{\pi}) \right| & \lesssim \int_{\RRR^3} r|\wc{\pi}|^2 + \int_{\RRR^3} r |\nab\wc{g}|^2 + \int_{\RRR^3} |\wc{g}| |\wc{\pi}|.
\end{align}
The first two integrals on the RHS of \eqref{3rd b} are estimated as in the proof of \eqref{eq A w2}:
\begin{align*}
 \int_{\RRR^3} r|\wc{\pi}|^2 + \int_{\RRR^3} r |\nab\wc{g}|^2 \lesssim \eta^{2(1-\th)} \e^{2\th}.
\end{align*}
For the third integral on the RHS of \eqref{3rd b}, we first use Cauchy-Schwarz inequality and Hardy's inequality $\l r^{-1} \wc{g} \r_{L^2}\lesssim \l \nab\wc{g}\r_{L^2} \lesssim \eta$ to get
\begin{align*}
\int_{\RRR^3} |\wc{g}| |\wc{\pi}| & \lesssim \eta \pth{ \int_{\RRR^3} (1+r)^2 |\wc{\pi}|^2 }^\half.
\end{align*} 
We estimate the integral $\int_{\RRR^3} (1+r)^2 |\wc{\pi}|^2$ as in the proof of \eqref{eq A w2}, with now a $(1+r)^2$ weight instead of a $1+r$ weight. We obtain
\begin{align*}
\int_{\RRR^3} (1+r)^2 |\wc{\pi}|^2 \lesssim \eta^{2\pth{1-2\th}}\e^{4\th},
\end{align*}
and thus
\begin{align*}
\int_{\RRR^3} |\wc{g}| |\wc{\pi}| & \lesssim \eta^{2(1-\th)} \e^{2\th}.
\end{align*} 
We have proved that
\begin{align*}
\left|Q^B_{ab}(\wc{g},\wc{\pi}) \right| & \lesssim \eta^{2(1-\th)} \e^{2\th}.
\end{align*}
This concludes the proof of \eqref{eq B W2}.

Finally we prove \eqref{eq B Z}. Thanks to \eqref{exp H} we have
\begin{align}
\left\langle B\pth{ \p , \wc{u},\wc{X} }, Z \right\rangle & = - \left\langle \MM\pth{e + \chi_{\vec{p}}(g_{\vec{p}}-e),\chi_{\vec{p}}\pi_{\vec{p}} } , Z \right\rangle - \left\langle D\MM[e ,0]( \breve{\pi}), Z \right\rangle\label{B Z}
\\&\quad - \half  \left\langle D^2\MM[e , 0]((\wc{g},\wc{\pi}),(\wc{g},\wc{\pi})), Z \right\rangle \nonumber
\\&\quad + \left\langle \breve{g}^{k\ell}\dr_k \wc{\pi}_{\ell i} - \mathrm{Err}_2(\p)_i + \RR_\MM\pth{\p,\wc{u},\wc{X}}_i, Z^i \right\rangle \nonumber.
\end{align}
We leave the first term as it is. The second term can be rewritten as
\begin{align*}
- \left\langle D\MM[e ,0]( \breve{\pi}), Z \right\rangle & = - \left\langle  \breve{\pi}, D\MM[e ,0]^*(Z) \right\rangle.
\end{align*}
For the third term in \eqref{B Z}, we rename it $Q^B_Z(\wc{g},\wc{\pi})$ and proceed as we did for the third term in \eqref{A wj}, since for each $Z\in\mathcal{Z}_q$ there exists $j\leq q$ such that $|Z|\lesssim r^{j-1}$. We obtain
\begin{align*}
\left| Q^B_Z(\wc{g},\wc{\pi}) \right| & \lesssim \eta \e.
\end{align*}
This concludes the proof of \eqref{eq B Z}.
\end{proof}

\begin{remark}\label{remark q12 2}
As Lemmas \ref{lem ortho calcul A} and \ref{lem ortho calcul B} show, if $q=1$ we only need the mass $m$ and the boost velocity $v$. Similarly, if $q=2$, we need the full black hole parameter $\vec{p}=(m,y,a,v)$ and the corrector $\breve{\pi}$ but we don't need the corrector $\breve{g}$.
\end{remark}

\subsection{Solving for $\p\pth{\wc{u},\wc{X}}$}\label{section def param}

We now solve for $\p\pth{\wc{u},\wc{X}}$ satisfying the orthogonality conditions \eqref{ortho 1}-\eqref{ortho 4}, assuming that the scalar function $\wc{u}$ and vector field $\wc{X}$ satisfy
\begin{align}\label{assumption u X}
\l \wc{u} \r_{H^2_{-q-\de}} + \l \wc{X} \r_{H^2_{-q-\de}} \leq C_0 \eta \e,
\end{align}
for some $C_0>0$ large enough and to be chosen later. The constant $\e$ will then be chosen small compared to $C_0^{-1}$, so that the control of $\wc{u}$ in \eqref{assumption u X} is consistent with \eqref{assumption preli 2}.

\subsubsection{The parameter map}

According to Lemmas \ref{lem ortho calcul A} and \ref{lem ortho calcul B}, the orthogonality conditions \eqref{ortho 1}-\eqref{ortho 4} are equivalent to the following non-linear system for the parameter $\p$:
\begin{equation}\label{system param}
\left\{
\begin{aligned}
16\pi \ga m & =   \eta^2  + F_1\pth{\p,\wc{u},\wc{X}},
\\ 16\pi\ga m v^k & =  - \int_{\RRR^3} \wc{\pi}^{ij}  \dr_k \wc{g}_{ij}  + F_{2,k}\pth{\p,\wc{u},\wc{X}},
\\ my^{\ell'} & =  -\frac{1}{\sqrt{3}}Q^A_{2,\ell}(\wc{g},\wc{\pi})  + \frac{1}{\sqrt{3}} F_{3,\ell'}\pth{\p,\wc{u},\wc{X}},
\\ m a^k & =  \frac{1}{8\pi}Q^B_{ab}(\wc{g},\wc{\pi}) + \frac{1}{8\pi}F_{4,k}\pth{\p,\wc{u},\wc{X}},
\\ \left\langle \breve{g}  , D\HH[e,0]^*( w_{j,\ell}) \right\rangle & = -\left\langle \HH\pth{e + \chi_{\vec{p}}(g_{\vec{p}}-e) , \chi_{\vec{p}}\pi_{\vec{p}} } , w_{j,\ell} \right\rangle 
\\&\quad - Q^A_{j,\ell}(\wc{g},\wc{\pi}) + F_{5,j,\ell}\pth{\p,\wc{u},\wc{X}},
\\ \left\langle  \breve{\pi}, D\MM[e ,0]^*(Z) \right\rangle & = -  \left\langle \MM\pth{e + \chi_{\vec{p}}(g_{\vec{p}}-e),\chi_{\vec{p}}\pi_{\vec{p}} }, Z \right\rangle 
\\&\quad +  Q^B_Z(\wc{g},\wc{\pi})   + F_{6,Z}\pth{\p,\wc{u},\wc{X}},
\end{aligned}
\right.
\end{equation}
where the remainders $F_i$ are given by
\begin{align*}
F_1\pth{\p,\wc{u},\wc{X}} & = - \RR_{\HH,1}(\vec{p})  - \left\langle  \breve{g}^{ij} \De \wc{g}_{ij}  + \mathrm{Err}_1(\p) +\RR_\HH\pth{\p,\wc{u},\wc{X}}  , w_{1,0}\right\rangle,
\\ F_{2,k}\pth{\p,\wc{u},\wc{X}} & = \RR_{\MM,W_{1,0,k}}(\vec{p})  - \left\langle  \breve{g}^{k\ell}\dr_k \wc{\pi}_{\ell i} - \mathrm{Err}_2(\p)_i + \RR_\MM\pth{\p,\wc{u},\wc{X}}_i, W_{1,0,k}^i \right\rangle,
\\ F_{3,\ell'}\pth{\p,\wc{u},\wc{X}} & = - \RR_{\HH,x^{\ell'}}(\vec{p})  -\left\langle  \breve{g}^{ij} \De \wc{g}_{ij}  + \mathrm{Err}_1(\p) +\RR_\HH\pth{\p,\wc{u},\wc{X}}  , w_{2,\ell}\right\rangle,
\\ F_{4,k}\pth{\p,\wc{u},\wc{X}} & = - \RR_{\MM,\Om^{-}_{ab}}(\vec{p})  + \left\langle  \breve{g}^{k\ell}\dr_k \wc{\pi}_{\ell i} - \mathrm{Err}_2(\p)_i + \RR_\MM\pth{\p,\wc{u},\wc{X}}_i, (\Om^{-}_{ab})^i \right\rangle,
\\ F_{5,j,\ell}\pth{\p,\wc{u},\wc{X}} & = - \left\langle \breve{g}^{ij} \De \wc{g}_{ij}  + \mathrm{Err}_1(\p) +\RR_\HH\pth{\p,\wc{u},\wc{X}}  , w_{j,\ell}\right\rangle,
\\ F_{6,Z}\pth{\p,\wc{u},\wc{X}} & =  \left\langle  \breve{g}^{k\ell}\dr_k \wc{\pi}_{\ell i} - \mathrm{Err}_2(\p)_i + \RR_\MM\pth{\p,\wc{u},\wc{X}}_i, Z^i \right\rangle,
\end{align*}
and where it is implied that the last two equations of \eqref{system param} must hold respectively for all $3\leq j \leq q$, $-(j-1)\leq \ell \leq j-1$ and for all $Z\in\mathcal{Z}_q$. The following lemma allows us to invert these equations. We leave the proof to the reader since it directly follows from the construction of the families $(h_{j,\ell})_{3\leq j \leq q, -(j-1)\leq \ell \leq j-1}$ and $(\varpi_Z)_{Z\in\mathcal{Z}_q}$ in Proposition \ref{prop KIDS} and in particular the invertibility of the matrices
\begin{align*}
\pth{ \left\langle h_{j,\ell}, D\HH[e,0]^*(w_{j',\ell'}) \right\rangle }_{3\leq j,j'\leq q, -(j-1)\leq \ell \leq j-1,-(j'-1)\leq \ell' \leq j'-1}
\end{align*}
and
\begin{align*}
\pth{ \left\langle \varpi_Z, D\MM[e,0]^*(Z') \right\rangle }_{(Z,Z')\in\mathcal{Z}_q^2}.
\end{align*}

\begin{lemma}\label{lem invert breve}
Let $(G_{j,\ell})_{3\leq j \leq q, -(j-1)\leq \ell \leq j-1}$ and $(P_Z)_{Z\in\mathcal{Z}_q}$ be two collections of real numbers. There exist two unique tensors $(\breve{g},\breve{\pi})\in\mathcal{A}_q\times \mathcal{B}_q$ such that
\begin{align*}
\left\langle \breve{g}  , D\HH[e,0]^*( w_{j,\ell}) \right\rangle & = G_{j,\ell},
\\ \left\langle  \breve{\pi}, D\MM[e ,0]^*(Z) \right\rangle & = P_Z,
\end{align*}
respectively for all $3\leq j \leq q$, $-(j-1)\leq \ell \leq j-1$ and for all $Z\in\mathcal{Z}_q$. Moreover they satisfy
\begin{align*}
\l \breve{g} \r_{W^{2,\infty}} & \lesssim \max_{3\leq j \leq q, -(j-1)\leq \ell \leq j-1}|G_{j,\ell}|,
\\  \l \breve{\pi} \r_{W^{1,\infty}} & \lesssim \max_{Z\in\mathcal{Z}_q}|P_Z|.
\end{align*}
\end{lemma}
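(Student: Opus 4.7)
My plan is to reduce the statement to the invertibility of two finite-dimensional linear systems, both of which are guaranteed by Proposition \ref{prop KIDS}. Since $\mathcal{A}_q = \mathrm{Span}(h_{j,\ell})_{3\leq j\leq q, -(j-1)\leq \ell\leq j-1}$ and $\mathcal{B}_q = \mathrm{Span}(\varpi_Z)_{Z\in\mathcal{Z}_q}$ are finite-dimensional by construction, I would write
\begin{align*}
\breve{g} = \sum_{3\leq j\leq q,\; -(j-1)\leq \ell\leq j-1} c_{j,\ell}\, h_{j,\ell}, \qquad \breve{\pi} = \sum_{Z\in\mathcal{Z}_q} d_Z\, \varpi_Z,
\end{align*}
and substitute into the two pairing conditions. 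Using bilinearity, the two systems decouple and become
\begin{align*}
\sum_{j',\ell'} c_{j',\ell'} M^{\HH}_{(j,\ell),(j',\ell')} = G_{j,\ell}, \qquad \sum_{Z'} d_{Z'} M^{\MM}_{Z,Z'} = P_Z,
\end{align*}
where $M^{\HH}_{(j,\ell),(j',\ell')} = \langle h_{j',\ell'}, D\HH[e,0]^*(w_{j,\ell})\rangle$ and $M^{\MM}_{Z,Z'} = \langle \varpi_{Z'}, D\MM[e,0]^*(Z)\rangle$.

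The key point is that both matrices $M^{\HH}$ and $M^{\MM}$ are invertible by Proposition \ref{prop KIDS}(ii). This immediately yields existence and uniqueness of $(c_{j,\ell})$ and $(d_Z)$, hence of $(\breve{g},\breve{\pi})$. For the norm estimates, the bound $\|\breve{g}\|_{W^{2,\infty}} \lesssim \max_{j,\ell}|c_{j,\ell}|$ follows trivially from the triangle inequality since the $h_{j,\ell}$ are fixed smooth tensors compactly supported in $\{1\leq r\leq 10\}$. Combining with $|c_{j,\ell}|\lesssim \max_{j',\ell'}|G_{j',\ell'}|$, which follows from the boundedness of $(M^{\HH})^{-1}$ (a fixed invertible matrix), gives the desired estimate on $\breve{g}$. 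The estimate on $\breve{\pi}$ is obtained in the same way using the invertibility of $M^{\MM}$.

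No significant obstacle arises here: the difficult work of selecting the right families $(h_{j,\ell})$ and $(\varpi_Z)$ and verifying invertibility of the pairing matrices was already carried out in Proposition \ref{prop KIDS}. The present lemma is the clean algebraic corollary that packages this invertibility for later use in the orthogonality system \eqref{system param}.
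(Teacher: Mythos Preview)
Your proposal is correct and matches the paper's approach exactly: the paper does not write out a proof but simply says the lemma follows directly from the invertibility of the matrices $\pth{ \left\langle h_{j,\ell}, D\HH[e,0]^*(w_{j',\ell'}) \right\rangle }$ and $\pth{ \left\langle \varpi_Z, D\MM[e,0]^*(Z') \right\rangle }$ established in Proposition~\ref{prop KIDS}, which is precisely the reduction you carry out.
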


We now introduce the map which will allow us to solve system \eqref{system param} via a fixed point argument.

\begin{definition}\label{def param map}
For $D_0>0$, we define the following subset of the parameter space $P$:
\begin{align*}
B_{D_0} & = B_{\RRR}\pth{0,D_0 \eta^2}\times B_{\RRR^3}\pth{0, D_0 \pth{ \frac{\e}{\eta}}^{2\th}} \times B_{\RRR^3}\pth{0, D_0 \pth{ \frac{\e}{\eta}}^{2\th}} \times B_{\RRR^3}\pth{0,1-\frac{\a}{2}}
\\&\qquad \times B_{\mathcal{A}_q}\pth{0,D_0^{q+1}\eta\e} \times B_{\mathcal{B}_q}\pth{0,D_0^{q+1} \eta\e}.
\end{align*} 
The parameter map $\Psi^{(p)}$ is defined by
\begin{align*}
&\Psi^{(p)} : B_{D_0} \longrightarrow B_{D_0}
\\&\hspace{1.45cm} \p \longmapsto \p'
\end{align*}
such that $\p'=\pth{m',y',a',v',\breve{g}',\breve{\pi}'}$ solves the system
\begin{align}
16\pi \ga' m' & =   \eta^2  + F_1\pth{\p,\wc{u},\wc{X}}, \label{eq m}
\\ 16\pi\ga' m' (v')^k & =  - \int_{\RRR^3} \wc{\pi}^{ij}  \dr_k \wc{g}_{ij}  + F_{2,k}\pth{\p,\wc{u},\wc{X}}, \label{eq v}
\\ m'(y')^{\ell'} & =  -\frac{1}{\sqrt{3}}Q^A_{2,\ell}(\wc{g},\wc{\pi})  + \frac{1}{\sqrt{3}} F_{3,\ell'}\pth{\p,\wc{u},\wc{X}}, \label{eq y}
\\ m' (a')^k & =  \frac{1}{8\pi}Q^B_{ab}(\wc{g},\wc{\pi}) + \frac{1}{8\pi}F_{4,k}\pth{\p,\wc{u},\wc{X}},\label{eq a}
\\ \left\langle \breve{g}'  , D\HH[e,0]^*( w_{j,\ell}) \right\rangle & = -\left\langle \HH\pth{e + \chi_{\vec{p}\,'}(g_{\vec{p}\,'}-e) , \chi_{\vec{p}\,'}\pi_{\vec{p}\,'} } , w_{j,\ell} \right\rangle \label{eq g breve}
\\&\quad   - Q^A_{j,\ell}(\wc{g},\wc{\pi})  + F_{5,j,\ell}\pth{\p,\wc{u},\wc{X}}, \nonumber
\\ \left\langle  \breve{\pi}', D\MM[e ,0]^*(Z) \right\rangle & = -  \left\langle \MM\pth{e + \chi_{\vec{p}\,'}(g_{\vec{p}\,'}-e),\chi_{\vec{p}\,'}\pi_{\vec{p}\,'} }, Z \right\rangle   \label{eq pi breve}
\\&\quad +  Q^B_Z(\wc{g},\wc{\pi}) + F_{6,Z}\pth{\p,\wc{u},\wc{X}}.
\end{align}
\end{definition}

Note that the system \eqref{eq m}-\eqref{eq pi breve} is not linear since the LHS of the equations \eqref{eq m}-\eqref{eq a} involve products of different components of $\vec{p}\,'$ and also since $\vec{p}\,'$ appears in a nonlinear way on the RHS of the equations \eqref{eq g breve}-\eqref{eq pi breve} for $\breve{g}'$ and $\breve{\pi}'$. It has nevertheless a triangular structure since $\vec{p}\,'$ will be defined through \eqref{eq m}-\eqref{eq a} and then $\breve{g}'$ and $\breve{\pi}'$ will be defined through \eqref{eq g breve} and \eqref{eq pi breve}.

\begin{remark}\label{remark gamma}
If $\p \in B_{D_0}$, then in particular $|v|\leq 1 - \frac{\a}{2}$ which implies
\begin{align*}
\ga \leq \frac{1}{\sqrt{1- \pth{ 1 - \frac{\a}{2} }^2}}.
\end{align*}
Since $\e$ will be chosen small compared to $\a$, and hence to $\ga^{-1}$, in what follows, all the symbols $\lesssim_\ga$ (or $\lesssim_{\max(\ga,\ga')}$ if $\p,\p'\in B_{D_0}$) will simply become $\lesssim$. 
\end{remark}

The next lemma gives estimates for the remainders of the equations \eqref{eq m}-\eqref{eq pi breve}.

\begin{lemma} \label{lem estim diff F}
If $\p,\p'\in B_{D_0}$ then
\begin{align}
\sum_{i=1,\dots,6}\left| F_i\pth{\p,\wc{u},\wc{X}} \right| & \lesssim_{D_0,C_0} \eta^2\e,  \label{estim F}
\\ \sum_{i=1,2}\left| F_i\pth{\p,\wc{u},\wc{X}} - F_i\pth{\p',\wc{u},\wc{X}}\right|  &  \lesssim_{D_0,C_0} \eta d(\p,\p') , \label{diff F 1}
\\ \sum_{i=3,4}\left| F_i\pth{\p,\wc{u},\wc{X}} - F_i\pth{\p',\wc{u},\wc{X}}\right|  &  \lesssim_{D_0,C_0} \eta^{1-2\th}\e^{2\th} d(\p,\p') , \label{diff F 3}
\\ \sum_{i=5,6} \left| F_i\pth{\p,\wc{u},\wc{X}} - F_i\pth{\p',\wc{u},\wc{X}}\right| & \lesssim_{D_0,C_0} \e d(\p,\p'). \label{diff F 2}
\end{align}
\end{lemma}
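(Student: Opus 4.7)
The plan is to estimate each piece of each $F_i$ separately using Propositions \ref{prop main terms}, \ref{prop remainders}, and \ref{prop kerr}. For every $i$, $F_i$ is a sum of a Kerr-type contribution $\RR_{\OO,\ast}(\vec p)$, a linear pairing $\langle \breve g^{ij}\De\wc g_{ij}, w\rangle$ (or its momentum analogue $\langle \breve g^{k\ell}\dr_k\wc\pi_{\ell i}, w^i\rangle$), and two nonlinear integrals $\langle\mathrm{Err}_i(\p), w\rangle$ and $\langle\RR_\OO(\p,\wc u,\wc X), w\rangle$, where the test function $w$ grows at most like $r^{j-1}$ with $j=1$ for $F_1, F_2$, $j=2$ for $F_3, F_4$, and $j\leq q$ for $F_5, F_6$. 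The Kerr piece is handled by \eqref{estim R bh bis}, giving $|\RR_{\OO,\ast}(\vec p)|\lesssim m^2\lesssim\eta^4\lesssim\eta^2\e$, where the last step uses $\eta\lesssim\e$, itself a consequence of the embedding $H^k_{-q-\de}\hookrightarrow L^2$ from Lemma \ref{lem plongement}. The $\breve g\De\wc g$ pairing is integrated by parts once and, since $\breve g$ is compactly supported in $\{1\leq r\leq 10\}$, bounded by $\|\breve g\|_{W^{1,\infty}}\|\nabla\wc g\|_{L^2}\cdot\sup_{1\leq r\leq 10}|w|\lesssim D_0^{q+1}\eta\e\cdot\eta\lesssim \eta^2\e$.

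For the two nonlinear integrals I use the pointwise bounds \eqref{estim error} and \eqref{estim remainders}. The key is a careful Cauchy--Schwarz: whenever a factor $\nabla\wc g$ or $\wc\pi$ is paired with a factor $\nabla(\wc u+\wc X)$, I place the full Bartnik weight $(1+r)^{q+\de-1/2}$ on the $(\wc u,\wc X)$ side (giving the norm $\lesssim C_0\eta\e$) and leave $\nabla\wc g$, $\wc\pi$ unweighted in $L^2$ (which is $\lesssim\eta$). The compatibility $r^{j-1}(1+r)^{-q-\de+1/2}\leq (1+r)^{q+\de-1/2}$ for $j\leq q$ is immediate from $q\geq 1$ and $\de>0$. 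Terms quadratic in $(\wc g,\wc\pi)$ carrying a prefactor $\e$, $m$, or $\|\breve g\|_{W^{2,\infty}}$ in \eqref{estim error} or \eqref{estim remainders} are bounded directly by $\eta^2$ times the prefactor, hence by $\eta^2\e$. Summing all contributions yields \eqref{estim F}.

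For the Lipschitz-type bounds \eqref{diff F 1}--\eqref{diff F 2} I use the refined difference estimates \eqref{diff R bh bis}, \eqref{diff error}, and \eqref{diff remainders}, all of which already factor out $d(\p,\p')$. The same weight-pairing strategy applies, except that smallness on $(\wc g,\wc\pi,\wc u,\wc X)$ is used only once rather than twice. The essential subtlety, flagged in Remark \ref{remark IPP}, is that the explicitly isolated pieces $T_1^{abcd}(1+r)^{-1}\dr_a\dr_b\wc g_{cd}$ and $S_2^{abc}(1+r)^{-1}\dr_a\wc\pi_{bc}$ in \eqref{diff error}, and likewise $V_\OO^{abcd}(\wc u-\wc u')\dr_a\dr_b\wc g_{cd}$ in \eqref{diff remainders}, must be integrated by parts before being estimated; after IBP, one derivative falls on $\wc g$ or $\wc\pi$ and the resulting $\|\nabla\wc g\|_{L^2}, \|\wc\pi\|_{L^2}\lesssim\eta$ produces the coefficient $\eta$ instead of the cruder $\e$. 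For $F_1, F_2$ (test weight $1$) this yields exactly the factor $\eta$ on the right of \eqref{diff F 1}. For $F_3, F_4$ (test weight $r$), the same IBP combined with the interpolation used in the proof of Lemma \ref{lem ortho calcul A} costs a factor $(\e/\eta)^{2\th}$, producing $\eta^{1-2\th}\e^{2\th}d(\p,\p')$. For $F_5, F_6$ (test weight up to $r^{q-1}$), the heavier weight prevents any interpolation gain, and one only recovers the coefficient $\e$.

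The main obstacle is purely technical bookkeeping: there are many individual contributions to route through the correct combination of integration by parts, weight-splitting in Cauchy--Schwarz, and interpolation between the unweighted $L^2$-norm (giving $\eta$) and the weighted $H^k_{-q-\de}$-norm (giving $\e$). The sharpness of the statement for $i=1,2$ depends on never wasting the $\eta$ factor on terms that still carry two unweighted derivatives of $\wc g$ or one of $\wc\pi$; this is precisely what the isolation of $T_1$, $S_2$, $V_\OO$ in Propositions \ref{prop main terms} and \ref{prop remainders} was designed to permit.
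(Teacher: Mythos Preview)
Your proposal is correct and follows essentially the same route as the paper: decompose each $F_i$ into the Kerr piece $\RR_{\OO,\ast}$, the $\breve g$-pairing, the $\mathrm{Err}_i$ integral, and the $\RR_\OO$ integral; control each via \eqref{estim R bh bis}, \eqref{diff R bh bis}, \eqref{estim error}, \eqref{diff error}, \eqref{estim remainders}, \eqref{diff remainders}; and distinguish the three ranges of test-weight growth $j=1$, $j=2$, $3\le j\le q$ exactly as the paper does. One small omission: in your treatment of \eqref{estim F} you invoke the pointwise bound \eqref{estim remainders}, which already has the piece $U_\OO^{abcd}\wc u\,\dr_a\dr_b\wc g_{cd}$ subtracted off, but you do not say how you handle $\langle U_\OO\,\wc u\,\nab^2\wc g,\,r^{j-1}\rangle$ itself---a direct estimate there gives only $\eta\e^2$, not $\eta^2\e$, so you must integrate by parts once (exactly as you do for $T_1,S_2,V_\OO$ in the Lipschitz part) to move a derivative onto $\wc u$ or the weight and recover the factor $\|\nab\wc g\|_{L^2}\lesssim\eta$; the paper does this explicitly. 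Also, your weight-compatibility line ``$r^{j-1}(1+r)^{-q-\de+1/2}\le(1+r)^{q+\de-1/2}$'' is miswritten; what you actually need (and what holds for $j\le q$, $\de>0$) is $r^{j-1}(1+r)^{-q-\de+1/2}\lesssim 1$.
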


\begin{proof}
We recall that $\wc{u}$ and $\wc{X}$ are assumed to satisfy \eqref{assumption u X}. In order to prove \eqref{estim F}, we note that all the remainders $F_b$ for $b=1,\dots,6$ are of the following schematic form\footnote{In this proof, the scalar product with $r^{j-1}$ is an abuse of notations standing for a scalar product with a scalar function $f_j$ satisfying $|f_j|+r|\nab f_j|\lesssim r^{j-1}$.}  
\begin{align}
F^{(i,\OO,Y,j)}\pth{\p,\wc{u},\wc{X}} = \RR_{\OO,Y}(\vec{p}) + \left\langle \breve{g}(\nab^2 \wc{g} + \nab \wc{\pi}) + \mathrm{Err}_i(\p) + \RR_\OO\pth{ \p,\wc{u},\wc{X}}, r^{j-1} \right\rangle,\label{def F iOj}
\end{align}
for $i=1,2$, $\OO=\HH,\MM$, $Y=1,x^k,W_{1,0,k},\Om^{-}_{ab}$ and $1\leq j \leq q$. For the first term in \eqref{def F iOj} we simply use \eqref{estim R bh bis}:
\begin{align*}
\left| \RR_{\OO,Y}(\vec{p}) \right| \lesssim_{D_0} \eta^4.
\end{align*}
We now estimate each term in the scalar product in \eqref{def F iOj}. For the first term, we use the support property of $\breve{g}$ and integrate by parts:
\begin{align*}
\left| \left\langle \breve{g}(\nab^2 \wc{g} + \nab \wc{\pi}) , r^{j-1} \right\rangle \right| & \lesssim \l \breve{g} \r_{W^{1,\infty}} \pth{ \l \nab\wc{g}\r_{L^2} + \l \wc{\pi}\r_{L^2}}
\\&\lesssim_{D_0} \eta^2\e.
\end{align*}
For the second term, we use \eqref{estim error} and $j\leq q$:
\begin{align*}
\left| \left\langle \mathrm{Err}_i(\p) , r^{j-1} \right\rangle \right| & \lesssim \e m \l (1+r)^{j-q-\de-4} \r_{L^1} + \e \pth{ \l \nab\wc{g}\r_{L^2}^2 + \l \wc{\pi}\r_{L^2}^2 } + \l \breve{g}\r_{W^{2,\infty}}^2 + \l \breve{\pi} \r_{W^{1,\infty}}^2
\\&\quad + \pth{ \l \breve{g} \r_{W^{2,\infty}} + \l \breve{\pi} \r_{W^{1,\infty}} } \pth{ m + \l r^{-1}\wc{g}\r_{L^2} + \l \nab \wc{g}\r_{L^2} + \l \wc{\pi} \r_{L^2} }
\\&\lesssim_{D_0}\eta^2\e.
\end{align*}
For the third term, we use \eqref{estim remainders}, $j\leq q$, $\de>0$:
\begin{align*}
&\left| \left\langle \RR_\OO\pth{ \p,\wc{u},\wc{X}} , r^{j-1} \right\rangle \right| 
\\&\qquad \lesssim  \l \nab\wc{X}\r_{L^2_{-q-\de-1}}^2 + \l \nab\wc{X}\r_{L^2_{-q-\de-1}}  \l \nab\wc{u}\r_{L^2_{-q-\de-1}}  
\\&\qquad\quad + \pth{ m \l (1+r)^{j-q-\de-\frac{5}{2}}\r_{L^2} + \l r^{-1}\wc{g}\r_{L^2} } \pth{ \l \nab^2\wc{u}\r_{L^2_{-q-\de-2}}  + \l \nab^2\wc{X}\r_{L^2_{-q-\de-2}}  }
\\&\qquad\quad + \pth{ m \l (1+r)^{j-q-\de-\frac{5}{2}} \r_{L^2} + \l \nab\wc{g}\r_{L^2} + \l\wc{\pi}\r_{L^2} } \pth{ \l \nab\wc{u}\r_{L^2_{-q-\de-1}}  + \l \nab\wc{X}\r_{L^2_{-q-\de-1}}  }
\\&\qquad\quad + \left| \int_{\RRR^3} r^{j-1}\wc{u} \nab^2\wc{g} \right| +  m \l (1+r)^{j-q-\de-\frac{5}{2}} \r_{L^2} \l \wc{u} \r_{L^2_{-q-\de}} + \pth{ \l \nab\wc{g}\r_{L^2}^2 + \l \wc{\pi} \r_{L^2}^2 } \l \wc{u} \r_{H^2_{-q-\de}}
\\&\qquad\quad + \pth{ \l \breve{g} \r_{W^{2,\infty}} + \l \breve{\pi}\r_{W^{1,\infty}} } \pth{ \l \wc{u} \r_{H^2_{-q-\de}} + \l \wc{X} \r_{H^2_{-q-\de}} }
\\&\qquad\lesssim_{D_0,C_0}\eta^2\e + \left| \int_{\RRR^3} r^{j-1}\wc{u} \nab^2\wc{g} \right|,
\end{align*}
where for the terms multiplied by $\mathbbm{1}_{\{1\leq r \leq 10\}}$ we used Cauchy-Schwarz inequality on a fixed bounded domain and added artificial weights as in $\l f \mathbbm{1}_{\{1\leq r \leq 10\}} \r_{L^1} \lesssim \l f \r_{L^2_\mu}$ (which holds for any $\mu\in\RRR$). We integrate by parts the term $r^{j-1}\wc{u} \nab^2\wc{g}$ in order to get $\eta^2\e$ instead of $\eta\e^2$:
\begin{align*}
\left| \int_{\RRR^3} r^{j-1}\wc{u} \nab^2\wc{g} \right| & \lesssim \left| \int_{\RRR^3} r^{j-1}\nab\wc{u} \nab\wc{g} \right| + \left| \int_{\RRR^3} r^{j-2}\wc{u} \nab\wc{g} \right|
\\&\lesssim \l \nab\wc{g} \r_{L^2} \pth{ \l \nab\wc{u}\r_{L^2_{-q-\de-1}} + \l \wc{u} \r_{L^2_{-q-\de}} }
\\&\lesssim_{C_0}\eta^2\e.
\end{align*}
We have proved that $\left| F^{(i,\OO,j)}\pth{\p,\wc{u},\wc{X}} \right| \lesssim_{D_0,C_0}\eta^2\e$, which proves \eqref{estim F}.

We now turn to the proof of \eqref{diff F 1}-\eqref{diff F 2}. We have schematically
\begin{align}
F^{(i,\OO,j)}\pth{\p,\wc{u},\wc{X}} - F^{(i,\OO,j)}\pth{\p',\wc{u},\wc{X}} & = \RR_{\OO,Y}(\vec{p}) - \RR_{\OO,Y}(\vec{p}\,')   \label{diff F inter}
\\&\quad  + \left\langle (\nab^2\wc{g} + \nab\wc{\pi})(\breve{g}-\breve{g}') , r^{j-1} \right\rangle \nonumber
\\&\quad + \left\langle \mathrm{Err}_i(\p) - \mathrm{Err}_i(\p') , r^{j-1} \right\rangle\nonumber
\\&\quad + \left\langle \RR_\OO\pth{\p,\wc{u},\wc{X}} - \RR_\OO\pth{\p',\wc{u},\wc{X}} , r^{j-1} \right\rangle\nonumber.
\end{align}
We estimate each terms in \eqref{diff F inter}. For the first term, we simply use \eqref{diff R bh bis}:
\begin{align*}
\left| \RR_{\OO,Y}(\vec{p}) - \RR_{\OO,Y}(\vec{p}\,') \right| & \lesssim_{D_0} \eta d(\p,\p').
\end{align*}
For the second term in \eqref{diff F inter}, we integrate by parts and use the support property of $\breve{g}-\breve{g}'$:
\begin{align*}
\left| \left\langle (\nab^2\wc{g} + \nab\wc{\pi})(\breve{g}-\breve{g}') , r^{j-1} \right\rangle \right| & \lesssim \left| \left\langle (\nab\wc{g} + \wc{\pi})\nab(\breve{g}-\breve{g}') , r^{j-1} \right\rangle \right| + \left| \left\langle (\nab\wc{g} + \wc{\pi})(\breve{g}-\breve{g}') , r^{j-2} \right\rangle \right|
\\&\lesssim \pth{ \l \nab\wc{g} \r_{L^2} + \l\wc{\pi}\r_{L^2} } \l \breve{g} - \breve{g}' \r_{W^{2,\infty}}
\\&\lesssim \eta d(\p,\p').
\end{align*}
For the fourth term in \eqref{diff F inter}, we use \eqref{diff remainders}, $m+m'\lesssim D_0 \eta^2$ and $j\leq q$:
\begin{align*}
&\left|  \left\langle \RR_\OO\pth{\p,\wc{u},\wc{X}} - \RR_\OO\pth{\p',\wc{u},\wc{X}}  , r^{j-1} \right\rangle \right| 
\\&\hspace{2cm} \lesssim_{D_0} \pth{ \int_{\RRR^3}r^{j-2}\left|\nab^2\pth{\wc{u}+\wc{X}}\right| + \int_{\RRR^3} r^{j-3}\left|\nab\pth{\wc{u}+\wc{X}}\right|   + \int_{\RRR^3}r^{j-4}|\wc{u}|  }d(\p,\p')
\\&\hspace{2cm}\quad + \pth{ \l \wc{u} \r_{H^2_{-q-\de}} + \l \wc{X} \r_{H^2_{-q-\de}} } d(\p,\p')
\\&\hspace{2cm} \lesssim_{D_0} \pth{ 1 + \l (1+r)^{j-q-\de-\frac{5}{2}} \r_{L^2}} \pth{ \l \wc{u} \r_{H^2_{-q-\de}} + \l \wc{X} \r_{H^2_{-q-\de}} } d(\p,\p')
\\&\hspace{2cm} \lesssim_{D_0,C_0} \eta\e d(\p,\p').
\end{align*}
For the third term in \eqref{diff F inter}, we distinguish between $j=1$, $j=2$ and $3\leq j \leq q$, which explains the difference between \eqref{diff F 1}, \eqref{diff F 3} and \eqref{diff F 2}. In all cases, we use \eqref{diff error}, $m+m'\lesssim D_0 \eta^2$ and 
\begin{align*}
\l \breve{g}\r_{W^{2,\infty}}+\l \breve{g}'\r_{W^{2,\infty}}+\l \breve{\pi}\r_{W^{1,\infty}}+\l \breve{\pi}'\r_{W^{1,\infty}}\lesssim D_0^{q+1}\eta\e.
\end{align*}
This gives
\begin{align*}
&\left|  \left\langle \mathrm{Err}_i(\p) - \mathrm{Err}_i(\p') , r^{j-1} \right\rangle \right|  
\\& \lesssim \left|T_1^{abcd}\right| \left| \int_{\RRR^3} \dr_a\dr_b\wc{g}_{cd} (1+r)^{j-2}  \right| + \left| S_2^{abc}\right| \left| \int_{\RRR^3}  \dr_a \wc{\pi}_{bc} (1+r)^{j-2}  \right| 
\\&\quad + \pth{ \int_{\RRR^3}(1+r)^{j-3}\pth{|\nab\wc{g}| + |\wc{\pi}| } + \int_{\RRR^3}(1+r)^{j-4}|\wc{g}|  }d(\p,\p') + C(D_0)\eta d(\p,\p')
\\&\lesssim \pth{ \int_{\RRR^3}(1+r)^{j-3}\pth{|\nab\wc{g}| + |\wc{\pi}| } + \int_{\RRR^3}(1+r)^{j-4}|\wc{g}|  }d(\p,\p') + C(D_0)\eta d(\p,\p'),
\end{align*}
where we integrated by parts in the first two integrals and used \eqref{estim T S}. 
\begin{itemize}
\item If $j=1$ (corresponds to \eqref{diff F 1}), we can use the fact that $(1+r)^{-2}\in L^2$ and obtain
\begin{align*}
\left|  \left\langle \mathrm{Err}_i(\p) - \mathrm{Err}_i(\p') , 1 \right\rangle \right| & \lesssim \l (1+r)^{-2} \r_{L^2} \pth{ \l \nab\wc{g}\r_{L^2} + \l \wc{\pi}\r_{L^2}  + \l r^{-1} \wc{g}\r_{L^2}  }d(\p,\p') 
\\&\quad + C(D_0)\eta d(\p,\p')
\\& \lesssim_{D_0}\eta d(\p,\p').
\end{align*}
\item If $j=2$ (corresponds to \eqref{diff F 3}), we interpolate between the $L^2$ norms and the weighted norms, thanks to Cauchy-Schwarz inequality and then a well-chosen Hölder's inequality:
\begin{align*}
\int_{\RRR^3}(1+r)^{-2}|\wc{g}| & \lesssim \l (1+r)^{-3-\de} \r_{L^1}^\half  \pth{ \int_{\RRR^3} (1+r)^{-1+\de} |\wc{g}|^2 }^\half 
\\& \lesssim \l \pth{\frac{|\wc{g}|^2}{r^2}}^{1-2\th} \r_{L^{\frac{1}{1-2\th}}}^{\half} \l \pth{ r^{2q+2\de-3}|\wc{g}|^2 }^{2\th} \r_{L^{\frac{1}{2\th}}}^\half 
\\& \lesssim \eta^{1-2\th}\e^{2\th},
\end{align*}
and
\begin{align*}
&\int_{\RRR^3}(1+r)^{-1}\pth{|\nab\wc{g}| + |\wc{\pi}| } 
\\& \lesssim \l (1+r)^{-3-\de} \r_{L^1}^\half \pth{ \pth{ \int_{\RRR^3}|\nab\wc{g}|^2 (1+r)^2 }^\half + \pth{\int_{\RRR^3}|\wc{\pi}|^2 (1+r)^2 }^\half  }
\\& \lesssim \eta^{1-2\th}\e^{2\th} ,
\end{align*}
where we have used the computation \eqref{interpolation utile} with $j=3$. This gives
\begin{align*}
\left|  \left\langle \mathrm{Err}_i(\p) - \mathrm{Err}_i(\p') , r \right\rangle \right|  & \lesssim_{D_0}\eta^{1-2\th}\e^{2\th}d(\p,\p').
\end{align*}
\item If $3\leq j\leq q$ (corresponds to \eqref{diff F 2}), we use the weighted norms of $\wc{g}$ and $\wc{\pi}$: 
\begin{align*}
\left|  \left\langle \mathrm{Err}_i(\p) - \mathrm{Err}_i(\p') , r^{j-1} \right\rangle \right|  & \lesssim \l (1+r)^{j-q-\de-\frac{5}{2}} \r_{L^2} \pth{ \l \wc{g} \r_{H^1_{-q-\de}} + \l \wc{\pi} \r_{L^2_{-q-\de-1}}  }d(\p,\p') 
\\&\quad + C(D_0)\eta d(\p,\p')
\\& \lesssim_{D_0}\e d(\p,\p').
\end{align*}
\end{itemize}
This concludes the proof of Lemma \ref{lem estim diff F}. 
\end{proof}

\subsubsection{A fixed point for the parameter map}

\begin{lemma}\label{lem param bound}
Consider $\pth{\wc{u},\wc{X}}\in H^2_{-q-\de}\times H^2_{-q-\de}$ satisfying \eqref{assumption u X}. If $D_0$ is a large enough universal constant, and if $\e$ is small enough compared to $C_0^{-1}$ and $D_0^{-1}$, then the parameter map $\Psi^{(p)}$ (see Definition \ref{def param map}) is well-defined.
\end{lemma}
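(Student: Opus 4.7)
The plan is to solve system \eqref{eq m}--\eqref{eq pi breve} for $\p'=(m',y',a',v',\breve g',\breve\pi')$ sequentially, exploiting its triangular structure: $\vec{p}\,'$ is determined by \eqref{eq m}--\eqref{eq a} without any reference to the correctors, which are then defined by \eqref{eq g breve}--\eqref{eq pi breve} via Lemma \ref{lem invert breve}. At each step one must verify both unique solvability and membership in the corresponding factor of $B_{D_0}$.

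First, I would divide \eqref{eq v} by \eqref{eq m} componentwise to obtain
\[
v'^k=\frac{-\int_{\RRR^3}\wc\pi^{ij}\partial_k\wc g_{ij}+F_{2,k}(\p,\wc u,\wc X)}{\eta^2+F_1(\p,\wc u,\wc X)},
\]
which determines $v'$ independently of $m'$. The cone condition $(\wc g,\wc\pi)\in V_\a$ states precisely that $\sqrt{\sum_k(\int\wc\pi^{ij}\partial_k\wc g_{ij})^2}\leq(1-\a)\eta^2$; combined with $|F_1|+|F_2|\lesssim_{D_0,C_0}\eta^2\e$ from \eqref{estim F}, this yields
\[
|v'|\leq\frac{(1-\a)\eta^2+C(D_0,C_0)\eta^2\e}{\eta^2-C(D_0,C_0)\eta^2\e}\leq 1-\frac{\a}{2}
\]
provided $\e$ is small enough relative to $\a$, $C_0^{-1}$ and $D_0^{-1}$. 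This keeps $\ga'$ bounded by a constant depending only on $\a$, and \eqref{eq m} then gives $m'=(\eta^2+F_1)/(16\pi\ga')$, yielding simultaneously $m'\leq D_0\eta^2$ for $D_0$ a large universal constant and a matching lower bound $m'\gtrsim_\a\eta^2$ essential for the sequel.

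With $m'>0$ secured, \eqref{eq y} and \eqref{eq a} directly give $y'$ and $a'$, and using $|Q^A_{2,\ell}|+|Q^B_{ab}|\lesssim\eta^{2(1-\th)}\e^{2\th}$ from Lemmas \ref{lem ortho calcul A}--\ref{lem ortho calcul B} together with the dominated bound $|F_{3,\ell'}|+|F_{4,k}|\lesssim_{D_0,C_0}\eta^2\e$ (negligible since $\eta\lesssim\e$), the lower bound on $m'$ produces $|y'|+|a'|\leq D_0(\e/\eta)^{2\th}$ for $D_0$ large. Finally Lemma \ref{lem invert breve} defines $(\breve g',\breve\pi')\in\AA_q\times\BB_q$ from \eqref{eq g breve}--\eqref{eq pi breve}, with norm controlled by the maximum of the right-hand sides. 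The main contribution comes from the Kerr projection: using \eqref{constraint kerr} together with $|w_{j,\ell}|+|Z|\lesssim r^{j-1}$ on the support $\{\la'\leq r\leq 2\la'\}$, one estimates
\[
|\langle\HH(e+\chi_{\vec{p}\,'}(g_{\vec{p}\,'}-e),\chi_{\vec{p}\,'}\pi_{\vec{p}\,'}),w_{j,\ell}\rangle|\lesssim m'(\la')^{j-1}\lesssim\eta^{2-2\th(j-1)}\e^{2\th(j-1)}\lesssim\eta\e,
\]
where the last inequality uses $1-2\th(q-1)=(2\de+1)/(2q+2\de-1)>0$ together with $\eta\lesssim\e$. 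Combined with $|Q^A_{j,\ell}|+|Q^B_Z|\lesssim\eta\e$ and $|F_{5,j,\ell}|+|F_{6,Z}|\lesssim_{D_0,C_0}\eta^2\e\ll\eta\e$, this gives $\l\breve g'\r_{W^{2,\infty}}+\l\breve\pi'\r_{W^{1,\infty}}\leq D_0^{q+1}\eta\e$ for $D_0$ sufficiently large.

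The main obstacle is the first step: without the cone condition the numerator and denominator in the formula for $v'$ could balance, $\ga'$ would blow up, and both the Kerr estimates (which are $\lesssim_\ga$) and the subsequent bound on $m'$ would degenerate. Once $|v'|\leq 1-\a/2$ is secured, the remainder reduces to algebraic manipulations and careful bookkeeping, relying on the interpolation inequalities already established in Lemmas \ref{lem ortho calcul A}--\ref{lem ortho calcul B} and \ref{lem estim diff F}.
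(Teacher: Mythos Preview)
Your proposal is correct and follows essentially the same approach as the paper: divide \eqref{eq v} by \eqref{eq m} to solve for $v'$ first using the cone condition, then obtain the two-sided bound on $m'$, then $y'$ and $a'$, and finally the correctors via Lemma~\ref{lem invert breve} with the Kerr projection estimated through \eqref{constraint kerr}. One minor point of bookkeeping: your bound $m'(\la')^{j-1}\lesssim \eta^{2-2\th(j-1)}\e^{2\th(j-1)}$ hides a factor $D_0^{j}\leq D_0^q$ coming from $\la'\lesssim D_0(\e/\eta)^{2\th}$, which is precisely what forces the choice $D_0^{q+1}$ for the corrector bound (as in the paper); otherwise your argument matches the paper's proof step for step.
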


\begin{proof}
Since the equations \eqref{eq m} and \eqref{eq v} are coupled through $\ga'$, the parameter map $\Psi^{(p)}$ is a priori not well-defined. However the coupling is easy to untangle, since the quantity $16\pi\ga'm'$ (i.e. the LHS of \eqref{eq m}) appears on the LHS of \eqref{eq v}. Hence, \eqref{eq v} rewrites
\begin{align*}
(v')^k & = \frac{ - \int_{\RRR^3} \wc{\pi}^{ij}  \dr_k \wc{g}_{ij}  + F_{2,k}\pth{\p,\wc{u},\wc{X}}}{ \frac{1}{4}\l \nab\wc{g}\r_{L^2}^2 + \l \wc{\pi}\r_{L^2}^2  + F_1\pth{\p,\wc{u},\wc{X}}}
\\& = \frac{ - \int_{\RRR^3} \wc{\pi}^{ij}  \dr_k \wc{g}_{ij}  }{ \frac{1}{4}\l \nab\wc{g}\r_{L^2}^2 + \l \wc{\pi}\r_{L^2}^2} \pth{ 1 + \frac{F_1\pth{\p,\wc{u},\wc{X}}}{\eta^2}}^{-1} + \frac{   F_{2,k}\pth{\p,\wc{u},\wc{X}}}{ \eta^2  + F_1\pth{\p,\wc{u},\wc{X}}}
\end{align*}
where we used the definition of $\eta$ (see Theorem \ref{maintheorem}). Thanks to \eqref{estim F} and of the bound
\begin{align*}
\left| \frac{ - \int_{\RRR^3} \wc{\pi}^{ij}  \dr_k \wc{g}_{ij}  }{ \frac{1}{4}\l \nab\wc{g}\r_{L^2}^2 + \l \wc{\pi}\r_{L^2}^2} \right|< 1,
\end{align*}
we obtain
\begin{align*}
(v')^k & = \frac{ - \int_{\RRR^3} \wc{\pi}^{ij}  \dr_k \wc{g}_{ij}  }{ \frac{1}{4}\l \nab\wc{g}\r_{L^2}^2 + \l \wc{\pi}\r_{L^2}^2} + C(D_0,C_0)\e.
\end{align*}
This defines the vector $v'$ and then implies $|v'|=J(\wc{g},\wc{\pi})  + C(D_0,C_0)\e$ (see Definition \ref{def J} for the definition of the functional $J(\wc{g},\wc{\pi})$). Thanks to the assumption $(\wc{g},\wc{\pi})\in V_\a$, we obtain 
\begin{align*}
|v'|  \leq 1 - \a + C(D_0,C_0)\e
\end{align*}
and then $|v'|  \leq 1 - \frac{\a}{2}$ if $\e$ is small compared to $C_0^{-1}$, $D_0^{-1}$ and $\a$. This implies $1\leq \ga'\lesssim 1$ according to Remark \ref{remark gamma}.

We can now define $m'$ through \eqref{eq m} and with \eqref{estim F} obtain the bound
\begin{align*}
 m' & =  \frac{\eta^2}{16\pi \ga'} \pth{ 1  + C(D_0,C_0)\e}.
\end{align*}
If $D_0$ is large enough and if $\e$ is small compared to $C_0^{-1}$ and $D_0^{-1}$ then we have the upper bound $m'\leq D_0 \eta^2$. Thanks to $\ga'\lesssim 1$, this also implies the lower bound $\eta^2\lesssim m'$.

We can now define $y'$ and $a'$ through \eqref{eq y} and \eqref{eq a}, after dividing by $m'$ using crucially the lower bound $\eta^2\lesssim m'$. With \eqref{estim F} and $\left| Q^A_{2,\ell}(\wc{g},\wc{\pi}) \right| + \left| Q^B_{ab}(\wc{g},\wc{\pi}) \right| \lesssim \eta^{2(1-\th)}\e^{2\th}$ (see Lemmas \ref{lem ortho calcul A} and \ref{lem ortho calcul B}) we obtain the bound
\begin{align*}
|y'| + |a'| \lesssim \pth{\frac{\e}{\eta}}^{2\th} + C(D_0,C_0)\e.
\end{align*}
If $D_0$ is large enough and $\e$ is small compared to $C_0^{-1}$ and $D_0^{-1}$ (recall that $\eta\lesssim\e$) this implies the bound $|y'|+|a'|\leq D_0\pth{\frac{\e}{\eta}}^{2\th}$.

With the help of Lemma \ref{lem invert breve} we can uniquely define $\breve{g}'\in\mathcal{A}_q$ and $\breve{\pi}'\in\mathcal{B}_q$ as solutions of \eqref{eq g breve} and \eqref{eq pi breve}. Moreover, using $ \left| Q^A_{j,\ell}(\wc{g},\wc{\pi}) \right| + \left| Q^B_Z(\wc{g},\wc{\pi})\right| \lesssim \eta\e$ from Lemmas \ref{lem ortho calcul A} and \ref{lem ortho calcul B} and \eqref{estim F}, we obtain the bound 
\begin{align*}
\l \breve{g}'\r_{W^{2,\infty}} &+ \l \breve{\pi}'\r_{W^{1,\infty}} 
\\& \lesssim  \int_{\RRR^3} \pth{ \left| \HH\pth{e + \chi_{\vec{p}\,'}(g_{\vec{p}\,'}-e) , \chi_{\vec{p}\,'}\pi_{\vec{p}\,'} }\right| + \left| \MM\pth{e + \chi_{\vec{p}\,'}(g_{\vec{p}\,'}-e) , \chi_{\vec{p}\,'}\pi_{\vec{p}\,'} }\right|  }r^{q-1} 
\\&\quad + \eta\e\pth{ 1 + C(D_0,C_0)\eta}
\\&\lesssim  m' \int_{\{ \la'\leq r \leq 2 \la'\}} r^{q-4}+ \eta\e\pth{ 1 + C(D_0,C_0)\eta}
\\&\lesssim m' (\la')^{q-1} + \eta\e\pth{ 1 + C(D_0,C_0)\eta}
\\& \lesssim D_0^q \eta^{2-2(q-1)\th}\e^{2(q-1)\th} + \eta\e\pth{ 1 + C(D_0,C_0)\eta},
\end{align*}
where we used \eqref{constraint kerr} and the estimates obtained on $m'$, $y'$ and $a'$. Using now $2(q-1)\th<1$ we obtain
\begin{align*}
\l \breve{g}'\r_{W^{2,\infty}} + \l \breve{\pi}'\r_{W^{1,\infty}} & \lesssim  \eta\e\pth{ 1 + D_0^q + C(D_0,C_0)\eta}.
\end{align*}
If $D_0$ is large enough, and if $\e$ is small compared to $C_0^{-1}$ and $D_0^{-1}$ we get the bound $\l \breve{g}' \r_{W^{2,\infty}} + \l \breve{\pi}' \r_{W^{1,\infty}} \leq D_0^{q+1}\eta\e$. We have proved that if $D_0$ is large compared to universal constants, and $\e$ is small compared to $D_0^{-1}$, $C_0^{-1}$ and universal constants, the parameter map $\Psi^{(p)}$ is well-defined. This concludes the proof of the lemma.
\end{proof}

\begin{lemma}\label{lem param contraction}
Consider $\pth{\wc{u},\wc{X}}\in H^2_{-q-\de}\times H^2_{-q-\de}$ satisfying \eqref{assumption u X}. If $D_0$ is large enough and if $\e$ is small enough compared to $D_0^{-1}$ and $C_0^{-1}$, then the parameter map $\Psi^{(p)}$ is a contraction for the distance $d$ defined in \eqref{def distance}.
\end{lemma}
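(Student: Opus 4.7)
The plan is to estimate $d(\p_1',\p_2')$ componentwise, where $\p_i' = \Psi^{(p)}(\p_i)$ for $i=1,2$, relying on the defining equations \eqref{eq m}--\eqref{eq pi breve} together with the difference estimates of Lemma \ref{lem estim diff F}. I would first handle the boost and the mass: as in the proof of Lemma \ref{lem param bound}, equations \eqref{eq m} and \eqref{eq v} decouple into an explicit formula for $v'$ whose $\p$-dependence enters only through $F_1$ and $F_2$; applying \eqref{diff F 1} together with $\eta^2 + F_1(\p) \gtrsim \eta^2$ yields $|v_1'-v_2'| \lesssim \eta^{-1}d(\p_1,\p_2)$, hence $\eta^2|v_1'-v_2'| \lesssim \eta\, d(\p_1,\p_2)$. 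Reinserting into \eqref{eq m} and using $|\ga_1'-\ga_2'| \lesssim |v_1'-v_2'|$ (valid on $B_{D_0}$ by Remark \ref{remark gamma}) gives $|m_1'-m_2'| \lesssim \eta\, d(\p_1,\p_2)$.

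Next I would treat $y'$ and $a'$ by dividing \eqref{eq y}--\eqref{eq a} by $m'$ (using the lower bound $m' \gtrsim \eta^2$ from Lemma \ref{lem param bound}) and splitting each difference into a numerator piece driven by \eqref{diff F 3} and a denominator piece driven by the bound on $|m_1'-m_2'|$ just obtained, combined with the $\eta^{2(1-\th)}\e^{2\th}$ bound for $Q^A_{2,\ell}, Q^B_{ab}$ from Lemmas \ref{lem ortho calcul A}--\ref{lem ortho calcul B}. The crucial input is the interpolated bound \eqref{diff F 3}, which supplies $\eta^{1-2\th}\e^{2\th}\, d(\p_1,\p_2)$ rather than the naive $\e\, d(\p_1,\p_2)$. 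Summing both pieces produces
\begin{align*}
\eta^2\left(|y_1'-y_2'| + |a_1'-a_2'|\right) \lesssim \eta^{1-2\th}\e^{2\th}\, d(\p_1,\p_2) \leq \e\, d(\p_1,\p_2),
\end{align*}
the final inequality using $\eta \leq \e$ together with $1 - 2\th > 0$ (valid for $q \geq 3$ since $2\th = 2/(2q+2\de-1) \leq 2/5$).

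For the compact correctors I would apply Lemma \ref{lem invert breve} to the difference of \eqref{eq g breve} (respectively \eqref{eq pi breve}) evaluated at $\p_1$ and $\p_2$: the $F_5, F_6$ contributions are directly bounded by \eqref{diff F 2}, producing $\e\, d(\p_1,\p_2)$, while the contributions from the Kerr constraint paired with $w_{j,\ell}$ or $Z$ are handled via \eqref{diff constraint kerr}, which localizes the integrand to the annulus $\{r \sim \la\}$ of volume $\sim \la^3$ with $\la \lesssim (\e/\eta)^{2\th}$. Pairing against a polynomial of degree $\leq q-1$ and plugging in the already-established bounds on $|m_1'-m_2'|, |y_1'-y_2'|, |a_1'-a_2'|, |v_1'-v_2'|$, the contribution for index $j$ is controlled by $\eta^{1-2\th(j-1)}\e^{2\th(j-1)}\, d(\p_1,\p_2) \leq \e\, d(\p_1,\p_2)$ for every $1 \leq j \leq q$, once again by $\eta \leq \e$ and $2\th(q-1)<1$. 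Summing all contributions gives $d(\p_1',\p_2') \lesssim \e\, d(\p_1,\p_2)$, so choosing $\e$ small enough (depending on $D_0, C_0$ and universal constants) closes the contraction.

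The main obstacle is the balancing act at the $y',a'$ step: inverting $m' \gtrsim \eta^2$ costs a factor $\eta^{-2}$, and the proof only closes because the tailored interpolation arguments in Lemma \ref{lem estim diff F} yield the improvement $\eta^{1-2\th}\e^{2\th}$ for the $j=2$ projections, in place of the generic $\e$. The matching exponent $2\th$ is precisely dictated by the anticipated size $\la \lesssim (\e/\eta)^{2\th}$ of the black hole's center of mass and angular momentum, which is also why these parameters are weighted by $\eta^2$ in the definition \eqref{def distance} of $d$.
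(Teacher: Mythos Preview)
Your proposal is correct and follows essentially the same approach as the paper's proof: differencing the defining equations \eqref{eq m}--\eqref{eq pi breve}, estimating $v'$ and $m'$ first via \eqref{diff F 1}, then $y',a'$ via \eqref{diff F 3} together with the mass lower bound, and finally the correctors via \eqref{diff F 2} and \eqref{diff constraint kerr} with the already-obtained primed-parameter bounds. The only cosmetic difference is that the paper works directly with the differenced product equations (e.g.\ $\ga'_1 m'_1 v'_1 - \ga'_2 m'_2 v'_2$) rather than with the explicit quotient formula for $v'$, and uses the $B_{D_0}$-bound $|y'_2|\lesssim(\e/\eta)^{2\th}$ in place of the $Q^A_{2,\ell}$ bound in the denominator step; both routes yield the same $\eta^{1-2\th}\e^{2\th}$ gain.
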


\begin{proof}
Let $\p_1,\p_2\in B_{D_0}$ and let $\p'_i=\Psi^{(p)}(\p_i)$. By taking the difference of the equations defining $\p'_i$ we obtain
\begin{align}
16\pi \pth{\ga'_1 m'_1 - \ga'_2m'_2} & =    F_1\pth{\p_1,\wc{u},\wc{X}} - F_1\pth{\p_2,\wc{u},\wc{X}}, \label{eq diff m}
\\ 16\pi \pth{ \ga'_1 m'_1 (v'_1)^k - \ga'_2 m'_2 (v'_2)^k} & =    F_{2,k}\pth{\p_1,\wc{u},\wc{X}} - F_{2,k}\pth{\p_2,\wc{u},\wc{X}}, \label{eq diff v}
\\ \sqrt{3}\pth{ m'_1(y'_1)^{\ell'} - m'_2(y'_2)^{\ell'}} & =  F_{3,\ell'}\pth{\p_1,\wc{u},\wc{X}} - F_{3,\ell'}\pth{\p_2,\wc{u},\wc{X}}, \label{eq diff y}
\\ 8\pi\pth{ m'_1 (a'_1)^k - m'_2 (a'_2)^k} & =  F_{4,k}\pth{\p_1,\wc{u},\wc{X}} -  F_{4,k}\pth{\p_2,\wc{u},\wc{X}},\label{eq diff a}
\end{align}
and
\begin{align}
&\left\langle \breve{g}'_1 - \breve{g}'_2  , D\HH[e,0]^*( w_{j,\ell}) \right\rangle \label{eq diff g breve}
\\&\quad=  \left\langle \HH\pth{e + \chi_{\vec{p}\,'_2}(g_{\vec{p}\,'_2}-e) , \chi_{\vec{p}\,'_2}\pi_{\vec{p}\,'_2} } -  \HH\pth{e + \chi_{\vec{p}\,'_1}(g_{\vec{p}\,'_1}-e) , \chi_{\vec{p}\,'_1}\pi_{\vec{p}\,'_1} } , w_{j,\ell} \right\rangle \nonumber
\\&\quad\quad + F_{5,j,\ell}\pth{\p_1,\wc{u},\wc{X}} - F_{5,j,\ell}\pth{\p_2,\wc{u},\wc{X}},\nonumber
\\ & \left\langle  \breve{\pi}'_1 - \breve{\pi}'_2, D\MM[e ,0]^*(Z) \right\rangle \label{eq diff pi breve}
\\&\quad =  \left\langle \MM\pth{e + \chi_{\vec{p}\,'_2}(g_{\vec{p}\,'_2}-e) , \chi_{\vec{p}\,'_2}\pi_{\vec{p}\,'_2} } -  \MM\pth{e + \chi_{\vec{p}\,'_1}(g_{\vec{p}\,'_1}-e) , \chi_{\vec{p}\,'_1}\pi_{\vec{p}\,'_1} } , Z \right\rangle \nonumber
\\&\quad\quad    F_{6,Z}\pth{\p_1,\wc{u},\wc{X}} - F_{6,Z}\pth{\p_2,\wc{u},\wc{X}}.\nonumber
\end{align}

We start by deducing from \eqref{eq diff v} that
\begin{align*}
\eta^2 \left|(v'_1)^k - (v'_2)^k\right| & \lesssim \frac{\eta^2}{\ga'_1 m'_1} \left| \ga'_1 m'_1 - \ga'_2 m'_2\right| \left|(v'_2)^k\right|  +\frac{\eta^2}{\ga'_1 m'_1}\left| F_{2,k}\pth{\p_1,\wc{u},\wc{X}} - F_{2,k}\pth{\p_2,\wc{u},\wc{X}} \right|
\\&\lesssim_{D_0} \left| F_1\pth{\p_1,\wc{u},\wc{X}} - F_1\pth{\p_2,\wc{u},\wc{X}} \right| + \left| F_{2,k}\pth{\p_1,\wc{u},\wc{X}} - F_{2,k}\pth{\p_2,\wc{u},\wc{X}} \right|
\end{align*}
where we also used \eqref{eq diff m} and the estimates following from $\p\in B_{D_0}$ and the lower bound on the masses (recall also Remark \ref{remark gamma}). Using \eqref{diff F 1} this gives
\begin{align}
\eta^2 \left| v'_1-v'_2 \right| & \lesssim_{D_0,C_0}\eta d(\p_1,\p_2).\label{estim diff v}
\end{align}
We now deduce from \eqref{eq diff m} that
\begin{align}
\left| m'_1-m'_2 \right| & \lesssim \frac{m'_2}{\ga'_1}\left| \ga'_1- \ga'_2\right| + \left|  F_1\pth{\p_1,\wc{u},\wc{X}} - F_1\pth{\p_2,\wc{u},\wc{X}} \right| \nonumber
\\&\lesssim \eta^2 \left| v'_1-v'_2 \right| + C(D_0,C_0)\eta d(\p_1,\p_2)\nonumber
\\&\lesssim_{D_0,C_0}\eta d(\p_1,\p_2),\label{estim diff m}
\end{align}
where we used the fact that $x\mapsto \frac{1}{\sqrt{1-x^2}}$ is Lipschitz on $\left[0,1-\frac{\a}{2}\right]$, \eqref{estim diff v} and \eqref{diff F 1}. From \eqref{eq diff y} we get
\begin{align*}
\eta^2\left| (y'_1)^{\ell'} - (y'_2)^{\ell'}\right| & \lesssim \frac{\eta^2|y'_2|}{m'_1} \left| m'_1 - m'_2\right| + \frac{\eta^2}{m'_1} \left| F_{3,\ell'}\pth{\p_1,\wc{u},\wc{X}} - F_{3,\ell'}\pth{\p_2,\wc{u},\wc{X}}\right|
\\&\lesssim_{D_0} \pth{\frac{\e}{\eta}}^{2\th}\left| m'_1 - m'_2\right| + C(D_0,C_0)\eta^{1-2\th}\e^{2\th} d(\p_1,\p_2)
\end{align*}
where we have used \eqref{diff F 3}. Using \eqref{estim diff m} we obtain
\begin{align}
\eta^2\left| y'_1 - y'_2\right| & \lesssim_{D_0,C_0}\eta^{1-2\th}\e^{2\th}   d(\p_1,\p_2).\label{estim diff y}
\end{align}
Estimating $\left| a'_1-a'_2\right|$ from \eqref{eq diff a} is done identically, and we obtain
\begin{align}
\eta^2\left| a'_1 - a'_2\right| & \lesssim_{D_0,C_0}\eta^{1-2\th}\e^{2\th}  d(\p_1,\p_2).\label{estim diff a}
\end{align}
Finally, we note that the invertibility of the matrices 
\begin{align*}
\pth{ \left\langle h_{j,\ell}, D\HH[e,0]^*(w_{j',\ell'}) \right\rangle }_{3\leq j,j'\leq q, -(j-1)\leq \ell \leq j-1,-(j'-1)\leq \ell' \leq j'-1}
\end{align*}
and
\begin{align*}
\pth{ \left\langle \varpi_Z, D\MM[e,0]^*(Z') \right\rangle }_{(Z,Z')\in\mathcal{Z}_q^2}
\end{align*}
allows us to deduce from \eqref{eq diff g breve} and \eqref{eq diff pi breve} that
\begin{align*}
\l \breve{g}'_1 - \breve{g}'_2 \r_{W^{2,\infty}} &+ \l \breve{\pi}'_1 - \breve{\pi}'_2 \r_{W^{1,\infty}} 
\\& \lesssim \int_{\RRR^3} \left| \HH\pth{e + \chi_{\vec{p}\,'_2}(g_{\vec{p}\,'_2}-e) , \chi_{\vec{p}\,'_2}\pi_{\vec{p}\,'_2} } -  \HH\pth{e + \chi_{\vec{p}\,'_1}(g_{\vec{p}\,'_1}-e) , \chi_{\vec{p}\,'_1}\pi_{\vec{p}\,'_1} }  \right| r^{q-1} 
\\&\quad + \int_{\RRR^3} \left| \MM\pth{e + \chi_{\vec{p}\,'_2}(g_{\vec{p}\,'_2}-e) , \chi_{\vec{p}\,'_2}\pi_{\vec{p}\,'_2} } -  \MM\pth{e + \chi_{\vec{p}\,'_1}(g_{\vec{p}\,'_1}-e) , \chi_{\vec{p}\,'_1}\pi_{\vec{p}\,'_1} }  \right| r^{q-1} 
\\&\quad + C(D_0,C_0) \e d(\p_1,\p_2),
\end{align*}
where we have used \eqref{diff F 2}. Using now \eqref{diff constraint kerr} we obtain
\begin{align*}
\l \breve{g}'_1 - \breve{g}'_2 \r_{W^{2,\infty}} + \l \breve{\pi}'_1 - \breve{\pi}'_2 \r_{W^{1,\infty}} & \lesssim \pth{ |m'_1-m'_2|+\eta^2|v'_1-v'_2|} \int_{\{\min(\la_1,\la_2)\leq r \leq 2 \max(\la_1,\la_2)\}} r^{q-4}
\\&\quad + \eta^2 \pth{ |y'_1-y'_2| + |a'_1-a'_2| } \int_{\{\min(\la_1,\la_2)\leq r \leq 2 \max(\la_1,\la_2)\}} r^{q-5}
\\&\quad + C(D_0,C_0) \e d(\p_1,\p_2)
\\&\lesssim \pth{ |m'_1-m'_2|+\eta^2|v'_1-v'_2|} \max(\la_1,\la_2)^{q-1}
\\&\quad + \eta^2 \pth{ |y'_1-y'_2| + |a'_1-a'_2| } \max(\la_1,\la'_2)^{q-2}
\\&\quad + C(D_0,C_0) \e d(\p_1,\p_2).
\end{align*}
Using now \eqref{estim diff v}, \eqref{estim diff m}, \eqref{estim diff y}, \eqref{estim diff a} and $2(q-1)\th<1$ we obtain
\begin{align*}
\l \breve{g}'_1 - \breve{g}'_2 \r_{W^{2,\infty}} + \l \breve{\pi}'_1 - \breve{\pi}'_2 \r_{W^{1,\infty}} & \lesssim_{D_0,C_0}  \eta^{1-2(q-1)\th}\e^{2(q-1)\th}d(\p_1,\p_2) +  \e d(\p_1,\p_2)
\\& \lesssim_{D_0,C_0} \e d(\p_1,\p_2).
\end{align*}
Putting everything together, we obtain
\begin{align*}
d\pth{ \p'_1,\p'_2} & \lesssim_{D_0,C_0} \e d(\p_1,\p_2).
\end{align*}
Letting $\e$ be small compared to $D_0^{-1}$ and $C_0^{-1}$, we have proved that the parameter map $\Psi^{(p)}$ is a contraction.
\end{proof}

The Banach fixed point theorem and the fact that the system \eqref{system param} is equivalent to the orthogonality conditions \eqref{ortho 1}-\eqref{ortho 4} then imply the following statement on the parameter $\p\pth{\wc{u},\wc{X}}$.

\begin{corollary}\label{coro param}
Consider $\pth{\wc{u},\wc{X}}\in H^2_{-q-\de}\times H^2_{-q-\de}$ satisfying \eqref{assumption u X}. If $\e>0$ is small enough compared to $C_0^{-1}$, there exists a unique $\p\pth{\wc{u},\wc{X}}\in P$ such that the orthogonality conditions \eqref{ortho 1}-\eqref{ortho 4} hold and such that the following bounds hold
\begin{align*}
\eta^2 \lesssim m\pth{\wc{u},\wc{X}} \lesssim \eta^2, \qquad \left| y\pth{\wc{u},\wc{X}}\right| + \left|a\pth{\wc{u},\wc{X}}\right| \lesssim \pth{\frac{\e}{\eta}}^{2\th}, \qquad \left| v\pth{\wc{u},\wc{X}}\right|\leq 1 - \frac{\a}{2}, 
\end{align*}
and
\begin{align*}
\l \breve{g}\pth{\wc{u},\wc{X}} \r_{W^{2,\infty}} + \l \breve{\pi}\pth{\wc{u},\wc{X}} \r_{W^{1,\infty}} \lesssim \eta \e.
\end{align*}
\end{corollary}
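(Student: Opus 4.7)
The plan is to deduce Corollary \ref{coro param} by applying the Banach fixed point theorem to the parameter map $\Psi^{(p)}$ on the complete metric space $(B_{D_0},d)$ introduced in Definition \ref{def param map}. First I would fix $D_0$ to be the large universal constant provided by Lemma \ref{lem param bound} and choose $\e$ small enough (compared with $C_0^{-1}$ and $D_0^{-1}$) so that both the self-map property from Lemma \ref{lem param bound} and the contraction property from Lemma \ref{lem param contraction} apply simultaneously. Since $(B_{D_0},d)$ is a closed subset of the complete metric space $(P,d)$ (see Definition \ref{def para}), it is itself complete, so the Banach fixed point theorem yields a unique $\p(\wc{u},\wc{X})\in B_{D_0}$ with $\Psi^{(p)}\bigl(\p(\wc{u},\wc{X})\bigr)=\p(\wc{u},\wc{X})$.

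Next I would observe that this fixed point equation is precisely the nonlinear system \eqref{system param}, because the defining equations \eqref{eq m}--\eqref{eq pi breve} of $\Psi^{(p)}$ coincide with the equations of \eqref{system param} once $\p'=\p$. By Lemmas \ref{lem ortho calcul A} and \ref{lem ortho calcul B}, satisfying \eqref{system param} is equivalent to the orthogonality conditions \eqref{ortho 1}--\eqref{ortho 4}, so $\p(\wc{u},\wc{X})$ fulfills exactly the required orthogonality properties.

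Finally, the quantitative bounds on $m$, $y$, $a$, $v$, $\breve{g}$ and $\breve{\pi}$ are read off from $\p(\wc{u},\wc{X})\in B_{D_0}$: the upper bounds $m\lesssim \eta^2$, $|y|+|a|\lesssim (\e/\eta)^{2\th}$, $|v|\leq 1-\a/2$ and $\l\breve{g}\r_{W^{2,\infty}}+\l\breve{\pi}\r_{W^{1,\infty}}\lesssim \eta\e$ are built into the definition of $B_{D_0}$ (up to the harmless powers of $D_0$, which are now universal constants). The matching lower bound $m\gtrsim\eta^2$ is not part of $B_{D_0}$ and must be recovered separately; I would obtain it directly from the fixed-point equation \eqref{eq m}, combined with the bound $\ga\lesssim 1$ (Remark \ref{remark gamma}) and the estimate $|F_1(\p,\wc{u},\wc{X})|\lesssim_{D_0,C_0}\eta^2\e$ from \eqref{estim F}, which give $16\pi\ga m=\eta^2(1+O(\e))$ and hence $m\gtrsim\eta^2$ provided $\e$ is small enough.

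No step is particularly delicate at this stage: all the real work has already been done in Lemmas \ref{lem param bound} and \ref{lem param contraction}, and the present corollary is essentially a packaging statement. The only point that requires minimal care is the uniqueness claim, which follows from the uniqueness of the Banach fixed point; in particular, the same $D_0$ that worked for the self-map property governs the uniqueness class, so one cannot hope for uniqueness in a larger ball without further argument—but this is not claimed here.
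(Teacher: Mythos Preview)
Your proposal is correct and matches the paper's approach exactly: the paper simply states that the corollary follows from the Banach fixed point theorem applied to $\Psi^{(p)}$ (via Lemmas \ref{lem param bound} and \ref{lem param contraction}) together with the equivalence of system \eqref{system param} and the orthogonality conditions \eqref{ortho 1}--\eqref{ortho 4}. Your explicit recovery of the lower bound $m\gtrsim\eta^2$ from \eqref{eq m} and \eqref{estim F} is precisely what the paper established inside the proof of Lemma \ref{lem param bound}, so nothing is missing.
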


\begin{remark}\label{remark q12 3}
Following Remarks \ref{remark q12 2}, we note that in the case $q=1$ the proof of Corollary \ref{coro param} considerably simplifies mainly because we don't need to solve equations \eqref{eq y} and \eqref{eq a}, and hence we don't need to divide by the mass.
\end{remark}

\subsubsection{A priori estimates on $\p\pth{\wc{u},\wc{X}}$}

In order to solve for $\wc{u}$ and $\wc{X}$, we will need to estimate how $\p\pth{\wc{u},\wc{X}}$ obtained in Corollary \ref{coro param} depends on $\pth{\wc{u},\wc{X}}$. This is provided by the next lemma.

\begin{lemma}\label{lem diff param}
Consider $\pth{\wc{u},\wc{X}},\pth{\wc{u'},\wc{X'}}\in H^2_{-q-\de}\times H^2_{-q-\de}$ both satisfying \eqref{assumption u X}. If $\e>0$ is small enough compared to $C_0^{-1}$, we have
\begin{align}
\left| m\pth{\wc{u},\wc{X}} - m\pth{\wc{u}',\wc{X}'} \right| &+ \eta^2 \left|v\pth{\wc{u},\wc{X}}-v\pth{\wc{u}',\wc{X}'}\right| \nonumber
\\& \lesssim_{C_0} \eta \pth{   \l \wc{u} - \wc{u}' \r_{H^2_{-q-\de}} + \l \wc{X} - \wc{X}' \r_{H^2_{-q-\de}}  },\label{estim diff m v lem}
\\ \eta^2 \Big( \left|y\pth{\wc{u},\wc{X}}-y\pth{\wc{u}',\wc{X}'}\right| &+ \left|a\pth{\wc{u},\wc{X}}-a\pth{\wc{u}',\wc{X}'}\right| \Big)\nonumber
\\& \lesssim_{C_0} \eta^{1-2\th}\e^{2\th} \pth{   \l \wc{u} - \wc{u}' \r_{H^2_{-q-\de}} + \l \wc{X} - \wc{X}' \r_{H^2_{-q-\de}}  } ,\label{estim diff y a lem}
\\\l \breve{g}\pth{\wc{u},\wc{X}} - \breve{g}\pth{\wc{u}',\wc{X}'} \r_{W^{2,\infty}} &+ \l \breve{\pi}\pth{\wc{u},\wc{X}} - \breve{\pi}\pth{\wc{u}',\wc{X}'} \r_{W^{1,\infty}} \nonumber
\\& \lesssim_{C_0}  \e \pth{   \l \wc{u} - \wc{u}' \r_{H^2_{-q-\de}} + \l \wc{X} - \wc{X}' \r_{H^2_{-q-\de}} } . \label{estim diff breve lem}
\end{align}
\end{lemma}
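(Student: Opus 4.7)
The proof follows the structure of Lemma \ref{lem param contraction} closely, the difference being that here the inputs $(\wc{u},\wc{X})$ vary in addition to the parameter. Write $\p=\p\pth{\wc{u},\wc{X}}$ and $\p'=\p\pth{\wc{u}',\wc{X}'}$. By Corollary \ref{coro param} both $\p$ and $\p'$ are the fixed points of the respective parameter maps, so they satisfy the system \eqref{eq m}--\eqref{eq pi breve} with their respective inputs. Subtracting term by term produces a coupled system whose right-hand sides are differences $F_i\pth{\p,\wc{u},\wc{X}}-F_i\pth{\p',\wc{u}',\wc{X}'}$, together with the Kerr-integral differences in \eqref{eq g breve}--\eqref{eq pi breve} (controlled by \eqref{diff constraint kerr} and hence by $d(\p,\p')$).

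The first main step is to estimate each $F_i$ difference using the telescoping identity
\begin{equation*}
F_i\pth{\p,\wc{u},\wc{X}}-F_i\pth{\p',\wc{u}',\wc{X}'} = \squareBrace*{F_i\pth{\p,\wc{u},\wc{X}}-F_i\pth{\p',\wc{u},\wc{X}}} + \squareBrace*{F_i\pth{\p',\wc{u},\wc{X}}-F_i\pth{\p',\wc{u}',\wc{X}'}}.
\end{equation*}
The first bracket is controlled by \eqref{diff F 1}--\eqref{diff F 2} of Lemma \ref{lem estim diff F}. For the second bracket, the only ingredient of $F_i$ (see \eqref{def F iOj}) depending on $(\wc{u},\wc{X})$ is $\langle \RR_\OO(\p,\wc{u},\wc{X}),r^{j-1}\rangle$, so we need to control $\langle \RR_\OO(\p,\wc{u},\wc{X}) - \RR_\OO(\p,\wc{u}',\wc{X}'),r^{j-1}\rangle$. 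Specializing \eqref{diff remainders} to $\p=\p'$ kills all the $d(\p,\p')$ contributions, and the remaining terms, after integration against $r^{j-1}$ using Cauchy--Schwarz with weighted norms (exactly as in the bound on $\langle \RR_\OO(\p,\wc{u},\wc{X}),r^{j-1}\rangle$ in the proof of Lemma \ref{lem estim diff F}), yield the parallel bounds
\begin{equation*}
\abs*{F_i\pth{\p',\wc{u},\wc{X}}-F_i\pth{\p',\wc{u}',\wc{X}'}} \lesssim_{D_0,C_0} r_i\,\pth*{\l\wc{u}-\wc{u}'\r_{H^2_{-q-\de}}+\l\wc{X}-\wc{X}'\r_{H^2_{-q-\de}}},
\end{equation*}
where the rate $r_i$ equals $\eta$ for $i=1,2$, $\eta^{1-2\th}\e^{2\th}$ for $i=3,4$, and $\e$ for $i=5,6$, mirroring the rates of Lemma \ref{lem estim diff F}.

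The second step unravels the coupled system in the triangular order of Lemma \ref{lem param contraction}. Untangling \eqref{eq m} and \eqref{eq v} (using the lower bound $m,m'\gtrsim \eta^2$) yields $|v-v'|$ and then $|m-m'|$ with rate $\eta$, giving \eqref{estim diff m v lem} modulo a term $C(D_0,C_0)\e\,d(\p,\p')$ that will be absorbed. Inserting these into \eqref{eq y}, \eqref{eq a} and dividing by $m'$ produces \eqref{estim diff y a lem}, using the $\eta^{1-2\th}\e^{2\th}$ rate of $F_{3},F_{4}$ and the bound on $Q^A_{2,\ell},Q^B_{ab}$. Finally, Lemma \ref{lem invert breve} applied to \eqref{eq g breve}, \eqref{eq pi breve}, combined with \eqref{diff constraint kerr} to control the Kerr-integral differences (as in the proof of Lemma \ref{lem param contraction}), gives \eqref{estim diff breve lem}. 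At each step all feedback contributions $\e\,d(\p,\p')$ can be absorbed into the left-hand side of the metric $d$ by choosing $\e$ small compared to $C_0^{-1}$, exactly as in Lemma \ref{lem param contraction}.

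The main technical obstacle is the second bracket estimate: to achieve the rate $\eta$ (rather than $\e$) for $i=1,2$, one must isolate the terms of $\RR_\OO$ involving $\wc{u}\nab^2\wc{g}$ (and, via the $\bar{g}(\p)-e$ and Kerr contributions, $\wc{g}\nab^2\wc{u}$, $\nab\wc{g}\nab\wc{u}$, etc.) and integrate by parts so that only $\l\nab\wc{g}\r_{L^2},\l\wc{\pi}\r_{L^2}\lesssim\eta$ appear, never the weaker $\l\wc{g}\r_{H^2_{-q-\de}}\lesssim\e$ norm. This is the exact counterpart of Remark \ref{remark IPP} and is precisely where the smallness $\eta\ll\e$ must be squeezed out from the nonlinearities; without it the contraction closure in the final step would fail because the $y,a$-block would not see a factor $\eta^{-2\th}\e^{2\th}$ small enough to compete with the $\eta^{-2}$ loss incurred by dividing by $m'$.
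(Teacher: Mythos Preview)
Your proposal is correct and follows essentially the same approach as the paper: subtract the fixed-point systems, split each $F_i$-difference by the triangle inequality into a parameter-variation piece (handled by Lemma~\ref{lem estim diff F}) and an input-variation piece (handled by \eqref{diff remainders} at fixed parameter, with integration by parts on the $(\wc{u}-\wc{u}')\nab^2\wc{g}$ term to extract $\eta$), then unravel in the triangular order of Lemma~\ref{lem param contraction} and absorb the $\e\,d(\p,\p')$ feedback. One small refinement: the paper actually obtains the uniform rate $\eta$ (not $\eta^{1-2\th}\e^{2\th}$ or $\e$) for the second bracket at \emph{every} $j$, since the weighted Cauchy--Schwarz and integration-by-parts arguments work identically for all $1\le j\le q$; your stated rates are weaker but harmless because the first bracket already forces the same overall bound.
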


\begin{proof}
For simplicity, we use the notation $\p=\p\pth{\wc{u},\wc{X}}$ and $\p'=\p\pth{\wc{u}',\wc{X}'}$. We substract the two systems satisfied by $\p$ and $\p'$ and obtain
\begin{align}
16\pi \pth{\ga m - \ga'm'} & =  F_1\pth{\p,\wc{u},\wc{X}} - F_1\pth{\p',\wc{u}',\wc{X}'},\label{eq diff m 2}
\\ 16\pi\pth{\ga m v^k - \ga' m' (v')^k } & = F_{2,k}\pth{\p,\wc{u},\wc{X}} - F_{2,k}\pth{\p',\wc{u}',\wc{X}'},\label{eq diff v 2}
\\ \sqrt{3}\pth{my^{\ell'} - m'(y')^{\ell'}} & =   F_{3,\ell'}\pth{\p,\wc{u},\wc{X}} - F_{3,\ell'}\pth{\p',\wc{u}',\wc{X}'},\label{eq diff y 2}
\\ 8\pi\pth{ m a^k - m' (a')^k } & =  F_{4,k}\pth{\p,\wc{u},\wc{X}} - F_{4,k}\pth{\p',\wc{u}',\wc{X}'},\label{eq diff a 2}
\end{align}
and
\begin{align}
&\left\langle \breve{g} -\breve{g}' , D\HH[e,0]^*( w_{j,\ell}) \right\rangle \label{eq diff g breve 2}
\\&\quad  = \left\langle\HH\pth{e + \chi_{\vec{p}\,'}(g_{\vec{p}\,'}-e) , \chi_{\vec{p}\,'}\pi_{\vec{p}\,'} }- \HH\pth{e + \chi_{\vec{p}}(g_{\vec{p}}-e) , \chi_{\vec{p}}\pi_{\vec{p}} }  , w_{j,\ell} \right\rangle  \nonumber
\\& \quad\quad   + F_{5,j,\ell}\pth{\p,\wc{u},\wc{X}} - F_{5,j,\ell}\pth{\p',\wc{u}',\wc{X}'},\nonumber
\\ & \left\langle  \breve{\pi} - \breve{\pi}', D\MM[e ,0]^*(Z) \right\rangle  \label{eq diff pi breve 2}
\\&\quad =   \left\langle \MM\pth{e + \chi_{\vec{p}\,'}(g_{\vec{p}\,'}-e),\chi_{\vec{p}\,'}\pi_{\vec{p}\,'} } - \MM\pth{e + \chi_{\vec{p}}(g_{\vec{p}}-e),\chi_{\vec{p}}\pi_{\vec{p}} }, Z \right\rangle \nonumber
\\&\quad\quad  + F_{6,Z}\pth{\p,\wc{u},\wc{X}} - F_{6,Z}\pth{\p',\wc{u}',\wc{X}'}. \nonumber
\end{align}
As in the proof of Lemma \ref{lem param contraction}, we deduce from the system \eqref{eq diff m 2}-\eqref{eq diff pi breve 2} that
\begin{align}
| m - m'| + \eta^2 |v-v'| & \lesssim \sum_{i=1,2}\left| F_i\pth{\p,\wc{u},\wc{X}} - F_i\pth{\p',\wc{u}',\wc{X}'} \right|, \label{estim diff m v}
\\ \eta^2 \pth{ |y-y'| + |a-a'| }& \lesssim \pth{\frac{\e}{\eta}}^{2\th}\sum_{i=1,2}\left| F_i\pth{\p,\wc{u},\wc{X}} - F_i\pth{\p',\wc{u}',\wc{X}'} \right|  \label{estim diff y a}
\\&\quad + \sum_{i=3,4}\left| F_i\pth{\p,\wc{u},\wc{X}} - F_i\pth{\p',\wc{u}',\wc{X}'} \right|,\nonumber
\\ \l \breve{g} - \breve{g}' \r_{W^{2,\infty}} + \l \breve{\pi} - \breve{\pi}' \r_{W^{1,\infty}}  & \lesssim  \pth{\frac{\e}{\eta}}^{2(q-1)\th} \sum_{i=1,2}\left| F_i\pth{\p,\wc{u},\wc{X}} - F_i\pth{\p',\wc{u}',\wc{X}'} \right| \label{estim diff breve}
\\&\quad +  \pth{\frac{\e}{\eta}}^{2(q-2)\th} \sum_{i=3,4}\left| F_i\pth{\p,\wc{u},\wc{X}} - F_i\pth{\p',\wc{u}',\wc{X}'} \right|\nonumber
\\&\quad + \sum_{i=5,6} \left| F_i\pth{\p,\wc{u},\wc{X}} - F_i\pth{\p',\wc{u}',\wc{X}'}\right|.\nonumber
\end{align}
The triangle inequality implies for each $i=1,\dots,6$ that
\begin{align}\label{a priori inter 1}
&\left| F_i\pth{\p,\wc{u},\wc{X}} - F_i\pth{\p',\wc{u}',\wc{X}'}\right| 
\\&\quad \lesssim \left| F_i\pth{\p,\wc{u},\wc{X}} - F_i\pth{\p',\wc{u},\wc{X}}\right| + \left| F_i\pth{\p',\wc{u},\wc{X}} - F_i\pth{\p',\wc{u}',\wc{X}'}\right|.\nonumber
\end{align}
The first term in \eqref{a priori inter 1} can be estimated by \eqref{diff F 1}, \eqref{diff F 3} and \eqref{diff F 2}, depending on the value of $i$. We now estimate the second term. Thanks to the expression of the terms $F_i$, this amounts to estimating a term of the form\footnote{In this proof, the scalar product with $r^{j-1}$ is an abuse of notations standing for a scalar product with a scalar function $f_j$ satisfying $|f_j|+r|\nab f_j|\lesssim r^{j-1}$.}
\begin{align*}
\left\langle \RR_\OO\pth{ \p',\wc{u},\wc{X}} - \RR_\OO\pth{ \p',\wc{u}',\wc{X}'}, r^{j-1} \right\rangle
\end{align*}
for $1\leq j \leq q$ and $\OO$ denoting either $\HH$ or $\MM$. We use \eqref{diff remainders}
\begin{align*}
&\left| \left\langle \RR_\OO\pth{ \p',\wc{u},\wc{X}} - \RR_\OO\pth{ \p',\wc{u}',\wc{X}'}, r^{j-1} \right\rangle \right| 
\\&\quad \lesssim  \left| \int_{\RRR^3} r^{j-1} (\wc{u}-\wc{u}') \nab^2\wc{g} \right| + \eta^2 \l (1+r)^{j-q-\de-\frac{5}{2}} \r_{L^2} \pth{ \l \wc{u} - \wc{u}' \r_{H^2_{-q-\de}} + \l \wc{X} - \wc{X}' \r_{H^2_{-q-\de}} }
\\&\quad\quad + \pth{ \l \nab\wc{u} \r_{L^2_{-q-\de-1}} +  \l \nab\wc{X} \r_{L^2_{-q-\de-1}} +  \l \nab\wc{X}' \r_{L^2_{-q-\de-1}} }
\\&\hspace{3cm}\times\pth{ \l\nab( \wc{u} - \wc{u}') \r_{L^2_{-q-\de-1}} + \l \nab\pth{ \wc{X} - \wc{X}'} \r_{L^2_{-q-\de-1}} }
\\&\quad\quad + \l r^{-1} \wc{g} \r_{L^2} \pth{ \l\nab^2( \wc{u} - \wc{u}') \r_{L^2_{-q-\de-2}} + \l \nab^2\pth{ \wc{X} - \wc{X}'} \r_{L^2_{-q-\de-2}} }
\\&\quad\quad + \pth{ \l\nab\wc{g}\r_{L^2} + \l \wc{\pi} \r_{L^2} }\pth{ \l\nab( \wc{u} - \wc{u}') \r_{L^2_{-q-\de-1}} + \l \nab\pth{ \wc{X} - \wc{X}'} \r_{L^2_{-q-\de-1}} }
\\&\quad\quad + \pth{ \l \nab\wc{g}\r_{L^2}^2 + \l \wc{\pi} \r_{L^2}^2 } \l \wc{u} - \wc{u}' \r_{H^2_{-q-\de}} + \eta\e \pth{ \l \wc{u} - \wc{u}' \r_{H^2_{-q-\de}} + \l \wc{X} - \wc{X}' \r_{H^2_{-q-\de}} }.
\end{align*}
We integrate by parts in the integral involving $\nab^2\wc{g}$ and obtain
\begin{align*}
&\left| \left\langle \RR_\OO\pth{ \p',\wc{u},\wc{X}} - \RR_\OO\pth{ \p',\wc{u}',\wc{X}'}, r^{j-1} \right\rangle \right| 
\\&\quad \lesssim_{C_0} \eta \pth{ \l \wc{u} - \wc{u}' \r_{H^2_{-q-\de}} + \l \wc{X} - \wc{X}' \r_{H^2_{-q-\de}} } 
\\&\quad\quad +   \int_{\RRR^3} |\nab r^{j-1}| |\wc{u}-\wc{u}'| |\nab\wc{g}|  +   \int_{\RRR^3} r^{j-1} |\nab(\wc{u}-\wc{u}')| |\nab\wc{g}|
\\&\quad \lesssim_{C_0} \eta \pth{ \l \wc{u} - \wc{u}' \r_{H^2_{-q-\de}} + \l \wc{X} - \wc{X}' \r_{H^2_{-q-\de}} }.
\end{align*}
Together with \eqref{a priori inter 1}, \eqref{diff F 1}, \eqref{diff F 3} and \eqref{diff F 2}, this implies
\begin{align*}
&\sum_{i=1,2}\left| F_i\pth{\p,\wc{u},\wc{X}} - F_i\pth{\p',\wc{u}',\wc{X}'} \right|  \lesssim_{C_0} \eta d(\p,\p') + \eta \pth{ \l \wc{u} - \wc{u}' \r_{H^2_{-q-\de}} + \l \wc{X} - \wc{X}' \r_{H^2_{-q-\de}} },
\\ \sum_{i=3,4}&\left| F_i\pth{\p,\wc{u},\wc{X}} - F_i\pth{\p',\wc{u}',\wc{X}'} \right| 
\\&\quad \lesssim_{C_0} \eta^{1-2\th}\e^{2\th} d(\p,\p') + \eta \pth{ \l \wc{u} - \wc{u}' \r_{H^2_{-q-\de}} + \l \wc{X} - \wc{X}' \r_{H^2_{-q-\de}} },
\\ & \sum_{i=5,6}\left| F_i\pth{\p,\wc{u},\wc{X}} - F_i\pth{\p',\wc{u}',\wc{X}'} \right|  \lesssim_{C_0} \e d(\p,\p') + \eta \pth{ \l \wc{u} - \wc{u}' \r_{H^2_{-q-\de}} + \l \wc{X} - \wc{X}' \r_{H^2_{-q-\de}} }.
\end{align*}
Plugging this in \eqref{estim diff m v}, \eqref{estim diff y a} and \eqref{estim diff breve} we obtain
\begin{align}
| m - m'| + \eta^2 |v-v'| & \lesssim_{C_0} \eta \pth{ d(\p,\p') +   \l \wc{u} - \wc{u}' \r_{H^2_{-q-\de}} + \l \wc{X} - \wc{X}' \r_{H^2_{-q-\de}}  },\label{estim diff m v bis}
\\ \eta^2 \pth{ |y-y'| + |a-a'| }& \lesssim_{C_0} \eta^{1-2\th}\e^{2\th} \pth{  d(\p,\p') +  \l \wc{u} - \wc{u}' \r_{H^2_{-q-\de}} + \l \wc{X} - \wc{X}' \r_{H^2_{-q-\de}}  } ,\label{estim diff y a bis}
\\ \l \breve{g} - \breve{g}' \r_{W^{2,\infty}} + \l \breve{\pi} - \breve{\pi}' \r_{W^{1,\infty}}  & \lesssim_{C_0}  \e \pth{ d(\p,\p') +  \l \wc{u} - \wc{u}' \r_{H^2_{-q-\de}} + \l \wc{X} - \wc{X}' \r_{H^2_{-q-\de}} } . \label{estim diff breve bis}
\end{align}
We sum these inequalities and after taking $\e$ small enough compared to $C_0^{-1}$ we obtain
\begin{align*}
d(\p,\p') \lesssim_{C_0} \e \pth{ \l \wc{u} - \wc{u}' \r_{H^2_{-q-\de}} + \l \wc{X} - \wc{X}' \r_{H^2_{-q-\de}} }.
\end{align*}
We plug this into \eqref{estim diff m v bis}, \eqref{estim diff y a bis} and \eqref{estim diff breve bis} and finally obtain \eqref{estim diff m v lem}, \eqref{estim diff y a lem} and \eqref{estim diff breve lem}.
\end{proof}

\subsection{Solving for $\pth{\wc{u},\wc{X}}$}\label{section conclusion}

In this section, we solve \eqref{eq u X} with a fixed point argument. In order to get the appropriate decay for the solutions $u$ and $X$, we set the parameter $\p$ appearing in \eqref{eq u X} to be the one constructed in Section \ref{section def param}. Therefore, the system we wish to solve is
\begin{equation}
\left\{
\begin{aligned}
8\De \wc{u}  & =  A\pth{ \p\pth{\wc{u},\wc{X}} , \wc{u},\wc{X} },
\\  \De \wc{X}_i & =B\pth{ \p\pth{\wc{u},\wc{X}} , \wc{u},\wc{X} }_i.
\end{aligned}
\right.
\end{equation}

\begin{definition}
Let $C_0>0$. The solution map $\Psi^{(s)}$ is defined by
\begin{align*}
&\Psi^{(s)} : B_{\pth{ H^2_{-q-\de}}^4}(0,C_0\eta\e) \longrightarrow B_{\pth{ H^2_{-q-\de}}^4}(0,C_0\eta\e)
\\&\hspace{2.7cm} \pth{ \wc{u},\wc{X} } \longmapsto \pth{ \Psi^{(s)}_1\pth{ \wc{u},\wc{X} },\Psi^{(s)}_2\pth{ \wc{u},\wc{X} }}
\end{align*}
such that $\pth{ \Psi^{(s)}_1\pth{ \wc{u},\wc{X} },\Psi^{(s)}_2\pth{ \wc{u},\wc{X} }}$ solves the system
\begin{align}
8\De\Psi^{(s)}_1\pth{ \wc{u},\wc{X} } & =  A\pth{ \p\pth{\wc{u},\wc{X}} , \wc{u},\wc{X} }, \label{eq psi s 1}
\\  \De \Psi^{(s)}_2\pth{ \wc{u},\wc{X} }_i & =B\pth{ \p\pth{\wc{u},\wc{X}} , \wc{u},\wc{X} }_i,  \label{eq psi s 2}
\end{align}
where $\p\pth{\wc{u},\wc{X}}$ is defined in Corollary \ref{coro param}.
\end{definition}

\begin{lemma}\label{lem u X bound}
If $C_0$ is large enough and $\e$ is small compared to $C_0^{-1}$, the solution map $\Psi^{(s)}$ is well-defined.
\end{lemma}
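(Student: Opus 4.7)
The plan has two steps: establish existence of $\Psi^{(s)}_1, \Psi^{(s)}_2$ in $H^2_{-q-\de}$, and then show they lie in the specified ball. Existence is immediate from Corollary \ref{coro param}, which produces $\p(\wc u,\wc X)$ satisfying the orthogonality conditions \eqref{ortho 1}--\eqref{ortho 4}; via Lemmas \ref{lem ortho calcul A} and \ref{lem ortho calcul B} these amount to the vanishing of $\langle A(\p(\wc u,\wc X),\wc u,\wc X), w_{j,\ell}\rangle$ for $1\le j\le q$, $-(j-1)\le\ell\le j-1$, and of $\langle B(\p(\wc u,\wc X),\wc u,\wc X), Z\rangle$ for $Z\in\{W_{1,0,k},\Om^\pm_{ab}\}\cup\mathcal Z_q$. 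These are exactly the cokernel conditions in Proposition \ref{prop laplace}, which therefore yields unique $\Psi^{(s)}_1,\Psi^{(s)}_2\in H^2_{-q-\de}$ solving \eqref{eq psi s 1}--\eqref{eq psi s 2}, together with
\[
\l \Psi^{(s)}_1\r_{H^2_{-q-\de}} + \l \Psi^{(s)}_2\r_{H^2_{-q-\de}} \lesssim \l A\r_{L^2_{-q-\de-2}} + \l B\r_{L^2_{-q-\de-2}}.
\]

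To bound the right-hand side by $C_0\eta\e$, I would decompose $A$ and $B$ following Propositions \ref{prop main terms} and \ref{prop remainders} and estimate each piece in $L^2_{-q-\de-2}$. The Kerr contributions are supported in $\{\la\le r\le 2\la\}$ and pointwise $\lesssim m/r^3$ by \eqref{constraint kerr}, so integrating against the weight $(1+r)^{q+\de+\half}$ gives $\lesssim m\la^{q+\de-1}$; with the interpolated bounds $m\lesssim\eta^2$, $\la\lesssim(\e/\eta)^{2\th}$ from Corollary \ref{coro param} with $\th=1/(2q+2\de-1)$, this simplifies to $\eta^{\frac{2q+2\de}{2q+2\de-1}}\e^{\frac{2q+2\de-2}{2q+2\de-1}}\lesssim\eta\e$ using $\eta\le\e$. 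The linear terms $D\HH[e,0](\breve g)$, $D\MM[e,0](\breve\pi)$ and the mixed pieces $\breve g^{ij}\De\wc g_{ij}$, $\breve g^{k\ell}\dr_k\wc\pi_{\ell i}$ are supported in $\{1\le r\le 10\}$, hence directly controlled by $\l\breve g\r_{W^{2,\infty}}+\l\breve\pi\r_{W^{1,\infty}}\lesssim\eta\e$. For the quadratic pieces in $D^2\HH[e,0]$ and $D^2\MM[e,0]$ one must distribute the weight carefully so that an undifferentiated factor is paired with an $L^2$ norm (bounded by $\eta$ through the definition of $\eta$ or Hardy $\l \wc g/r\r_{L^2}\lesssim \l \nab\wc g\r_{L^2}\lesssim\eta$), while the differentiated factor carries the full weight in $L^\infty$ (bounded by $\e$ via Lemma \ref{lem plongement}(i), using $H^4_{-q-\de}\hookrightarrow C^2_{-q-\de}$). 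Concretely,
\[
\l(\nab\wc g)^2\r_{L^2_{-q-\de-2}}\lesssim \l(1+r)^{q+\de+\half}\nab\wc g\r_{L^\infty}\l\nab\wc g\r_{L^2}\lesssim\e\cdot\eta,
\]
and similarly
\[
\l\wc g\,\De\wc g\r_{L^2_{-q-\de-2}}\lesssim \l(1+r)^{q+\de+\frac{3}{2}}\De\wc g\r_{L^\infty}\l\wc g/(1+r)\r_{L^2}\lesssim\e\cdot\eta,
\]
and $\wc\pi^2$, mixed products $\wc g\wc\pi$ are handled in the same fashion. The error terms bounded in \eqref{estim error} reduce to combinations of the above, while the remainders $\RR_\HH,\RR_\MM$ bounded by \eqref{estim remainders} are linear or quadratic in $(\wc u,\wc X)$ with prefactors of order $\e$ in weighted $L^\infty$, producing $\l\RR_\HH\r_{L^2_{-q-\de-2}}+\l\RR_\MM\r_{L^2_{-q-\de-2}}\lesssim C_0\eta\e\cdot\e$ thanks to \eqref{assumption u X}.

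Summing, there is a universal constant $C$ with $\l A\r_{L^2_{-q-\de-2}}+\l B\r_{L^2_{-q-\de-2}}\le C\eta\e(1+C_0\e)$. Choosing $C_0$ larger than a universal multiple of $C$ and then $\e$ small enough that $C_0\e\le 1$ gives $\l \Psi^{(s)}_1\r_{H^2_{-q-\de}}+\l \Psi^{(s)}_2\r_{H^2_{-q-\de}}\le C_0\eta\e$, so $\Psi^{(s)}$ maps the ball $B_{(H^2_{-q-\de})^4}(0,C_0\eta\e)$ into itself. The principal obstacle, as throughout the paper, is the tension between the two small scales $\eta\le\e$: the naive estimate of the quadratic contributions yields $\e^2$, which fits into $C_0\eta\e$ only under the inaccessible assumption $\e\le C_0\eta$. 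One is therefore forced to consistently extract the smaller $\eta$ from factors controlled in plain $L^2$ (via Hardy or the definition of $\eta$), while tolerating the larger $\e$ only in weighted $L^\infty$ norms. The same careful bookkeeping, together with the sharp exponent $\th=1/(2q+2\de-1)$, is what renders the Kerr contribution integrable over the potentially large annulus $\{\la\le r\le 2\la\}$ with the correct $\eta\e$ size.
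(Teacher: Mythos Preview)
Your proposal is correct and follows essentially the same route as the paper: apply Proposition~\ref{prop laplace} using the orthogonality conditions from Corollary~\ref{coro param}, decompose $A$ and $B$ via Propositions~\ref{prop main terms} and~\ref{prop remainders}, and estimate each piece in $L^2_{-q-\de-2}$ by pairing one factor in plain $L^2$ (extracting $\eta$ via Hardy or the definition of $\eta$) with another in weighted $L^\infty$ (giving $\e$). The Kerr, compact-corrector, quadratic, error, and remainder estimates you sketch match the paper's line by line, as does the final bound $\eta\e(1+C(C_0)\e)$ and the choice of $C_0$ then $\e$.
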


\begin{proof}
In order to show that the solution map $\Psi^{(s)}$ is well-defined, we fix 
\begin{align*}
\pth{\wc{u},\wc{X}}\in B_{\pth{ H^2_{-q-\de}}^4}(0,C_0\eta\e)
\end{align*}
and solve \eqref{eq psi s 1}-\eqref{eq psi s 2}. First, note that if $C_0$ is large enough and $\e$ is small compared to $C_0^{-1}$, Corollary \ref{coro param} applies and the parameter $\p\pth{\wc{u},\wc{X}}$ exists and satisfies all the bounds of Corollary \ref{coro param}. For the sake of clarity, we simply denote $\p\pth{\wc{u},\wc{X}}$ by $\p$, $m\pth{\wc{u},\wc{X}}$ by $m$ and so on.

We want to apply Proposition \ref{prop laplace} to the system \eqref{eq psi s 1}-\eqref{eq psi s 2}. In order to do that, we only need to estimate the $L^2_{-q-\de-2}$ norm of $A\pth{ \p, \wc{u},\wc{X} }$ and $B\pth{ \p , \wc{u},\wc{X} }$. We have
\begin{align}
&\l A\pth{ \p , \wc{u},\wc{X} } \r_{L^2_{-q-\de-2}} + \l B\pth{ \p , \wc{u},\wc{X} } \r_{L^2_{-q-\de-2}} \label{L2 bound}
\\&\quad \lesssim \l \HH(\bar{g}(\p),\bar{\pi}(\p)) \r_{L^2_{-q-\de-2}} + \l \MM(\bar{g}(\p),\bar{\pi}(\p)) \r_{L^2_{-q-\de-2}} \nonumber
\\&\quad\quad + \l \RR_\HH\pth{\p,\wc{u},\wc{X}} \r_{L^2_{-q-\de-2}} + \l \RR_\MM\pth{\p,\wc{u},\wc{X}} \r_{L^2_{-q-\de-2}}.\nonumber
\end{align}
We start with the first two norms in \eqref{L2 bound}, using the expansions of Proposition \ref{prop main terms}. From \eqref{exp H} we obtain
\begin{align}
&\l \HH(\bar{g}(\p),\bar{\pi}(\p)) \r_{L^2_{-q-\de-2}} \label{L2 inter 1}
\\& \lesssim \l \HH\pth{e + \chi_{\vec{p}}(g_{\vec{p}}-e) , \chi_{\vec{p}}\pi_{\vec{p}} }  \r_{L^2_{-q-\de-2}} + \l D\HH[ e ,0]\pth{\breve{g}}+ \breve{g}^{ij} \De \wc{g}_{ij}  \r_{L^2_{-q-\de-2}} \nonumber
\\&\quad + \l D^2\HH[ e ,0]\pth{\pth{ \wc{g}  , \wc{\pi}  } , \pth{ \wc{g} , \wc{\pi}   }} \r_{L^2_{-q-\de-2}} + \l  \mathrm{Err}_1(\p) \r_{L^2_{-q-\de-2}}\nonumber.
\end{align}
We estimate each terms in \eqref{L2 inter 1}.
\begin{itemize}
\item For the first term, we use \eqref{constraint kerr}
\begin{align*}
\l \HH\pth{ e + \chi_{\vec{p}}(g_{\vec{p}} - e), \chi_{\vec{p}}\pi_{\vec{p}} } \r_{L^2_{-q-\de-2}} & \lesssim m \l r^{q+\de - \frac{5}{2}} \r_{L^2\pth{ \{ \la \leq r \leq 2 \la \} }}
\\& \lesssim m \la^{q+\de-1}
\\& \lesssim \eta^{2 - 2(q+\de-1)\th} \e^{2(q+\de-1)\th}
\\&\lesssim \eta \e,
\end{align*}
where we used the estimates of Corollary \ref{coro param} and $2(q+\de-1)\th<1$.
\item For the second term, we use the support property of $\breve{g}$ and \eqref{DH} and obtain
\begin{align*}
\l D\HH[ e ,0]\pth{\breve{g}}  +   \breve{g}^{ij} \De \wc{g}_{ij}  \r_{L^2_{-q-\de-2}} & \lesssim \l \breve{g} \r_{W^{2,\infty}}
\\&\lesssim \eta\e.
\end{align*}
\item For the third term, we have from \eqref{D2H}
\begin{align*}
\l D^2\HH[ e ,0]\pth{\pth{ \wc{g}  , \wc{\pi}  } , \pth{ \wc{g} , \wc{\pi}   }}  \r_{L^2_{-q-\de-2}} & \lesssim \l (1+r)^{q+\de + \half } \wc{g} \nab^2\wc{g} \r_{L^2}  + \l (1+r)^{q+\de + \half } (\nab\wc{g})^2 \r_{L^2} 
\\&\quad + \l (1+r)^{q+\de + \half } (\wc{\pi})^2 \r_{L^2}. 
\end{align*}
We use Hardy's inequality and the embedding $H^2_{-q-\de-2}\subset C^0_{-q-\de-2}$ from Lemma \ref{lem plongement} to get
\begin{align*}
\l (1+r)^{q+\de + \half } \wc{g} \nab^2\wc{g} \r_{L^2} & \lesssim \l r^{-1}\wc{g} \r_{L^2} \l (1+r)^{q+\de+\frac{3}{2}} \nab^2\wc{g} \r_{L^\infty}
\\&\lesssim \l \nab\wc{g}\r_{L^2} \l \nab^2\wc{g} \r_{H^2_{-q-\de-2}}
\\&\lesssim \eta \e.
\end{align*}
We use again the embedding $H^2_{-q-\de-1}\subset C^0_{-q-\de-1}$ from Lemma \ref{lem plongement} to get
\begin{align*}
\l (1+r)^{q+\de + \half } (\nab\wc{g})^2 \r_{L^2} &+ \l (1+r)^{q+\de + \half } (\wc{\pi})^2 \r_{L^2} 
\\& \lesssim \l \nab\wc{g} \r_{L^2} \l (1+r)^{q+\de+\half} \nab\wc{g} \r_{L^\infty}  + \l \wc{\pi} \r_{L^2} \l (1+r)^{q+\de+\half} \wc{\pi} \r_{L^\infty} 
\\&\lesssim  \l \nab\wc{g} \r_{L^2} \l \nab\wc{g} \r_{H^2_{-q-\de-1}}  + \l \wc{\pi} \r_{L^2} \l  \wc{\pi} \r_{H^2_{-q-\de-1}}
\\&\lesssim \eta\e. 
\end{align*}
We have proved that
\begin{align*}
\l D^2\HH[ e ,0]\pth{\pth{ \wc{g}  , \wc{\pi}  } , \pth{ \wc{g} , \wc{\pi}   }}  \r_{L^2_{-q-\de-2}} & \lesssim \eta\e.
\end{align*}
\item For the fourth term, we use \eqref{estim error} and the boundedness of $\wc{g}$, $\nab\wc{g}$ and $\wc{\pi}$ to obtain
\begin{align*}
\l  \mathrm{Err}_1(\p) \r_{L^2_{-q-\de-2}} & \lesssim \e m \l (1+r)^{-\frac{5}{2}} \r_{L^2}  + \e \l (1+r)^{\half} \pth{ |\nab\wc{g}|^2 + |\wc{\pi}|^2} \r_{L^2}  
\\&\quad +  \l \breve{g} \r_{W^{2,\infty}} + \l \breve{\pi} \r_{W^{1,\infty}} 
\\&\lesssim \eta\e.
\end{align*}
\end{itemize}
We have proved that $\l \HH(\bar{g}(\p),\bar{\pi}(\p)) \r_{L^2_{-q-\de-2}}\lesssim \eta\e$. Estimating $\l \MM(\bar{g}(\p),\bar{\pi}(\p)) \r_{L^2_{-q-\de-2}}$ is done in an identical way using \eqref{exp M} instead of \eqref{exp H}. We thus have
\begin{align}
\l \HH(\bar{g}(\p),\bar{\pi}(\p)) \r_{L^2_{-q-\de-2}} + \l \MM(\bar{g}(\p),\bar{\pi}(\p)) \r_{L^2_{-q-\de-2}} & \lesssim \eta \e. \label{L2 conclusion 1}
\end{align}
We now estimate the last two norms in \eqref{L2 bound} using \eqref{estim remainders}, the embedding $H^1_{-q-\de-1}\times H^1_{-q-\de-1} \subset L^2_{-q-\de-2}$ from Lemma \ref{lem plongement} and rough estimates like $(1+r)\pth{|\nab\wc{g}|+|\wc{\pi}|} \lesssim \e$:
\begin{align*}
&\l \RR_\HH\pth{\p,\wc{u},\wc{X}} \r_{L^2_{-q-\de-2}} + \l \RR_\MM\pth{\p,\wc{u},\wc{X}} \r_{L^2_{-q-\de-2}} 
\\&\qquad \lesssim \l \wc{u} \nab^2 \wc{g} \r_{L^2_{-q-\de-2}} + \l \pth{\nab \wc{X}}^2 \r_{L^2_{-q-\de-2}} +  \l \nab\wc{u}\nab\wc{X} \r_{L^2_{-q-\de-2}} 
\\&\qquad\quad +  \l   \pth{ \frac{m}{1+r} + |\wc{g}| }\nab^2\pth{\wc{u} + \wc{X}} \r_{L^2_{-q-\de-2}} 
\\&\qquad\quad +  \l \pth{ \frac{m}{(1+r)^2} + |\nab\wc{g}|+ |\wc{\pi}| } \nab\pth{ \wc{X} + \wc{u} } \r_{L^2_{-q-\de-2}} 
\\&\qquad\quad + \l  \pth{ \frac{m}{(1+r)^3}  + \pth{\frac{m}{(1+r)^2} + |\nab\wc{g}|+|\wc{\pi}|}^2}\wc{u} \r_{L^2_{-q-\de-2}}
\\&\qquad\quad + C\pth{ \l \breve{g} \r_{W^{2,\infty}}, \l \breve{\pi}\r_{W^{1,\infty}} }\pth{ \l \wc{u}\r_{H^2_{-q-\de}} + \l \wc{X} \r_{H^2_{-q-\de}} }
\\&\qquad \lesssim \e \l \wc{u}\r_{H^2_{-q-\de}} + \l \nab\wc{X}\r_{H^1_{-q-\de-1}}^2 + \l \nab\wc{X}\r_{H^1_{-q-\de-1}} \l \nab\wc{u}\r_{H^1_{-q-\de-1}}
\\&\qquad\quad + \pth{ \eta^2 + \e + \eta\e} \pth{ \l \wc{u}\r_{H^2_{-q-\de}}  + \l \wc{X}\r_{H^2_{-q-\de}} }
\\&\qquad \lesssim C(C_0) \eta\e^2.
\end{align*}
Together with \eqref{L2 bound} and \eqref{L2 conclusion 1} this implies 
\begin{align}
\l A\pth{ \p , \wc{u},\wc{X} } \r_{L^2_{-q-\de-2}} + \l B\pth{ \p , \wc{u},\wc{X} } \r_{L^2_{-q-\de-2}} & \lesssim   \eta\e \pth{ 1 + C(C_0)\e}.\label{L2 conclusion}
\end{align}
Since the parameter $\p$ has been constructed such that the orthogonality conditions \eqref{ortho 1}-\eqref{ortho 4} hold (recall Corollary \ref{coro param}), Proposition \ref{prop laplace} implies the existence of a unique couple 
\begin{align*}
\pth{ \Psi^{(s)}_1\pth{ \wc{u},\wc{X} },\Psi^{(s)}_2\pth{ \wc{u},\wc{X} }}\in H^2_{-q-\de}
\end{align*}
solving \eqref{eq psi s 1}-\eqref{eq psi s 2}. Moreover, thanks to \eqref{L2 conclusion} we have
\begin{align*}
\l \Psi^{(s)}_1\pth{ \wc{u},\wc{X} } \r_{H^2_{-q-\de}}  + \l \Psi^{(s)}_2\pth{ \wc{u},\wc{X} } \r_{H^2_{-q-\de}}  \lesssim  \eta\e \pth{ 1 + C(C_0)\e}.
\end{align*}
Therefore, if $C_0$ is large enough and if $\e$ is small compared to $C_0^{-1}$, we have
\begin{align*}
\pth{ \Psi^{(s)}_1\pth{ \wc{u},\wc{X} },\Psi^{(s)}_2\pth{ \wc{u},\wc{X} }}\in B_{\pth{ H^2_{-q-\de}}^4}(0,C_0\eta\e).
\end{align*}
This shows that the solution map $\Psi^{(s)}$ is well-defined. 
\end{proof}

\begin{lemma}\label{lem u X contraction}
If $C_0$ is large enough and $\e$ is small compared to $C_0^{-1}$, the solution map $\Psi^{(s)}$ is a contraction in $H^2_{-q-\de}$.
\end{lemma}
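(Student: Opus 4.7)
The plan is to follow closely the structure of the proof of Lemma \ref{lem u X bound}, but now working with the differences of all the relevant quantities. Fix $(\wc{u},\wc{X}),(\wc{u}',\wc{X}')\in B_{(H^2_{-q-\de})^4}(0,C_0\eta\e)$ and denote $\p=\p(\wc{u},\wc{X})$, $\p'=\p(\wc{u}',\wc{X}')$. Subtracting the defining equations \eqref{eq psi s 1}--\eqref{eq psi s 2} for the two sets, one obtains
\begin{align*}
8\De\pth{\Psi^{(s)}_1(\wc{u},\wc{X})-\Psi^{(s)}_1(\wc{u}',\wc{X}')} &= A(\p,\wc{u},\wc{X})-A(\p',\wc{u}',\wc{X}'),\\
\De\pth{\Psi^{(s)}_2(\wc{u},\wc{X})-\Psi^{(s)}_2(\wc{u}',\wc{X}')}_i &= B(\p,\wc{u},\wc{X})_i-B(\p',\wc{u}',\wc{X}')_i.
\end{align*}
Since the orthogonality conditions \eqref{ortho 1}--\eqref{ortho 4} hold for both RHS by construction of $\p$ and $\p'$ via Corollary \ref{coro param}, they also hold for the differences, so Proposition \ref{prop laplace} reduces the estimate to bounding the $L^2_{-q-\de-2}$ norms of these two differences.

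The key step is to split each difference using the triangle inequality as
\begin{equation*}
A(\p,\wc{u},\wc{X})-A(\p',\wc{u}',\wc{X}') = \underbrace{A(\p,\wc{u},\wc{X})-A(\p',\wc{u},\wc{X})}_{\text{(I)}}+\underbrace{A(\p',\wc{u},\wc{X})-A(\p',\wc{u}',\wc{X}')}_{\text{(II)}},
\end{equation*}
and analogously for $B$. For (I), the main terms $\HH(\bar g(\p),\bar\pi(\p))$ and the various $\breve g$, $\breve\pi$ contributions from Proposition \ref{prop main terms} can be controlled using the Kerr difference estimate \eqref{diff kerr} and the error difference bound \eqref{diff error}, while the remainder difference $\RR_\HH(\p,\wc u,\wc X)-\RR_\HH(\p',\wc u,\wc X)$ is handled by \eqref{diff remainders} of Proposition \ref{prop remainders}. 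All these bounds are linear in $d(\p,\p')$, and Lemma \ref{lem diff param} converts them into $\e(\|\wc u-\wc u'\|_{H^2_{-q-\de}}+\|\wc X-\wc X'\|_{H^2_{-q-\de}})$. For (II), the terms $\HH(\bar g(\p'),\bar\pi(\p'))$, $\MM(\bar g(\p'),\bar\pi(\p'))$, and the $\breve g,\breve\pi$ contributions cancel identically, so only the remainder differences $\RR_\HH(\p',\wc u,\wc X)-\RR_\HH(\p',\wc u',\wc X')$ and $\RR_\MM(\p',\wc u,\wc X)-\RR_\MM(\p',\wc u',\wc X')$ remain. These are estimated directly in $L^2_{-q-\de-2}$ using \eqref{diff remainders} with $\p=\p'$ and the embeddings $H^2_{-q-\de}\subset C^0_{-q-\de}$ and $H^2_{-q-\de-1}\subset C^0_{-q-\de-1}$ exactly as in the proof of Lemma \ref{lem u X bound}; each product structure produces at least one factor bounded by $\eta$, $\e$, $m\lesssim\eta^2$, or $\|\breve g\|+\|\breve\pi\|\lesssim\eta\e$, so the outcome is $C(C_0)\,\e\,(\|\wc u-\wc u'\|_{H^2_{-q-\de}}+\|\wc X-\wc X'\|_{H^2_{-q-\de}})$.

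Combining (I) and (II) and applying Proposition \ref{prop laplace} yields
\begin{equation*}
\l \Psi^{(s)}(\wc u,\wc X)-\Psi^{(s)}(\wc u',\wc X')\r_{(H^2_{-q-\de})^4}\leq C(C_0)\,\e\,\l(\wc u-\wc u',\wc X-\wc X')\r_{(H^2_{-q-\de})^4},
\end{equation*}
which gives the contraction once $\e$ is chosen sufficiently small compared to $C_0^{-1}$.

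The main technical obstacle is part (II): verifying that, despite $(\wc u,\wc X)$ and $(\wc u',\wc X')$ having size comparable to $\eta\e$ (and not $\eta^2$), every bilinear and higher-order combination in $\RR_\HH,\RR_\MM$ produces at least one small factor when estimated in the weighted norm $L^2_{-q-\de-2}$. The cross terms of the form $\wc u\,\nab^2\wc g$ or $\wc g\,\nab^2(\wc u-\wc u')$ are the most delicate, since there is no room for integration by parts inside an $L^2_{-q-\de-2}$ norm; one must instead pair Hardy's inequality $\|r^{-1}\wc g\|_{L^2}\lesssim\eta$ with the $L^\infty_{-q-\de-2}$ embedding to extract the necessary $\eta$ factor, exactly as was done for the bound in Lemma \ref{lem u X bound}. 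Once this is in place, collecting all the contributions and using that $\e\ll C_0^{-1}$ closes the contraction argument.
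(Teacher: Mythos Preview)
The proposal is correct and follows essentially the same approach as the paper. The only organizational difference is that the paper does not split via the intermediate point $(\p',\wc u,\wc X)$ but instead applies the full difference estimate \eqref{diff remainders} directly to $\RR_\HH(\p_a,\wc u_a,\wc X_a)-\RR_\HH(\p_b,\wc u_b,\wc X_b)$ in one pass; the resulting terms and the tools used (\eqref{diff constraint kerr}, \eqref{diff error}, \eqref{diff remainders}, Lemma~\ref{lem diff param}) are identical. One small correction: for the truncated-Kerr constraint term in your part (I) you should cite \eqref{diff constraint kerr} rather than \eqref{diff kerr}, and note that its support in $\{\min(\la,\la')\le r\le 2\max(\la,\la')\}$ produces a factor $\max(\la_a,\la_b)^{q+\de-1}$ in $L^2_{-q-\de-2}$, which closes only because $2(q+\de-1)\th<1$.
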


\begin{proof}
Let $\pth{\wc{u}_a,\wc{X}_a}$ and $\pth{\wc{u}_b,\wc{X}_b}$ both in $B_{\pth{ H^2_{-q-\de}}^4}(0,C_0\eta\e)$. We wish to estimate the difference 
\begin{align*}
\Psi^{(s)}_i\pth{ \wc{u}_a,\wc{X}_a } - \Psi^{(s)}_i\pth{ \wc{u}_b,\wc{X}_b }
\end{align*}
for $i=1,2$. For this, we write the equations satisfied by these differences:
\begin{align}
8\De\pth{ \Psi^{(s)}_1\pth{ \wc{u}_a,\wc{X}_a } - \Psi^{(s)}_1\pth{ \wc{u}_b,\wc{X}_b } } & = A\pth{ \p\pth{\wc{u}_a,\wc{X}_a} , \wc{u}_a,\wc{X}_a } - A\pth{ \p\pth{\wc{u}_b,\wc{X}_b} , \wc{u}_b,\wc{X}_b }, \label{eq diff 1}
\\ \De \pth{ \Psi^{(s)}_2\pth{ \wc{u}_a,\wc{X}_a } - \Psi^{(s)}_2\pth{ \wc{u}_b,\wc{X}_b } } & =  B\pth{ \p\pth{\wc{u}_a,\wc{X}_a} , \wc{u}_a,\wc{X}_a } - B\pth{ \p\pth{\wc{u}_b,\wc{X}_b} , \wc{u}_b,\wc{X}_b }.\label{eq diff 2}
\end{align}
For clarity, we denote $\p\pth{\wc{u}_a,\wc{X}_a}$ by $\p_a$ and $\p\pth{\wc{u}_b,\wc{X}_b}$ by $\p_b$. We wish to estimate the $L^2_{-q-\de-2}$ norm of the RHS of \eqref{eq diff 1} and \eqref{eq diff 2}. We have
\begin{align}
&\l A\pth{ \p_a , \wc{u}_a,\wc{X}_a } - A\pth{ \p_b , \wc{u}_b,\wc{X}_b } \r_{L^2_{-q-\de-2}} \label{L2 diff}
\\& \lesssim \l \HH\pth{ \bar{g}(\p_a),\bar{\pi}(\p_a)} - \HH\pth{ \bar{g}(\p_b),\bar{\pi}(\p_b)}  \r_{L^2_{-q-\de-2}} \nonumber
\\&\quad + \l \RR_\HH\pth{\p_a,\wc{u}_a,\wc{X}_a} - \RR_\HH\pth{\p_b,\wc{u}_b,\wc{X}_b}  \r_{L^2_{-q-\de-2}}.\nonumber
\end{align}
We start with the first term in \eqref{L2 diff} and use \eqref{exp H} to expand it:
\begin{align}
&\l \HH\pth{ \bar{g}(\p_a),\bar{\pi}(\p_a)} - \HH\pth{ \bar{g}(\p_b),\bar{\pi}(\p_b)}  \r_{L^2_{-q-\de-2}} \label{L2 diff 1}
\\&\qquad \lesssim \l \HH\pth{e + \chi_{\vec{p}_a}(g_{\vec{p}_a}-e) , \chi_{\vec{p}_a}\pi_{\vec{p}_a} } - \HH\pth{e + \chi_{\vec{p}_b}(g_{\vec{p}_b}-e) , \chi_{\vec{p}_b}\pi_{\vec{p}_b} } \r_{L^2_{-q-\de-2}} \nonumber
\\&\qquad\quad + \l D\HH[e,0]\pth{ \breve{g}_a-\breve{g}_b} + \De\wc{g}_{ij}\pth{ \breve{g}_a^{ij} - \breve{g}_b^{ij} } \r_{L^2_{-q-\de-2}} + \l \mathrm{Err}_1(\p_a) - \mathrm{Err}_1(\p_b) \r_{L^2_{-q-\de-2}} \nonumber
\end{align} 
We estimate each term in \eqref{L2 diff 1}.
\begin{itemize}
\item For the first term, we use \eqref{diff constraint kerr}
\begin{align*}
& \l \HH\pth{e + \chi_{\vec{p}_a}(g_{\vec{p}_a}-e) , \chi_{\vec{p}_a}\pi_{\vec{p}_a} } - \HH\pth{e + \chi_{\vec{p}_b}(g_{\vec{p}_b}-e) , \chi_{\vec{p}_b}\pi_{\vec{p}_b} } \r_{L^2_{-q-\de-2}}
\\&\qquad\lesssim \pth{ |m_a-m_b| + \eta^2 |v_a-v_b| } \l r^{q+\de-\frac{5}{2}} \r_{L^2\pth{\{\min(\la_1,\la_2)\leq r \leq 2 \max(\la_1,\la_2)\}}}
\\&\qquad\quad + \eta^2 \pth{ |y_a - y_b| + |a_a- a_b|} \l r^{q+\de-\frac{7}{2}} \r_{L^2\pth{\{\min(\la_1,\la_2)\leq r \leq 2 \max(\la_1,\la_2)\}}}
\\&\qquad \lesssim \pth{ |m_a-m_b| + \eta^2 |v_a-v_b| } \max(\la_a,\la_b)^{q+\de-1} 
\\&\qquad\quad + \eta^2 \pth{ |y_a - y_b| + |a_a- a_b|}\max(\la_a,\la_b)^{q+\de-2}
\\&\qquad \lesssim_{C_0} \eta^{1-2(q+\de-1)\th} \e^{2(q+\de-1)\th} \pth{ \l \wc{u}_a - \wc{u}_b \r_{H^2_{-q-\de}} + \l \wc{X}_a - \wc{X}_b \r_{H^2_{-q-\de}}   }
\\&\qquad \lesssim_{C_0}\e \pth{ \l \wc{u}_a - \wc{u}_b \r_{H^2_{-q-\de}} + \l \wc{X}_a - \wc{X}_b \r_{H^2_{-q-\de}}   },
\end{align*}
where we also used \eqref{estim diff m v lem}, \eqref{estim diff y a lem} and $2(q+\de-1)\th<1$.
\item For the second term, the support property of $\breve{g}_a-\breve{g}_b$ simply implies
\begin{align*}
&\l D\HH[e,0]\pth{ \breve{g}_a-\breve{g}_b} + \De\wc{g}_{ij}\pth{ \breve{g}_a^{ij} - \breve{g}_b^{ij} } \r_{L^2_{-q-\de-2}} 
\\&\quad \lesssim \l \breve{g}_a - \breve{g}_b \r_{W^{2,\infty}}
\\&\quad \lesssim_{C_0} \e \pth{ \l \wc{u}_a - \wc{u}_b \r_{H^2_{-q-\de}} + \l \wc{X}_a - \wc{X}_b \r_{H^2_{-q-\de}}   },
\end{align*}
where we used \eqref{estim diff breve lem}.
\item For the third term we use \eqref{diff error} and \eqref{estim T S}
\begin{align*}
\l \mathrm{Err}_1(\p_a) - \mathrm{Err}_1(\p_b) \r_{L^2_{-q-\de-2}} &\lesssim \pth{ \l \wc{g} \r_{H^2_{-q-\de}} +  \l \wc{\pi} \r_{H^1_{-q-\de}} } d(\p_a,\p_b) +  \eta d(\p_a,\p_b)
\\&\lesssim_{C_0} \e \pth{ \l \wc{u}_a - \wc{u}_b \r_{H^2_{-q-\de}} + \l \wc{X}_a - \wc{X}_b \r_{H^2_{-q-\de}}   },
\end{align*}
where we also used 
\begin{align}\label{estim dpp}
d(\p_a,\p_b) & \lesssim_{C_0} \e \pth{ \l \wc{u}_a - \wc{u}_b \r_{H^2_{-q-\de}} + \l \wc{X}_a - \wc{X}_b \r_{H^2_{-q-\de}}   },
\end{align}
which follows from \eqref{estim diff m v lem}, \eqref{estim diff y a lem} and \eqref{estim diff breve lem}.
\end{itemize}
Collecting the above estimates, we have proved that
\begin{align*}
\l \HH\pth{ \bar{g}(\p_a),\bar{\pi}(\p_a)} - \HH\pth{ \bar{g}(\p_b),\bar{\pi}(\p_b)}  \r_{L^2_{-q-\de-2}} \lesssim_{C_0} \e \pth{ \l \wc{u}_a - \wc{u}_b \r_{H^2_{-q-\de}} + \l \wc{X}_a - \wc{X}_b \r_{H^2_{-q-\de}}   }. 
\end{align*}
For the second term in \eqref{L2 diff} we use \eqref{diff remainders} and $\left| V_\OO^{abcd}\right| \lesssim 1$ to obtain
\begin{align*}
&\l \RR_\HH\pth{\p_a,\wc{u}_a,\wc{X}_a} - \RR_\HH\pth{\p_b,\wc{u}_b,\wc{X}_b}  \r_{L^2_{-q-\de-2}}
\\&\lesssim_{C_0} \pth{ \l \wc{u}_a \r_{H^2_{-q-\de}} + \l \wc{X}_a \r_{H^2_{-q-\de}} +  \l \wc{u}_b \r_{H^2_{-q-\de}} + \l \wc{X}_b \r_{H^2_{-q-\de}} } d(\p_a,\p_b)
\\&\quad + \eta^2  \pth{ \l \wc{u}_a - \wc{u}_b \r_{H^2_{-q-\de}} + \l \wc{X}_a - \wc{X}_b \r_{H^2_{-q-\de}}   }  
\\&\quad + \pth{ \l \wc{g}\r_{H^2_{-q-\de}} + \l \wc{\pi} \r_{H^1_{-q-\de-1}} + \l \wc{u}_a \r_{H^2_{-q-\de}} + \l \wc{X}_a \r_{H^2_{-q-\de}} +  \l \wc{u}_b \r_{H^2_{-q-\de}} + \l \wc{X}_b \r_{H^2_{-q-\de}} }
\\& \hspace{3cm}\times\pth{ \l \wc{u}_a - \wc{u}_b \r_{H^2_{-q-\de}} + \l \wc{X}_a - \wc{X}_b \r_{H^2_{-q-\de}}   }
\\&\quad + \eta\e \pth{ \l \wc{u}_a - \wc{u}_b \r_{H^2_{-q-\de}} + \l \wc{X}_a - \wc{X}_b \r_{H^2_{-q-\de}}   }  + \eta\e d(\p_a,\p_b)
\\&\lesssim_{C_0} \eta\e d(\p_a,\p_b) + \e \pth{ \l \wc{u}_a - \wc{u}_b \r_{H^2_{-q-\de}} + \l \wc{X}_a - \wc{X}_b \r_{H^2_{-q-\de}}   }
\\&\lesssim_{C_0} \e \pth{ \l \wc{u}_a - \wc{u}_b \r_{H^2_{-q-\de}} + \l \wc{X}_a - \wc{X}_b \r_{H^2_{-q-\de}}   },
\end{align*}
where we used the continuous embeddings $H^1_{-q-\de-1}\times H^1_{-q-\de-1}\subset L^2_{-q-\de-2}$ and $H^2_{-q-\de}\times L^2_{-q-\de-2}\subset L^2_{-q-\de-2}$ from Lemma \ref{lem plongement} and \eqref{estim dpp}. We have proved that
\begin{align*}
\l A\pth{ \p_a , \wc{u}_a,\wc{X}_a } - A\pth{ \p_b , \wc{u}_b,\wc{X}_b } \r_{L^2_{-q-\de-2}} & \lesssim_{C_0} \e \pth{ \l \wc{u}_a - \wc{u}_b \r_{H^2_{-q-\de}} + \l \wc{X}_a - \wc{X}_b \r_{H^2_{-q-\de}}   }.
\end{align*}
Repeating the argument, we can prove that
\begin{align*}
\l B\pth{ \p_a , \wc{u}_a,\wc{X}_a } - B\pth{ \p_b , \wc{u}_b,\wc{X}_b } \r_{L^2_{-q-\de-2}} & \lesssim_{C_0} \e \pth{ \l \wc{u}_a - \wc{u}_b \r_{H^2_{-q-\de}} + \l \wc{X}_a - \wc{X}_b \r_{H^2_{-q-\de}}   }.
\end{align*}
Applying Proposition \ref{prop laplace} to the system \eqref{eq diff 1}-\eqref{eq diff 2} and using the orthogonality conditions of Corollary \ref{coro param}, we obtain
\begin{align*}
\l \Psi^{(s)}\pth{ \wc{u}_a,\wc{X}_a} - \Psi^{(s)}\pth{ \wc{u}_b,\wc{X}_b} \r_{H^2_{-q-\de}} & \lesssim_{C_0} \e \pth{ \l \wc{u}_a - \wc{u}_b \r_{H^2_{-q-\de}} + \l \wc{X}_a - \wc{X}_b \r_{H^2_{-q-\de}}   }.
\end{align*}
Taking $\e$ small compared to $C_0^{-1}$ shows that the solution map $\Psi^{(s)}$ is a contraction for the distance in $H^2_{-q-\de}$.
\end{proof}

The Banach fixed point theorem then implies the following statement on the system \eqref{eq u X}.

\begin{corollary}\label{coro u X}
If $\e$ is small enough, there exists a unique solution $\wc{u}$ and $\wc{X}$ to the system \eqref{eq u X} with the bound
\begin{align*}
\l \wc{u} \r_{H^2_{-q-\de}} + \l \wc{X} \r_{H^2_{-q-\de}} \lesssim \eta \e.
\end{align*}
\end{corollary}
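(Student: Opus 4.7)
The plan is to apply the Banach fixed point theorem to the solution map $\Psi^{(s)}$ introduced just before Lemma \ref{lem u X bound}. The closed ball $B_{(H^2_{-q-\de})^4}(0, C_0 \eta\e)$, equipped with the distance induced by the $H^2_{-q-\de}$ norm, is a complete metric space. By Lemma \ref{lem u X bound}, upon choosing $C_0$ sufficiently large (compared to universal constants) and $\e$ sufficiently small compared to $C_0^{-1}$, the map $\Psi^{(s)}$ sends this ball into itself. By Lemma \ref{lem u X contraction}, shrinking $\e$ further if necessary, $\Psi^{(s)}$ is a strict contraction on this ball in the $H^2_{-q-\de}$ norm.

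Next, I would invoke the Banach fixed point theorem to produce a unique fixed point $(\wc{u}, \wc{X}) \in B_{(H^2_{-q-\de})^4}(0, C_0\eta\e)$ of $\Psi^{(s)}$. By the very definition of $\Psi^{(s)}$ through equations \eqref{eq psi s 1}--\eqref{eq psi s 2}, the fixed point condition $(\wc{u},\wc{X})=\Psi^{(s)}(\wc{u},\wc{X})$ is equivalent to the system \eqref{eq u X} with the parameter $\p=\p(\wc{u},\wc{X})$ constructed in Corollary \ref{coro param}. Since $\p(\wc{u},\wc{X})$ was chosen precisely so that the orthogonality conditions \eqref{ortho 1}--\eqref{ortho 4} are satisfied, the inversion of the flat Laplacian via Proposition \ref{prop laplace} is justified and yields honest $H^2_{-q-\de}$ solutions. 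The bound $\|\wc{u}\|_{H^2_{-q-\de}} + \|\wc{X}\|_{H^2_{-q-\de}} \lesssim \eta\e$ is immediate from the fact that the fixed point lies in the ball of radius $C_0\eta\e$.

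For uniqueness within the class of solutions of size $\lesssim \eta\e$ in $H^2_{-q-\de}$, I would observe that any such pair $(\wc{u},\wc{X})$ satisfying \eqref{eq u X} with parameter $\p(\wc{u},\wc{X})$ is automatically a fixed point of $\Psi^{(s)}$ (the orthogonality conditions being built into the construction of $\p$, Proposition \ref{prop laplace} identifies the unique $H^2_{-q-\de}$ solution of the Poisson equations), and Banach then forces equality with the fixed point already produced.

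There is essentially no remaining obstacle at this stage: the substantive work has already been carried out in Sections \ref{section def param} and \ref{section conclusion}, where both the self-mapping property and the contraction estimate — the latter being the delicate point, due to the need to extract an $\e$ rather than merely an $\eta\e$ from differences of truncated Kerr data with potentially large center of mass and angular momentum — have been established. The current statement is therefore a direct packaging of Lemmas \ref{lem u X bound} and \ref{lem u X contraction} via Banach's theorem.
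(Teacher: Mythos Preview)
Your proposal is correct and matches the paper's approach exactly: the paper states Corollary~\ref{coro u X} as an immediate consequence of the Banach fixed point theorem applied to the solution map $\Psi^{(s)}$, relying on Lemmas~\ref{lem u X bound} and~\ref{lem u X contraction} for the self-mapping and contraction properties respectively. Your additional remarks on uniqueness and the role of the orthogonality conditions are accurate elaborations of what the paper leaves implicit.
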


This concludes the proof of Theorem \ref{maintheorem}, since the initial data set 
\begin{align*}
\pth{ \pth{ 1+\wc{u}}^4 \bar{g}\pth{\p\pth{\wc{u},\wc{X}}} , \bar{\pi}\pth{\p\pth{\wc{u},\wc{X}}}+ L_e \wc{X}}
\end{align*}
solves the constraint equations on $\RRR^3$, and all the bounds stated in Theorem \ref{maintheorem} follow from Corollaries \ref{coro param} and \ref{coro u X}.

\appendix

\section{Appendix to Section \ref{section préliminaires}}\label{appendix section 2}

\subsection{Proof of Proposition \ref{prop laplace}}\label{appendix prop laplace}

We will use the following properties:
\begin{itemize}
\item[(i)] For $j\geq 1$, the functions $(w_{j,\ell})_{-(j-1)\leq \ell \leq j-1}$ form a basis of $\mathfrak{H}^{j-1}$ the set of harmonic polynomials defined on $\RRR^3$ which are homogeneous of degree $j-1$.
\item[(ii)] For $x=|x|\om$ and $y=|y|\om'$ with $\om,\om'\in\mathbb{S}^2$ and $|x|>|y|$ we have
\begin{align}\label{expansion potential}
\frac{1}{|x-y|} & =  \sum_{j=0}^\infty\frac{1}{2j+1} \frac{|y|^j}{|x|^{j+1}} \sum_{\ell=-j}^j (-1)^\ell  Y_{j,-\ell}(\om) Y_{j,\ell}(\om').
\end{align}
\end{itemize}

We start with the first part of Proposition \ref{prop laplace}, i.e. look at the scalar Laplace equation $\De u =f$. Note that the uniqueness of $\tilde{u}$ follows from the injectivity of $\De:H^2_{-q-\de}\longrightarrow L^2_{-q-\de-2}$ (see Theorem \ref{theo macowen}). It remains to show the existence of $\tilde{u}$. For each $j\geq 1$, we consider $G_{j}:\RRR_+\longrightarrow\RRR$ a smooth compactly supported (in $[0,1]$) function and vanishing in a neighborhood of zero and such that $\int_{\RRR_+}G_{j}(r) r^{j+1}\d r=\frac{1}{4\pi}$. By using the fact that $(w_{j',\ell'})_{1\leq j'\leq q, -(j'-1)\leq \ell'\leq j'-1}$ is a basis of $\bigcup_{i=0}^{q-1}\mathfrak{H}^i$, one can check that
\begin{align*}
f - \sum_{j=1}^q \sum_{\ell=-(j-1)}^{j-1} \langle f , w_{j,\ell} \rangle G_{j}(r)Y_{j-1,\ell}  \in \pth{ \bigcup_{i=0}^{q-1}\mathfrak{H}^i}^{\perp_{L^2}}.
\end{align*}
Moreover we have
\begin{align*}
\l f - \sum_{j=1}^q \sum_{\ell=-(j-1)}^{j-1} \langle f , w_{j,\ell} \rangle G_{j}(r)Y_{j-1,\ell} \r_{L^2_{-q-\de-2}} &\lesssim  \l f \r_{L^2_{-q-\de-2}} + \sum_{j=1}^q  \int_{\RRR^3} |f| (1+r)^{j-1}
\\& \lesssim \l f \r_{L^2_{-q-\de-2}},
\end{align*}
where we used 
\begin{align*}
\int_{\RRR^3} |f| (1+r)^{j-1} & \lesssim \int_{\RRR^3}|f| (1+r)^{q+\de+\half} (1+r)^{j- q-\de - \frac{3}{2}}
\\& \lesssim \l (1+r)^{j- q-\de - \frac{3}{2}} \r_{L^2} \l f \r_{L^2_{-q-\de-2}} 
\\& \lesssim  \l f \r_{L^2_{-q-\de-2}},
\end{align*}
if $j\leq q$ and $\de>0$. Therefore we can apply Theorem \ref{theo macowen} with $p=q$ and $\rho=-q-\de$ and obtain the existence of $v$ such that 
\begin{align*}
\De v = f - \sum_{j=1}^q \sum_{\ell=-(j-1)}^{j-1} \langle f , w_{j,\ell} \rangle G_{j}(r)Y_{j-1,\ell},
\end{align*}
with $\l v\r_{H^2_{-q-\de}}\lesssim\l f \r_{L^2_{-q-\de-2}}$. Now, for $j\geq 1$ and $-(j-1) \leq \ell \leq j-1$, we define
\begin{align*}
u_{j,\ell} = \frac{1}{4\pi}\pth{G_{j}Y_{j-1,\ell}}*\frac{1}{|\cdot|},
\end{align*}
which is such that $\De u_{j,\ell} = -G_{j}Y_{j-1,\ell}$. Let $x=r\om$ with $r>1$ and $\om\in\mathbb{S}^2$, since $G_j$ is supported in $[0,1]$, we can use \eqref{orthogonality} and \eqref{expansion potential} to obtain
\begin{align*}
u_{j,\ell}(x) & = \sum_{j'=0}^\infty \frac{1}{4\pi(2j'+1)r^{j'+1}} 
\\&\hspace{1cm} \times \sum_{\ell'=-j'}^{j'} \pth{ \int_0^1 G_{j}(r')(r')^{j'+2}\d r' } \pth{ \int_{\mathbb{S}^2} Y_{j-1,\ell} \pth{\om'}Y_{j',\ell'}(\om') } (-1)^{\ell'}  Y_{j',-\ell'}(\om) 
\\ & =   \frac{1}{(2j-1)r^{j}}  \pth{ \int_0^1 G_{j}(r')(r')^{j+1}\d r' }  (-1)^{\ell}  Y_{j-1,-\ell}(\om) 
\\& = -v_{j,\ell} ,
\end{align*}
where $v_{j,\ell}$ is introduced in Definition \ref{def tool laplace}. In particular, if we define $\tilde{u}_{j,\ell}=u_{j,\ell}+\chi(r)v_{j,\ell}$, then $\tilde{u}_{j,\ell}$ is actually compactly supported and we just need to show that it is twice differentiable with bounded second order derivatives. This simply follows from its definition and the facts that $\frac{1}{|\cdot|}$ is locally integrable and $G_j$ supported outside of 0. In conclusion, the function $u$ defined by
\begin{align*}
u = -  \sum_{j=1}^q \sum_{\ell=-(j-1)}^{j-1} \langle f , w_{j,\ell} \rangle u_{j,\ell} + v
\end{align*}
solves $\De u = f$ and rewrites as
\begin{align*}
u=  \chi(r) \sum_{j=1}^q \sum_{\ell=-(j-1)}^{j-1} \langle f , w_{j,\ell} \rangle v_{j,\ell}  + \tilde{u} ,
\end{align*}
where we defined
\begin{align*}
\tilde{u} =  - \sum_{j=1}^q \sum_{\ell=-(j-1)}^{j-1} \langle f , w_{j,\ell} \rangle  \tilde{u}_{j,\ell} + v.
\end{align*}
Also we have from $\l v\r_{H^2_{-q-\de}}\lesssim\l f \r_{L^2_{-q-\de-2}}$ and $|\langle f , w_{j,\ell} \rangle|\lesssim \l f \r_{L^2_{-q-\de-2}}$ that $\l \tilde{u} \r_{H^2_{-q-\de}}\lesssim \l f \r_{L^2_{-q-\de-2}}$. This concludes the proof of the first part of Proposition \ref{prop laplace}.

We now look at the second part of Proposition \ref{prop laplace}, i.e. the vector Laplace equation $\De X=Y$. Applying the first part of Proposition \ref{prop laplace} to the three equations $\De X_i  = Y_i$, we obtain the existence of a unique vector field $\tilde{X}$ satisfying $\l \tilde{X} \r_{H^2_{-q-\de}}\lesssim \l Y \r_{L^2_{-q-\de-2}}$ and such that 
\begin{align*}
X=  \chi(r) \sum_{j=1}^q \sum_{\ell=-(j-1)}^{j-1}\sum_{k=1,2,3} \langle Y , W_{j,\ell,k} \rangle  V_{j,\ell,k}  + \tilde{X} 
\end{align*}
solves $\De X = Y$. To conclude the proof, it remains to rearrange the terms corresponding to $j=2$. First, we recall the value of the $w_{2,\ell}$ functions
\begin{align*}
w_{2,-1} = \sqrt{3}x^2, \quad w_{2,0} = \sqrt{3}x^3, \quad w_{2,1} = \sqrt{3}x^1.
\end{align*}
Expressing each $x^i\dr_j$ in the basis 
\begin{align*}
\pth{ \Om^+_{11} , \Om^+_{22} , \Om^+_{33} , \Om^+_{12} , \Om^+_{13} , \Om^+_{23} , \Om^-_{12} , \Om^-_{13}, \Om^-_{23} }
\end{align*}
we obtain
\begin{align*}
W_{2,-1,1} & =  \frac{\sqrt{3}}{2} \pth{\Om^+_{12} - \Om^-_{12}},     & W_{2,0,1} & = \frac{\sqrt{3}}{2} \pth{\Om^+_{13} - \Om^-_{13}},  & W_{2,1,1} & = \frac{\sqrt{3}}{2} \Om^+_{11}, 
\\ W_{2,-1,2} & = \frac{\sqrt{3}}{2} \Om^+_{22},   & W_{2,0,2} & = \frac{\sqrt{3}}{2} \pth{\Om^+_{23} - \Om^-_{23}}, & W_{2,1,2} & = \frac{\sqrt{3}}{2} \pth{\Om^+_{12} + \Om^-_{12}},
\\ W_{2,-1,3} & =  \frac{\sqrt{3}}{2} \pth{\Om^+_{23} + \Om^-_{23}} , & W_{2,0,3} & = \frac{\sqrt{3}}{2} \Om^+_{33},  & W_{2,1,3} & = \frac{\sqrt{3}}{2} \pth{\Om^+_{13} + \Om^-_{13}}.
\end{align*}
This gives
\begin{align*}
\frac{2}{\sqrt{3}}&\sum_{\ell=-1,0,1}\sum_{k=1,2,3} \langle Y , W_{2,\ell,k} \rangle V_{2,\ell,k}
\\& = \langle Y ,  \Om^-_{12} \rangle \pth{- V_{2,-1,1} + V_{2,1,2}} +  \langle Y , \Om^-_{13} \rangle\pth{ - V_{2,0,1} + V_{2,1,3}} +  \langle Y , \Om^-_{23} \rangle \pth{ - V_{2,0,2} + V_{2,-1,3}  } 
\\&\quad + \langle Y , \Om^+_{12}  \rangle \pth{ V_{2,-1,1}  +  V_{2,1,2}} + \langle Y , \Om^+_{13}  \rangle \pth{ V_{2,0,1}  +  V_{2,1,3} } + \langle Y , \Om^+_{23}  \rangle \pth{ V_{2,0,2} +  V_{2,-1,3} }
\\&\quad + \langle Y , \Om^+_{11} \rangle V_{2,1,1} + \langle Y , \Om^+_{22} \rangle V_{2,-1,2}  + \langle Y , \Om^+_{33} \rangle V_{2,0,3} ,
\end{align*}
which concludes the proof of Proposition \ref{prop laplace}.

\subsection{Proof of Proposition \ref{prop KIDS}}\label{appendix proof KIDS}

\paragraph{Identification of the kernels.} We recall the main result of \cite{Moncrief1975}: if $(M,\g)$ is a solution to the Einstein vacuum equations and if $(g,\pi)$ are the induced data on a hypersurface $\Si$, then the kernel of $D\Phi[g,k]^*$ (with $\Phi=(\HH,\MM)$ being the full constraint operator) consists of all Killing fields of $(M,\g)$. More precisely, if $X$ is Killing field of $(M,\g)$, then $(h,Y)\in \ker \pth{D\Phi[g,k]^*} $ where the scalar $h$ and the vector field $Y$ are respectively the normal and tangential projection of $X$ to $\Si$. 

In the case of Minkowski spacetime, the 10-dimensional space of Killing fields is generated by the four translations, the three spatial rotations and the three Lorentz boosts. This proves that the kernels of $D\HH[e,0]^*$ and $D\MM[e,0]^*$ are given respectively by
\begin{align*}
\ker\pth{D\HH[e,0]^* } = \mathrm{Span}\pth{ w_{j,\ell},\; j=1,2, -(j-1)\leq \ell \leq j-1 }
\end{align*}
and
\begin{align*}
\ker\pth{D\MM[e,0]^* } = \mathrm{Span}\pth{ W_{1,0,1},W_{1,0,2},W_{1,0,3}, \Om^-_{12}, \Om^-_{13}, \Om^-_{23} }
\end{align*}
as stated.

\paragraph{Construction of the family $(h_{j,\ell})_{3\leq j \leq q, -(j-1)\leq \ell \leq j-1}$.} Let $\mu=\mu(r)$ be a smooth non-negative function compactly supported in $\{1\leq r \leq 10\}$. For $3\leq j \leq q$ and $-(j-1)\leq \ell \leq j-1$ we define
\begin{align*}
h_{j,\ell} & = \mu(r)D\HH[e,0]^*(w_{j,\ell}).
\end{align*}
By construction, $h_{j,\ell}$ is a traceless (since $\De w_{j,\ell}=0$) smooth and compactly supported symmetric 2-tensor. Assume by contradiction that the matrix 
\begin{align}\label{matrice h}
\pth{ \left\langle h_{j,\ell}, D\HH[e,0]^*(w_{j',\ell'}) \right\rangle }_{3\leq j,j'\leq q, -(j-1)\leq \ell \leq j-1,-(j'-1)\leq \ell' \leq j'-1}
\end{align}
is non-invertible. This implies that the columns of this matrix are linearly dependent, i.e. that there exists scalars $(\la_{j,\ell})_{3\leq j \leq q, -(j-1)\leq \ell \leq j-1}$ such that
\begin{align*}
\sum_{j'=3}^{q} \sum_{\ell'=-(j'-1)}^{j'-1} \la_{j',\ell'}  \left\langle \mu(r)D\HH[e,0]^*(w_{j,\ell}), D\HH[e,0]^*(w_{j',\ell'}) \right\rangle =0
\end{align*}
for all $(j,\ell)$ such that $3\leq j \leq q$ and $-(j-1)\leq \ell \leq j-1$ and such that they are not all vanishing. We multiply each of these equalities by $\la_{j,\ell}$ and sum over $(j,\ell)$ to obtain
\begin{align*}
\l \mu(r) \sum_{j=3}^{q} \sum_{\ell=-(j-1)}^{j-1}\la_{j,\ell} D\HH[e,0]^*(w_{j,\ell}) \r_{L^2} & = 0.
\end{align*}
Therefore, the sum $\sum_{j=3}^{q} \sum_{\ell=-(j-1)}^{j-1}\la_{j,\ell} D\HH[e,0]^*(w_{j,\ell})$ vanishes on $\{\mu\neq 0\}$ and since each $D\HH[e,0]^*(w_{j,\ell})$ is a polynomial, this implies that 
\begin{align*}
D\HH[e,0]^*\pth{ \sum_{j=3}^{q} \sum_{\ell=-(j-1)}^{j-1}\la_{j,\ell} w_{j,\ell} } & = 0.
\end{align*}
Since the kernel of $D\HH[e,0]^*$ is generated by $\{ w_{j,\ell},\; j=1,2, -(j-1)\leq \ell \leq j-1 \}$, the linear independence of the family $\{ w_{j,\ell},\; 1\leq j \leq q, -(j-1)\leq \ell \leq j-1 \}$ implies that $\la_{j,\ell}=0$ for all $3\leq j \leq q$ and $-(j-1)\leq \ell \leq j-1$, which contradicts our first assumption. This shows that the matrix \eqref{matrice h} is invertible. Moreover, since $\De w_{j,\ell}=0$ and $\nab\mu=\mu' \dr_r$ we have
\begin{align*}
(\div h_{j,\ell})_r & = \mu'(r) (\Hess w_{j,\ell})_{rr} 
\\& = (j-1)(j-2) \mu'(r) r^{j-3} Y_{j-1,\ell}. 
\end{align*}
The linear independence of the spherical harmonics thus implies the linear independence of the functions $(\div h_{j,\ell})_r $ and thus of the 1-forms $\div h_{j,\ell}$.

\paragraph{Construction of the family $(\varpi_Z)_{Z\in\mathcal{Z}_q}$.}  We simplify the notations and rename the elements of $\mathcal{Z}_q$ so that $\mathcal{Z}_q = \left\{ Z_1,\dots, Z_N \right\}$ with $N=3q^2-6$. We define some cutoffs functions:
\begin{itemize}
\item $\mu^{(\infty)}$ is simply defined by
\begin{align*}
\mu^{(\infty)} (r) = \mathbbm{1}_{[1,2]}(r).
\end{align*}
\item for $n\in\mathbb{N}^*$ and $1\leq k \leq N$ we define $\mu^{(n)}_k$ as a smooth function satisfying $0\leq \mu^{(n)}_k\leq 1$ and 
\begin{equation*}
\mu^{(n)}_k(r)  = \left\{
\begin{aligned}
&1 \qquad \text{if}\quad r\in \left[ 1+ \frac{1}{kn}, 2-\frac{1}{kn} \right],
\\& 0 \qquad \text{if}\quad r\notin \left[ 1+ \frac{1}{(k+1)n}, 2-\frac{1}{(k+1)n} \right].
\end{aligned}
\right. 
\end{equation*}
\end{itemize}
We also define the following matrices:
\begin{align*}
M^{(\infty)} &  = \pth{ \left\langle \mu^{(\infty)} D\MM[e,0]^*(Z_i), D\MM[e,0]^*(Z_j) \right\rangle  }_{1\leq i,j\leq N},
\\ M^{(n)} & = \pth{ \left\langle \mu^{(n)}_i D\MM[e,0]^*(Z_i), D\MM[e,0]^*(Z_j) \right\rangle  }_{1\leq i,j\leq N}.
\end{align*}
On the one hand, we can prove that $M^{(\infty)}$ is invertible as we did above for the matrix \eqref{matrice h} (the non-smoothness of $\mu^{(\infty)}$ doesn't affect the argument), ultimately using the description of the kernel of $D\MM[e,0]^*$ and the linear independence of the family $\left\{ W_{1,0,1},W_{1,0,2},W_{1,0,3}, \Om^-_{12}, \Om^-_{13}, \Om^-_{23}  \right\}\cup\mathcal{Z}_q$. On the other hand, by construction, for all $1\leq i \leq N$ we have
\begin{align*}
\mu^{(n)}_i \xrightarrow[n\to+\infty]{\enskip L^2\enskip} \mu^{(\infty)}.
\end{align*}
This implies the convergence
\begin{align*}
M^{(n)} \xrightarrow[n\to+\infty]{} M^{(\infty)}.
\end{align*}
The set of invertible matrices being an open set, this proves that $M^{(n)}$ is invertible for $n$ large enough. Thus we take $n$ large enough and define
\begin{align*}
\varpi_i & = \mu^{(n)}_i D\MM[e,0]^*(Z_i).
\end{align*}
Let us prove that the family $((\De\tr+\div\div)\varpi_i)_{1\leq i\leq N}$ is linearly independent. Consider scalar $\la_i$ such that
\begin{align}\label{nontrivial combinaison}
\sum_{i=1}^N\la_i(\De\tr+\div\div)\varpi_i =0.
\end{align}
Since the components in Cartesian coordinates of the vector fields $Z_i$ are harmonic functions, one can show 
\begin{align*}
(\De\tr+\div\div)\varpi_i & =  -(\mu^{(n)}_i)'\pth{ 3\dr_r\div Z_i +  \frac{3}{r} \div Z_i -  \frac{1}{r} \dr_r (Z_i)_r   }  -(\mu^{(n)}_i)''\pth{ \div Z_i +  \dr_r (Z_i)_r}.
\end{align*}
In particular, each $(\De\tr+\div\div)\varpi_i$ is supported in $\left\{(\mu^{(n)}_i)'\neq 0\right\}$, i.e. in 
\begin{align*}
S_i \vcentcolon = \left[ 1 + \frac{1}{(i+1)n},1+\frac{1}{in}\right] \cup\left[ 2- \frac{1}{in}, 2-\frac{1}{(i+1)n} \right].
\end{align*}
By construction, we have $\mathring{S}_i\cap \mathring{S}_j=\emptyset$ if $i\neq j$. Therefore, \eqref{nontrivial combinaison} implies that 
\begin{align}\label{relation}
\la_i \pth{ -(\mu^{(n)}_i)'\pth{ 3\dr_r\div Z_i +  \frac{3}{r} \div Z_i -  \frac{1}{r} \dr_r (Z_i)_r   }  -(\mu^{(n)}_i)''\pth{ \div Z_i +  \dr_r (Z_i)_r} } & = 0
\end{align}
identically on $S_i$ for all $1\leq i \leq N$. Assume that there exists $i$ such that $\la_i\neq 0$, then \eqref{relation} rewrites
\begin{align*}
Q_i(x)(\mu^{(n)}_i)''(r)+  P_i(x) (\mu^{(n)}_i)'(r) & = 0
\end{align*}
on $S_i$ with $P_i$ and $Q_i$ polynomials. Since $Q_i$ is a polynomial, there exists $\om_0\in\mathbb{S}^2$ such that $Q(r\om_0)\neq0$ for all $r>0$. Defining $f(r)=\frac{P_i(r\om_0)}{Q_i(r\om_0)}$ we obtain
\begin{align*}
(\mu^{(n)}_i)''(r)+  f(r) (\mu^{(n)}_i)'(r) & = 0
\end{align*}
on $S_i$ with $(\mu^{(n)}_i)'=0$ on $\dr S_i$ and $f$ smooth. By Cauchy-Lipschitz, this would imply $(\mu^{(n)}_i)'=0$ identically on $S_i$ which is a contradiction. Therefore we necessarily have
\begin{align*}
3\dr_r\div Z_i +  \frac{3}{r} \div Z_i -  \frac{1}{r} \dr_r (Z_i)_r & = 0,
\\ \div Z_i +  \dr_r (Z_i)_r & = 0,
\end{align*}
on $S_i$. Thus $\div Z_i$ solves the equation
\begin{align*}
\dr_r \pth{ r^{\frac{4}{3}} \div Z_i }  & = 0.
\end{align*}
Since $\div Z_i$ is a polynomial, this is only possible if $\div Z_i=0$ everywhere. From the second relation we then get $\dr_r (Z_i)_r=0$ which is a contradiction since
\begin{align*}
\dr_r (\Om^+_{ab})_r & = 2 \frac{x^a x^b}{r^2},
\\ \dr_r (W_{j,\ell,k})_r & =(j-1)r^{j-3} Y_{j-1,\ell} x^k.
\end{align*}
Therefore, $\la_i=0$ for all $1\leq i \leq N$ and the family $\pth{ (\De\tr+\div\div)\varpi_i }_{1\leq i \leq N}$ is linearly independent. This concludes the proof of Proposition \ref{prop KIDS}.

\subsection{Proof of Proposition \ref{prop kerr}}\label{appendix prop kerr}

\paragraph{Estimates on $\chi_{\vec{p}}$.} The cutoff $\chi_{\vec{p}}$ is defined in Definition \ref{def chi p} and it satisfies the following properties:
\begin{itemize}
\item[(i)] we have $\left| \chi_{\vec{p}} \right|\lesssim \mathbbm{1}_{\{\la\leq r \}}$ and if $n\geq 1$ we have
\begin{align}\label{estim cutoff}
r^n \left| \nab^n \chi_{\vec{p}} \right| \lesssim_n \mathbbm{1}_{\{ \la\leq r \leq 2 \la \}},
\end{align}
\item[(ii)] for all $n\geq 0$ we have
\begin{align}\label{diff cutoff}
r^n \left| \nab^n \pth{ \chi_{\vec{p}} - \chi_{\vec{p}\,'}} \right| \lesssim_n \frac{\left| \la - \la' \right|}{r}\mathbbm{1}_{\{ \min(\la,\la')\leq r \leq 2 \max(\la,\la') \}}.
\end{align}
\end{itemize}
We start with the proof of \eqref{estim cutoff}. From Faà di Bruno's formula and $|\nab^kr|\lesssim r^{1-k}$ we get
\begin{align*}
\left| \nab^n  \chi_{\vec{p}} \right| & \lesssim_n \sum_{k=1}^n r^{k-n} \la^{-n} \left| \chi^{(k)} \pth{\frac{r}{\la}} \right|.
\end{align*}
Since derivatives of $\chi$ are supported in $ \{ 1 \leq r \leq 2  \}$ we obtain
\begin{align*}
r^n\left| \nab^n  \chi_{\vec{p}} \right| & \lesssim_n  \pth{\frac{r}{\la}}^n \mathbbm{1}_{ \{ \la \leq r \leq 2 \la \} }.
\\&\lesssim_n \mathbbm{1}_{ \{ \la \leq r \leq 2 \la \} }.
\end{align*}

We now prove \eqref{diff cutoff}, assuming without loss of generality that $\la\leq \la'$. Let $n=0$. We can already assume that $\la\leq r \leq 2\la'$ otherwise $\chi_{\vec{p}} - \chi_{\vec{p}\,'}=0$. We distinguish several cases.
\begin{enumerate}
\item If $2\la \leq \la' \iff \frac{\la'}{2} \leq \la'-\la$. Thus, from $r\leq 2\la'$ we obtain $\frac{|\la-\la'|}{r}\geq \frac{\la'}{2r}\geq \frac{1}{4}$. Since $\left|  \chi_{\vec{p}} - \chi_{\vec{p}\,'}  \right|\leq 2$, we obtain $\left|  \chi_{\vec{p}} - \chi_{\vec{p}\,'}  \right|\lesssim \frac{|\la-\la'|}{r}$.
\item If $ \la'\leq 2 \la$. We have
\begin{align*}
\left|  \chi_{\vec{p}} - \chi_{\vec{p}\,'}  \right| & \lesssim \frac{r}{\la\la'}(\la'-\la)
\\&\lesssim \frac{|\la-\la'|}{r},
\end{align*}
where we used $\frac{r}{\la'}\leq 2$ and $\frac{r}{\la}\leq 4$.
\end{enumerate}
Since $ \chi_{\vec{p}} - \chi_{\vec{p}\,'} $ is supported in $\{ \min(\la,\la')\leq r \leq 2 \max(\la,\la') \}$, this concludes the proof of \eqref{diff cutoff} if $n=0$. For $n\geq 1$, we use again Faà di Bruno's formula and $|\nab^kr|\lesssim r^{1-k}$ to obtain
\begin{align*}
r^n\left| \nab^n\pth{ \chi_{\vec{p}}(r) - \chi_{\vec{p}\,'}(r) } \right| & \lesssim_n  \sum_{k=1}^n  \pth{\frac{r}{\la'}}^{k} \left| \chi^{(k)}\pth{\frac{r}{\la}} - \chi^{(k)}\pth{\frac{r}{\la'}} \right|
\\&\quad + \sum_{k=1}^n r^{k} \left| \la^{-k} - (\la')^{-k} \right| \left| \chi^{(k)}\pth{\frac{r}{\la}}\right|.
\end{align*}
For the first sum, we use the fact that $r\leq 2\la'$ and we apply the previous dichotomy argument to $\chi_{\vec{p}}^{(k)}(r) - \chi_{\vec{p}\,'}^{(k)}(r)$ instead of $\chi_{\vec{p}}(r) - \chi_{\vec{p}\,'}(r)$. We obtain
\begin{align*}
\left| \chi_{\vec{p}}^{(k)}(r) - \chi_{\vec{p}\,'}^{(k)}(r) \right| & \lesssim_k \frac{|\la-\la'|}{r}\mathbbm{1}_{\{ \min(\la,\la')\leq r \leq 2 \max(\la,\la') \}}.
\end{align*}
and thus
\begin{align*}
\sum_{k=1}^n  \pth{\frac{r}{\la'}}^{k} \left| \chi^{(k)}\pth{\frac{r}{\la}} - \chi^{(k)}\pth{\frac{r}{\la'}} \right| \lesssim_n  \frac{|\la-\la'|}{r}\mathbbm{1}_{\{ \min(\la,\la')\leq r \leq 2 \max(\la,\la') \}}.
\end{align*}
For the second sum, we use the fact that $\chi^{(k)}\pth{\frac{r}{\la}}$ with $k\geq 1$ is supported in $\{ \la \leq r \leq 2 \la \}$ to get
\begin{align*}
r^{k} \left| \la^{-k} - (\la')^{-k} \right|  & \lesssim_k \frac{|\la-\la'|}{r} \sum_{j=0}^{k-1}  \pth{\frac{r}{\la}}^j \pth{\frac{r}{\la'}}^{k-j}
\\& \lesssim_k \frac{|\la-\la'|}{r}.
\end{align*}
This concludes the proof of \eqref{diff cutoff}.

\paragraph{Estimates on $(g_{\vec{p}},\pi_{\vec{p}})$ and $(e+\chi_{\vec{p}}(g_{\vec{p}}-e),\chi_{\vec{p}}\pi_{\vec{p}})$.} For the proof of \eqref{estim kerr}, we start by estimating $\g_{m,|a|}-\m$ in the region $r\geq \la$. In this region, we have $\frac{|a|}{r}\leq \half$ and we deduce from the expression (see \cite{Visser2009})
\begin{align*}
\tilde{r} = \sqrt{ \frac{r^2-|a|^2 + \sqrt{(r^2-|a|^2)^2+4|a|^2z^2}}{2} }
\end{align*}
that $1 \lesssim \left| \nab^n\pth{ \frac{\tilde{r}}{r}}\right|\lesssim 1$. After projecting the metric on $\{t=0\}$ this implies
\begin{align}\label{estim kerr inter}
r^n \left| \nab^n (g_{\vec{p}_0}-e)  \right| + r^{n+1}\left| \nab^n  \pi_{\vec{p}_0} \right| & \lesssim_n \frac{m}{r},
\end{align}
where $\vec{p}_0=(m,0,|a|\dr_3,0)$. Considering now the change of coordinates \eqref{change of coord}, we use 
\begin{align*}
\ga^{-1} \lesssim \left| \,^{(v)}\La \right|_{\RRR^4\to \RRR^4} \lesssim \ga
\end{align*}
and that in the region $r\geq \la$ we have
\begin{align*}
\left| |x|^{-\ell} - |x-y|^{-\ell} \right| \lesssim |x|^{-\ell-1}. 
\end{align*}
This allows us to deduce from \eqref{estim kerr inter} that in the region $\{\la \leq r \}$ we have
\begin{align}\label{estim kerr preli}
r^n \left| \nab^n (g_{\vec{p}}-e)  \right| + r^{n+1}\left| \nab^n  \pi_{\vec{p}} \right| & \lesssim_{\ga,n} \frac{m}{r}.
\end{align}
Using moreover the expression of $\,^{(v)}\La$ (see (8.2) in \cite{Aretakis2023}) and the fact that $x\mapsto \frac{1}{\sqrt{1-x^2}}$ is locally Lipschitz we obtain
\begin{align*}
\left| \,^{(v)}\La - \,^{(v')}\La \right|_{\RRR^4\to \RRR^4}& \lesssim \left| \ga-\ga'\right| + \max(\ga,\ga') \left| v-v'\right|
\\& \lesssim_{\max(\ga,\ga')} \left| v-v'\right|.
\end{align*}
This implies that in the region $\{ \max(\la,\la')\leq r \}$ we have the following difference estimate
\begin{align}\label{diff kerr preli}
r^n \left| \nab^n \pth{ g_{\vec{p}} - g_{\vec{p}\,'}} \right| &+ r^{n+1} \left| \nab^n \pth{ \pi_{\vec{p}} - \pi_{\vec{p}\,'}} \right| 
\\& \lesssim_{\max(\ga,\ga'),n} \frac{\left| m -m' \right|}{r} + \max(m,m') \pth{ \frac{\left| y -y' \right| + \left| a -a' \right|}{r^2} + \frac{\left| v -v' \right|}{r}}.\nonumber
\end{align}
The estimate \eqref{estim kerr} then follows from putting \eqref{estim cutoff} and \eqref{estim kerr preli} together, and the estimate \eqref{diff kerr} then follows from putting \eqref{diff cutoff} and \eqref{diff kerr preli} together and using the bound
\begin{align*}
\left| \la- \la' \right| \lesssim \left| y- y' \right| + \left| a- a' \right|.
\end{align*}

\paragraph{The integral identities.} In order to prove \eqref{int kerr H 1}-\eqref{int kerr M x} we introduce the classical asymptotic charges for an asymptotically flat initial data set: 
\begin{align*}
\E(g,\pi) & \vcentcolon= \frac{1}{16\pi}\lim_{R\to+\infty}\int_{S_R} \nu\cdot \pth{ \div g - \nab\tr g} ,
\\ \P(g,\pi)_i & \vcentcolon= \frac{1}{8\pi} \lim_{R\to+\infty} \int_{S_R}  \pi_{\nu i} ,
\\ \J(g,\pi)_i & \vcentcolon= \frac{1}{8\pi} \lim_{R\to+\infty} \int_{S_R} \pi_{Y^i \nu} ,
\\ \C(g,\pi)_i & \vcentcolon=  \lim_{R\to+\infty} \int_{S_R} x^i \nu\cdot \pth{ \div g - \nab\tr g}  -  \lim_{R\to+\infty} \int_{S_R} (g_{\nu i} - \nu^i \tr g),
\end{align*}
where $S_r$ is the Cartesian sphere of radius $r$ and $\nu$ is the outward pointing unit normal to $S_r$. Following \cite{Chrusciel2003} and Section 8 of \cite{Aretakis2023} we find that the asymptotic charges of $(g_{\vec{p}},\pi_{\vec{p}})$ are given in terms of $\vec{p}$ by
\begin{align}\label{charges BH bis}
\pth{ \E,\P,\J,\C } (g_{\vec{p}},\pi_{\vec{p}}) =  \pth{ \ga m,-\ga m v, ma , my}.
\end{align}
We start with \eqref{int kerr H 1} and \eqref{int kerr M 1}, i.e. the identities involving vector fields growing like 1 at infinity. Thanks to Lemma \ref{lem Phi} and since $\Phi(e,0)=0$ we have
\begin{align}
\Phi\pth{ e +\chi_{\vec{p}}(g_{\vec{p}}-e),\chi_{\vec{p}}\pi_{\vec{p}}} & = D\Phi[e,0]\pth{ \chi_{\vec{p}}(g_{\vec{p}}-e),\chi_{\vec{p}}\pi_{\vec{p}} } + \RR_{\Phi,\mathrm{bh}}(\vec{p}), \label{def R bh}
\end{align}
where $\Phi$ denotes $\HH$ or $\MM$ and where the error term $\RR_{\Phi,\mathrm{bh}}(\vec{p})$ is schematically given by
\begin{align*}
\RR_{\Phi,\mathrm{bh}}(\vec{p}) & = \chi_{\vec{p}}(g_{\vec{p}}-e)\nab^2(\chi_{\vec{p}}(g_{\vec{p}}-e))  + (\nab (\chi_{\vec{p}}(g_{\vec{p}}-e)))^2   + (\chi_{\vec{p}}\pi_{\vec{p}})^2 
\\&\quad + \chi_{\vec{p}}(g_{\vec{p}}-e)\nab(\chi_{\vec{p}}\pi_{\vec{p}})     + \chi_{\vec{p}}(g_{\vec{p}}-e)(\nab (\chi_{\vec{p}}(g_{\vec{p}}-e)))^2 
\\&\quad  + (\chi_{\vec{p}}(g_{\vec{p}}-e)\nab (\chi_{\vec{p}}(g_{\vec{p}}-e)))^2  + (\chi_{\vec{p}}(g_{\vec{p}}-e)\chi_{\vec{p}}\pi_{\vec{p}})^2 
\\&\quad +  \chi_{\vec{p}}(g_{\vec{p}}-e) \chi_{\vec{p}}\pi_{\vec{p}} \nab (\chi_{\vec{p}}(g_{\vec{p}}-e))   + (\chi_{\vec{p}}(g_{\vec{p}}-e))^2 \chi_{\vec{p}}\pi_{\vec{p}} \nab (\chi_{\vec{p}}(g_{\vec{p}}-e)).
\end{align*}
Thanks to the assumption $m\leq 1$ and \eqref{estim kerr} we have
\begin{align}
\left| \RR_{\Phi,\mathrm{bh}}(\vec{p})\right| & \lesssim_\ga  \frac{m^2}{r^4}  . \label{estim R bh appendix}
\end{align}
Using $m,m'\leq 1$ and \eqref{diff kerr} we also obtain
\begin{align}
&\left| \RR_{\Phi,\mathrm{bh}}(\vec{p}) - \RR_{\Phi,\mathrm{bh}}(\vec{p}\,') \right| \label{diff R bh appendix}
\\&\qquad \lesssim_{\max(\ga,\ga')}  \frac{\max(m,m')}{r^3}\pth{  \frac{|m-m'|}{r} + \max(m,m')\pth{\frac{|y-y'|+|a-a'|}{r^2}  + \frac{ | v-v'|}{r}} }. \nonumber
\end{align}
From \eqref{def R bh} and the fact that $\Phi\pth{ e +\chi_{\vec{p}}(g_{\vec{p}}-e),\chi_{\vec{p}}\pi_{\vec{p}}}$ is supported in $\{ \la \leq r \leq 2 \la \}$ (since $\Phi(e,0)=0$ and $\Phi(g_{\vec{p}},\pi_{\vec{p}})=0$ away from the center of mass $y$) we obtain
\begin{align*}
\left\langle \HH\pth{ e + \chi_{\vec{p}}(g_{\vec{p}} - e),\chi_{\vec{p}} \pi_{\vec{p}}  }, 1 \right\rangle & = \left\langle D\HH[e,0]\pth{ \chi_{\vec{p}}(g_{\vec{p}}-e) } \mathbbm{1}_{\{\la\leq r \}} , 1 \right\rangle 
\\&\quad + \left\langle \RR_{\HH,\mathrm{bh}}(\vec{p})\mathbbm{1}_{\{\la\leq r \}}, 1 \right\rangle,
\\  \left\langle \MM\pth{ e + \chi_{\vec{p}}(g_{\vec{p}} - e),\chi_{\vec{p}} \pi_{\vec{p}}  }, W_{1,0,k} \right\rangle & =  \left\langle D\MM[e,0]\pth{ \chi_{\vec{p}}\pi_{\vec{p}} } \mathbbm{1}_{\{\la\leq r \}} , W_{1,0,k} \right\rangle 
\\&\quad +  \left\langle \RR_{\MM,\mathrm{bh}}(\vec{p}) \mathbbm{1}_{\{\la\leq r \}} , W_{1,0,k} \right\rangle.
\end{align*}
Thanks to \eqref{DH} and Stokes theorem we have
\begin{align*}
&\left\langle D\HH[e,0]\pth{ \chi_{\vec{p}}(g_{\vec{p}}-e) } \mathbbm{1}_{\{\la\leq r \}} , 1 \right\rangle 
\\&\quad = \lim_{R\to +\infty} \int_{\{ \la \leq r \leq R \}} \pth{ -\De\tr \pth{ \chi_{\vec{p}}(g_{\vec{p}} - e) } + \div\div\pth{ \chi_{\vec{p}}(g_{\vec{p}} - e) } }
\\&\quad = \lim_{R\to +\infty} \int_{\dr\{ \la \leq r \leq R \}}\nu\cdot \pth{ -\nab \tr \pth{ \chi_{\vec{p}}(g_{\vec{p}} - e) } + \div\pth{ \chi_{\vec{p}}(g_{\vec{p}} - e) } },
\end{align*} 
where $\nu$ denotes the outward pointing unit normal to $\dr\{ \la \leq r \leq R \}$. Now, note that $\dr\{ \la \leq r \leq R \}=S_\la \cup S_R$ and that $\chi_{\vec{p}}=0$ on $S_\la$, $\chi_{\vec{p}}=1$ on $S_R$ (since $R\geq 2\la$) and $\nu\chi_{\vec{p}}=0$ on $S_\la \cup S_R$. This implies
\begin{align*}
\left\langle D\HH[e,0]\pth{ \chi_{\vec{p}}(g_{\vec{p}}-e) } \mathbbm{1}_{\{\la\leq r \}} , 1 \right\rangle  & = \lim_{R\to +\infty} \int_{S_R}\nu\cdot \pth{ -\nab \tr  g_{\vec{p}}  + \div g_{\vec{p}}  }.
\end{align*}
We recognize the expression of $16\pi\E(g_{\vec{p}},\pi_{\vec{p}})$ and obtain 
\begin{align*}
\left\langle \HH\pth{ e + \chi_{\vec{p}}(g_{\vec{p}} - e),\chi_{\vec{p}} \pi_{\vec{p}}  }, 1 \right\rangle & = 16\pi \ga m +  \left\langle \RR_{\HH,\mathrm{bh}}(\vec{p})\mathbbm{1}_{\{\la\leq r \}}, 1 \right\rangle.
\end{align*}
We define $\RR_{\HH,1}(\vec{p})\vcentcolon =  \left\langle \RR_{\HH,\mathrm{bh}}(\vec{p})\mathbbm{1}_{\{\la\leq r \}}, 1 \right\rangle$. The estimates \eqref{estim R bh appendix}-\eqref{diff R bh appendix} imply 
\begin{align*}
\left| \RR_{\HH,1}(\vec{p}) \right|  \lesssim_\ga m^2
\end{align*}
and
\begin{align*}
&\left| \RR_{\HH,1}(\vec{p}) - \RR_{\HH,1}(\vec{p}\,')\right| 
\\&\quad \lesssim \left| \left\langle \pth{ \RR_{\HH,\mathrm{bh}}(\vec{p}) - \RR_{\HH,\mathrm{bh}}(\vec{p}\,')} \mathbbm{1}_{\{\la\leq r \}} , 1 \right\rangle \right|  + \left| \left\langle \RR_{\HH,\mathrm{bh}}(\vec{p}\,')\pth{\mathbbm{1}_{\{\la\leq r \}} - \mathbbm{1}_{\{\la'\leq r \}} }, 1 \right\rangle \right|
\\&\quad \lesssim_{\max(\ga,\ga')} \max(m,m')\pth{ |m-m'| + \max(m,m')\pth{ |y-y'| + |a-a'| + |v-v'| }} 
\\& \quad + (m')^2|\la-\la'|
\\&\quad \lesssim_{\max(\ga,\ga')} \max(m,m')\pth{ |m-m'| + \max(m,m')\pth{ |y-y'| + |a-a'| + |v-v'| }} , 
\end{align*}
where we used the fact that $(1+r)^{-4}$ is integrable on $\RRR^3$. Moreover, thanks to \eqref{DM} and to the following version of Stokes theorem
 \begin{align*}
\int_{\Om} X \cdot \div T & = \int_{\dr \Om} T_{X\nu} - \half \int_{\Om} \nabla\otimes X \cdot T,
\end{align*}
we have
\begin{align*}
&\left\langle D\MM[e,0]\pth{ \chi_{\vec{p}}\pi_{\vec{p}} } \mathbbm{1}_{\{\la\leq r \}} , W_{1,0,k} \right\rangle = \lim_{R\to +\infty}\int_{S_R} (\pi_{\vec{p}})_{k\nu},
\end{align*}
where we recall from Proposition \ref{prop KIDS} that $\nabla\otimes W_{1,0,k} = 0$. We recognize the expression of $8\pi\P(g_{\vec{p}},\pi_{\vec{p}})_k$ and obtain
\begin{align*}
\left\langle \MM\pth{ e + \chi_{\vec{p}}(g_{\vec{p}} - e),\chi_{\vec{p}} \pi_{\vec{p}}  }, W_{1,0,k} \right\rangle & = -8\pi \ga m v^k +   \left\langle \RR_{\MM,\mathrm{bh}}(\vec{p}) \mathbbm{1}_{\{\la\leq r \}} , W_{1,0,k} \right\rangle.
\end{align*}
We define $\RR_{\MM,W_{1,0,k}}(\vec{p})\vcentcolon =  \left\langle \RR_{\MM,\mathrm{bh}}(\vec{p}) \mathbbm{1}_{\{\la\leq r \}} , W_{1,0,k} \right\rangle$ and \eqref{estim R bh appendix}-\eqref{diff R bh appendix} again imply
\begin{align*}
\left| \RR_{\MM,W_{1,0,k}}(\vec{p}) \right| & \lesssim_\ga m^2 
\end{align*}
and
\begin{align*}
&\left| \RR_{\MM,W_{1,0,k}}(\vec{p}) - \RR_{\MM,W_{1,0,k}}(\vec{p}\,')\right| 
\\&\hspace{0.5cm} \lesssim_{\max(\ga,\ga')} \max(m,m')\pth{ |m-m'| + \max(m,m')\pth{ |y-y'| + |a-a'| + |v-v'| }} .
\end{align*}
This concludes the proof of \eqref{int kerr H 1} and \eqref{int kerr M 1}. 

We now turn to the proof of \eqref{int kerr H x} and \eqref{int kerr M x}, i.e. the identities involving vector fields growing like $r$ at infinity. Compared to the proof of \eqref{int kerr H 1} and \eqref{int kerr M 1}, we need to isolate the quadratic terms in the expansion of the constraint operators:
\begin{align}
&\left\langle \HH\pth{ e+ \chi_{\vec{p}}(g_{\vec{p}} -e),\chi_{\vec{p}}\pi_{\vec{p}}} , x^k \right\rangle \nonumber
\\& = \left\langle D\HH[e,0]\pth{\chi_{\vec{p}}(g_{\vec{p}} -e)} \mathbbm{1}_{\{\la\leq r\}} , x^k \right\rangle\label{lol}
\\&\quad + \underbrace{\left\langle \half D^2\HH[e,0]\pth{ \pth{ \chi_{\vec{p}}(g_{\vec{p}} -e),\chi_{\vec{p}}\pi_{\vec{p}} },\pth{ \chi_{\vec{p}}(g_{\vec{p}} -e),\chi_{\vec{p}}\pi_{\vec{p}} } } \mathbbm{1}_{\{\la\leq r\}}, x^k \right\rangle}_{=\vcentcolon Q^k(\vec{p})}\non
\\&\quad + \left\langle \mathrm{Err}\HH[e,0]\pth{ \chi_{\vec{p}}(g_{\vec{p}} -e),\chi_{\vec{p}}\pi_{\vec{p}} } \mathbbm{1}_{\{\la\leq r\}}, x^k \right\rangle\non.
\end{align}
As above we can show that
\begin{align*}
\left\langle D\HH[e,0]\pth{\chi_{\vec{p}}(g_{\vec{p}} -e)} \mathbbm{1}_{\{\la\leq r\}} , x^k \right\rangle & = m y^k,
\end{align*}
and using Lemma \ref{lem Phi}, \eqref{estim kerr} and \eqref{diff kerr} we can also show that
\begin{align*}
\left| \left\langle \mathrm{Err}\HH[e,0]\pth{ \chi_{\vec{p}}(g_{\vec{p}} -e),\chi_{\vec{p}}\pi_{\vec{p}} } \mathbbm{1}_{\{\la\leq r\}}, x^k \right\rangle \right| \lesssim m^3
\end{align*}
and
\begin{align*}
&\left| \left\langle \mathrm{Err}\HH[e,0]\pth{ \chi_{\vec{p}}(g_{\vec{p}} -e),\chi_{\vec{p}}\pi_{\vec{p}} } \mathbbm{1}_{\{\la\leq r\}} - \mathrm{Err}\HH[e,0]\pth{ \chi_{\vec{p}\,'}(g_{\vec{p}\,'} -e),\chi_{\vec{p}\,'}\pi_{\vec{p}\,'} } \mathbbm{1}_{\{\la'\leq r\}}, x^k \right\rangle  \right| 
\\&\hspace{2.5cm}\lesssim \max(m,m')^2\pth{ |m-m'| + \max(m,m')\pth{ |y-y'| + |a-a'| + |v-v'| }}.
\end{align*}
It remains to study the quadratic term $Q^k(\vec{p})$. A direct estimate with \eqref{estim kerr} and \eqref{D2H} would imply 
\begin{align*}
\left| \half D^2\HH[e,0]\pth{ \pth{ \chi_{\vec{p}}(g_{\vec{p}} -e),\chi_{\vec{p}}\pi_{\vec{p}} },\pth{ \chi_{\vec{p}}(g_{\vec{p}} -e),\chi_{\vec{p}}\pi_{\vec{p}} } } \right| \lesssim \frac{m^2}{r^4}
\end{align*}
so that the integral defining $Q^k(\vec{p})$ \text{a priori} does not converge (recall that $x^k\sim r$ at infinity). More precisely, we introduce 
\begin{align*}
\widehat{Q}^k_r(\vec{p}) \vcentcolon= \int_{S_r} \half D^2\HH[e,0]\pth{ \pth{ \chi_{\vec{p}}(g_{\vec{p}} -e),\chi_{\vec{p}}\pi_{\vec{p}} },\pth{ \chi_{\vec{p}}(g_{\vec{p}} -e),\chi_{\vec{p}}\pi_{\vec{p}} } } x^k
\end{align*}
so that $Q^k(\vec{p})=\int_{r\geq \la} \widehat{Q}^k_r(\vec{p}) \d r$. Using the expression of $(g_{\vec{p}},\pi_{\vec{p}})$ we obtain 
\begin{align*}
\widehat{Q}^k_r(\vec{p}) = \frac{c^k(\vec{p})}{r} + \GO{m^2 r^{-2}},
\end{align*}
for a constant $c^k(\vec{p})$ depending only on the parameter $\vec{p}$. If $c^k(\vec{p})\neq 0$, then $Q^k(\vec{p})=\pm\infty$ which contradicts the fact that all the other terms in \eqref{lol} are finite. We have thus obtained $c^k(\vec{p})=0$ and benefiting from this cancellation we can easily show that
\begin{align*}
\left| Q^k(\vec{p}) \right| \lesssim m^2
\end{align*}  
and
\begin{align*}
\left| Q^k(\vec{p}) - Q^k(\vec{p}\,') \right| \lesssim \max(m,m')\pth{ |m-m'| + \max(m,m')\pth{ |y-y'| + |a-a'| + |v-v'| }}.
\end{align*}
Defining 
\begin{align*}
\RR_{\HH,x^k}(\vec{p}) \vcentcolon = Q^k(\vec{p}) + \left\langle \mathrm{Err}\HH[e,0]\pth{ \chi_{\vec{p}}(g_{\vec{p}} -e),\chi_{\vec{p}}\pi_{\vec{p}} } \mathbbm{1}_{\{\la\leq r\}}, x^k \right\rangle,
\end{align*}
and collecting the above estimates, this concludes the proof of \eqref{int kerr H x}. The proof of \eqref{int kerr M x} is identical.

\paragraph{Pointwise estimates on the constraint operators.} Using the fact that $\Phi(e,0)=0$ and $\Phi(g_{\vec{p}},\pi_{\vec{p}})=0$ away from the center of mass $y$ and the schematic expression from Lemma \ref{lem Phi} we obtain
\begin{align*}
&\Phi\pth{e + \chi_{\vec{p}}(g_{\vec{p}}-e), \chi_{\vec{p}}\pi_{\vec{p}} } 
\\&\qquad =  \chi_{\vec{p}}g_{\vec{p}}(g_{\vec{p}} - e) \nab^2\chi_{\vec{p}} +  \chi_{\vec{p}}g_{\vec{p}} \nab\chi_{\vec{p}}\nab g_{\vec{p}}  + (1-\chi_{\vec{p}})e \nab^2(\chi_{\vec{p}}(g_{\vec{p}} - e)) 
\\&\qquad\quad + \pth{ (1-\chi_{\vec{p}})e  \nab( \chi_{\vec{p}}(g_{\vec{p}} - e))}^2  +  (1-\chi_{\vec{p}})e  \chi_{\vec{p}}g_{\vec{p}} \pth{\nab( \chi_{\vec{p}}(g_{\vec{p}} - e))}^2
\\&\qquad\quad + \chi_{\vec{p}}^2g_{\vec{p}}^2(g_{\vec{p}} - e)^2\pth{ \nab\chi_{\vec{p}}   }^2 +  \chi_{\vec{p}}^3g_{\vec{p}}^2(g_{\vec{p}} - e) \nab\chi_{\vec{p}}  \nab g_{\vec{p}}   
\\&\qquad\quad + (1-\chi_{\vec{p}})e \nab( \chi_{\vec{p}}\pi_{\vec{p}}) +  \chi_{\vec{p}}g_{\vec{p}}\pi_{\vec{p}} \nab\chi_{\vec{p}} 
\\&\qquad\quad + ((1-\chi_{\vec{p}})e)^2 \chi_{\vec{p}}\pi_{\vec{p}} \nab(\chi_{\vec{p}}(g_{\vec{p}} - e)) + (1-\chi_{\vec{p}})e \chi_{\vec{p}}^2g_{\vec{p}} \pi_{\vec{p}} \nab(\chi_{\vec{p}}(g_{\vec{p}} - e))
\\&\qquad\quad + \chi_{\vec{p}}^3g_{\vec{p}}^2 \pi_{\vec{p}} (g_{\vec{p}} - e)\nab\chi_{\vec{p}}  +  \chi_{\vec{p}}^2  ( \chi_{\vec{p}}^2 -1)\pth{ \pth{g_{\vec{p}} \nab g_{\vec{p}}  }^2  + g_{\vec{p}}^2 \pi_{\vec{p}} \nab g_{\vec{p}} } .
\end{align*}
Using \eqref{estim cutoff}, \eqref{estim kerr preli} and \eqref{estim kerr} we obtain
\begin{align*}
\left| \Phi\pth{(1-\chi_{\vec{p}})e + \chi_{\vec{p}}g_{\vec{p}}, \chi_{\vec{p}}\pi_{\vec{p}} }  \right| & \lesssim_\ga \pth{ \frac{m}{r^3} + \frac{m^2}{r^4} } \mathbbm{1}_{\{\la\leq r \leq 2\la\} }
\\&\lesssim_\ga \frac{m}{r^3} \mathbbm{1}_{\{\la\leq r \leq 2\la\} }.
\end{align*}
Using also \eqref{diff cutoff} \eqref{diff kerr preli} and \eqref{diff kerr} we obtain
\begin{align*}
&\left| \Phi\pth{e + \chi_{\vec{p}}(g_{\vec{p}} - e), \chi_{\vec{p}}\pi_{\vec{p}} } -  \Phi\pth{e + \chi_{\vec{p}\,'}(g_{\vec{p}\,'} - e), \chi_{\vec{p}\,'}\pi_{\vec{p}\,'} } \right|  
\\&\quad \lesssim_{\max(\ga,\ga')} \max(m,m') \frac{|\la-\la'|}{r^4}  \mathbbm{1}_{\{\min(\la,\la')\leq r \leq 2 \max(\la,\la')\}}
\\&\quad\quad + \pth{ \frac{|m-m'|}{r} + \max(m,m')\pth{ \frac{|y-y'|+|a-a'|}{r^2} + \frac{|v-v'|}{r}} }\pth{ \frac{1}{r^2} +  \frac{m}{r^3}}
\\&\hspace{3cm}\quad\times\mathbbm{1}_{\{\min(\la,\la')\leq r \leq 2 \max(\la,\la')\}}
\\&\quad \lesssim_{\max(\ga,\ga')}  \pth{ \frac{|m-m'|}{r^3} + \max(m,m')\pth{ \frac{|y-y'|+|a-a'|}{r^4} + \frac{|v-v'|}{r^3}} }
\\&\hspace{3cm}\times\mathbbm{1}_{\{\min(\la,\la')\leq r \leq 2 \max(\la,\la')\}}
\end{align*} 
where we used $\left| \la-\la'\right| \lesssim \left| y-y'\right| + \left| a-a'\right|$. This proves \eqref{constraint kerr} and \eqref{diff constraint kerr} and concludes the proof of Proposition \ref{prop kerr}.

\section{Appendix to Section \ref{section main results}}\label{appendix proof section 3}

\subsection{Proof of Lemma \ref{lem decomposition}} \label{section proof lem reduction}

Assuming the decomposition \eqref{decomposition}, i.e. that 
\begin{align*}
h= \u[h] e+ \nab\otimes \X[h] + h_{TT}
\end{align*}
with $h_{TT}$ a TT tensor, we have
\begin{equation}\label{tr div h}
\begin{aligned}
\tr h & = 3\u[h]+2\div\X[h],
\\ \div h & = \nab\u[h] + \nab\div\X[h] + \De\X[h].
\end{aligned}
\end{equation}
We apply the Laplacian to the first identity and compute the divergence of the second one:
\begin{align*}
\De \tr h & = 3\De \u[h]+2\De\div\X[h],
\\ \div\div h & = \De\u[h] + 2\De\div\X[h].
\end{align*}
Provided that we can define $\De^{-1}\div\div h$, this rewrites as
\begin{align*}
\u[h] & = \half \pth{ \tr h - \De^{-1}\div\div h} ,
\\ \div\X[h] & = \frac{3}{4}\De^{-1} \div\div h - \frac{1}{4} \tr h.
\end{align*}
Plugging these two relations into the second equation in \eqref{tr div h} finally gives the system
\begin{equation}\label{system u X h}
\begin{aligned}
\u[h] & = \frac{1}{2}\pth{ \tr h - \De^{-1}\div\div h },
\\ \De \X[h] & = \div h -\frac{1}{4}\nab \tr h -\frac{1}{4}\nab\De^{-1}\div\div h.
\end{aligned}
\end{equation}
Conversely, one can show that if $\u[h]$ and $\X[h]$ solve \eqref{system u X h} then $h_{TT}$ defined by \eqref{decomposition} is TT. Therefore, it only remains to show the existence of a unique solution $\pth{ \u[h],\X[h]}$ to \eqref{system u X h} with the appropriate bounds.

First, we need to make precise the meaning of $\De^{-1}\div\div h$. Since $h\in H^k_{-q-\de}$ we have in particular $\div\div h\in H^{k-2}_{-q-\de-2}$ and thanks to the first part of Proposition \ref{prop laplace} there exists a unique $\tilde{f}\in H^k_{-q-\de}$ with $\l \tilde{f}\r_{H^k_{-q-\de}}\lesssim \l h \r_{H^k_{-q-\de}}$ such that $f$ defined by
\begin{align*}
f =  \chi(r) \sum_{j=1}^q \sum_{\ell=-(j-1)}^{j-1} \langle \div\div h , w_{j,\ell} \rangle  v_{j,\ell}  + \tilde{f} 
\end{align*}
solves $\De f = \div\div h$. We then define $\De^{-1}\div\div h\vcentcolon = f $. Since $\langle \div\div h , w_{j,\ell} \rangle=0$ for $j=1,2$ we have $\De^{-1}\div\div h=\tilde{f}$ and thus $\l \De^{-1}\div\div h \r_{H^k_{-q-\de}}\lesssim \l h \r_{H^k_{-q-\de}}$. This also shows how one solve the first equation of \eqref{system u X h}, i.e. by simply defining 
\begin{align*}
\u[h]\vcentcolon = \half \pth{ \tr h - \De^{-1}\div\div h},
\end{align*}
which implies the bound $\l \u[h]\r_{H^k_{-q-\de}}\lesssim \l h \r_{H^k_{-q-\de}}$.

We now look at the second equation of \eqref{system u X h}. Its RHS 
\begin{align*}
Y = \div h -\frac{1}{4}\nab \tr h -\frac{1}{4}\nab\De^{-1}\div\div h
\end{align*}
satisfies $\l Y  \r_{H^{k-1}_{-q-\de-1}} \lesssim \l h\r_{H^k_{-q-\de}}$. We have the following dichotomy:
\begin{itemize}
\item if $q=1$, then we can use the isomorphism property of $\De$ (see Remark \ref{remark iso}) to deduce the existence of a unique $\X[h]\in H^{k+1}_{-q-\de+1}$ solving $\De \X[h]=Y$ with moreover $\l \X[h] \r_{H^{k+1}_{-q-\de+1}}\lesssim \l h \r_{H^k_{-q-\de}}$,
\item if $q=2$, then the second part of Proposition \ref{prop laplace} implies the existence of a unique $\widetilde{\X}[h]\in H^{k+1}_{-q-\de+1}$ with $\l \widetilde{\X}[h] \r_{H^{k+1}_{-q-\de+1}}\lesssim \l h\r_{H^k_{-q-\de}}$ and such that $\X[h]$ defined by
\begin{align*}
\X[h] & = \chi(r)\sum_{j=1}^q \sum_{\ell=-(j-1)}^{j-1} \sum_{k=1,2,3} \langle Y , W_{j,\ell,k} \rangle  V_{j,\ell,k} + \widetilde{\X}[h] 
\end{align*}
solves $\De \X[h]=Y$. Since $\langle Y , W_{1,0,k} \rangle=0$, $\left| \langle Y , W_{2,\ell,k} \rangle \right| \lesssim \l h\r_{H^k_{-q-\de}}$ and $\chi(r)r^{-2}\in H^{k+1}_{-q-\de+1}$, we obtain $\l \X[h] \r_{H^{k+1}_{-q-\de+1}}\lesssim \l h \r_{H^k_{-q-\de}}$.
\end{itemize}
In both cases we have constructed a solution $\X[h]$ to the second equation of \eqref{system u X h} with the bound $\l \X[h] \r_{H^{k+1}_{-q-\de+1}}\lesssim \l h \r_{H^k_{-q-\de}}$. The bound on $h_{TT}$ in \eqref{estim decomp} then simply follows from its definition and the bounds on $h$, $\u[h]$ and $\X[h]$. This concludes the proof of Lemma \ref{lem decomposition}.

\subsection{Proof of Proposition \ref{prop reduction}}\label{section proof reduction}

We set $\wc{v}=v-1$ and $\wc{\psi}=\psi - \mathrm{Id}$ and define the following symmetric 2-tensors:
\begin{align*}
\wc{g}\pth{\wc{v},\wc{\psi}} & \vcentcolon = (1+\wc{v})^{-4}\pth{ \pth{ \mathrm{Id} + \wc{\psi}}^*\g_{|_{\Si_0}}} - e,
\\ \wc{\pi}\pth{Y, \wc{\psi}} & \vcentcolon = \pi^{\pth{\pth{\mathrm{Id}+\wc{\psi}}^*\g,\Si_0}} - L_eY.
\end{align*}
We start by expanding $\wc{g}\pth{\wc{v},\wc{\psi}} $, already assuming $|\wc{v}|\leq \half$. We obtain
\begin{align}
\wc{g}\pth{\wc{v},\wc{\psi}} &  = \wc{g}(0,0) - 4\wc{v} e + \nab \otimes \wc{\psi}  + \mathrm{Err}_{\wc{g}}\pth{\wc{v},\wc{\psi}},\label{expansion wc g}
\end{align}
where the error term is given schematically by 
\begin{align*}
\mathrm{Err}_{\wc{g}}\pth{\wc{v},\wc{\psi}} & =  (\g-\m)\nab\wc{\psi}  + \pth{\nab\wc{\psi}}^2  + \wc{v}\pth{\g-\m + \nab\wc{\psi}_i} + \wc{v}^2,
\end{align*}
where $\psi_i$ denotes spatial components of $\psi$. Note that in the previous expression of $\mathrm{Err}_{\wc{g}}\pth{\wc{v},\wc{\psi}}$ we neglected all cubic and higher error terms since they behave better than the quadratic ones. This convention also applies to all schematic expressions in the rest of this section.

We now expand $\wc{\pi}\pth{Y, \wc{\psi}}$. We recall the general expression
\begin{align*}
\pi^{(\g,\Si_0)}_{ij} & = -\half \T^{(\g,\Si_0)} (\g_{ij}) + \half \g\pth{ \left[\T^{(\g,\Si_0)},\dr_{(i}\right],\dr_{j)}  } 
\\&\quad + \half g^{ab}   \T^{(\g,\Si_0)} (\g_{ab})  g_{ij} - g^{ab}\g\pth{ \left[\T^{(\g,\Si_0)},\dr_{a}\right],\dr_{b}  } g_{ij} ,
\end{align*}
where $g=\g_{|_{\Si_0}}$ and $\T^{(\g,\Si_0)}$ is the future directed unit normal to $\Si_0$ for $\g$. Using the schematic expansion of the unit normal for a spacetime metric close to Minkowski spacetime
\begin{align*}
\T^{(\g,\Si_0)} - \dr_t - \half (\g-\m)_{00}\dr_t + \de^{ij}(\g-\m)_{0i}\dr_j & = (\g-\m)^2
\end{align*}
and the definition of the pullback we first obtain the schematic expansion
\begin{align*}
\T^{\pth{\pth{\mathrm{Id}+\wc{\psi}}^*\g,\Si_0}} -  \T^{(\g,\Si_0)} -  \dr_t \wc{\psi}_0 \dr_t + \de^{ij} \pth{ \dr_t \wc{\psi}_i + \dr_i \wc{\psi}_0 } \dr_j  & =  (\g-\m)\nab\wc{\psi} + \pth{\nab\wc{\psi}}^2,
\end{align*}
and finally
\begin{align}\label{expansion wc pi}
\wc{\pi}\pth{Y, \wc{\psi}} & = \wc{\pi}(0,0) + \Hess \wc{\psi}_0 - \De \wc{\psi}_0 e  - L_e Y + \mathrm{Err}_{\wc{\pi}}\pth{\wc{\psi}},
\end{align}
where the error term is given schematically by 
\begin{align*}
\mathrm{Err}_{\wc{\pi}}\pth{\wc{\psi}} & = (\g-\m)\nab\wc{\psi} + \pth{ \nab\wc{\psi}}^2 + \dr_t (\g-\m)\nab\wc{\psi} 
\\&\quad + \nab (\g-\m)\nab\wc{\psi} + (\g-\m)\nab^2\wc{\psi} + \nab\wc{\psi}\nab^2\wc{\psi}.
\end{align*}
Note that in this error term, time derivatives of $\wc{\psi}$ \textit{a priori} appear but we impose $\dr_t \wc{\psi}=0$ (note however that in the main terms in \eqref{expansion wc pi}, time derivatives of $\wc{\psi}$ actually cancel out).

We want to find $\pth{\wc{v},\wc{\psi},Y}$ such that $\wc{g}\pth{\wc{v},\wc{\psi}} $ and $\wc{\pi}\pth{Y, \wc{\psi}}$ are TT tensors. According to the expansions \eqref{expansion wc g} and \eqref{expansion wc pi} and using the notation of Lemma \ref{lem decomposition} this is equivalent to the following system for $\pth{\wc{v},\wc{\psi},Y}$:
\begin{align}
\wc{v} & = \frac{1}{4}\u\left[ \wc{g}(0,0)\right] + \frac{1}{4}\u\left[\mathrm{Err}_{\wc{g}}\pth{\wc{v},\wc{\psi}} \right], \label{eq v app}
\\ \wc{\psi}_i & = - \X\left[ \wc{g}(0,0)\right] - \X\left[\mathrm{Err}_{\wc{g}}\pth{\wc{v},\wc{\psi}} \right]_i, \label{eq phi i}
\\ \De \wc{\psi}_0 & = 2\u\left[ \wc{\pi}(0,0)\right] + 2\u\left[\mathrm{Err}_{\wc{\pi}}\pth{\wc{\psi}} \right] + 2 \div\X\left[ \wc{\pi}(0,0)\right] +2\div\X\left[ \mathrm{Err}_{\wc{\pi}}\pth{\wc{\psi}} \right] \label{eq phi 0},
\\ Y & = \half \nab \wc{\psi}_0 + \X\left[ \wc{\pi}(0,0)\right] + \X\left[\mathrm{Err}_{\wc{\pi}}\pth{\wc{\psi}} \right] \label{eq Y},
\end{align}
where we used $\u[h_1+h_2]=\u[h_1]+\u[h_2]$ and $\X[h_1+h_2]=\X[h_1]+\X[h_2]$ which follows from the uniqueness part of Lemma \ref{lem decomposition}. Indeed, one can check that if $\pth{\wc{v},\wc{\psi},Y}$ satisfy \eqref{eq v app}-\eqref{eq Y} then we have
\begin{align*}
\wc{g}\pth{\wc{v},\wc{\psi}} & = \pth{ \wc{g}(0,0)   + \mathrm{Err}_{\wc{g}}\pth{\wc{v},\wc{\psi}}  }_{TT},
\\ \wc{\pi}\pth{Y, \wc{\psi}} & = \pth{\wc{\pi}(0,0) + \mathrm{Err}_{\wc{\pi}}\pth{\wc{\psi}}}_{TT}.
\end{align*}
It thus remains to solve the system \eqref{eq v app}-\eqref{eq phi 0} on $\Si_0$ via a Banach fixed point argument and then define $Y$ by \eqref{eq Y}. We don't write down all the details and simply state \textit{a priori} estimates which easily imply both the boundedness and contraction part of the fixed point argument.

First, note that the assumption \eqref{spacetime assumption} together with the bounds of Lemma \ref{lem decomposition} in the cases $(k,q)=(3,1)$ and $(k,q)=(2,2)$ implies
\begin{align}\label{terme principal}
\l \u[\wc{g}(0,0)]\r_{H^3_{-1-\de}}+\l \X[\wc{g}(0,0)]\r_{H^{4}_{-\de}} + \l \u[\wc{\pi}(0,0)]\r_{H^2_{-2-\de}}+\l \X[\wc{\pi}(0,0)]\r_{H^{3}_{-1-\de}} \lesssim \e.
\end{align}
We assume that there exists $A_0>0$ such that
\begin{align*}
\l \wc{v} \r_{H^3_{-1-\de}} + \l \wc{\psi} \r_{H^4_{-\de}} \leq A_0 \e.
\end{align*}
Using also \eqref{terme principal}, this implies the following estimates on the error terms
\begin{align*}
\l \mathrm{Err}_{\wc{g}}\pth{\wc{v},\wc{\psi}} \r_{H^3_{-1-\de}} + \l \mathrm{Err}_{\wc{\pi}}\pth{\wc{\psi}} \r_{H^2_{-2-\de}}  \lesssim C(A_0) \e^2,
\end{align*}
where we also used the embeddings $H^3_{-1-\de}\times H^3_{-1-\de}\subset H^3_{-1-\de}$, $H^3_{-1-\de}\times H^3_{-1-\de}\subset H^2_{-2-\de}$ and $H^3_{-1-\de}\times H^2_{-2-\de}\subset H^2_{-2-\de}$ from Lemma \ref{lem plongement}. Thanks to the bounds of Lemma \ref{lem decomposition} in the cases $(k,q)=(3,1)$ and $(k,q)=(2,2)$ we obtain
\begin{align}\label{terme erreur}
& \l \u\left[\mathrm{Err}_{\wc{g}}\pth{\wc{v},\wc{\psi}}\right]\r_{H^3_{-1-\de}}+\l \X\left[\mathrm{Err}_{\wc{g}}\pth{\wc{v},\wc{\psi}}\right]\r_{H^{4}_{-\de}} \nonumber
\\&\hspace{3cm}+ \l \u\left[\mathrm{Err}_{\wc{\pi}}\pth{\wc{\psi}}\right]\r_{H^2_{-2-\de}}+\l \X\left[\mathrm{Err}_{\wc{\pi}}\pth{\wc{\psi}}\right]\r_{H^{3}_{-1-\de}} \lesssim C(A_0) \e^2.
\end{align}
We now estimate $\pth{\wc{v},\wc{\psi}}$ with the equations \eqref{eq v app}-\eqref{eq phi 0}. For $\wc{v}$ and $\wc{\psi}_i$ we directly obtain
\begin{align*}
\l \wc{v} \r_{H^3_{-1-\de}} + \l \wc{\psi}_i \r_{H^4_{-\de}} & \lesssim \e + C(A_0) \e^2,
\end{align*}
where we used \eqref{terme principal} and \eqref{terme erreur}. For $\wc{\psi}_0$, we estimate the RHS of \eqref{eq phi 0}:
\begin{align*}
\l \text{RHS of \eqref{eq phi 0}} \r_{H^2_{-2-\de}} \lesssim \e + C(A_0) \e^2,
\end{align*}
where we used \eqref{terme principal} and \eqref{terme erreur}. We then use the isomorphism property of $\De$ (see Remark \ref{remark iso}) to deduce the bound
\begin{align*}
\l \wc{\psi}_0 \r_{H^4_{-\de}} \lesssim \e + C(A_0) \e^2.
\end{align*}
Thanks to these \text{a priori} estimates we can solve the nonlinear system \eqref{eq v app}-\eqref{eq phi 0} and obtain the bound
\begin{align*}
\l \wc{v} \r_{H^3_{-1-\de}} + \l \wc{\psi} \r_{H^4_{-\de}} \lesssim \e.
\end{align*}
Defining $Y$ by \eqref{eq Y}, \eqref{terme principal} and \eqref{terme erreur} imply
\begin{align*}
\l Y \r_{H^3_{-1-\de}} \lesssim \e.
\end{align*}
This concludes the proof of Proposition \ref{prop reduction}.

\subsection{Proof of Proposition \ref{prop F}}\label{section proof prop F}

The first part of Proposition \ref{prop F} is proved as follows. We define $c_k=\int_{\RRR^3} \varpi^{ij} \dr_k h_{ij}$. We apply Cauchy-Schwarz in $L^2$ and then in $\ell^2$ and get $|c_k|^2\leq \l \varpi \r_{L^2}^2 \l \dr_k h \r_{L^2}^2$ which gives
\begin{align*}
\sqrt{\sum_{k=1,2,3} |c_k|^2} & \leq \sqrt{\l \varpi \r_{L^2}^2\sum_{k=1,2,3}  \l \dr_k h \r_{L^2}^2}
\\& = \l \varpi \r_{L^2}\l \nabla h \r_{L^2}.
\end{align*}
We now apply the inequality $ab\leq \frac{a^2}{2}+\frac{b^2}{2}$ with $a=\frac{1}{\sqrt{2}}\l \nabla h \r_{L^2}$ and $b=\sqrt{2}\l \varpi \r_{L^2}$ and obtain
\begin{align*}
\sqrt{\sum_{k=1,2,3} |c_k|^2} & \leq \frac{1}{4}\l \nabla h \r_{L^2}^2 + \l \varpi \r_{L^2}^2,
\end{align*}
i.e. $J(h,\varpi)\leq 1$.

If we assume $J(h,\varpi)=1$, then 
\begin{align*}
\sum_{k=1,2,3} \pth{  \l \varpi \r_{L^2}^2 \l \dr_k h \r_{L^2}^2 - |c_k|^2 }= 0.
\end{align*}
Since these three quantities are nonnegative, they must all vanish and for all $k=1,2,3$ we have $|c_k|^2= \l \varpi \r_{L^2}^2 \l \dr_k h \r_{L^2}^2$. Therefore, we have the equality case in the Cauchy-Schwarz inequality, i.e. there exists $\la_k$ for $k=1,2,3$ such that $ \dr_k h_{ij}  = \la_k  \varpi_{ij} $ for all $i,j=1,2,3$. Moreover, the equality case in the inequality $ab\leq \frac{a^2}{2}+\frac{b^2}{2}$ implies that $\l \nabla h \r_{L^2}=2\l \varpi \r_{L^2}$, which in turn implies that $\sum_{k=1,2,3}\la_k^2=4$. Therefore, the vector $(\la_1,\la_2,\la_3)$ is non-zero and there exists a non-zero vector $X\in\RRR^3$ such that $X^k\la_k=0$. This implies that each $h_{ij}$ satisfies the transport equation $X(h_{ij})=0$. This is incompatible with $h$ being in $\dot{H}^1$ and non-zero. Therefore the equality case is impossible and we have proved that 
\begin{align*}
J(h,\varpi)<1.
\end{align*}

We now turn to the second part of Proposition \ref{prop F}. As formally shown above, up to a linear change of coordinates, the upper bound 1 is reached when $\wc{\pi}=\half\dr_3\wc{g}$. Therefore, in order to saturate the bound $J<1$, we will construct a sequence $(\wc{g}_n,\wc{\pi}_n)_{n\in\mathbb{N}}$ of TT tensors with $\wc{\pi}_n=\half \dr_3 \wc{g}_n$. We first note that if $(\wc{g}_n)_{n\in\mathbb{N}}$ is a sequence of symmetric 2-tensors such that
\begin{align}\label{condition}
\lim_{n\to +\infty} \frac{\l\dr_1 \wc{g}_n \r_{L^2} }{\l \dr_3 \wc{g}_n \r_{L^2}}=\lim_{n\to +\infty} \frac{ \l \dr_2 \wc{g}_n \r_{L^2}}{\l \dr_3 \wc{g}_n \r_{L^2}} =0,
\end{align}
then we have
\begin{align*}
\lim_{n\to+\infty}J\pth{\sqrt{2}\wc{g}_n,\frac{1}{\sqrt{2}}\dr_3\wc{g}_n}=1.
\end{align*}
Indeed we have:
\begin{align*}
J\pth{\sqrt{2}\wc{g}_n,\frac{1}{\sqrt{2}}\dr_3\wc{g}_n} & = \frac{ \sqrt{\l \dr_3\wc{g}_n\r_{L^2}^4+ \sum_{k=1,2}\pth{\int_{\RRR^3} \dr_3 \wc{g}_n^{ij} \dr_k (\wc{g}_n)_{ij} }^2}}{  \l \dr_3\wc{g}_n\r_{L^2}^2+\frac{1}{2} \l \dr_1\wc{g}_n\r_{L^2}^2 + \frac{1}{2} \l \dr_2\wc{g}_n\r_{L^2}^2 }
\\&\geq \frac{ 1}{  1+ \frac{ \l \dr_1\wc{g}_n\r_{L^2}^2}{2\l \dr_3\wc{g}_n\r_{L^2}^2} + \frac{ \l \dr_2\wc{g}_n\r_{L^2}^2}{2\l \dr_3\wc{g}_n\r_{L^2}^2}  }.
\end{align*}
Thanks to the assumption \eqref{condition} this last quantity converges to 1, and thanks to the first part of Proposition \ref{prop F} we have $J\pth{\sqrt{2}\wc{g}_n,\frac{1}{\sqrt{2}}\dr_3\wc{g}_n}<1$ so that
\begin{align*}
\lim_{n\to+\infty}J\pth{\sqrt{2}\wc{g}_n,\frac{1}{\sqrt{2}}\dr_3\wc{g}_n}=1.
\end{align*}

In view of this fact, in order to prove the second part of Proposition \ref{prop F} it is enough to exhibit a sequence $(\wc{g}_n)_{n\in\mathbb{N}}$ of TT tensors satisfying \eqref{condition}. To this end, we first consider $\phi:\RRR\longrightarrow\RRR$ a non-zero compactly supported function, $\chi:\RRR\longrightarrow\RRR$ a smooth cutoff function such that $\chi=1$ on $[0,1]$ and $\chi=0$ on $[2,+\infty)$ and define
\begin{align*}
g_n \vcentcolon = \phi''(x^3) \chi\pth{\frac{\rho}{n}}M,
\end{align*}
where $\rho=\sqrt{(x^1)^2+(x^2)^2}$ and $M=(\d x^1)^2 - (\d x^2)^2$. This defines a traceless symmetric 2-tensor on $\RRR^3$ with
\begin{align*}
(\div g_n)_1 & = \frac{1}{n} \phi''(x^3) \chi'\pth{\frac{\rho}{n}}\frac{x^1}{\rho},
\\ (\div g_n)_1 & = - \frac{1}{n} \phi''(x^3) \chi'\pth{\frac{\rho}{n}}\frac{x^2}{\rho},
\\ (\div g_n)_3 & = 0.
\end{align*}
Moreover, the sequence $g_n$ satisfies
\begin{align*}
\l \dr_3 g_n \r_{L^2} & = c_0 n ,
\\ \l \div g_n \r_{L^2} + \l \dr_1 g_n\r_{L^2} + \l\dr_2 g_n \r_{L^2} & \lesssim 1, 
\end{align*}
with 
\begin{align*}
c_0 \vcentcolon = \sqrt{4\pi \pth{ \int_{\RRR}(\phi'''(z))^2\d z}\pth{\int_{\RRR_+}\chi(\rho)\rho\d\rho}}>0,
\end{align*}
and where the implicit constant in $\lesssim$ depends on $\phi$ and $\chi$. We need to correct the fact that $g_n$ is not divergence free. We define a symmetric 2-tensor $h_n$ directly in Cartesian coordinates:
\begin{itemize}
\item we first define the $(h_n)_{3i}$ components for $i=1,2$ by $ \dr_3 (h_n)_{3i} = - (\div g_n)_i$ with the requirement $\lim_{x^3\to-\infty}(h_n)_{3i}=0$,
\item we then define the $(h_n)_{33}$ component by $\dr_3 (h_n)_{33}=-\dr_1 (h_n)_{31} - \dr_2 (h_n)_{32}$ with the requirement $\lim_{x^3\to-\infty}(h_n)_{33}=0$,
\item we finally define the remaining components $(h_n)_{ij}$ for $i,j=1,2$ as the solution to
\begin{equation}\label{system 2d}
\begin{aligned}
\div^{(2)}h_n & = 0,
\\ \tr^{(2)}h_n & = -(h_n)_{33},
\end{aligned}
\end{equation}
where $\div^{(2)}$ and $\tr^{(2)}$ denote the divergence and trace operator in the $(x^1,x^2)$ variables.
\end{itemize}
By construction $\wc{g}_n\vcentcolon=g_n+h_n$ is a TT tensor. It remains to show that the sequence $(\wc{g}_n)_{n\in\mathbb{N}}$ satisfies \eqref{condition}, and before that to properly define and estimate $h_n$.

\paragraph{The components $(h_n)_{3i}$ for $i=1,2,3$.} From the definition of $(h_n)_{31}$ and $(h_n)_{32}$ and the expression of $\div g_n$ we have
\begin{align*}
(h_n)_{31} & = - \frac{1}{n} \phi'(x^3) \chi'\pth{\frac{\rho}{n}} \frac{x^1}{\rho},
\\ (h_n)_{32} & = \frac{1}{n} \phi'(x^3) \chi'\pth{\frac{\rho}{n}} \frac{x^2}{\rho}.
\end{align*}
This implies that
\begin{align*}
\dr_3 (h_n)_{33} & = -\dr_1 (h_n)_{31} - \dr_2 (h_n)_{32}
\\& = \frac{1}{n} \phi'(x^3)\pth{ \dr_1 \pth{ \chi'\pth{\frac{\rho}{n}} \frac{x^1}{\rho} } - \dr_2 \pth{  \chi'\pth{\frac{\rho}{n}} \frac{x^2}{\rho} } }
\\& = \frac{1}{n} \phi'(x^3) \frac{(x^1)^2-(x^2)^2}{\rho^2} \pth{  \frac{1}{n} \chi''\pth{\frac{\rho}{n}} -  \frac{1}{\rho}    \chi' \pth{\frac{\rho}{n}}    },
\end{align*}
which in turn yields
\begin{align*}
(h_n)_{33} & =\frac{1}{n} \phi(x^3) \frac{(x^1)^2-(x^2)^2}{\rho^2} \pth{  \frac{1}{n} \chi''\pth{\frac{\rho}{n}} -  \frac{1}{\rho}    \chi'  \pth{\frac{\rho}{n}}   }.
\end{align*}
For $i,j=1,2$, we obtain the estimates
\begin{align*}
\l \dr_3 (h_n)_{3i} \r_{L^2} & \lesssim 1,
\\ \l \dr_j (h_n)_{3i} \r_{L^2} +\l \dr_3 (h_n)_{33} \r_{L^2}  & \lesssim \frac{1}{n},
\\ \l \dr_j (h_n)_{33} \r_{L^2} & \lesssim \frac{1}{n^2}.
\end{align*}

\paragraph{The components $(h_n)_{ij}$ for $i,j=1,2$.} We first define a scalar function on $\RRR^2$ by
\begin{align*}
f(x^1,x^2) & =  \frac{(x^1)^2-(x^2)^2}{\rho^2} \pth{  \chi''(\rho)-  \frac{1}{\rho}    \chi'(\rho)   }
\end{align*}
so that $(h_n)_{33}=\frac{1}{n^2}\phi(x^3)f\pth{\frac{x^1}{n},\frac{x^2}{n}}$. We now search $(h_n)_{ij}$ under the form 
\begin{align*}
(h_n)_{ij}(x^1,x^2,x^3)=\frac{1}{n^2}\phi(x^3)\tilde{h}_{ij}\pth{\frac{x^1}{n},\frac{x^2}{n}},
\end{align*}
where $\tilde{h}$ is a symmetric 2-tensor defined on $\RRR^2$. We also define $\hat{h}=\tilde{h} + \half f e^{(2)}$ (where $e^{(2)}$ is the Euclidean metric on $\RRR^2$). The system \eqref{system 2d} is then equivalent to the following system for $\hat{h}$:
\begin{equation}\label{system 2d bis}
\begin{aligned}
\div^{(2)}\hat{h} & = \half \nab^{(2)} f,
\\ \tr^{(2)}\hat{h} & = 0,
\end{aligned}
\end{equation}
where $\nab^{(2)}=(\dr_1,\dr_2)$. We now use a consequence of Corollary 2.11 from \cite{Huneau2016a} on the divergence equation on $\RRR^2$ for traceless symmetric 2-tensor: since $f$ is smooth and compactly supported and since $\int_{\RRR^2} \dr_i f=0$, there exists a traceless symmetric 2-tensor $\hat{h}$ solution of \eqref{system 2d bis} and such that
\begin{align*}
\l \hat{h} \r_{L^2(\RRR^2)} + \l \nab\hat{h} \r_{L^2(\RRR^2)} \lesssim 1,
\end{align*}
where the implicit constant depends on $f$, i.e. on $\chi$ and $\phi$. We thus get the estimates for $k,i,j=1,2$
\begin{align*}
\l \dr_3 (h_n)_{ij} \r_{L^2(\RRR^2)} & \lesssim \frac{1}{n},
\\ \l \dr_k (h_n)_{ij} \r_{L^2(\RRR^2)} & \lesssim \frac{1}{n^{2}}.
\end{align*}

\paragraph{Verification of \eqref{condition}.} Thanks to $\l \dr_3 g_n\r_{L^2}=c_0n$ and the various estimates on $\l \dr_3 h_n \r_{L^2}$ above we have
\begin{align*}
\l \dr_3 \wc{g}_n \r_{L^2} \geq c_0n - c_1 - \frac{c_2}{n},
\end{align*}
for some $c_1,c_2>0$. Therefore there exists $n_0$ such that if $n\geq n_0$ then $\l \dr_3 \wc{g}_n \r_{L^2} \geq \frac{c_0}{2}n$. For $n\geq n_0$ we then have
\begin{align*}
\frac{\l \dr_1 \wc{g}_n \r_{L^2}+\l \dr_2 \wc{g}_n \r_{L^2} }{\l \dr_3 \wc{g}_n \r_{L^2}} & \lesssim   \frac{ 1 + \frac{1}{n} + \frac{1}{n^{2}} }{n}.
\end{align*}
This shows that the sequence of TT tensors $(\wc{g}_n)_{n\geq n_0}$ satisfies \eqref{condition}. This concludes the proof of Proposition \ref{prop F}.

\subsection{Proof of Proposition \ref{prop uniqueness}}\label{appendix uniqueness}

From the decay assumptions on $\wc{g}_i$, $\wc{\pi}_i$, $u_i-1$ and $X_i$, the support properties of $\breve{g}_i$ and $\breve{\pi}_i$, and \eqref{uniq1}-\eqref{uniq2} we deduce that for large $r$ we have
\begin{align*}
g & = e + \chi_{\vec{p}_i} (g_{\vec{p}_i} - e ) + \GO{r^{-q-\de}},
\\ \pi & = \chi_{\vec{p}_i} \pi_{\vec{p}_i}  + \GO{r^{-q-\de-1}},
\end{align*}
for $i=1,2$. Therefore the asymptotics charges of $(g,\pi)$ are given by the ones of $(g_{\vec{p}_i},\pi_{\vec{p}_i})$. Since the asymptotic charges completely determine the black hole parameter $\vec{p}$, we have $\vec{p}_1=\vec{p}_2=\vcentcolon\vec{p}$. We then compute the trace of $g$ with respect to the Euclidean metric from \eqref{uniq1}. Since $\wc{g}_i$ and $\breve{g}_i$ are traceless by assumption (recall that tensors in $\AA_q$ are traceless, see Proposition \ref{prop KIDS}) this gives  
\begin{align*}
u^4_1 \tr\pth{e + \chi_{\vec{p}} (g_{\vec{p}} - e )} = u^4_2 \tr\pth{e + \chi_{\vec{p}} (g_{\vec{p}} - e )}
\end{align*}
and implies $u_1=u_2$ and furthermore
\begin{align*}
\wc{g}_1 + \breve{g}_1 = \wc{g}_2 + \breve{g}_2.
\end{align*}
Since $\wc{g}_i$ is divergence-free we can take the divergence of this identity and get $\div (\breve{g}_1-\breve{g}_2)=0$. Remember now that $\breve{g}_1-\breve{g}_2\in\AA_q$ is a linear combination of $h_{j,\ell}$ (recall Definition \ref{def Aq Bq}) and that the 1-forms $\div h_{j,\ell}$ are linearly independent (see Proposition \ref{prop KIDS}). Therefore we obtain $\breve{g}_1=\breve{g}_2$ and then $\wc{g}_1=\wc{g}_2$. 

We now look at the reduced second fundamental form, \eqref{uniq2} gives
\begin{align*}
\wc{\pi}_1 + \breve{\pi}_1 + L_{e }X_1  & = \wc{\pi}_2 + \breve{\pi}_2 + L_{e}X_2. 
\end{align*}
We apply the operator $\De\tr + \div\div$ to this equality and use the fact that $(\De\tr + \div\div)(L_e X)=0$ for all vector field $X$ and the fact that $\wc{\pi}_i$ is TT, we obtain
\begin{align*}
(\De\tr + \div\div)( \breve{\pi}_1-\breve{\pi}_2) = 0.
\end{align*}
Remember now that $\breve{\pi}_1-\breve{\pi}_2\in\BB_q$ is a linear combination of $\varpi_Z$ (recall Definition \ref{def Aq Bq}) and that the functions $(\De\tr + \div\div)\varpi_Z$ are linearly independent (see Proposition \ref{prop KIDS}). Therefore we obtain $\breve{\pi}_1=\breve{\pi}_2$ and
\begin{align*}
\wc{\pi}_1 + L_{e }X_1  & = \wc{\pi}_2 + L_{e}X_2. 
\end{align*}
We take the divergence of this equality and get $\De(X_1-X_2)=0$ (since $\wc{\pi}_i$ is divergence-free). Thanks to the decay property of $X_i$, this implies $X_1=X_2$ and finally $\wc{\pi}_1=\wc{\pi}_2$.

%

\end{document}